%% LyX 2.3.3 created this file.  For more info, see http://www.lyx.org/.
%% Do not edit unless you really know what you are doing.
\documentclass[11pt,oneside]{amsart}
\usepackage{ae,aecompl}
\usepackage[T1]{fontenc}
\usepackage[latin9]{inputenc}
\synctex=-1
\usepackage{color}
\usepackage{textcomp}
\usepackage{amstext}
\usepackage{amsthm}
\usepackage{amssymb}
\usepackage{comment}
\usepackage{amsmath,amscd}
\usepackage{graphicx}

\makeatletter
%%%%%%%%%%%%%%%%%%%%%%%%%%%%%% Textclass specific LaTeX commands.
\numberwithin{equation}{section}
\numberwithin{figure}{section}
\theoremstyle{plain}
\newtheorem{thm}{\protect\theoremname}
\theoremstyle{plain}
\newtheorem{lem}[thm]{\protect\lemmaname}
\theoremstyle{remark}
\newtheorem{rem}[thm]{\protect\remarkname}
\theoremstyle{plain}
\newtheorem{prop}[thm]{\protect\propositionname}
\theoremstyle{definition}
\newtheorem{defn}[thm]{\protect\definitionname}
\theoremstyle{plain}
\newtheorem{cor}[thm]{\protect\corollaryname}
\newtheorem*{theorem*}{Theorem}
\newtheorem*{lem*}{Lemma}
\numberwithin{thm}{section}

%%%%%%%%%%%%%%%%%%%%%%%

\makeatother

\newenvironment{manualtheorem}[1]{%
	\manualtheoreminner
}{\endmanualtheoreminner}

\providecommand{\corollaryname}{Corollary}
\providecommand{\definitionname}{Definition}
\providecommand{\lemmaname}{Lemma}
\providecommand{\propositionname}{Proposition}
\providecommand{\remarkname}{Remark}
\providecommand{\theoremname}{Theorem}

\begin{document}
	\global\long\def\Exp#1{\mathcal{E}(#1)}%
	\global\long\def\T{T_{*}}%
	
	\global\long\def\hT{\widehat{T_{*}}}%
	
	\global\long\def\ZZ{\mathbb{Z}}
	
	\global\long\def\Autt{\text{Aut}_{\,2}\left(X,\mathcal{A},\mu\right)}
	
	\global\long\def\NN{\mathbb{N}}
	
	\global\long\def\Auto{\text{Aut}_{\,\hat{0}}\left(X,\mathcal{A},\mu\right)}
	
	\global\long\def\Autn{\text{Aut}_{\,\mathrm{NMP}}\left(X,\mathcal{A},\mu\right)}
	
	\global\long\def\EAutn{\text{EAut}_{\,\mathrm{NMP}}\left(X,\mathcal{A},\mu\right)}
	
	\global\long\def\Auti{\text{Aut}_{\,1}\left(X,\mathcal{A},\mu\right)}
	
	\title{Nonsingular Poisson Suspensions}

	\author{Alexandre I. Danilenko}
	\address{B. Verkin Institute for Low Temperature Physics and Engineering of National Academy of
		Sciences of Ukraine, 47 Nauky Ave., Kharkiv, 61103, Ukraine}
	\email{alexandre.danilenko@gmail.com}
	\author{Zemer Kosloff}
	\thanks{The research of Z.K. was partially supported by ISF grant No. 1570/17}
	\address{Einstein Institute of Mathematics,
		Hebrew University of Jerusalem, Edmond J. Safra Campus, Jerusalem 91904,
		Israel}
	\email{zemer.kosloff@mail.huji.ac.il}
	\author{Emmanuel Roy}
	\address{Laboratoire Analyse, G\'eom\'etrie et Applications, CNRS UMR 7539,
		Universit\'e Paris
		13, Institut Galil\'ee, 99 avenue Jean-Baptiste Cl\'ement F93430 Villetaneuse, France.}
	\email{roy@math.univ-paris13.fr}
	
	\begin{abstract} The classical Poisson functor associates to every infinite measure preserving dynamical system $(X,\mu,T)$ a probability preserving dynamical system $(X^*,\mu^*,T_*)$ called the Poisson suspension of $T$.
		In this paper we generalize this construction: a subgroup Aut$_2(X,\mu)$ of $\mu$-nonsingular transformations $T$ of $X$ is specified as the largest subgroup for which $T_*$ is  $\mu^*$-nonsingular.
		Topological structure of this subgroup is studied.
		We show that a generic element in Aut$_2(X,\mu)$ is ergodic and of Krieger type III$_1$.
		Let   $G$ be a locally compact Polish group and let  $A:G\to\text{Aut}_2(X,\mu)$ be a $G$-action.
		We investigate dynamical properties of the Poisson suspension $A_*$ of $A$ in terms of an affine representation  of $G$ associated naturally with $A$.
		It is shown that $G$ has property (T) if and only if each nonsingular Poisson $G$-action admits an absolutely continuous invariant probability.
		If $G$ does  not have property $(T)$ then for each generating probability $\kappa$ on $G$ and $t>0$, a nonsingular Poisson $G$-action is constructed whose Furstenberg $\kappa$-entropy is $t$. 
	\end{abstract}
	
	\maketitle
	
	\section{Introduction}
	\subsection{Poisson suspensions: measure preserving and nonsingular}
	In this  paper we initiate a systematic study  of {\it nonsingular} Poisson suspensions in the framework of ergodic theory.
	Poisson point processes 
	have convenient mathematical properties and are often used as mathematical models for seemingly random phenomena.
	Say, in statistical physics they provide a model for ideal gas consisting of countably many randomly moving noninteracting points (particles) of a standard $\sigma$-finite infinite nonatomic measure space $(X,\mathcal{A},\mu)$.
	The space of states $X^*$ of the gas consists of countable subsets (configurations) of $X$.
	It is endowed with the natural Borel $\sigma$-algebra $\mathcal A^*$ and a probability measure $\mu^*$ such that for each subset $A\in\mathcal A$, the $\mu^*$-expected value of the number of particles in $A$ has the Poisson distribution with parameter $\mu(A)$.
	Given a transformation $T:X\to X$,  
	the Poisson suspension $T_*:X^*\to X^*$ of $T$ models the motion of the particles by $T$. 
	In other words,  $T_*\omega=\{Tx:\,x\in\omega\}$ for all $\omega\in X^*$.
	If $\mu^*\circ T_*\sim\mu^*$ then we call the system $(X^*,\mathcal A^*,\mu^*,T_*)$  {\it the nonsingular Poisson suspension} of $(X,\mathcal A,\mu,T)$.

	In ergodic theory, only the {\it measure preserving} case $\mu\circ T=\mu$ (and hence $\mu^*\circ T_*=\mu^*$) of Poisson suspensions has been considered so far. 
	We refer the reader to the classical sources \cite{Cor82ergo}  studying the dynamical properties of the ideal gas and to \cite{OrnWeis87Entr} providing a Poisson model for Bernoulli actions of 
	locally compact groups\footnote{The seemingly simpler Bernoulli model as the shiftwise $G$-action on the product space $A^G$, for a probability space $A$, drops out from the category of standard measure spaces if the group $G$ is not countable.}.
	Over the last 15 years we observe a boost of interest to Poisson suspensions in ergodic theory.
	The work \cite{Roy07Infinite} describes  dynamical properties of $T_*$ such as ergodicity, weak mixing, mixing, rigidity, $K$-property, etc. in terms of $T$ and $\mu$.
	Spectral properties, entropic properties, similarity and  asymmetry, properties of joinings of the measure preserving Poisson suspensions are studied extensively in   \cite{Roy07Infinite}, \cite[\S 8]{Dan}, \cite{Jan08entropy}, \cite{DaRy}, \cite{Lem05ELF}, \cite{JanRueRoy15Sushis}, etc.
	Summarizing this progress we can say that  the {\it Poisson functor} $T\to T_*$ from the category of infinite measure preserving actions to the probability preserving actions is similar (and of similar importance in ergodic theory) to the {\it Gaussian functor} from the category of orthogonal representations to the probability preserving actions.

	Our global task is to find  some extensions of  the aforementioned results to the {\it nonsingular} Poisson suspensions  as well as to investigate purely nonsingular properties of them such as dissipativeness, 
	Krieger's type, associated flow, Furstenberg entropy.  
	We note that nonsingular Poisson suspensions are widely used in  
	the representation theory to construct unitary representations of {\it large groups} such as diffeomorphism groups of  non-compact manifolds and   current groups (see
	the surveys \cite{VershikGraev}, \cite{GelGraVer} and references therein and Chapter~10 of the book
	\cite{Ner1996CatInfGroups}).
	The fundamental (in fact, the only dynamical) property of the nonsingular Poisson suspensions of these group actions exploited there is the ergodicity, because it implies irreducibility of the associated unitary representation.
	In contrast to that, in this work we investigate  dynamical properties of nonsingular Poisson suspensions for individual transformations or, more generally, {\it locally compact} group actions.
	For them, the aforementioned implication does not hold.

	\subsection{Main results} We now list the main  results of this paper.
	The following fact can be deduced easily from Takahashi's version of Kakutani dichotomy for Poisson point processes:\\
	
	The set  of $\mu$-nonsingular transformations $T$ of $(X,\mathcal A,\mu)$ such that $T_*$ is $\mu^*$-nonsingular is exactly the group
	$$
	\text{{\rm Aut}}_2(X,\mathcal A,\mu):=\bigg\{T: \mu\circ T^{-1}\sim \mu, \ \sqrt{\frac{d\mu\circ T^{-1}}{d\mu}}-1\in L^2(\mu)\bigg \}.
	$$
	
	We note that Aut$_2(X,\mathcal A,\mu)$ contains  (properly) a subgroup 
	$$
	\text{Aut}_1(X,\mathcal A,\mu):=
	\bigg\{T: \mu\circ T^{-1}\sim \mu, \ {\frac{d\mu\circ T^{-1}}{d\mu}}-1\in L^1(\mu)\bigg\},
	$$
	which is the largest subgroup of nonsingular transformations for which the Poisson suspensions were defined in the literature  (\cite{VershikGraev}, \cite{GelGraVer}, 
	\cite{Ner1996CatInfGroups}\footnote{Neretin's notation for Aut$_1(X,\mathcal A,\mu)$ is 
		Gms$_\infty$. }) so far.
	Let   $\mathcal U_\mathbb{R}(L^2(\mu))$ denote the group of unitary operators in $L^2(X,\mu)$ preserving the real functions and let $\text{Aff}_\mathbb{R}(L^2(\mu)):=L^2_\mathbb{R}(X,\mu)\rtimes
	\mathcal U_\mathbb{R}(L^2(\mu))$ denote the group of affine operators in $L^2(X,\mu)$ preserving the real functions.
	We consider two natural representations of Aut$_2(X,\mathcal A,\mu)$ in $L^2(X,\mu)$: the {\it unitary} (well known) Koopman representation
	$
	U:\text{Aut}_2(X,\mathcal A,\mu)\ni T\mapsto U_T\in \mathcal U_\mathbb{R}(L^2(\mu))
	$
	and the  {\it affine} representation $A^{(2)}:\text{Aut}_2(X,\mathcal A,\mu)\ni T\mapsto A^{(2)}_T\in  \text{Aff}_\mathbb{R}(L^2(\mu))$, given by the formulas
	\begin{equation}\label{eq:un-aff}
	U_Tf:=f\circ T^{-1}\sqrt{\frac{d\mu\circ T^{-1}}{d\mu}}\quad\text{and}\quad A^{(2)}_Tf:= U_Tf+\sqrt{\frac{d\mu\circ T^{-1}}{d\mu}}-1.
	\end{equation}
	It appears, surprisingly for us, that the nonsingular Poisson suspensions are related closely to geometrical (affine) properties of the underlying Hilbert space.
	
	\begin{manualtheorem}{A}
		Let $C_{-1}:=\{f\in L^2(\mu):f\ge-1\}$.
		Then $\{A\in \text{{\rm Aff}}_\mathbb{R}(L^2(\mu)): AC_{-1}=C_{-1}\}=\Big\{\Big(\sqrt{\frac{d\mu\circ T^{-1}}{d\mu}}-1, U_T\Big):\,T\in \text{{\rm Aut}}_2(X,\mathcal A,\mu)\Big\}$.
	\end{manualtheorem}
	
	A similar results holds also for Aut$_1(X,\mathcal A,\mu)$ and  the corresponding natural isometric and affine representations of this group in $L^1(\mu)$.
	
	Let
	$
	W:\text{ Aff}_\mathbb{R} R(L^2(\mu))\ni A\mapsto  W_A\in \mathcal U(F(L^2(\mu)))
	$
	stand for the  well known unitary representation 
	of $\text{ Aff}_\mathbb{R}(L^2(\mu))$ by the Weyl operators $W_A$ in the Fock space $F(L^2(\mu))$ constructed over $L^2(\mu)$ \cite{Gui72Sym}.
	The operators $W_A$ are of fundamental importance in representation theory and quantum probability.
	They also appear naturally in description of  the unitary Koopman operators associated with  nonsingular Poisson suspensions.
	
	\begin{manualtheorem}{B}
		
		Under the natural identification of $L^2(\mu^*)$ with $F(L^2(\mu))$, for each transformation $T\in\text{{\rm Aut}}_2(X,\mathcal A,\mu)$, the Koopman operator $U_{T_*}$ generated by $T_*$ equals $W_{A^{(2)}_T}$.
	\end{manualtheorem}
	
	Theorem~A enables us to define a Polish topology, denoted by $d_2$, on Aut$_2(X,\mathcal A,\mu)$.
	In a similar way, utilizing the aforementioned analogue of Theorem~A for Aut$_1(X,\mathcal A,\mu)$ we also introduce a Polish topology $d_1$ on Aut$_1(X,\mathcal A,\mu)$.
	We show that the homomorphism $\chi:\text{Aut}_1(X,\mathcal A,\mu)\to\mathbb{R}$, defined in 
	\cite{Ner1996CatInfGroups} by the formula $\chi(T):=\int_X\left(\frac{d\mu\circ T^{-1}}{d\mu}-1\right)d\mu$, is $d_1$-continuous.
	We then prove the following results.
	
	\begin{manualtheorem}{C}
		\begin{itemize}
			\item
			The weak topology is strictly weaker than $d_2$ and $d_2$ is strictly weaker than $d_1$.
			\item
			{\rm Aut}$_p(X,\mu)$ endowed with $d_p$ is a Polish group for  $p=1,2$.
			\item
			The set $\{T_*:\, T\in\text{{\rm Aut}}_2(X,\mathcal A,\mu)\}$ of nonsingular Poisson suspensions is  a weakly closed  subgroup of  nonsingular transformations of $(X^*,\mathcal A^*,\mu^*)$.
			\item
			\text{{\rm Aut}}$_1(X,\mathcal A,\mu)$ is isomorphic to a semidirect product
			{\rm Ker}$\,\chi\rtimes\mathbb{R}$ is such a way that $\chi$ corresponds to the projection onto the second coordinate.
			\item Every conservative transformation in \text{{\rm Aut}}$_1(X,\mathcal A,\mu)$ belongs to 
			{\rm Ker}$\,\chi$.
		\end{itemize}
	\end{manualtheorem}
	
	The latter result (as well as the following theorem) shows that the group {\rm Ker}$\,\chi$ is a more natural
	object than \text{{\rm Aut}}$_1(X,\mathcal A,\mu)$ from the ergodic theory point of view.

	\begin{manualtheorem}{D}
		\begin{itemize}
			\item {\rm Aut}$_2(X,\mathcal A,\mu)$ endowed with $d_2$ has the Rokhlin property\footnote{I.e. there is a dense conjugacy class in this group.}.
			The subset of ergodic transformations  of Krieger's type $III_1$ is a dense $G_\delta$ in
			{\rm Aut}$_2(X,\mu)$.
			\item {\rm Ker\,}$\chi$ endowed with $d_1$ has the Rokhlin property ({\rm Aut}$_1(X,\mathcal A,\mu)$ does not have it).
			The subset of ergodic transformations  of Krieger's type $III_1$ is a dense $G_\delta$ in
			{\rm Ker\,}$\chi$.
		\end{itemize}
	\end{manualtheorem}

	It is well known that the group of all $\mu$-nonsingular transformations of $(X,\mathcal A,\mu)$ endowed with the weak topology also possesses  similar properties (see \cite{IonTul65}, \cite{ChKa79}, \cite{ChHaPr87}, \cite{DaSi}).
	However  the proof of  Theorem~D is more difficult for several reasons.
	The first is that  $d_2$ and $d_1$  are stronger than the  weak topology.
	The second is that we have no freedom to replace $\mu$ by an arbitrary equivalent measure any more.
	Indeed, if $\nu\sim\mu$ but $\sqrt{\frac{d\nu}{d\mu}}-1\not\in L^2(\mu)$ then 
	$\text{{\rm Aut}}_2(X,\mathcal A,\nu)\ne \text{{\rm Aut}}_2(X,\mathcal A,\mu)$.
	
	In a subsequent work \cite{DanKosRoy} we show that the subset of $T\in \text{Aut}_2(X,\mathcal{A},\mu)$ such that $T_*$ is ergodic and type $\mathrm{III}_1$ is a dense $G_\delta$.
	Combined with Theorem~D this implies the existence of a type $\mathrm{III}_1$  ergodic transformation whose Poisson suspension is also ergodic and of type $\mathrm{III}_1$.
	
	Let $G$ be a locally compact second countable group.
	The affine isometric representations  of $G$ is a fundamental tool in geometric group theory connecting Kazhdan property (T), the Haagerup property, operator algebras, harmonic analysis, etc. (see \cite{BePiVa},
	\cite{CoTeVa1}, \cite{CoTeVa2}, \cite{Bekka}).
	The Poisson $G$-actions deliver  natural non-trivial  examples of such representations.
	Indeed, if $T:G\ni g\mapsto T_g\in \text{Aut}_2(X,\mathcal A,\mu)$ is a measurable $G$-action  then the mapping  $G\ni g\mapsto A^{(2)}_{T_g}$ (see (\ref{eq:un-aff})) 
	is a continuous  affine representation of $G$ in $L^2(\mu)$.

	\begin{manualtheorem}{E}
		
		Let  $T:G\ni g\mapsto T_g\in \text{{\rm Aut}}_2(X,\mathcal A,\mu)$ be a measurable $G$-action.
		\begin{itemize}
			\item
			The Poisson suspension $ T_*:=\{(T_g)_*\}_{g\in G}$ of $T$ has an absolutely continuous invariant probability measure if and only if the $L^2(\mu)$-cocycle $c_T:G\ni g\mapsto c_T(g):=
			\sqrt{\frac{d\mu\circ T_g^{-1}}{d\mu}}-1$ is bounded.
			\item
			If $
			\int_G e^{-\frac12\|c_T(g)\|_2^2} \,d\lambda(g)<\infty
			$
			then $T_*$ is totally dissipative.
			\item
			$T_*$ is of zero type if and only if $\|c_T(g)\|_2\to\infty$ as $g\to\infty$.
		\end{itemize}
	\end{manualtheorem}
	
	The aforementioned {\it zero type} is a nonsingular analogue of the mixing in the probability preserving case (see \cite{DaSi}).
	We could not find a criterion for ergodicity of $T_*$ because it depends in a subtle way not only on the ergodic properties of $T$ but also on a ``right'' choice of a measure inside the equivalence class of $\mu$: we construct an example of an ergodic $T$ admitting a $\mu$-equivalent invariant probability such that $T_*$ is totally dissipative.
	
	Let $\kappa$ be a  probability on $G$.
	A nonsingular $G$-action $S=(S_g)_{g\in G}$ on a probability space $(Y,\mathcal B,\nu)$ is called
	$\kappa$-stationary if $\int_G\kappa(g)\nu\circ S_g\,d\kappa(g)=\nu$ (see a survey
	\cite{FuGl} for properties and applications of the stationary actions).
	
	\begin{manualtheorem}{F}
		
		Let $T$ be as in the previous theorem.  
		If $T_*$ is $\kappa$-stationary for a generating probability $\kappa$ on $G$ then $T_*$ preserves $\mu^*$ and $T$ preserves $\mu$.
	\end{manualtheorem}  
	
	There are several equivalent characterizations of  property (T) for $G$~\cite{BHV08Kazh}.
	We provide one more in terms of the nonsingular Poisson suspensions.
	
	\begin{manualtheorem}{G}
		
		$G$ has property  (T) if and only if each nonsingular Poisson $G$-action $T_*$ 
		admits an absolutely continuous invariant probability.
	\end{manualtheorem}
	
	One more characterization was obtained in \cite{BowHarTam}: if a countable discrete group $G$ does not have property (T) then for each generating probability $\kappa$ on $G$, the Furstenberg $\kappa$-entropy $h_\kappa(.)$   {\it has no  gap} on the set of purely infinite ergodic nonsingular $G$-actions.
	This was refined in \cite{Danilen}: $h_\kappa(.)$ takes all possible positive values on the subset
	of ergodic $G$-actions of type $III_1$.
	We extend this result to arbitrary locally compact groups and partly refine it by considering only
	the nonsingular Poisson suspensions.
	
	\begin{manualtheorem}{H}
		Let a locally compact $G$ do not have property (T) and let $\kappa$ be a probability  on $G$.
		Then there is a nonsingular $G$-action $T$ on an infinite measure space $(X,\mu)$
		such that the Poisson suspension $T_*$ of $T$ is $\mu^*$-nonsingular and 
		$\{h_\kappa(T_*,\mu_t^*): t\in(0,+\infty)\}=(0,+\infty)$, where $\mu_t=t\mu$. 
	\end{manualtheorem}

	During the course of work on this paper we learnt about   \cite{ArIsMa}, where
	it was constructed a functor from the affine  $G$-representations
	to the  nonsingular Gaussian systems.
	Certain nonsingular Gaussian counterparts of Theorems~G and H are proved there.
	We believe that there should be some interplay between the theory of nonsingular Gaussian actions and the nonsingular Poisson suspensions.

	\subsection{Sections overview}
	In Section \ref{sec: Poisson suspensions} we introduce a model $(X^*,\mathcal A^*,\mu^*, T_*)$ for the Poisson suspension
	of   a dynamical system $(X,\mathcal A,\mu,T)$, consider $L^2(\mu^*)$ as a Fock space over $L^2(\mu)$ and extend
	the exponential map $\mathcal E:L^2(\mu)\to L^2(\mu^*)$ to some non-square integrable functions.
	In Section~\ref{sec: AC of Poisson} we extend and refine Takahashi's theorem on equivalence and orthogonality of the Poisson suspensions of equivalent measures \cite{Tak90}.
	In Section  \ref{sec: NS of Poisson and Koopman} we introduce and study  the topological groups Aut$_2(X,\mathcal A,\mu)$,  Aut$_1(X,\mathcal A,\mu)$, the unitary and affine representations $U$ and $A^{(2)}$ of Aut$_2(X,\mathcal A,\mu)$ and their analogues for Aut$_1(X,\mathcal A,\mu)$, and prove Theorems~A, B, C and related results.
	Section~\ref{sec: Baire category} is devoted to generic properties of Aut$_2(X,\mathcal A,\mu)$ and Aut$_1(X,\mathcal A,\mu)$. 
	Theorem~D is proved there.
	In Section~\ref{sec: Basic dynamics}, for arbitrary locally compact groups $G$, we characterize some basic dynamical properties of the nonsingular Poisson suspensions  $T_*$ of $G$-actions $T$ in terms of the underlying system $(X,\mu,T)$.
	Theorem~E is proved there. 
	In Section~\ref{sec: F-entropy} we consider stationary $G$-actions, prove Theorem~F and compute the Furstenberg $\kappa$-entropy of $T_*$ in terms of the underlying system $(X,\mu,T)$.
	Section~\ref{sec: Property T} is devoted to property (T).
	We prove Theorems~G and H and related results there.
	The paper ends with Appendix which is devoted to infinitely divisible variables and stochastic integration. 
	This material is used in the course of the proofs of Theorems~\ref{th:R-N-D} and
	\ref{thm: LogT'}.

\section{Poisson suspensions}\label{sec: Poisson suspensions}
\subsection{Space of point processes}\label{space}
Let $(X,\mathcal{A},\mu)$ be a $\sigma$-finite Lebesgue
space with a non-atomic measure, that is $\left(X,\mathcal{A},\mu\right)$
is $\bmod  0$ isomorphic to the real line if $\mu(X)=\infty$
or to a bounded closed interval if $\mu(X)<\infty$, endowed
with Lebesgue measure and Lebesgue measurable sets.

The space of point processes over $X$ is defined as the set $X^{*}$
of all measures $\omega$ of the form $\omega=\sum_{i\in I}\delta_{x_{i}}$
where $I$ is at most countable. $X^{*}$ is endowed with the smallest
$\sigma$-algebra such that the $\Bbb Z_+\cup\{+\infty\}$-valued maps
$N_{A}:\,\omega\mapsto\omega\left(A\right)$ are measurable, for all
$A\in\mathcal{A}$.
We denote this $\sigma$-algebra on $X^{*}$ by $\mathcal{A}^{*}$.

\subsection{Poisson measures}

Let $\mathcal{A}_{f}^{\mu}$ denote the collection of sets $A\in\mathcal{A}$
of finite $\mu$-measure. There exists a unique probability measure
$\mu^{*}$ on $\left(X^{*},\mathcal{A}^{*}\right)$ such that:
\begin{itemize}
\item For all $k\ge1$ and pairwise disjoint sets $A_{1},\dots,A_{k}$ in
$\mathcal{A}_{f}^{\mu}$, the random variables $N_{A_{i}}$, $1\le i\le k$,
are independent.
\item For any $A\in\mathcal{A}_{f}^{\mu}$, $N_{A}$ is Poisson distributed
with parameter $\mu\left(A\right)$.
\end{itemize}
The probability space $\left(X^{*},\mathcal{A}^{*},\mu^{*}\right)$
is called the \emph{Poisson space} over the base $\left(X,\mathcal{A},\mu\right)$.
When completed with respect to $\mu^{*}$, $\left(X^{*},\mathcal{A}^{*},\mu^{*}\right)$
is a Lebesgue space.
The random measure $A\mapsto N_{A}$, $A\in\mathcal{A}$ distributed
as $\mu^{*}$ is called a \emph{Poisson point process of intensity
$\mu$}. In most cases this object is presented on  $\mathbb{R}^{d}$
(or on a subset of it) with Lebesgue measure as intensity and then
called \emph{homogeneous Poisson point process}. As Lebesgue spaces
with a continuous measure are either isomorphic to $\mathbb{R}$ or
to a bounded closed interval with Lebesgue measure depending on wether
the measure is finite or not, there is essentially no loss in generality
in dealing with homogeneous Poisson point process.
Observe the following three important features of a Poisson measure:
\begin{itemize}
\item $\mu^{*}$ is supported on \emph{simple counting measures}, that is:
\[
\mu^{*}(\{\omega\in X^{*}:\forall x\in X, \:\omega(\{ x\} )=0\text{ or }1\})=1.
\]
\item The \emph{intensity} of the random measure $A\mapsto N_{A}$ is $\mu$,
that is $\mathbb{E}_{\mu^{*}}[N_{A}]=\mu(A)$.
\end{itemize}
We shall also make use of the following important theorem:
\begin{thm}
(R\'enyi) Let $m$ be a probability measure on $\left(X^{*},\mathcal{A}^{*}\right)$
supported on  simple counting measures.
If for all $A\in\mathcal{A}_{f}^{\mu}$,
$$
m(\{\omega: N_{A}(\omega)=0\}) =\mu^{*}(\{\omega: N_{A}(\omega)=0\}) ,
$$ 
then
$m=\mu^{*}$.
\end{thm}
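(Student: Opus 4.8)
The plan is to prove that, for a simple point process, the ``void probabilities'' $A\mapsto m(\{N_A=0\})$ on a countable generating subfamily of $\mathcal{A}_f^\mu$ determine the whole law; since the second defining property of $\mu^*$ gives $\mu^*(\{N_A=0\})=e^{-\mu(A)}$, the hypothesis then forces $m=\mu^*$. First I would fix a countable \emph{dissecting} ring $\mathcal R\subseteq\mathcal A_f^\mu$ generating $\mathcal A$, such that every $D\in\mathcal R$ carries a refining sequence $\sigma_j(D)$ of finite partitions into $\mathcal R$-sets with vanishing mesh ($\max_{C\in\sigma_j(D)}\mu(C)\to0$) that separates points; such a ring exists because $(X,\mathcal A,\mu)$ is $\sigma$-finite Lebesgue. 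It is standard (the value $\omega(A)$ of a measure being recoverable from its values on a generating ring) that $\mathcal A^*=\sigma(N_B:B\in\mathcal R)$, so it suffices to identify the finite-dimensional distributions of $(N_B)_{B\in\mathcal R}$ under $m$. Since every finite subfamily of $\mathcal R$ consists of finite disjoint unions of atoms of the finite algebra it generates, and those atoms again lie in $\mathcal R$, it is in fact enough to identify the joint law of $(N_{D_1},\dots,N_{D_r})$ for arbitrary pairwise disjoint $D_1,\dots,D_r\in\mathcal R$.

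To do that I would compute joint binomial (factorial) moments. Given a finite partition $\sigma$ of $D_1\sqcup\cdots\sqcup D_r$ into $\mathcal R$-sets refining $\{D_l\}$, write $Z_l^\sigma(\omega)$ for the number of cells of $\sigma$ contained in $D_l$ on which $\omega$ puts at least one point. An inclusion--exclusion over the events $\{N_C=0\}$, using that the void probability of a disjoint union factors as $\prod_C e^{-\mu(C)}$, yields
$$\mathbb E_m\Big[\textstyle\prod_{l=1}^r\binom{Z_l^\sigma}{n_l}\Big]=\prod_{l=1}^r e_{n_l}\big(\{\,1-e^{-\mu(C)}:C\in\sigma,\ C\subseteq D_l\,\}\big),$$
where $e_n$ is the $n$-th elementary symmetric function. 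Now I let $\sigma$ run through the common refinement $\sigma_j$ of the dissecting sequences $\sigma_j(D_l)$. Because $m$ is carried by simple counting measures and the partitions separate points, the occupied-cell counts increase to the true counts, $Z_l^{\sigma_j}\uparrow N_{D_l}$ $m$-a.s. (the upper bound $Z_l^{\sigma_j}\le N_{D_l}$ is automatic, while $\liminf_j Z_l^{\sigma_j}\ge N_{D_l}$ uses simplicity). The case $n_l=1$ together with monotone convergence already gives $\mathbb E_m[N_{D_l}]=\mu(D_l)<\infty$, so each $N_{D_l}$ is $m$-a.s.\ finite. Passing to the limit in the displayed identity (monotone convergence on the left; on the right $\sum_{C\subseteq D_l}(1-e^{-\mu(C)})\to\mu(D_l)$ and $\max(1-e^{-\mu(C)})\to0$, hence $e_{n_l}(\cdots)\to\mu(D_l)^{n_l}/n_l!$) gives
$$\mathbb E_m\Big[\textstyle\prod_{l=1}^r\binom{N_{D_l}}{n_l}\Big]=\prod_{l=1}^r\frac{\mu(D_l)^{n_l}}{n_l!}\qquad\text{for all }n_1,\dots,n_r\ge0,$$
i.e.\ exactly the joint binomial moments of independent Poisson variables with parameters $\mu(D_l)$.

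Finally I would invoke moment determinacy: these moments grow slowly enough that $\sum_{n_1,\dots,n_r}\prod_l\frac{\mu(D_l)^{n_l}}{n_l!}t_l^{n_l}=\prod_l e^{\mu(D_l)t_l}<\infty$ for all $t_l\ge0$, so the joint law is the unique one with these binomial moments; concretely the alternating series $\sum_{n_l\ge k_l}\prod_l(-1)^{n_l-k_l}\binom{n_l}{k_l}\,\mathbb E_m[\prod_l\binom{N_{D_l}}{n_l}]$ converges absolutely to $\prod_l e^{-\mu(D_l)}\mu(D_l)^{k_l}/k_l!$ and equals $m(N_{D_l}=k_l\ \forall l)$. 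Hence $(N_{D_1},\dots,N_{D_r})$ has law $\bigotimes_{l}\mathrm{Poisson}(\mu(D_l))$ under $m$, exactly as under $\mu^*$; therefore all finite-dimensional distributions of $(N_B)_{B\in\mathcal R}$ agree under $m$ and $\mu^*$, and $m=\mu^*$.

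I expect the main obstacle to be the limiting argument of the second paragraph --- specifically the verification that simplicity forces $Z_l^{\sigma_j}\uparrow N_{D_l}$ and the careful passage to the limit on both sides. This is precisely where the hypothesis that $m$ is carried by simple counting measures is indispensable: without it, discrepancies at atoms of multiplicity $>1$ (for instance the push-forward of a Poisson process under $\omega\mapsto2\omega$, which has the same void function) would be invisible. The remaining ingredients --- existence of a dissecting ring and moment determinacy of the Poisson law --- are classical.
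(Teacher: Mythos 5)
The paper offers no proof of this statement: it is quoted as a classical theorem of R\'enyi and used as a black box, so there is nothing internal to compare your argument against. On its own terms, your proof is the standard dissecting-system argument (as in Daley--Vere-Jones or Kallenberg), and its probabilistic core is correct: the inclusion--exclusion over void events really does show that the indicators $1_{\{N_C>0\}}$, $C\in\sigma$, are independent Bernoulli variables with parameters $1-e^{-\mu(C)}$ under $m$; the factorial-moment identity, the monotone passage to the limit, and the use of simplicity to get $Z_l^{\sigma_j}\uparrow N_{D_l}$ are all sound; and the joint Poisson law is indeed determined by these factorial moments through the absolutely convergent inversion series.

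The one assertion that is false as written is $\mathcal A^*=\sigma(N_B:B\in\mathcal R)$. On the space $X^*$ of \emph{all} countable sums of Dirac masses this fails: taking $\mathcal R$ to be the ring of finite unions of dyadic intervals, the configurations $\omega=\sum_{q\in\mathbb Q}\delta_q$ and $2\omega$ satisfy $N_B(\omega)=N_B(2\omega)\in\{0,\infty\}$ for every $B\in\mathcal R$, yet $N_{\{0\}}$ separates them, so $\sigma(N_B:B\in\mathcal R)$ is strictly smaller than $\mathcal A^*$; your parenthetical justification (uniqueness of the extension from a generating ring) requires $\omega$ to be $\sigma$-finite on $\mathcal R$, which a general element of $X^*$ is not. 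The repair is routine and does not disturb the architecture of your proof: let $X^*_0$ be the set of simple configurations that are finite on every $B\in\mathcal R$. Then $X^*_0\in\sigma(N_B:B\in\mathcal R)$, it has full measure under $\mu^*$ by construction and under $m$ by your first-moment computation $\mathbb E_m[N_B]=\mu(B)<\infty$ together with the simplicity hypothesis, and on $X^*_0$ the atoms of $\omega$ can be enumerated measurably from the dissecting system, so $N_A\restriction X^*_0$ is measurable with respect to the trace of $\sigma(N_B:B\in\mathcal R)$ for every $A\in\mathcal A$. Since every $E\in\mathcal A^*$ then agrees on $X^*_0$ with some $G\in\sigma(N_B:B\in\mathcal R)$, the identification of the finite-dimensional laws of $(N_B)_{B\in\mathcal R}$ does yield $m=\mu^*$ on all of $\mathcal A^*$, as desired.
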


\subsection{Poisson suspensions}

At the core of this paper is an easy yet fundamental observation:
If $\varphi$ is a measurable map between two Lebesgue spaces $\left(X,\mathcal{A},\mu\right)$
and $\left(Y,\mathcal{B},\nu\right)$ such that $\mu\circ\varphi^{-1}=\nu$
then $\varphi_{*}$ also acts measurably between $\left(X^{*},\mathcal{A}^{*}\right)$
and $\text{\ensuremath{\left(Y^{*},\mathcal{B}^{*}\right)} }$ by
$\varphi_{*}\omega=\omega\circ\text{\ensuremath{\varphi}}^{-1}$ and
satisfies
\[
\mu^{*}\circ\varphi_{*}^{-1}=\nu^{*}.
\]
In particular, if $T$ is an invertible transformation of $\left(X,\mathcal{A},\mu\right)$,
we have the following picture:
$$
\begin{aligned}
(X,\mathcal{A},\mu) & \overset{T}{\longrightarrow} 
(X,\mathcal{A},\mu\circ T^{-1})\\
(X^{*},\mathcal{A}^{*},\mu^{*})&\overset{T_*}\longrightarrow
(X^{*},\mathcal{A}^{*},\mu^{*}\circ T_{*}^{-1}).
\end{aligned}
$$
We will be interested in the situation where $T$ is a non-singular
automorphism, that is $\mu\sim\mu\circ T^{-1}$.
 It is not always
true that $\mu^{*}\sim\left(\mu\circ T^{-1}\right)^{*}$.
We will
recall necessary and sufficient conditions to get the equivalence
of measures. When it is the case, the non-singular dynamical system
$\left(X^{*},\mathcal{A}^{*},\mu^{*},T_{*}\right)$ will be called
the \emph{Poisson suspension} over $\left(X,\mathcal{A},\mu,T\right)$.

\subsection{Fock space structure of $L^{2}(\mu^{*})$ and coherent
vectors}\label{Fock}

We recall a very important structural feature of Poisson measures
(see \cite{Attal16Quant} or \cite{Ner1996CatInfGroups}):
there is a canonical isometry  between
$L^{2}\left(\mu^{*}\right)$ and  the \emph{symmetric
Fock space} $F(L^{2}(\mu))$ over $L^{2}(\mu)$.
We recall that
\[
F(L^{2}(\mu)):=\bigoplus_{n=0}^{\infty}L^{2}(\mu)^{\odot n},
\]
where $L^{2}(\mu)^{\odot0}:=\mathbb{C}$ and each factor
$L^{2}(\mu)^{\odot n}$ is equipped with the normalized
scalar product $n!\left\langle \cdot,\cdot\right\rangle _{L^{2}\left(\mu\right)^{\odot n}}$.
The Hilbert space $L^{2}\left(\mu\right)^{\odot n}$  considered  as a subspace of $L^{2}(\mu^{*})$
is called the \emph{chaos of order $n$}.
We now explain how to construct the canonical isometry.
For that, we choose a distinguished family of vectors
in $F(L^{2}(\mu))$, called the \emph{coherent
vectors}, defined, for $f\in L^{2}(\mu)$, by
\[
\Exp f:=\sum_{k=0}^{\infty}\frac{1}{n!}f^{\otimes n}\in F(L^{2}(\mu)).
\]
They form a total family in $F(L^{2}(\mu))$
 and satisfy the exponential relation
\begin{equation}
\left\langle \Exp f,\Exp g\right\rangle _{F\left(L^{2}\left(\mu\right)\right)}=e^{\left\langle f,g\right\rangle _{L^{2}\left(\mu\right)}}.\label{eq:exponentialrelation}
\end{equation}
 Denote by $\mathcal{B}_{0}(X)$ 
 the subspace of
 $L^{2}(\mu)$-functions
with finite $\mu$-measure support.
Then the family $\{\Exp f: f\in \mathcal{B}_{0}(X) \}$ is also  total 
in $F(L^{2}(\mu))$ (see e.g. \cite{Ner1996CatInfGroups}, where it is shown that even
a   subspace of finitely valued  functions from $\mathcal{B}_{0}(X)$
generates a total family in $F(L^{2}(\mu))$).
On the other hand, for $f\in \mathcal{B}_{0}(X)$, 
define a bounded function  $\exp (f)$  on $X^*$ by setting:

\begin{equation}
\exp (f)(\omega)=e^{-\int_{X}fd\mu}\prod_{\{x\in X:\,\omega(\{ x\}) =1\}}\left(1+f\left(x\right)\right),\quad\omega\in X^{*}.\label{eq:coherent_simple function-1}
\end{equation}
In particular,  for any set $A\in\mathcal{A}_{f}^{\mu}$,
\begin{equation}\label{eq:simple exp}
\exp{(-1_{A})}=e^{\mu\left(A\right)}1_{\{\omega:\, N_{A}(\omega)=0\} }.
\end{equation}
A standard calculation shows that 
\begin{equation}\label{eq:exp}
\left\langle \exp( f),\exp (g)\right\rangle _{L^{2}(\mu^*)}=e^{\left\langle f,g\right\rangle _{L^{2}\left(\mu\right)}}.
\end{equation}
Due to the R\'enyi Theorem, the family $\{\exp (f): f\in \mathcal{B}_{0}(X)\}$ is total
in $L^2(\mu^*)$.
Hence we deduce from (\ref{eq:exponentialrelation}) and  (\ref{eq:exp})
the map $\Exp f\mapsto\exp( f)$ extends to an isometry 
between  $F(L^{2}(\mu))$ and $L^2(\mu^*)$.
In the sequel, we will not distinguish between $\Exp f$ and  $\exp( f)$.
We will use the following properties of  coherent vectors: $\Exp{ \overline{f}}=\overline{\Exp f}$, $\Exp f\in L^1(\mu^*)$ and $\mathbb{E}_{\mu^{*}}[\Exp{f}]=1$ for all $f\in L^2(\mu)$.

\subsection{\label{subsec:Product-formula}Product formula and extended coherent
vectors}
For every two functions $f$ and $g$ in $L^2(\mu)$,
we define a function $f\bullet g$ by setting
$$
f\bullet g:=\left(1+f\right)\left(1+g\right)-1.
$$
We now define
 a function space
\[
\mathcal{L}\left(\mu\right):=\{ \varphi:X\to\mathbb{R}:\, \exists f,g\in L^{2}(\mu),\,\varphi=f\bullet g\}.
\]
Clearly, $L^{2}\left(\mu\right)\subset\mathcal{L}\left(\mu\right)$.
If $f,g\in \mathcal{B}_{0}(X)$,
 $f\bullet g$ is integrable
and has finite measure support.
Moreover, one can deduce from 
(\ref{eq:coherent_simple function-1})
 the following \emph{product
formula}:
\begin{equation}
\Exp f(\omega)\Exp g(\omega)=e^{\int_X fg\,d\mu}e^{-\int_{X}f\bullet g\,d\mu}\prod_{\{x\in X: \,\omega(\{ x\}) =1\}}\left(f\bullet g\left(x\right)+1\right).\label{eq:Product formula}
\end{equation}
This formula enables us to extend the definition of coherent vectors
to functions in $\mathcal{L}\left(\mu\right)$. 
Namely, we set
\begin{equation}\label{eq:product formula}
\Exp{f\bullet g}:=e^{-\int_{X}fgd\mu}\Exp f\Exp g
\end{equation}
 for all $f,g\in L^{2}(\mu)$.
 We have to  verify that this formula is well defined.
 To that end, we first
define an auxiliary  map $\Psi:L^{2}(\mu)\times L^{2}(\mu)\to L^{1}(\mu^{*})$
by setting 
\begin{equation}\label{eq:Psi}
\Psi(f,g):=e^{-\int fgd\mu}\Exp f\Exp g.
\end{equation}
Since the map $L^{2}(\mu)\ni f\mapsto\Exp f\in  L^2(\mu^*)$ is continuous, it follows that
$\Psi$ is continuous.
Now we consider $f,g,f',g'\in L^{2}(\mu)$  such that $f\bullet g=f^{\prime}\bullet g^{\prime}$.
 Select an increasing sequence $A_1\subset A_2\subset\cdots$ of subsets of finite measure in $X$ such that $\bigcup_{n=1}^\infty A_n=X$.
In we now set 
 $f_{n}:=f1_{A_{n}}$,  $g_{n}:=g1_{A_{n}}$,  $f_{n}':=f'1_{A_{n}}$ and $g_{n}':=g'1_{A_{n}}$
 then $f_n,g_n,f_n',g_n'\in\mathcal{B}_{0}(X)$ for each $n\in\Bbb N$ and
 $f_n\to f$, $g_n\to g$, $f_n'\to f'$, $g_n'\to g'$ in $L^2(\mu)$ as $n\to\infty$.
 Moreover, for all $n\in\mathbb{N}$, 
\[
f_{n}\bullet g_{n}=(f+g+fg)1_{A_n}=(f'+g'+f'g')1_{A_n}=f_{n}'\bullet g_{n}'.
\]
It now follows from (\ref{eq:Psi})  and  (\ref{eq:Product formula}) that 
$
\Psi\left(f_{n},g_{n}\right)=\Psi(f_{n}^{'},g_{n}^{'}).
$
Taking limits as $n\to\infty$ and using the continuity of $\Psi$, we
obtain that 
\[
\Psi(f,g)=\Psi(f',g'),
\]
as desired.
Thus, utilizing (\ref{eq:product formula}), we can define $\Exp\phi$ for each $\phi\in\mathcal{L}(\mu)$.
We call such  $\Exp\phi$ {\it the extended coherent vectors}.
If $\phi\in L^2(X)$ then $\phi=\phi\bullet 0$ and (\ref{eq:product formula}) implies that the extended coherent vector $\Exp\phi$ coincides with the ``standard'' coherent vector defined by $\phi$.
We also note that $\mathbb{E}_{\mu^{*}}(\Exp{\varphi})=1$ for  each   $\varphi\in\mathcal{L}(\mu)$.

\subsection{More properties of coherent vectors}
\begin{prop}
\label{lem:square coherent vectors}An extended coherent vector $\Exp{\varphi}$,
$\varphi\in\mathcal{L}(\mu)$, is in $L^{2}(\mu^{*})$
if and only if $\varphi\in L^{2}(\mu)$.
\end{prop}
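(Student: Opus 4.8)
The plan is to prove the two implications separately, since they are of very different depth. The implication $\varphi\in L^{2}(\mu)\Rightarrow\Exp\varphi\in L^{2}(\mu^{*})$ I would dispose of immediately: for such $\varphi$ the extended coherent vector $\Exp\varphi$ coincides with the ordinary coherent vector (as already observed after \eqref{eq:product formula}), so the exponential relation \eqref{eq:exp} gives $\|\Exp\varphi\|_{L^{2}(\mu^{*})}^{2}=e^{\langle\varphi,\varphi\rangle_{L^{2}(\mu)}}=e^{\|\varphi\|_{2}^{2}}<\infty$; in passing this also computes the norm in the other case. For the converse I would fix a decomposition $\varphi=f\bullet g$ with $f,g\in L^{2}(\mu)$, assume $\Exp\varphi\in L^{2}(\mu^{*})$, and aim to extract $\varphi\in L^{2}(\mu)$ by pairing $\Exp\varphi$ against a carefully chosen increasing family of \emph{genuine} square-integrable coherent vectors and invoking Cauchy--Schwarz.

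Concretely, I would pick an increasing exhaustion $A_{1}\subset A_{2}\subset\cdots$ of $X$ by finite-measure sets, set $E_{n}:=A_{n}\cap\{|\varphi|\le n\}$ and $\varphi_{n}:=\varphi1_{E_{n}}$. Each $\varphi_{n}$ is bounded with finite-measure support, hence $\varphi_{n}\in\mathcal{B}_{0}(X)\subset L^{2}(\mu)\subset\mathcal{L}(\mu)$, so $\Exp{\varphi_{n}}$ is an ordinary coherent vector with $\|\Exp{\varphi_{n}}\|_{2}^{2}=e^{\|\varphi_{n}\|_{2}^{2}}$. The crux is the identity
\[
\langle\Exp\varphi,\Exp{\varphi_{n}}\rangle_{L^{2}(\mu^{*})}=e^{\int_{X}\varphi\varphi_{n}\,d\mu}=e^{\|\varphi_{n}\|_{2}^{2}},
\]
the last equality because $\varphi\varphi_{n}=\varphi_{n}^{2}$. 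One cannot apply \eqref{eq:product formula} directly to the pair $(\varphi,\varphi_{n})$, since $\varphi$ is not yet known to lie in $L^{2}(\mu)$; instead I would work with the three bona fide $L^{2}$-functions $f$, $g$, $\varphi_{n}$. Boundedness of $\varphi_{n}$ enters twice: it makes $g\bullet\varphi_{n}=g+\varphi_{n}+g\varphi_{n}\in L^{2}(\mu)$, and it makes $\int_{X}fg\varphi_{n}\,d\mu$ absolutely convergent. Using the bullet-associativity $\varphi\bullet\varphi_{n}=f\bullet(g\bullet\varphi_{n})$ (both sides satisfy $1+(\cdot)=(1+f)(1+g)(1+\varphi_{n})$), repeated applications of the defining formula \eqref{eq:product formula} give, as an identity in $L^{1}(\mu^{*})$,
\[
\Exp\varphi\cdot\Exp{\varphi_{n}}=e^{\int_{X}\varphi\varphi_{n}\,d\mu}\,\Exp{\varphi\bullet\varphi_{n}},
\]
all the integrals arising when the exponent is simplified ($\int fg$, $\int f\varphi_{n}$, $\int g\varphi_{n}$, $\int fg\varphi_{n}$) being finite; taking expectations and using $\mathbb{E}_{\mu^{*}}[\Exp{\varphi\bullet\varphi_{n}}]=1$ then yields the displayed identity, the inner product being legitimate because $\Exp\varphi$ and $\Exp{\varphi_{n}}$ are real and square-integrable.

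With the identity in hand, Cauchy--Schwarz gives $e^{\|\varphi_{n}\|_{2}^{2}}\le\|\Exp\varphi\|_{2}\,e^{\|\varphi_{n}\|_{2}^{2}/2}$, so $e^{\|\varphi_{n}\|_{2}^{2}/2}\le\|\Exp\varphi\|_{2}$ for every $n$. Since $E_{n}\uparrow X$, monotone convergence gives $\|\varphi_{n}\|_{2}^{2}\uparrow\|\varphi\|_{2}^{2}\in[0,+\infty]$, whence $e^{\|\varphi\|_{2}^{2}/2}\le\|\Exp\varphi\|_{2}<\infty$ and so $\varphi\in L^{2}(\mu)$. I expect the only genuinely delicate point to be the bookkeeping behind the product-formula identity --- verifying that $g\bullet\varphi_{n}\in L^{2}(\mu)$ and that every integral produced by expanding $\int_{X}f(g\bullet\varphi_{n})\,d\mu$ is finite, so that no step covertly requires $\varphi\in L^{2}(\mu)$. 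The level-dependent truncation $\{|\varphi|\le n\}$, rather than merely the sets $A_{n}$, is precisely what makes this work; the remaining ingredients (Cauchy--Schwarz and the monotone limit) are routine.
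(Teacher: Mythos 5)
Your proof is correct. The easy direction and the key computational identity $\mathbb{E}_{\mu^{*}}[\Exp{\varphi}\,\Exp{h}]=e^{\int_{X}\varphi h\,d\mu}$ (valid for $h\in L^{2}(\mu)$ with $g\bullet h\in L^{2}(\mu)$, obtained by iterating the product formula) are exactly as in the paper; where you diverge is in how the identity is exploited. The paper tests $\Exp{\varphi}$ against \emph{all} finitely valued $h\in L^{2}(\mu)$, deduces that the linear functional $h\mapsto\int_{X}(f\bullet g)h\,d\mu$ is continuous at $0$ (hence bounded on a dense subspace), and invokes the Riesz representation theorem to conclude $f\bullet g\in L^{2}(\mu)$. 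You instead test only against the level-and-support truncations $\varphi_{n}=\varphi 1_{A_{n}\cap\{|\varphi|\le n\}}$ of $\varphi$ itself, for which the pairing becomes $e^{\|\varphi_{n}\|_{2}^{2}}$, and a single application of Cauchy--Schwarz gives the uniform bound $e^{\|\varphi_{n}\|_{2}^{2}/2}\le\|\Exp{\varphi}\|_{2}$, after which monotone convergence finishes the argument. Your route is somewhat more self-contained and quantitative: it avoids the duality step (and the small extra argument needed there to identify the Riesz representative with $f\bullet g$) and yields the explicit bound $\|\varphi\|_{2}^{2}\le 2\log\|\Exp{\varphi}\|_{2}$, which in fact is equality by the easy direction. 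The bookkeeping you flag as the delicate point --- that $g\bullet\varphi_{n}\in L^{2}(\mu)$ and that all four integrals in the exponent converge because $\varphi_{n}$ is bounded with finite-measure support --- is indeed the only place care is needed, and you handle it correctly; the same care is required in the paper's version for finitely valued $h$.
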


\begin{proof}
If $\varphi\in L^2(\mu)$ then the extended coherent vector $\Exp{\varphi}$ is the classical coherent vector and hence it belongs to $L^2(\mu^*)$.

Now we prove the converse. 
Let $f,g\in L^{2}(\mu)$ and $\Exp{f\bullet g}\in L^2(\mu^*)$.
Denote by $\mathcal S$ the subspace of finitely valued functions from $L^2(\mu)$.
Then for each $h \in \mathcal S$, the function $g\bullet h$ is in $L^{2}(\mu)$.
We now have:
\begin{align*}
\mathbb{E}_{\mu^{*}}[\Exp{f\bullet g}\Exp h] & =\mathbb{E}_{\mu^{*}}[e^{-\int_{X}fg\,d\mu}\Exp f\Exp g\Exp h]\\
 & =e^{-\int_{X}fgd\mu+\int_{X}gh\,d\mu}\mathbb{E}_{\mu^{*}}[\Exp f\Exp{g\bullet h}]\\
 & =e^{-\int_{X}fg\,d\mu+\int_{X}gh\,d\mu+\int_{X}f\cdot (g\bullet h)\,d\mu}\\
 & =e^{\int_{X}(f\bullet g) \cdot h\,d\mu}
\end{align*}
Hence a linear functional $L:\mathcal S\ni h\mapsto\int_{X}(f\bullet g) \cdot h\,d\mu\in\Bbb C$ is well defined.
It is continuous at $0$.
Indeed, if a sequence $(h_n)_{n=1}^\infty$ with $h_n\in\mathcal S$, $n\in\Bbb N$, goes to 0 in $L^2(\mu)$ as $n\to\infty$ then  $\Exp {h_n}\to \Exp 0=1$ as $n\to\infty$ in $L^2(\mu^*)$.
Hence 
$$
\mathbb{E}_{\mu^{*}}[\Exp{f\bullet g}\Exp {h_n}]\to \mathbb{E}_{\mu^{*}}[\Exp{f\bullet g}]=1\quad\text{as }n\to\infty.
$$ 
Therefore $e^{\int_X(f\bullet g) \cdot h_n\,d\mu}\to 1$, i.e. $\int_{X}(f\bullet g) \cdot h_n\,d\mu\to 0$
as $n\to\infty$.
Since $L$ is linear, it follows that $L$ is continuous on the entire $\mathcal S$.
Since $\mathcal S$ is dense in $L^2(\mu)$, we deduce that $L$ extends uniquely  to a continuous linear functional on $L^2(\mu)$.
In view of the Riesz representation theorem, we conclude that $f\bullet g \in L^2(\mu)$.
\end{proof}

\begin{lem}
\label{lem:non-negative-coherent}
Let  a function $\varphi\in \mathcal{L}(\mu)$ take only real values.
Then the function $\widetilde{\varphi}:=\left|1+\varphi\right|-1$ belong to $\mathcal{L}(\mu)$
and 
\begin{equation}\label{eq:WW}
\left|\Exp{\varphi}\right|=e^{-2\int_{\{x\in X:\,\varphi(x)+1<0\}}\left(\varphi+1\right)\,d\mu}\Exp{\widetilde{\varphi}}.
\end{equation}
In particular, $\Exp{\varphi}$ is non-negative $\mu^{*}$-a.s. if
and only if $\varphi\ge-1$ $\mu$-almost everywhere.
\end{lem}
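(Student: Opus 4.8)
\emph{Plan.} Write $\varphi=f\bullet g$ with $f,g\in L^{2}(\mu)$, so that $1+\varphi=(1+f)(1+g)$ and hence $1+\widetilde\varphi=|1+\varphi|=|1+f|\,|1+g|$. I would first prove $\widetilde\varphi\in\mathcal L(\mu)$ by setting $f':=|1+f|-1$ and $g':=|1+g|-1$: the reverse triangle inequality gives $|f'|=\big||1+f|-1\big|\le|f|$ and $|g'|\le|g|$, so $f',g'\in L^{2}(\mu)$, while $f'\bullet g'=(1+f')(1+g')-1=|1+f|\,|1+g|-1=\widetilde\varphi$. I would also record for later use that if $f,g\in\mathcal B_0(X)$, then $f'$ and $g'$ vanish wherever $f$, resp.\ $g$, vanish and so lie in $\mathcal B_0(X)$ as well, and that $|1+f1_{A_n}|-1=f'1_{A_n}$ for any $A_n\subset X$.

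Next I would prove (\ref{eq:WW}) first in the case $f,g\in\mathcal B_0(X)$, where the pointwise machinery is available. Combining (\ref{eq:Product formula}) with the defining relation (\ref{eq:product formula}) gives, for any $h,k\in\mathcal B_0(X)$, the pointwise identity $\Exp{h\bullet k}(\omega)=e^{-\int_X(h\bullet k)\,d\mu}\prod_{x:\,\omega(\{x\})=1}\big(1+(h\bullet k)(x)\big)$. Applying this to $\varphi=f\bullet g$ and to $\widetilde\varphi=f'\bullet g'$, and using that $\varphi$ is real with $|1+\varphi(x)|=1+\widetilde\varphi(x)$, I obtain
\[
|\Exp{\varphi}(\omega)|=e^{-\int_X\varphi\,d\mu}\prod_{x:\,\omega(\{x\})=1}\big(1+\widetilde\varphi(x)\big)=e^{\int_X(\widetilde\varphi-\varphi)\,d\mu}\,\Exp{\widetilde\varphi}(\omega).
\]
Since $\widetilde\varphi-\varphi=|1+\varphi|-(1+\varphi)$ vanishes on $\{\varphi+1\ge0\}$ and equals $-2(\varphi+1)$ on $\{\varphi+1<0\}$, the exponent is exactly $-2\int_{\{x:\,\varphi(x)+1<0\}}(\varphi+1)\,d\mu$, which is (\ref{eq:WW}).

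For a general $\varphi=f\bullet g\in\mathcal L(\mu)$ I would truncate: fix finite-measure sets $A_n\uparrow X$, put $f_n:=f1_{A_n}$, $g_n:=g1_{A_n}\in\mathcal B_0(X)$ and $\varphi_n:=f_n\bullet g_n=\varphi1_{A_n}$. The case already treated yields $|\Exp{\varphi_n}|=c_n\,\Exp{\widetilde{\varphi_n}}$ with $c_n=\exp\!\big(-2\int_{\{\varphi+1<0\}\cap A_n}(\varphi+1)\,d\mu\big)$, a nondecreasing sequence in $[1,+\infty]$ with $\lim_n c_n=\exp\!\big(-2\int_{\{\varphi+1<0\}}(\varphi+1)\,d\mu\big)$ by monotone convergence. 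Since $\widetilde{\varphi_n}=f_n'\bullet g_n'$ with $f_n'=f'1_{A_n}\to f'$ and $g_n'=g'1_{A_n}\to g'$ in $L^{2}(\mu)$, the continuity of $\Psi$ (see (\ref{eq:Psi})) gives $\Exp{\varphi_n}=\Psi(f_n,g_n)\to\Psi(f,g)=\Exp\varphi$ and $\Exp{\widetilde{\varphi_n}}=\Psi(f_n',g_n')\to\Psi(f',g')=\Exp{\widetilde\varphi}$ in $L^{1}(\mu^{*})$, hence a.s.\ along a subsequence. As $\mathbb E_{\mu^{*}}[\Exp{\widetilde\varphi}]=1$, the limit $\Exp{\widetilde\varphi}$ is strictly positive on a set of positive measure, and on that set $c_n=|\Exp{\varphi_n}|/\Exp{\widetilde{\varphi_n}}$ converges to the finite value $|\Exp\varphi|/\Exp{\widetilde\varphi}$; therefore $\lim_n c_n<\infty$ and, passing to the limit in $|\Exp{\varphi_n}|=c_n\Exp{\widetilde{\varphi_n}}$, one gets (\ref{eq:WW}) in general. \emph{This last step is the main obstacle}: a priori the prefactor $c_n$, equivalently the integral $\int_{\{\varphi+1<0\}}(\varphi+1)\,d\mu$, could be $-\infty$, and its finiteness has to be extracted from the $L^{1}(\mu^{*})$-convergence together with $\mathbb E_{\mu^{*}}[\Exp{\widetilde\varphi}]=1$ rather than estimated directly.

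Finally, the displayed consequence would follow from (\ref{eq:WW}) and $\mathbb E_{\mu^{*}}[\Exp{\psi}]=1$ for $\psi\in\mathcal L(\mu)$. If $\varphi\ge-1$ a.e.\ then $\{\varphi+1<0\}$ is $\mu$-null, so $\widetilde\varphi=\varphi$ a.e.\ and (\ref{eq:WW}) reads $|\Exp\varphi|=\Exp\varphi$, i.e.\ $\Exp\varphi\ge0$ $\mu^{*}$-a.s. Conversely, if $\Exp\varphi\ge0$ $\mu^{*}$-a.s., then $\Exp\varphi=|\Exp\varphi|=c\,\Exp{\widetilde\varphi}$ with $c=\exp\!\big(-2\int_{\{\varphi+1<0\}}(\varphi+1)\,d\mu\big)\ge1$; taking $\mu^{*}$-expectations and using $\mathbb E_{\mu^{*}}[\Exp\varphi]=\mathbb E_{\mu^{*}}[\Exp{\widetilde\varphi}]=1$ forces $c=1$, hence $\int_{\{\varphi+1<0\}}(\varphi+1)\,d\mu=0$; since the integrand is strictly negative on $\{\varphi+1<0\}$, that set is $\mu$-null, i.e.\ $\varphi\ge-1$ a.e.
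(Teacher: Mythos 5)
Your proof is correct, but the reduction to the general case runs along a genuinely different track than the paper's. The paper proves \eqref{eq:WW} in three stages: first for $\varphi\in\mathcal{B}_{0}(X)$ via \eqref{eq:coherent_simple function-1}; then for $\varphi\in L^{2}(\mu)$, by approximating with functions in $\mathcal{B}_{0}(X)$ that are chosen to \emph{coincide} with $\varphi$ on the finite-measure set $\{\varphi<-1\}$, so that the exponential prefactor is literally constant along the approximating sequence and no divergence issue ever arises; and finally for $\varphi=f\bullet g$ by applying the $L^{2}$-case to $f$ and $g$ separately and multiplying via \eqref{eq:product formula}, which produces the exponent $-\int_X(\varphi-\widetilde{\varphi})\,d\mu$ as a sum of manifestly integrable terms ($fg$, $f-\widetilde f$, $g-\widetilde g$, $\widetilde f\widetilde g$). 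You instead skip the intermediate $L^{2}$-stage and truncate $\varphi$ itself by $1_{A_n}$; the price is that your prefactor $c_n$ is a genuinely varying monotone sequence that could a priori tend to $+\infty$, and you close that gap with a normalization argument: $\mathbb{E}_{\mu^{*}}[\Exp{\widetilde{\varphi}}]=1$ forces $\Exp{\widetilde{\varphi}}>0$ on a set of positive measure, and on that set $c_n$ is pinned down as a ratio converging to a finite limit. That step is sound (the a.e. identity holds simultaneously for all $n$, and monotonicity of $c_n$ upgrades the subsequential limit to the full limit), and as a by-product it proves directly that $\int_{\{\varphi+1<0\}}(\varphi+1)\,d\mu$ is finite for every real $\varphi\in\mathcal{L}(\mu)$ --- a fact the paper obtains instead from the $L^{1}$-decomposition in its third step. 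Your treatment of $\widetilde{\varphi}\in\mathcal{L}(\mu)$ (via $\widetilde{\varphi}=\widetilde{f}\bullet\widetilde{g}$) and of the final equivalence is the same as the paper's. Both routes are valid; the paper's avoids any appeal to positivity of the limit, while yours is shorter by one case at the cost of the extra finiteness argument you correctly flagged as the main obstacle.
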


\begin{proof}
We consider separately three cases.
Suppose first that  $\varphi\in\mathcal{B}_{0}(X)$.
Then  $|1+\varphi|-1\in \mathcal{B}_{0}(X)$  and, in view of~(\ref{eq:coherent_simple function-1}),
\begin{equation}\label{eq:L}
\begin{aligned}
\left|\Exp{\varphi}\left(\omega\right)\right| & =e^{-\int_{X}\varphi \,d\mu}
\prod_{\{x\in X:\,\omega(\{ x\}) =1\}}|1+\varphi(x)|\\
 & =e^{-\int_{X}\varphi \,d\mu}\prod_{\{x\in X:\,\omega(\{ x\}) =1\}}(1+\widetilde\varphi(x))\\
 & =e^{-\int_{X}(\varphi -\widetilde\varphi) \,d\mu}\,e^{-\int_{X}\widetilde\varphi \,d\mu}
 \prod_{\{x\in X:\,\omega(\{ x\}) =1\}}(1+\widetilde\varphi(x))\\
  & =e^{-\int_{X}(\varphi -\widetilde\varphi) \,d\mu}\,
\Exp{\widetilde{\varphi}}(\omega)\\
 & =e^{-2\int_{\{x\in X:\,\varphi(x)+1 <0\}}(\varphi+1)\,d\mu}\Exp{\widetilde{\varphi}}(\omega),
\end{aligned}
\end{equation}
as desired.

Suppose now that $\varphi\in L^{2}(\mu)$.
We let $A_\varphi:=\{x\in X:\, \varphi(x)<-1\}$.
Then  $\mu(A_\varphi)<\infty$.
 Select a sequence $\{ \varphi_{n}\} _{n\in\mathbb{N}}$ of functions
$\varphi_n\in\mathcal{B}_{0}(X)$
such that 
\begin{itemize}
\item
$\varphi_n\to\varphi$ in 
 $L^{2}(\mu)$  as $n\to\infty$,
 \item
$A_{\varphi_n}= A_{\varphi}$ for each $n\in\Bbb N$ and
\item
$\varphi_n(x)=\varphi(x)$ if $x\in A_\varphi$.
 \end{itemize}
  It is straightforward to verify that  $|\widetilde \varphi|\le|\varphi|$ and 
 $|\widetilde{\varphi_{n}}-\widetilde{\varphi}|\le|\varphi_{n}-\varphi|$.
This yields  that $\widetilde\varphi\in L^{2}(\mu)$ and $\widetilde\varphi_n\to\widetilde\varphi$ 
 in $L^{2}(\mu)$
as $n\to\infty$.
Therefore
  $\Exp{\widetilde{\varphi_n}}\to
\Exp{\widetilde{\varphi}}$   in $L^2(\mu^*)$
 as $n\to\infty$.
By the first case  and the properties of $\varphi_n$,
$$
|\Exp{\varphi_n}|=e^{-2\int_{A_{\phi_n}}(\varphi_n+1)\,d\mu}\Exp{\widetilde\varphi_n}=
e^{-2\int_{A_{\varphi}}(\varphi+1)\,d\mu}\Exp{\widetilde\varphi_n}.
$$
Passing to the limit as $n\to\infty$, we obtain (\ref{eq:WW}), as desired.
Before we proceed to the general case, we  rewrite (\ref{eq:WW}) in the following equivalent form:
\begin{equation}\label{eq:QQ}
|\Exp{\varphi}|=e^{-\int_{X}(\varphi-\widetilde{\varphi})\,d\mu}\Exp{\widetilde{\varphi}}.
\end{equation}

Now, in the general case, let  $\varphi=f\bullet g$
for arbitrary vectors $f,g\in L^{2}(\mu)$.
A straightforward verification shows that $\widetilde \varphi=\widetilde f\bullet \widetilde g$.
We deduce from (\ref{eq:product formula}) and (\ref{eq:QQ}) that
\begin{align*}
\left|\Exp{\varphi}\right| 
 & =e^{-\int_{X}fg\,d\mu}\left|\Exp f\right|\left|\Exp g\right|\\
& =e^{-\int_{X}fg\,d\mu}\,e^{\int_X(\widetilde f-f)d\mu}\Exp{\widetilde{f}}\,
e^{\int_X(\widetilde g-g)d\mu}\Exp{\widetilde{g}}\\
 & =e^{-\int_{X}(fg -\widetilde f+f-\widetilde g+g-\widetilde f\widetilde g)d\mu}
 \Exp{\widetilde{f}\bullet\widetilde g}\\
 & =e^{-\int_X(\varphi-\widetilde\varphi)d\mu}\Exp{\widetilde{\varphi}},
\end{align*}
and (\ref{eq:WW}) follows.

To prove the second claim of the lemma, we assume first that $\Exp{\varphi}\ge 0$ $\mu^{*}$-a.s. for some  $\varphi\in\mathcal{L}(\mu)$.
Then
\begin{align*}
1 & =\mathbb{E}[\Exp{\varphi}]\\
 & =\mathbb{E}[|\Exp{\varphi}|]\\
 & =\mathbb{E}\left[e^{-2\int_{\{x\in X:\,\varphi(x)+1<0\}}(\varphi+1)\,d\mu}\Exp{\widetilde{\varphi}}\right]\\
 & =e^{-2\int_{\{x\in X:\,\varphi(x)+1<0\}}(\varphi+1)\,d\mu}.
\end{align*}
Therefore $\int_{\{x\in X:\,\varphi(x)+1<0\}}(\varphi+1)\,d\mu=0$, which
implies that $\varphi\ge-1$, $\mu$-almost everywhere.
Conversely, if $\varphi\ge-1$ $\mu$-almost everywhere then $\int_{\{x\in X:\,\varphi(x)+1<0\}}\left(\varphi+1\right)d\mu=0$ and $\widetilde\varphi=\varphi$.
Therefore $\left|\Exp{\varphi}\right|=\Exp{\widetilde{\varphi}}=\Exp{\varphi}$.
\end{proof}

\section{Absolute continuity and equivalence of Poisson measures}\label{sec: AC of Poisson}
Let $(X,\mathcal{A},\mu)$ be a non-atomic standard $\sigma$-finite measure space.
We single out four important sets of measures:
\begin{itemize}
\item $\mathcal{M}_{\mu,2}^{+}$ is the set of $\sigma$-finite measures
$\nu$ on $\left(X,\mathcal{A}\right)$ such that $\nu\ll\mu$ and
$\sqrt{\frac{d\nu}{d\mu}}-1\in L^{2}(\mu)$,
\item $\mathcal{M}_{\mu,2}^{\circ,+}:=\{\nu\in \mathcal{M}_{\mu,2}^{+}:\,
\nu\sim\mu\}$,
\item
$\mathcal{M}_{\mu,1}^{+}$ is the set of $\sigma$-finite measures
$\nu$ on $(X,\mathcal{A})$ such that $\nu\ll\mu$ and
$\frac{d\nu}{d\mu}-1\in L^{1}(\mu)$ and 
\item
$\mathcal{M}_{\mu,1}^{\circ,+}:=\{\nu\in \mathcal{M}_{\mu,1}^{+}:\,
\nu\sim\mu\}$.
\end{itemize}
Since
$
\left(\sqrt{x}-1\right)^{2}\le\left|x-1\right|
$
 for each $x>0$,
it follows that
 $\mathcal{M}_{\mu,1}^{+}\subset \mathcal{M}_{\mu,2}^{+}$ and  hence $\mathcal{M}_{\mu,1}^{\circ,+}\subset \mathcal{M}_{\mu,2}^{\circ,+}$.

\begin{rem}\label{rem: measures}
\begin{enumerate}
\item
\label{rem:finite measure} Let $\mu$ be a finite measure.
Then  $\nu \in \mathcal{M}_{\mu,2}^{+}$ if and only if it is a finite measure and $\nu\ll\mu$.
Hence $\mathcal{M}_{\mu,1}^{+}=\mathcal{M}_{\mu,2}^{+}$ and $\mathcal{M}_{\mu,1}^{\circ,+}=\mathcal{M}_{\mu,2}^{\circ,+}$.
\item
\label{rem:equivalence}If $\nu\in\mathcal{M}_{\mu,2}^{\circ,+}$,
then $\mathcal{M}_{\nu,2}^{\circ,+}=\mathcal{M}_{\mu,2}^{\circ,+}$.
In a similar way,  
if $\nu\in\mathcal{M}_{\mu,1}^{\circ,+}$,
then $\mathcal{M}_{\nu,1}^{\circ,+}=\mathcal{M}_{\mu,1}^{\circ,+}$.
\item
If $\mu$ is infinite and $\nu\in\mathcal{M}_{\mu,2}^{+}$ then $\mu\left(\left\{x\in X:\frac{d\nu}{d\mu}(x)=0\right\}\right)<\infty$.
\end{enumerate}
\end{rem}

\begin{lem}
\label{lem:A_f}Let $\nu\in\mathcal{M}_{\mu,2}^{+}$. Then for any
set $A\in\mathcal{A}$, we have that $\mu\left(A\right)<\infty$ if and only if
$\nu\left(A\right)<\infty$, that is $\mathcal{A}_{f}^{\mu}=\mathcal{A}_{f}^{\nu}$.
\end{lem}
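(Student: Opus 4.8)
The plan is to deduce both implications at once from a single $L^{2}(\mu)$-membership criterion. Put $\rho:=\frac{d\nu}{d\mu}$, so that the hypothesis $\nu\in\mathcal{M}_{\mu,2}^{+}$ reads $\sqrt\rho-1\in L^{2}(\mu)$. Fix $A\in\mathcal{A}$ and note the two elementary identities $\int_{A}1\,d\mu=\mu(A)$ and $\int_{A}(\sqrt{\rho})^{2}\,d\mu=\int_{A}\rho\,d\mu=\nu(A)$, which give the equivalences
\[
1_{A}\in L^{2}(\mu)\iff\mu(A)<\infty\qquad\text{and}\qquad\sqrt\rho\,1_{A}\in L^{2}(\mu)\iff\nu(A)<\infty .
\]
Thus it suffices to prove that $1_{A}\in L^{2}(\mu)$ if and only if $\sqrt\rho\,1_{A}\in L^{2}(\mu)$.

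For this I would use the pointwise identity $\sqrt\rho\,1_{A}=1_{A}+(\sqrt\rho-1)1_{A}$. The term $(\sqrt\rho-1)1_{A}$ always lies in $L^{2}(\mu)$, being dominated in absolute value by $\sqrt\rho-1\in L^{2}(\mu)$; hence in this identity one of the two remaining terms belongs to $L^{2}(\mu)$ precisely when the other does. Combined with the equivalences above, this yields $\mu(A)<\infty\iff\nu(A)<\infty$, i.e.\ $\mathcal{A}_{f}^{\mu}=\mathcal{A}_{f}^{\nu}$. I do not expect any genuine obstacle here: the whole thing is a direct computation. The only point worth a second glance is the behaviour on the set $N:=\{x\in X:\rho(x)=0\}$, where $\sqrt\rho$ vanishes; but there $(\sqrt\rho-1)^{2}=1$, so $\mu(N)\le\int_{X}(\sqrt\rho-1)^{2}\,d\mu<\infty$ automatically (consistently with item~(3) of Remark~\ref{rem: measures}), and the argument above applies verbatim.

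If a proof avoiding the identity is preferred, one can argue by direct estimates: for $\mu(A)<\infty\Rightarrow\nu(A)<\infty$, the triangle inequality in $L^{2}(\mu)$ gives $\nu(A)^{1/2}=\|\sqrt\rho\,1_{A}\|_{2}\le\|1_{A}\|_{2}+\|\sqrt\rho-1\|_{2}<\infty$; for the converse, split $A=(A\cap\{\rho\ge\tfrac14\})\cup(A\cap\{\rho<\tfrac14\})$ and bound $\mu(A\cap\{\rho\ge\tfrac14\})\le 4\nu(A)$ and $\mu(A\cap\{\rho<\tfrac14\})\le 4\int_{X}(\sqrt\rho-1)^{2}\,d\mu$, using that $\rho<\tfrac14$ forces $(\sqrt\rho-1)^{2}>\tfrac14$. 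The one-line identity argument is cleaner, so that is the route I would take.
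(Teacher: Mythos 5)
Your proof is correct and rests on the same observation as the paper's: on any $A$, the functions $\sqrt{\rho}\,1_{A}$ and $1_{A}$ differ by $(\sqrt{\rho}-1)1_{A}\in L^{2}(\mu)$, so one is square-integrable iff the other is. The paper's converse direction is literally this, and its forward direction is the same computation phrased in $L^{1}(\mu_{\mid A})$ via the expansion $(\sqrt{\phi}-1)^{2}=(\phi-1)-2(\sqrt{\phi}-1)$; your single symmetric identity is a mild streamlining, not a different argument.
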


\begin{proof}
We set $\phi:=\frac{d\nu}{d\mu}$. 
 If $\mu\left(A\right)<\infty$, then $\sqrt{\phi}-1\in L^{2}\left(\mu_{\mid A}\right)\subset L^{1}\left(\mu_{\mid A}\right)$.
Since $L^1(\mu)\ni\left(\sqrt{\phi}-1\right)^{2}=\left(\phi-1\right)-2\left(\sqrt{\phi}-1\right)$,
we now obtain that 
$\phi-1\in L^{1}\left(\mu_{\mid A}\right)$, which implies that
$\nu\left(A\right)<\infty$.

Now if $\nu\left(A\right)<\infty$ then  $\sqrt{\phi}$
is in $L^{2}\left(\mu_{\mid A}\right)$. As $\sqrt{\phi}-1\in L^{2}\left(\mu_{\mid A}\right)$,
this implies that the constant function $1$ is in $L^{2}\left(\mu_{\mid A}\right)$
too.
Hence $\mu\left(A\right)<\infty$.
\end{proof}
The following theorem is the ground for the rest of the paper. 
The necessary and sufficient condition for the absolute continuity for Poisson measures was found by Takahashi in  \cite{Tak90}.
However he did not write the explicit formula for the Radon-Nikodym derivative as a coherent vector. 
We only prove this formula and show that it generalizes the formula obtained by Neretin in \cite{Ner1996CatInfGroups} for the smaller class of measures $\mathcal{M}_{\mu,1}^{+}$.

\begin{thm}
\label{thm:absolute continuity}Let $\nu$ be a $\sigma$-finite measure
on $(X,\mathcal{A})$. 
Then $\nu^{*}\ll\mu^{*}$ if and
only if $\nu\in\mathcal{M}_{\mu,2}^{+}$.
If $\nu\in\mathcal{M}_{\mu,2}^{+}$ then
 $\frac{d\nu^{*}}{d\mu^{*}}=\Exp{\frac{d\nu}{d\mu}-1}$.
If $\nu\notin\mathcal{M}_{\mu,2}^{+}$, then $\nu^{*}\perp\mu^{*}$.
\end{thm}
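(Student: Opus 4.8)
The plan is to establish the three assertions (absolute continuity, the coherent-vector formula, and the dichotomy) together, using Rényi's theorem to identify the candidate Radon--Nikodym derivative and Takahashi's criterion (via Kakutani's dichotomy) to rule out the singular case. First I would dispose of the easy implication: assume $\nu\in\mathcal{M}_{\mu,2}^{+}$ and set $\phi:=\frac{d\nu}{d\mu}$, so $\sqrt\phi-1\in L^2(\mu)$. By Lemma~\ref{lem:A_f} we have $\mathcal{A}_f^\mu=\mathcal{A}_f^\nu$, so $\nu^*$ is well defined on the same $\sigma$-algebra. The candidate density is $\Exp{\phi-1}$, which makes sense: $\phi-1=(\sqrt\phi-1)\bullet(\sqrt\phi-1)\in\mathcal{L}(\mu)$ (indeed $(1+(\sqrt\phi-1))^2-1=\phi-1$), so $\Exp{\phi-1}$ is a well-defined extended coherent vector, it is non-negative $\mu^*$-a.s.\ by Lemma~\ref{lem:non-negative-coherent} (since $\phi-1\ge-1$), and $\mathbb{E}_{\mu^*}[\Exp{\phi-1}]=1$. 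So $\Exp{\phi-1}\,d\mu^*$ is a probability measure on $(X^*,\mathcal{A}^*)$, supported on simple counting measures (being absolutely continuous w.r.t.\ $\mu^*$).

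Next I would verify the hypothesis of Rényi's theorem for this measure against $\nu^*$. For $A\in\mathcal{A}_f^\mu$ one computes, using the explicit formula (\ref{eq:simple exp}) for $\exp(-1_A)$, that
\[
\int_{X^*}\Exp{\phi-1}\,\mathbf 1_{\{\omega:N_A(\omega)=0\}}\,d\mu^*
=e^{-\mu(A)}\,\mathbb{E}_{\mu^*}\big[\Exp{\phi-1}\,\Exp{-1_A}\big].
\]
Applying the product formula / defining relation (\ref{eq:product formula}) for extended coherent vectors, $\Exp{\phi-1}\Exp{-1_A}=e^{\int(\phi-1)\cdot 1_A\,d\mu}\Exp{(\phi-1)\bullet(-1_A)}$, and $(\phi-1)\bullet(-1_A)=\phi\cdot(1-1_A)-1=-1$ on $A$ and $=\phi-1$ off $A$; since $\mathbb{E}_{\mu^*}[\Exp{\cdot}]=1$ always, the expectation is $e^{\int_A(\phi-1)\,d\mu}=e^{\nu(A)-\mu(A)}$, giving the value $e^{-\mu(A)}e^{\mu(A)-\mu(A)}e^{\nu(A)-\mu(A)}$; after bookkeeping this is exactly $e^{-\nu(A)}=\mu^*(\{\omega:N_A(\omega)=0\})$ computed with intensity $\nu$, i.e.\ $\nu^*(\{N_A=0\})$. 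By Rényi's theorem, $\Exp{\phi-1}\,d\mu^*=\nu^*$, which proves both $\nu^*\ll\mu^*$ and the formula $\frac{d\nu^*}{d\mu^*}=\Exp{\phi-1}$.

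For the converse and the dichotomy, I would invoke the Kakutani-type dichotomy for the Poisson product structure: writing $X=\bigsqcup_n A_n$ with $A_n\in\mathcal{A}_f^\mu$, the Poisson space factors as the product $\prod_n(A_n^*,\mu_{|A_n}^*)$, and $\nu^*$ is the corresponding product of the finite-measure Poisson laws $\nu_{|A_n}^*$; by Kakutani's theorem $\nu^*\ll\mu^*$ or $\nu^*\perp\mu^*$ according to whether $\prod_n\int\sqrt{\frac{d\nu_{|A_n}^*}{d\mu_{|A_n}^*}}\,d\mu_{|A_n}^*$ is positive or zero. A standard computation of the Hellinger integral for Poisson laws on a finite-measure piece gives $\int\sqrt{\frac{d\nu_{|A}^*}{d\mu_{|A}^*}}\,d\mu_{|A}^*=\exp\!\big(-\tfrac12\int_A(\sqrt\phi-1)^2\,d\mu\big)$ (using $e^{\langle f,g\rangle}$-type identities, with $f=g=\sqrt{\phi}-1$ restricted to $A$, analogous to (\ref{eq:exp})), so the infinite product is positive iff $\sum_n\int_{A_n}(\sqrt\phi-1)^2\,d\mu=\|\sqrt\phi-1\|_{L^2(\mu)}^2<\infty$, i.e.\ iff $\nu\in\mathcal{M}_{\mu,2}^{+}$; otherwise it is $0$ and $\nu^*\perp\mu^*$. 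One must also handle the case $\nu\not\ll\mu$ separately: if $\nu$ has a part singular to $\mu$ on some finite-measure set, the corresponding Poisson coordinates are already mutually singular, forcing $\nu^*\perp\mu^*$ as well, so $\nu\notin\mathcal{M}_{\mu,2}^{+}\Rightarrow\nu^*\perp\mu^*$ in all cases. The main obstacle I anticipate is purely bookkeeping: keeping the $\bullet$-products, the various exponential normalization factors, and the restrictions to the pieces $A_n$ consistent — in particular being careful that $\Exp{\phi-1}$ on the infinite-measure space is genuinely the $L^1(\mu^*)$-limit of the finite-piece coherent vectors, so that Rényi's identity on each $\{N_A=0\}$ with $A\in\mathcal{A}_f^\mu$ really pins down the whole measure.
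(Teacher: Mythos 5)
Your first half --- establishing $\nu^*\ll\mu^*$ and the formula $\frac{d\nu^{*}}{d\mu^{*}}=\Exp{\phi-1}$ for $\nu\in\mathcal{M}_{\mu,2}^{+}$ by combining the product formula, the identity $\mathbb{E}_{\mu^*}[\Exp{\varphi}]=1$, and R\'enyi's theorem on the events $\{N_A=0\}$ --- is exactly the paper's argument. Two small repairs are needed there. First, (\ref{eq:product formula}) is stated only for $f,g\in L^{2}(\mu)$, and $\phi-1$ need not lie in $L^2(\mu)$; to expand $\Exp{\phi-1}\Exp{-1_A}$ you must first write $\Exp{\phi-1}=e^{-\|\sqrt{\phi}-1\|_2^2}\Exp{\sqrt{\phi}-1}^2$ and apply the product formula three times, as the paper does. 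Second, your exponents carry a sign error: $\int_X(\phi-1)(-1_A)\,d\mu=\mu(A)-\nu(A)$, so the correct chain is $e^{-\mu(A)}\cdot e^{\mu(A)-\nu(A)}\cdot 1=e^{-\nu(A)}$, whereas the product you display equals $e^{\nu(A)-2\mu(A)}$. Neither issue affects the method.

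Where you genuinely diverge is the dichotomy: the paper does not prove it but cites Takahashi \cite{Tak90}, while you sketch the Kakutani argument. Your Hellinger computation $\int\sqrt{d\nu_{|A}^{*}/d\mu_{|A}^{*}}\,d\mu_{|A}^{*}=e^{-\frac12\int_A(\sqrt{\phi}-1)^2d\mu}$ is correct (it follows from (\ref{was})-type identities and $\mathbb{E}_{\mu^*}[\Exp{\sqrt{\phi}-1}]=1$), and for $\nu\ll\mu$ this does yield the alternative ``$\nu^*\ll\mu^*$ or $\nu^*\perp\mu^*$'' governed by $\|\sqrt{\phi}-1\|_2$; this is essentially Takahashi's proof. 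However, your disposal of the case $\nu\not\ll\mu$ is wrong: a nonzero $\mu$-singular part of $\nu_{|A_n}$ does \emph{not} make the Poisson coordinates mutually singular. If $\lambda$ is a finite measure concentrated on a $\mu$-null set $N\subset A_n$, then the Poisson law of intensity $\mu_{|A_n}+\lambda$ is the independent superposition of the laws of intensities $\mu_{|A_n}$ and $\lambda$, and the latter charges the empty configuration with probability $e^{-\lambda(N)}>0$; hence $(\mu_{|A_n}+\lambda)^{*}\ge e^{-\lambda(N)}\mu_{|A_n}^{*}$, so far from being singular, these two Poisson laws satisfy $\mu_{|A_n}^{*}\ll(\mu_{|A_n}+\lambda)^{*}$. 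All a singular part actually gives is $\nu_{|A_n}^{*}\not\ll\mu_{|A_n}^{*}$ (positive probability of a particle in $N$), which is not enough to feed into Kakutani's singularity alternative. So your route establishes the ``only if'' and the $\perp$ statement only under the extra hypothesis $\nu\ll\mu$; the general case requires a different argument (or, as in the paper, an appeal to \cite{Tak90}).
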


\begin{proof}
Assume $\nu\in\mathcal{M}_{\mu,2}^{+}$ and set $\phi:=\frac{d\nu}{d\mu}$.
We now observe that  
\begin{equation}\label{eq:phi -1}
\phi-1=(\sqrt{\phi}-1)\bullet (\sqrt{\phi}-1)
\end{equation}
with $\sqrt{\phi}-1\in L^2(\mu)$.
Hence  $\Exp{\phi-1}$ is well defined.
Take a subset $A\in\mathcal{A}_{f}^{\nu}$.
Then  $A\in \mathcal{A}_{f}^{\mu}$
by Lemma \ref{lem:A_f}. 
Applying  the product formula~(\ref{eq:product formula}) three times, we obtain that
$$
\begin{aligned}
\Exp{-1_{A}}&\Exp{\phi-1}=\Exp{-1_{A}}e^{-\int_X(\sqrt\phi-1)^2d\mu}\Exp{\sqrt\phi-1}\Exp{\sqrt\phi-1}
\\
&=e^{-\int_X((\sqrt\phi-1)^2+(\sqrt\phi-1)1_A)d\mu}\Exp{(-1_A)\bullet(\sqrt\phi-1)}\Exp{\sqrt\phi-1}
\\
&=e^{-\int_X(\phi-1)1_Ad\mu}\Exp{(-1_A)\bullet(\phi-1)}
\\
&=e^{\mu(A)-\nu(A)}\Exp{1_{A^c}\phi-1}.
\end{aligned}
$$
Taking the mathematical expectation and using (\ref{eq:simple exp}) twice, we obtain that
$$
\mathbb{E}_{\mu^{*}}[1_{\{\omega\in X^*:\, N_{A}(\omega)=0\} }\Exp{\phi-1}]  =e^{-\nu(A)}
  =\nu^{*}(\{\omega\in X^*:\, N_{A}(\omega)=0\}).
$$
Hence, by the 
R\'enyi's theorem, 
$\nu^*\ll\mu^*$ and 
 $\frac{d\nu^{*}}{d\mu^{*}}=\Exp{\frac{d\nu}{d\mu}-1}$.
 The second claim of the theorem was proved in \cite{Tak90}.
\end{proof}

We note that  Theorem~\ref{thm:absolute continuity} highlights the connection between extended coherent vectors and Radon-Nikodym derivatives of equivalent Poisson point process measures. The following result can be seen as an explicit  description of $\frac{d\nu^{*}}{d\mu^{*}}$ for $ \nu\in\mathcal{M}_{\mu,2}^{\circ,+}$ as a function from $X^*$ to $\Bbb R$.

\begin{thm}\label{th:R-N-D}
\label{prop:LogT'}Let $\nu\in\mathcal{M}_{\mu,2}^{\circ,+}$ and set $\phi:=\frac{d\nu}{d\mu}$.
Then 
\begin{enumerate}
\item  We can represent $\log\frac{d\nu^{*}}{d\mu^{*}}$ as the following limit in probability:
\begin{multline}\label{eq:main}
\log\frac{d\nu^{*}}{d\mu^{*}}(\omega)=\lim_{\epsilon\to 0}
\bigg(\int_{\{x\in X:\,|\log\phi(x)|>\epsilon\}}
\log\phi \,d\omega  \\
-\int_{\{x\in X:\,|\log\phi(x)|>\epsilon\}}(
\phi-1)\,d\mu\bigg).
\end{multline}
\item 
Moreover, $\log\frac{d\nu^{*}}{d\mu^{*}}$ is an infinitely divisible random
variable whose L\'evy measure is the image of $\mu$ by $\log\phi$,
restricted to $\mathbb{R}\setminus\{ 0\} $.
\item
We have that
\[
\mathbb{E}_{\mu^{*}}\Big[\log\frac{d\nu^{*}}{d\mu^{*}}\Big]=-\int_{X}(\phi-1-\log \phi)\,d\mu\in[-\infty,0].
\]
It is finite if and only if 
$\int_{\{x\in X:\,|\log \phi(x)|>1\}}|\log \phi\,|d\mu<\infty$.
\end{enumerate}
\end{thm}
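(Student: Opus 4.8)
The plan is to reduce everything to the corresponding facts about a single sum of independent infinitely divisible (compound-Poisson-type) contributions, one for each ``scale'' $\{x:|\log\phi(x)|>\epsilon\}$, and then pass to the limit. First I would fix notation: write $\psi:=\log\phi$, which is finite $\mu$-a.e. (since $\nu\sim\mu$, so $\phi>0$ a.e.), and for $\epsilon>0$ let $X_\epsilon:=\{x:|\psi(x)|>\epsilon\}$. By Lemma~\ref{lem:A_f} and Remark~\ref{rem: measures}(3) one checks $\mu(X_\epsilon)<\infty$: on $X_\epsilon$ either $\psi>\epsilon$, forcing $\phi>e^\epsilon$, so $\sqrt\phi-1$ is bounded below by a positive constant and square-integrability of $\sqrt\phi-1$ gives finite measure there; or $\psi<-\epsilon$, so $\phi<e^{-\epsilon}$, i.e. $1-\sqrt\phi$ is bounded below by a positive constant, again forcing finite measure. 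Hence $1_{X_\epsilon}$ and $1_{X_\epsilon}\phi-1_{X_\epsilon}$ lie in $L^2(\mu)$ (indeed in $L^1$), so both $\Exp{1_{X_\epsilon}\phi-1}$ and the stochastic integral $\int_{X_\epsilon}\psi\,d\omega$ make sense as honest $L^2(\mu^*)$-measurable functions.

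Next I would establish the finite-scale identity: for each $\epsilon>0$,
\begin{equation*}
\log\frac{d\nu_\epsilon^{*}}{d\mu^{*}}(\omega)=\int_{X_\epsilon}\psi\,d\omega-\int_{X_\epsilon}(\phi-1)\,d\mu\quad\text{$\mu^*$-a.s.},
\end{equation*}
where $\nu_\epsilon$ is the measure with $\frac{d\nu_\epsilon}{d\mu}=1_{X_\epsilon}\phi+1_{X_\epsilon^c}$. This is precisely the classical formula for the Radon--Nikodym derivative of Poisson measures with $L^1$-equivalent intensities and can be read off from Theorem~\ref{thm:absolute continuity}: $\frac{d\nu_\epsilon^*}{d\mu^*}=\Exp{1_{X_\epsilon}\phi-1}$, and on $X_\epsilon$ the product $\prod_{\omega(\{x\})=1,\,x\in X_\epsilon}\phi(x)=\exp\big(\int_{X_\epsilon}\psi\,d\omega\big)$ with the normalizing constant $e^{-\int_{X_\epsilon}(\phi-1)\,d\mu}$ read off from (\ref{eq:coherent_simple function-1}) (here $\int_{X_\epsilon}\psi\,d\omega$ is a genuine a.s.-finite sum because $\omega$ places only finitely many points in the finite-measure set $X_\epsilon$). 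Then I would show $\frac{d\nu_\epsilon^*}{d\mu^*}\to\frac{d\nu^*}{d\mu^*}$ in $\mu^*$-probability as $\epsilon\to0$. This follows because $1_{X_\epsilon}\phi-1\to\phi-1$ in the relevant sense: $\sqrt{1_{X_\epsilon}\phi+1_{X_\epsilon^c}}-1=1_{X_\epsilon}(\sqrt\phi-1)\to\sqrt\phi-1$ in $L^2(\mu)$ by dominated convergence (dominant $|\sqrt\phi-1|$, and $\mu(X_\epsilon^c\cap\{\phi\neq1\})\to$ its limit with $X_\epsilon\uparrow\{\phi\neq1\}$), hence $\Exp{1_{X_\epsilon}\phi-1}\to\Exp{\phi-1}$ in $L^2(\mu^*)$ by continuity of the coherent-vector map, which gives convergence in probability and hence of the logarithms. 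This proves (1).

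For (2) I would invoke the theory of stochastic integrals against Poisson processes recalled in the Appendix: $\int_X\psi\,d\omega$ (defined as the probability limit of the truncated integrals $\int_{X_\epsilon}\psi\,d\omega$, compensated) is exactly an infinitely divisible random variable whose L\'evy measure is $\mu\circ\psi^{-1}$ restricted to $\mathbb R\setminus\{0\}$ — the L\'evy--Khintchine / Campbell-type description of linear functionals of a Poisson process. The centering in (\ref{eq:main}) is the standard L\'evy-measure compensation, so $\log\frac{d\nu^*}{d\mu^*}$ has the claimed law. For (3), I would compute the expectation: since $\mathbb{E}_{\mu^*}\big[\frac{d\nu^*}{d\mu^*}\big]=1$ (it is a probability density) and $\frac{d\nu^*}{d\mu^*}$ is a coherent vector with $\mathbb{E}_{\mu^*}\Exp{\cdot}=1$, one uses the exponential-moment formula for Poisson stochastic integrals: $\mathbb{E}_{\mu^*}\big[e^{\int_X\psi\,d\omega}\big]=e^{\int_X(e^\psi-1)\,d\mu}=e^{\int_X(\phi-1)\,d\mu}$ whenever this is finite, and differentiating the exponent (or directly using Campbell's formula $\mathbb{E}_{\mu^*}[\int_X\psi\,d\omega]=\int_X\psi\,d\mu$ on the truncations and compensating) yields $\mathbb{E}_{\mu^*}[\log\frac{d\nu^*}{d\mu^*}]=\int_X\psi\,d\mu-\int_X(\phi-1)\,d\mu=-\int_X(\phi-1-\log\phi)\,d\mu$. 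Non-positivity is immediate from $\phi-1-\log\phi\ge0$ for $\phi>0$, and the finiteness criterion is the standard integrability condition for the mean of an infinitely divisible variable: the mean is finite iff $\int_{|\psi|>1}|\psi|\,d\mu<\infty$ (the part $|\psi|\le1$ is automatically controlled since there $|\phi-1-\log\phi|\le c\,\psi^2\le c\,(\sqrt\phi-1)^2\cdot(\text{bounded})$ is $\mu$-integrable).

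The main obstacle I anticipate is making the limiting procedure in (1) fully rigorous while simultaneously identifying the limiting object in (2): one must be careful that the compensated truncated integrals $\int_{X_\epsilon}\psi\,d\omega-\int_{X_\epsilon}\psi\,d\mu$ converge in probability — which requires both the ``small jumps'' part (controlled by $\int_{|\psi|\le1}\psi^2\,d\mu<\infty$, itself a consequence of $\sqrt\phi-1\in L^2$) and the ``large jumps'' part to behave — and then that this probability limit agrees with $\log\Exp{\phi-1}$ rather than merely being close to it. Matching the two centerings (the L\'evy-measure compensation $\int_{X_\epsilon}\psi\,d\mu$ in the stochastic-integral picture versus the $\int_{X_\epsilon}(\phi-1)\,d\mu$ appearing from the coherent-vector normalization) is exactly the bookkeeping that the Appendix on infinitely divisible variables is designed to handle, so I would lean on it rather than reproving it here.
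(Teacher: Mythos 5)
Your proposal is correct and follows essentially the same route as the paper: truncate to the finite-measure sets $X_\epsilon$, read off the exact exponential formula for the truncated density $\Exp{(\phi-1)1_{X_\epsilon}}$ from (\ref{eq:coherent_simple function-1}), pass to the limit by continuity of the extended coherent-vector map, and identify the resulting probability limit with the compensated stochastic integral $I_\mu(\log\phi)$ plus a finite centering constant, so that (2) and (3) follow from the Appendix. One small correction: the convergence $\Exp{(\phi-1)1_{X_\epsilon}}\to\Exp{\phi-1}$ holds in $L^1(\mu^*)$ (via the continuity of the map $\Psi$ of \S\ref{subsec:Product-formula}), not in $L^2(\mu^*)$ in general, since by Proposition~\ref{lem:square coherent vectors} the limit is square-integrable only when $\phi-1\in L^2(\mu)$; the $L^1$-convergence still yields the convergence in probability that your argument needs.
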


\begin{proof}
Given $\epsilon>0$, we let $X_\epsilon:=\{x\in X:\,|\log\phi(x)|>\epsilon\}$.
As usual, $X_\epsilon^c$ denotes the complement to $X_\epsilon$.
We first prove three auxiliary claims.

{\it Claim }A:
$\mu(X_\epsilon)<\infty$ for each $\epsilon>0$.

Indeed,
\begin{align*}
\mu(X_\epsilon) & =\mu(\{x\in X:\, \phi(x)>e^{\epsilon}\} \cup
\{x\in X:\, \phi(x)<e^{-\epsilon}\}\\
 & \le\mu(\{x\in X:\,|\sqrt{\phi(x)}-1|>\alpha\}\\
 & \le\frac{1}{\alpha^{2}}\int_{X}\left(\sqrt{\phi}-1\right)^{2}d\mu<+\infty,
\end{align*}
where $\alpha=\min\left(e^{\epsilon}-1,1-e^{-\epsilon}\right)$.

{\it Claim }B:
$\left(\log\phi\right)^{2}\wedge1\le\kappa\left(\sqrt{\phi}-1\right)^{2}$
for some constant $\kappa>0$.

This claim follows from  the standard inequality
$\log t\le t-1$ for $t>0$.

{\it Claim }C:
$\phi-1-\log\phi\cdot 1_{X_1^c}\in
L^{1}(\mu)$.

To prove this claim we first  write the function $\phi-1-\log\phi\cdot 1_{X_1^c}$
as the following sum:
$$
(\sqrt{\phi}-1)^{2}+2(\sqrt{\phi}-1-\log\sqrt{\phi})1_{X_1^c}+2(\sqrt{\phi}-1)1_{X_1}.
$$
The first term in this sum  is in $L^{1}(\mu)$ because $\sqrt\phi-1\in L^2(\mu)$.
Since
$$
0\le\sqrt{\phi}-1-\log\sqrt{\phi}\le(\sqrt{\phi}-1)^{2},
$$
it follows that the second term is in $L^1(\mu)$ too.
The Cauchy-Schwarz inequality and Claim~A yield that the third term is also integrable.
Claim~C follows.

It follows from Claim~B that the stochastic integral $I_{\mu}(\log\phi):X^*\to\Bbb R$ is well defined
(see  Appendix).
Claim~C implies  that  the real 
$$
\beta:=-\int_{X}(\phi-1-\log\phi\cdot 1_{X_1^c})\,d\mu
$$
 is well defined.
It follows that 
\begin{equation}\label{eq:stochastic__integral}
I_{\mu}(\log\phi)(\omega)+\beta=\lim_{\epsilon\to0}\bigg(\int_{X_\epsilon}\log\phi \,d\omega
-\int_{X_\epsilon}(\phi-1)\,d\mu\bigg),
\end{equation}
where the limit means the convergence in probability (see  Appendix).
Our purpose now is to identify the lefthand side of this formula as a Radon-Nikodym derivative.

It is straightforward to verify that for each subset $B\in\mathcal
 A$, we have that
$
 (\phi-1)1_{B}=((\sqrt{\phi}-1)1_{B})\bullet ((\sqrt{\phi}-1)1_{B}).
$
Since $\Exp{(\sqrt{\phi}-1)1_{X_\epsilon}}\to\Exp{\sqrt{\phi}-1}$ in $L^2(\mu^*)$,
we can apply (\ref{eq:product formula}) to obtain that
\begin{align}\label{eq:split}
\begin{split}
\Exp{(\phi-1)1_{X_\epsilon}} &=e^{-\int_{X_\epsilon}(\sqrt{\phi}-1)^{2}d\mu}\,
\Exp{(\sqrt{\phi}-1)1_{X_\epsilon}}^{2}\\
&\to e^{-\int_{X}(\sqrt{\phi}-1)^{2}d\mu}\,
\Exp{(\sqrt{\phi}-1)}^{2}\\
&=\Exp{\phi-1} 
\end{split}
\end{align}
 in $L^1(\mu^*)$ as $\epsilon\to 0$.
Since $(\sqrt{\phi}-1)1_{X_\epsilon}\in \mathcal{B}_{0}(X)$, it follows from (\ref{eq:coherent_simple function-1}) that for a.e. $\omega\in X^*$,
$$
\begin{aligned}
\Exp{(\phi-1)1_{X_\epsilon}}(\omega) & =e^{-\int_{X_\epsilon}(\sqrt{\phi}-1)^{2}d\mu
-2\int_{X_\epsilon}(\sqrt\phi-1)d\mu}
 \prod_{\{x\in X_\epsilon:\,\omega(\{x\})=1\}}\phi(x)
\\
 & =e^{\int_{X_\epsilon}\log\phi d\omega-\int_{X_\epsilon}\left(\phi-1\right)d\mu}.
\end{aligned}
$$
From this and (\ref{eq:split}) we deduce that
$$
\lim_{n\to\infty}
\left(\int_{X_\epsilon}\log\phi \,d\omega-\int_{X_\epsilon}\left(\phi-1\right)d\mu\right)
=\log \Exp{\phi-1}
$$
where the limit is in $\mu^*$-probability. This formula, (\ref{eq:stochastic__integral}) and Theorem~\ref{eq:stochastic__integral} yield that
\begin{equation}\label{eq:beta}
I_{\mu}(\log\phi)+\beta=\log\frac{d\nu^{*}}{d\mu^{*}}.
\end{equation}
Thus, (1) is proved. Moreover, $I_\mu(f)$ is infinitely divisible (see Appendix)) so (\ref{eq:beta})
 implies~(2).

Since 
$
\phi-1-\log \phi\ge0,
$
the integral $\int_{X}(\phi-1-\log \phi)\,d\mu$ is always well
defined. 
Combining this observation with Claim~C,
we obtain that
\begin{equation}\label{eq:equivalence}
\int_{X}(\phi-1-\log \phi)\,d\mu<\infty\Longleftrightarrow \int_{X_1}|\log \phi|\,d\mu<\infty.
\end{equation}
By Proposition \ref{prop: appendix ID}, the latter inequality is equivalent to the fact that $ I_\mu(\log\phi)\in
L^{1}(\mu^{*})$.
The latter, in turn, is equivalent to $\log\frac{d\nu^*}{d\mu^*}\in
L^{1}(\mu^{*})$ in view of (\ref{eq:beta}).

Firstly we consider  the case where $\log\frac{d\nu^*}{d\mu^*}\in
L^{1}(\mu^{*})$.
Then it follows from~(\ref{eq:beta}), Proposition~\ref{prop: appendix ID} and the definition of $\beta$ that  
\begin{align*}
\mathbb{E}_{\mu^*}\bigg[\log \frac{d\nu^*}{d\mu^*}\bigg]&= \mathbb{E}_{\mu^*}(I_\mu(\log\phi))+\beta\\
&= \int_{X_1}\log\phi\,d\mu+\beta\\
&=\int_X (\log\phi-\phi-1)d\mu
\end{align*}
Consider now the second case, where 
$\int_{X_1}|\log \phi|\,d\mu=+\infty$.
Then from~(\ref{eq:equivalence}) we deduce that $\int_{X}(\phi-1-\log \phi )d\mu=+\infty$. 
On the other hand, by the Jensen inequality,
$\mathbb{E}_{\mu^{*}}[-\log \frac{d\nu^*}{d\mu^*}]\ge 0$.
Hence the fact  $\log \frac{d\nu^*}{d\mu^*}\notin L^1(\mu^*)$ implies $\mathbb{E}_{\mu^{*}}[-\log \frac{d\nu^*}{d\mu^*}]=+\infty$. 
 The proof of (3) is now complete.
\end{proof}

\begin{rem}
With additional efforts, using the martingale convergence theorem, it is
possible to prove the almost sure convergence in (\ref{eq:main}) instead of the convergence
in probability.

\end{rem}

The special case where $\phi-1\in L^1(\mu)$ has been considered in \cite{Tak90}.
In this case we have the following results.

\begin{thm}\label{thm: LogT'}
	Let $\nu\in\mathcal{M}_{\mu,1}^{\circ,+}$ and set $\phi:=\frac{d\nu}{d\mu}$.
	Then
	\begin{enumerate} 
		\item $\log\phi\in L^{1}(\omega)$ for $\mu^{*}$-almost every $\omega\in X^{*}$ and
		$$
		\frac{d\nu^{*}}{d\mu^{*}}\left(\omega\right)=e^{-\int_{X}\left(\phi-1\right)d\mu}\prod_{\{x\in X:\,\omega\left(\{x\}\right)=1\}}\phi\left(x\right),
		$$
		where the infinite product converges absolutely.
		\item
	The integral $\int_{X}\log \phi \,d\mu$ is well defined and takes 
		values in the extended interval $\left[-\infty,\int_X(\phi-1)d\mu\right]$. 
		Moreover,
		\[
		\mathbb{E}_{\mu^{*}}\Big[\log\frac{d\nu^{*}}{d\mu^{*}}\Big]=-\int_X(\phi-1)\,d\mu+\int_{X}\log \phi\,d\mu.
		\]
	\end{enumerate}
\end{thm}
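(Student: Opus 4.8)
The plan is to derive Theorem~\ref{thm: LogT'} as a specialization of the already-proven Theorem~\ref{th:R-N-D}, using the extra integrability hypothesis $\phi-1\in L^1(\mu)$ to upgrade the ``limit in probability'' representation to an honest absolutely convergent product. First I would observe that $\mathcal{M}_{\mu,1}^{\circ,+}\subset\mathcal{M}_{\mu,2}^{\circ,+}$ (stated in the excerpt, since $(\sqrt x-1)^2\le|x-1|$), so Theorem~\ref{thm:absolute continuity} applies and gives $\frac{d\nu^*}{d\mu^*}=\Exp{\phi-1}$; the task is to identify this extended coherent vector as the claimed product. The key point is that $\phi-1\in L^1(\mu)$ forces $\mu(\{x:\phi(x)\neq 1\})$ to be essentially controlled: more precisely, on the set where $|\log\phi|>1$ we have $|\log\phi|\le\kappa|\phi-1|$ for a constant, so $\log\phi\cdot 1_{X_1}\in L^1(\mu)$, and combined with Claim~C from the previous proof ($\phi-1-\log\phi\cdot 1_{X_1^c}\in L^1(\mu)$) this means the two integrals in~\eqref{eq:main} can be separated and each passed to the limit $\epsilon\to 0$ individually rather than only in combination.

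Concretely, for part~(1) I would argue as follows. Write $\Exp{\phi-1}=\lim_{\epsilon\to 0}\Exp{(\phi-1)1_{X_\epsilon}}$ in $L^1(\mu^*)$ exactly as in~\eqref{eq:split}, and use the explicit formula for $\Exp{(\phi-1)1_{X_\epsilon}}(\omega)$ derived there, namely $\exp\big(\int_{X_\epsilon}\log\phi\,d\omega-\int_{X_\epsilon}(\phi-1)\,d\mu\big)$. Under the $L^1$ hypothesis, $\int_{X_\epsilon}(\phi-1)\,d\mu\to\int_X(\phi-1)\,d\mu$ by dominated convergence, so $\int_{X_\epsilon}\log\phi\,d\omega$ converges in $\mu^*$-probability to a finite limit; since the integrand $\log\phi$ is $\mu$-integrable away from $\{|\log\phi|\le 1\}$ and the Poisson process has only finitely many points in each $X_\epsilon$ (Claim~A: $\mu(X_\epsilon)<\infty$), a Borel--Cantelli / summability argument shows that for $\mu^*$-a.e.\ $\omega$ the sum $\sum_{x\in\omega}\log\phi(x)$ converges \emph{absolutely}, i.e.\ $\log\phi\in L^1(\omega)$, and equals $\int_X\log\phi\,d\omega$. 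Exponentiating gives $\frac{d\nu^*}{d\mu^*}(\omega)=e^{-\int_X(\phi-1)\,d\mu}\prod_{\{x:\omega(\{x\})=1\}}\phi(x)$ with absolute convergence of the product. The cleanest route to the almost-sure convergence (as opposed to convergence in probability) is the remark already made after Theorem~\ref{th:R-N-D}: the objects $\Exp{(\phi-1)1_{X_\epsilon}}$ form a martingale in $\epsilon$, so the martingale convergence theorem upgrades the mode of convergence; alternatively one invokes directly that $\mathbb{E}_{\mu^*}[\sum_{x\in\omega}|\log\phi(x)|1_{X_1}(x)]=\int_{X_1}|\log\phi|\,d\mu<\infty$.

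For part~(2), the integral $\int_X\log\phi\,d\mu$ is well defined in $[-\infty,+\infty]$ because $\phi-1-\log\phi\ge 0$ always (Claim~B's underlying inequality $\log t\le t-1$), hence $\log\phi\le\phi-1$ pointwise and $\int_X\log\phi\,d\mu\le\int_X(\phi-1)\,d\mu<+\infty$; so the integral is a well-defined element of $[-\infty,\int_X(\phi-1)\,d\mu]$. For the expectation formula I would split into the two cases already used in the proof of Theorem~\ref{th:R-N-D}(3). If $\int_{X_1}|\log\phi|\,d\mu<\infty$ then $\log\frac{d\nu^*}{d\mu^*}\in L^1(\mu^*)$ and taking $\mathbb{E}_{\mu^*}$ of $\log\frac{d\nu^*}{d\mu^*}=-\int_X(\phi-1)\,d\mu+\sum_{x\in\omega}\log\phi(x)$ and using $\mathbb{E}_{\mu^*}[\sum_{x\in\omega}g(x)]=\int_X g\,d\mu$ for $g\in L^1(\mu)$ gives the formula; if $\int_{X_1}|\log\phi|\,d\mu=+\infty$ then both sides equal $-\infty$ (the left by Jensen, exactly as in the previous proof, and the right because $\int_X\log\phi\,d\mu=-\infty$ here since the positive part $\int_X\log\phi\cdot 1_{\{\phi>1\}}\,d\mu\le\int_X(\phi-1)_+\,d\mu<\infty$). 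The main obstacle is the bookkeeping in establishing \emph{absolute} convergence of the infinite product $\mu^*$-a.s.: one has to control separately the ``large jumps'' (on $X_1$, where $\log\phi$ is genuinely $\mu$-integrable so Campbell's theorem / absolute summability is immediate) and the ``small jumps'' (on $X_1^c$, where only $\sum(\log\phi(x))^2$-type control is available in general, but here $\phi-1-\log\phi\cdot 1_{X_1^c}\in L^1(\mu)$ and $\phi-1\in L^1(\mu)$ together pin down $\int_{X_1^c}|\log\phi|\,d\mu<\infty$ as well, via $|\log\phi|\le|\phi-1|+|\phi-1-\log\phi|$ on that set). Once that integrability is in hand the rest is routine Poisson-process computation.
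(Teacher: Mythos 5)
There is a genuine gap in your treatment of the ``large jumps''. You assert that on $X_1=\{x\in X:\,|\log\phi(x)|>1\}$ one has $|\log\phi|\le\kappa\,|\phi-1|$ for some constant $\kappa$, and conclude that $\log\phi\cdot 1_{X_1}\in L^1(\mu)$. That inequality fails wherever $\phi$ is close to $0$: there $|\phi-1|\le 1$ while $|\log\phi|$ is unbounded. In fact the conclusion itself is false under the hypothesis $\nu\in\mathcal{M}_{\mu,1}^{\circ,+}$: take disjoint sets $E_n$ with $\mu(E_n)=2^{-n}$, put $\phi=e^{-2^n}$ on $E_n$ and $\phi=1$ elsewhere; then $\int_X|\phi-1|\,d\mu\le\sum_n 2^{-n}<\infty$ but $\int_{X_1}|\log\phi|\,d\mu=\sum_n 2^n\cdot 2^{-n}=\infty$. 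This is precisely the delicate case that part (2) is designed to cover (it allows $\int_X\log\phi\,d\mu=-\infty$), and you implicitly concede its existence when you later split part (2) according to whether $\int_{X_1}|\log\phi|\,d\mu$ is finite --- which contradicts your earlier claim. As a result, your justification of the absolute convergence of $\sum_{x\in\omega}\log\phi(x)$ (``on $X_1$ \dots Campbell's theorem / absolute summability is immediate'') breaks down exactly where it matters. A secondary soft spot: knowing that $\omega$ has finitely many points in each $X_\epsilon$ does not by itself give absolute convergence of the full sum, since the number of points in $\bigcup_{\epsilon>0}X_\epsilon$ is typically infinite; the split has to be made at $X_1$ versus $X_1^c$, not at the sets $X_\epsilon$.

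The repair is short, and with it your proof essentially coincides with the paper's. On $X_1$ no integrability of $\log\phi$ is needed: by Claim~A, $\mu(X_1)<\infty$, so a $\mu^*$-typical $\omega$ has only finitely many atoms in $X_1$, each contributing a finite term (since $\phi>0$ a.e.), whence $\int_{X_1}|\log\phi|\,d\omega<\infty$ trivially. On $X_1^c$ your argument is correct: Claim~C together with $\phi-1\in L^1(\mu)$ gives $|\log\phi|\cdot 1_{X_1^c}\in L^1(\mu)$, and Campbell's formula yields $\mathbb{E}_{\mu^*}\bigl[\int_{X_1^c}|\log\phi|\,d\omega\bigr]<\infty$. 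The paper achieves the same end in one stroke by applying the stochastic integral to $|\log\phi|$ (well defined by Claim~B) and using monotone convergence together with the integrability of $|\log\phi|1_{X_1^c}$. Your part (2) --- well-definedness of $\int_X\log\phi\,d\mu$ from $\log\phi\le\phi-1$, the case split, and the $-\infty=-\infty$ identification --- is correct as stated.
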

\begin{proof}
In the course of  proof  we will  use the notation $X_\epsilon$  and $X_1^c$ and refer to
Claims~B and C 
from the proof of Theorem~\ref{prop:LogT'}.

If $\nu\in\mathcal{M}_{\mu,1}^{\circ,+}$ then
$\phi-1\in L^{1}(\mu)$ and hence by Claim C, 
$(\log\phi)1_{X_1^c}\in L^{1}(\mu)$.
By Claim B, $|\log\phi|^21_{X_1^c}\in L^1(\mu)$, thus the stochastic integral $I_\mu(|\log\phi|)$ is well defined as 
\begin{equation}\label{eq:stoc}
I_{\mu}(|\log\phi|)=\lim_{\epsilon\to0}\bigg(\int_{X_{\epsilon}}|\log\phi|\,d\omega-\int_{X_{\epsilon}}|\log\phi|1_{X_1^c}\,d\mu\bigg),
\end{equation}
where the limit is in probability. 
In particular, $I_{\mu}(|\log\phi|)$ is finite $\mu^{*}$-almost surely.
By the monotone convergence theorem and the integrability of  $|\log\phi|1_{X_1^c}$,
$$
\lim_{\epsilon\to 0}\int_{X_{\epsilon}}|\log\phi|1_{X_1^c}\,d\mu=
\int_{X}|\log\phi|1_{X_1^c}\,d\mu<\infty.
$$
It follows from this and (\ref{eq:stoc}) that
there exists $\lim_{\epsilon\to0}\int_{X_{\epsilon}}|\log\phi|\,d\omega<\infty$ for a.e. $\omega\in X^*$.
By the monotone convergence theorem,  
$$
\lim_{\epsilon\to0}\int_{X_{\epsilon}}|\log\phi|\,d\omega=\int_{X}|\log\phi|\,d\omega.
$$
Thus,  $\log\phi\in L^{1}(\omega)$ for $\mu^{*}$-a.e.
This fact combined with the integrability of $\phi-1$ imply the almost everywhere convergence in
(\ref{eq:main}).
Passing to this limit, we obtain now that
\[
\log\frac{d\nu^{*}}{d\mu^{*}}(\omega)=\int_{X}\log\phi \,d\omega-\int_{X}(\phi-1)\,d\mu
\]
for a.e. $\omega$.
This proves  (1).
The second claim follows from Theorem \ref{prop:LogT'}(3).
\end{proof}

\begin{rem}
\begin{itemize}
\item
Firstly, we note that the formula for the Radon-Niko\-dym derivative in Theorem \ref{thm: LogT'} is well known and follows immediately from Theorem~\ref{prop:LogT'}(1) and the fact that $\phi-1\in L^1(\mu)$.
However the absolute convergence of the infinite product (or the fact that $\log \phi\in L^1(\omega)$ for a.e. $\omega$) requires an additional reasoning. 
\item
Secondly, it is worth mentioning that the combination of $\nu\in\mathcal{M}_{\mu,2}^{\circ,+}$ and $\log\phi\in L^{1}(\mu)$
implies that $\nu$ is in $\mathcal{M}_{\mu,1}^{\circ,+}$ and $\log\frac{d\nu^{*}}{d\mu^{*}}\in L^1(\mu^*)$.
Indeed, from $\nu\in\mathcal{M}_{\mu,2}^{\circ,+}$ we get that $\phi-1-(\log\phi)1_{X_{1}^{c}}\in L^{1}(\mu)$ (see Claim~C).
On the other hand, the fact $\log\phi\in L^{1}(\mu)$ implies that $(\log\phi)1_{X_{1}^{c}}\in L^{1}(\mu)$.
Therefore $\phi-1\in L^{1}(\mu)$, i.e. $\nu\in\mathcal{M}_{\mu,1}^{\circ,+}$.
The integrability of $\log\frac{d\nu^{*}}{d\mu^{*}}$ follows now from Theorem~\ref{thm: LogT'}(2).
\end{itemize}
\end{rem}

\section{Poisson suspensions of nonsingular transformations and related Koopman representations}\label{sec: NS of Poisson and Koopman}

\subsection{The unitary Koopman representation of the group  of nonsingular transformations}
Let $(Y,\mathcal{B},\rho)$ be a $\sigma$-finite Lebesgue space.
Denote by $\mathcal{U}(L^{2}(\rho))$ the group of unitary operators in $L^2(\mu)$
and by $\mathcal{U}_{\Bbb R}(L^{2}(\rho))$ the  subgroup of unitaries that preserve invariant
the $\Bbb R$-subspace $L^2_\Bbb R(\rho)$ of real valued functions in $L^2(\mu)$.
Let
Aut$(Y,\mathcal{B},\rho)$ stand for the group of all nonsingular transformations of $(Y,\mathcal{B},\rho)$.
For each 
   $S\in\text{Aut}(Y,\mathcal{B},\rho)$, we set
 $S^{\prime}:=\frac{d\rho\circ S^{-1}}{d\rho}$
and 
define a unitary operator $U_S\in \mathcal{U}_\Bbb R(L^{2}(\rho))$ by setting $U_Sf:=\sqrt{S'}f\circ  S^{-1}$.
Then
 the mapping 
 $$
 U:\text{Aut}(Y,\mathcal{B},\rho)\ni S\mapsto U_S\in \mathcal{U}_\Bbb R(L^{2}(\rho))
 $$
 is a unitary one-to-one representation of Aut$(Y,\mathcal{B},\rho)$ in $L^2(Y,\rho)$.
 It is called {\it the unitary Koopman representation} of Aut$(Y,\mathcal{B},\rho)$.
 
 Let  $C_0:=\{f\in L^2(\rho):\, f\ge 0\}$.
Then $C_0$ is a closed cone in $L^2(\rho)$.
It is well known that
 \begin{equation}\label{eq:positive unitary}
\{V\in \mathcal{U}(L^{2}(\rho)): \, VC_0=C_0\} =\{U_S:\, S\in\text{Aut}(Y,\mathcal B,\rho)\}.
 \end{equation}
 Endow $\mathcal{U}_\Bbb R(L^{2}(\rho))$ with the weak (equivalently, strong) operator topology.
We recall that {\it the weak topology} on $\text{Aut}(Y,\mathcal B,\rho)$ is the weakest topology in which $U$ is continuous.
It follows from (\ref{eq:positive unitary}) that $\text{Aut}(Y,\mathcal B,\rho)$ furnished with the weak topology is a Polish group.

\subsection{Nonsingular Poisson suspensions and related transformation groups}
Let the measure space $(X,\mathcal A,\mu)$ be as in the previous section and let $T$ be a nonsingular (invertible) transformation of this space.
Theorem~\ref{thm:absolute continuity} provides  us with an ``if and
only if'' criteria for when $T_{*}$ is non-singular and Theorems~\ref{prop:LogT'} and
Theorem~\ref{thm: LogT'} give an explicit
pointwise description of the  Radon-Nikodym derivative of $T_*$ as follows.

\begin{cor}\label{cor:R-N}
$T_{*}$ is a nonsingular automorphism of $\left(X^{*},\mathcal{A}^{*},\mu^{*}\right)$
if and only if  $\sqrt{T^{\prime}}-1\in L^{2}(\mu)$. In this
case $\left(T_{*}\right)^{\prime}=\Exp{T^{\prime}-1}$.
Moreover,
\begin{enumerate}
\item 
We can represent $\log( T_*)'$ as the following limit in probability:
\begin{multline*}
\log( T_*)'(\omega)=\lim_{\epsilon\to0}\bigg(\int_{\{x\in X:\,|\log T'(x)|>\epsilon\}}\log T'\,d\omega  \\
-\int_{\{x\in X:\,|\log T'(x)|>\epsilon\}}(T'-1)\,d\mu\bigg).
\end{multline*}
\item The function $X^*\ni\omega\mapsto \log (T_*)'(\omega)\in\Bbb R$ is an infinitely divisible random
variable whose L\'evy measure is the (restriction to $\mathbb{R}\setminus\{0\}$ of) the image of $\mu$ by $\log T'$.
\item If $T'-1\in L^1(\mu)$, then  $\log T'\in L^{1}(\omega)$ for $\mu^{*}$-almost every $\omega\in X^{*}$ and
$$
(T_*)'(\omega)=e^{-\int_{X}(T'-1)\,d\mu}\prod_{\{x\in X:\,\omega(\{x\})=1\}}T'(x),
$$
where the infinite product converges absolutely.
\end{enumerate}
\end{cor}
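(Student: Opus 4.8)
The plan is to deduce the entire statement by applying the results of Section~\ref{sec: AC of Poisson} to the measure $\nu:=\mu\circ T^{-1}$. Since $T$ is a nonsingular invertible transformation of the $\sigma$-finite non-atomic space $(X,\mathcal A,\mu)$, the measure $\nu$ is again $\sigma$-finite, non-atomic and equivalent to $\mu$, with $\frac{d\nu}{d\mu}=T'$. Moreover, the pushforward identity recalled in Section~\ref{sec: Poisson suspensions} (applied to $\varphi=T$) gives $\mu^*\circ T_*^{-1}=\nu^*$, and, since $(T_*)^{-1}=(T^{-1})_*$, it also gives $\mu^*\circ T_*=(\mu\circ T)^*$.

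First I would apply Theorem~\ref{thm:absolute continuity} to this $\nu$. As $\nu\sim\mu$, membership of $\nu$ in $\mathcal M_{\mu,2}^+$ is equivalent to $\nu\in\mathcal M_{\mu,2}^{\circ,+}$, i.e.\ to $\sqrt{T'}-1\in L^2(\mu)$; the theorem then says that either this holds, in which case $\mu^*\circ T_*^{-1}=\nu^*\ll\mu^*$ with $\frac{d\mu^*\circ T_*^{-1}}{d\mu^*}=\Exp{T'-1}$, or it fails, in which case $\mu^*\circ T_*^{-1}\perp\mu^*$. This already yields one direction of the equivalence and the Radon--Nikodym formula $(T_*)'=\Exp{T'-1}$. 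To see that $\sqrt{T'}-1\in L^2(\mu)$ in fact makes $T_*$ a \emph{nonsingular automorphism}, i.e.\ $\mu^*\circ T_*^{-1}\sim\mu^*$, I would run the same argument for $T^{-1}$: it controls $\mu^*\circ T_*=(\mu\circ T)^*\ll\mu^*$ by the condition $\sqrt{(T^{-1})'}-1\in L^2(\mu)$, and the change of variables $\int g\circ T\,d\mu=\int g\,T'\,d\mu$ with $g=(1-\sqrt{T'})^2/T'$ shows $\|\sqrt{(T^{-1})'}-1\|_{L^2(\mu)}=\|\sqrt{T'}-1\|_{L^2(\mu)}$. Thus the two one-sided absolute continuities hold together, and the single displayed $L^2$-condition is exactly equivalent to the nonsingularity of $T_*$.

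Items (1)--(3) are then pure transcription. For (1) and (2) I would invoke Theorem~\ref{th:R-N-D}(1) and (2) with $\phi=T'$; since $\log\frac{d\nu^*}{d\mu^*}=\log(T_*)'$, both the limit-in-probability representation and the assertion that $\log(T_*)'$ is infinitely divisible with L\'evy measure the image of $\mu$ under $\log T'$ restricted to $\mathbb R\setminus\{0\}$ carry over verbatim. For (3), the extra hypothesis $T'-1\in L^1(\mu)$ combined with $\nu\sim\mu$ says precisely that $\nu\in\mathcal M_{\mu,1}^{\circ,+}$, so Theorem~\ref{thm: LogT'}(1) applies and gives $\log T'\in L^1(\omega)$ for $\mu^*$-a.e.\ $\omega$ together with the absolutely convergent product formula for $(T_*)'$.

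The whole argument is essentially bookkeeping, so there is no serious obstacle. The one point that is not purely formal is the upgrade from ``$\mu^*\circ T_*^{-1}\ll\mu^*$'' to ``$T_*$ is a nonsingular automorphism'': this two-sided measure-class statement really does need an additional input, namely the invariance of $\|\sqrt{T'}-1\|_{L^2(\mu)}$ under $T\mapsto T^{-1}$ noted above, since in general an invertible $T$ can map a $\mu$-null set onto a set of positive $\mu$-measure. Everything else follows by substituting $\nu=\mu\circ T^{-1}$ and $\phi=T'$ into Theorems~\ref{thm:absolute continuity}, \ref{th:R-N-D} and~\ref{thm: LogT'}.
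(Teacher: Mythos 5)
Your proposal is correct and follows exactly the route the paper intends: the corollary is presented there as an immediate specialization of Theorems~\ref{thm:absolute continuity}, \ref{th:R-N-D} and \ref{thm: LogT'} to $\nu=\mu\circ T^{-1}$, $\phi=T'$. Your extra check that $\|\sqrt{(T^{-1})'}-1\|_{2}=\|\sqrt{T'}-1\|_{2}$ (which the paper leaves implicit, via Remark~\ref{rem: measures}(\ref{rem:equivalence}) and the group property of $\text{Aut}_2$) is a correct and worthwhile clarification of the two-sided nonsingularity.
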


Our purpose in this paper is to study nonsingular Poisson suspensions.
Therefore in view of Corollary~\ref{cor:R-N} we introduce some special subgroups of nonsingular transformations that are related naturally  to these suspensions.

\begin{defn}
We set
\begin{flalign*}
\text{Aut}_{2}(X,\mathcal{A},\mu) & :=\left\{ T\in\text{Aut}(X,\mathcal{A},\mu),\sqrt{T^{\prime}}-1\in L^{2}\left(\mu\right)\right\}, \\
\text{Aut}_{1}(X,\mathcal{A},\mu) & :=\left\{ T\in\text{Aut}(X,\mathcal{A},\mu),T^{\prime}-1\in L^{1}\left(\mu\right)\right\}\quad\text{and} \\
\text{Aut}_{\mathcal{P}}(X^{*},\mathcal{A}^{*},\mu^{*}) & :=\{ T_{*}\in\text{Aut}(X^{*},\mathcal{A}^{*},\mu^{*}),T\in\text{Aut}_{2}(X,\mathcal{A},\mu)\}. 
\end{flalign*}
\end{defn}

Of course, Aut$_{1}(X,\mathcal{A},\mu)\subset\text{Aut}_{2}(X,\mathcal{A},\mu)$.
By Remark~\ref{rem: measures}(\ref{rem:equivalence}),  the two objects are
subgroups of $\text{Aut}(X,\mathcal{A},\mu)$. 
Since the map $T\mapsto T_*$ is a homomorphism from $\text{Aut}_{2}(X,\mathcal{A},\mu) $
to $\text{Aut}(X^{*},\mathcal{A}^{*},\mu^{*})$, the set
 $\text{Aut}_{\mathcal{P}}(X^{*},\mathcal{A}^{*},\mu^{*})$
is a subgroup of $\text{Aut}(X^{*},\mathcal{A}^{*},\mu^{*})$.

\begin{defn} The map $\text{Aut}_{2}(X,\mathcal{A},\mu)\ni T\mapsto T^*
\in\text{Aut}(X^{*},\mathcal{A}^{*},\mu^{*})$ will be called
{\it the Poisson homomorphism}.
\end{defn}

In the next three subsections we study Aut$_{2}(X,\mathcal{A},\mu)$, $\text{Aut}_{1}(X,\mathcal{A},\mu)$ and $\text{Aut}_{\mathcal{P}}(X^{*},\mathcal{A}^{*},\mu^{*})$ respectively
 in more detail.

\subsection{Polish group ${\rm Aut}_{2}(X,\mathcal{A},\mu)$ and the associated affine Koopman representation}
 We denote by Aff$_\Bbb R(L^2(\mu))$ the subgroup of  invertible affine operators in  $L^2(\mu)$ that preserve invariant the $\Bbb R$-subspace $L^2_\Bbb R(\mu)$.
Then Aff$_\Bbb R(L^2(\mu)):=L^2_\Bbb R(\mu)\rtimes\mathcal{U}_\Bbb R(L^{2}(\mu))$.
We recall that an operator $A=(f,V)\in\text{Aff}_\Bbb R(L^2(\mu))$ acts on $L^{2}(\mu)$ by the formula
 $Ah:=f+Vh$.
 One can verify that the multiplication law in  Aff$_\Bbb R(L^2(\mu))$ is given by:
 $$
 (f,V)(f',V'):=(f+Vf', VV').
 $$
Aff$_\Bbb R(L^2(\mu))$ is a Polish group when  endowed with the product of the  norm topology on $L^2(\mu)$ and the weak operator topology on 
$\mathcal{U}_\Bbb R(L^{2}(\mu))$.
 We now let
$$
 C_{-1}:=\{f\in L^2(\mu):\, f\ge -1\}.
 $$
 Then $C_{-1}$ is a closed semispace in $L^2_\Bbb R(\mu)$.
 We now establish an ``affine'' analogue of (\ref{eq:positive unitary}).
 
 \begin{thm}\label{th:characterization Aut_2}
  $$
\{A\in\text{{\rm Aff}}_\Bbb R(L^{2}(\mu)): \, AC_{-1}=C_{-1}\} =\{(\sqrt{ S'}-1, U_S):\, S\in\text{{\rm Aut}}_2(X,\mathcal A,\mu)\}.
 $$
 \end{thm}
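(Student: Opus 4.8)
The plan is to prove the two inclusions separately. The inclusion ``$\supseteq$'' is a short computation: given $S\in\mathrm{Aut}_2(X,\mathcal A,\mu)$, the pair $A:=(\sqrt{S'}-1,U_S)$ belongs to $\mathrm{Aff}_{\mathbb R}(L^2(\mu))$ precisely because $\sqrt{S'}-1\in L^2_{\mathbb R}(\mu)$ by the definition of $\mathrm{Aut}_2$, and \eqref{eq:un-aff} gives $Af=\sqrt{S'}\,(1+f\circ S^{-1})-1$. Since $S'>0$ $\mu$-a.e.\ and $S$ is nonsingular, $Af\ge-1$ if and only if $f\ge-1$; as $A$ is invertible, this equivalence yields $AC_{-1}=C_{-1}$.

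For the converse, fix $A=(g,V)\in\mathrm{Aff}_{\mathbb R}(L^2(\mu))$ with $AC_{-1}=C_{-1}$. Since $A$ is a bijection of $L^2_{\mathbb R}(\mu)$, the hypothesis is equivalent to: $f\in C_{-1}\Leftrightarrow Af\in C_{-1}$ for every $f\in L^2_{\mathbb R}(\mu)$. I would first recover the linear part $V$. The cone $C_0=\{h\ge0\}$ is the recession cone of $C_{-1}$, so one expects $VC_0=C_0$; to see this directly, note that for $h\in C_0$ and a set $F$ of finite measure the functions $-1_F+th$ lie in $C_{-1}$ for every $t\ge0$, hence $V(-1_F)+tVh+g\ge-1$ $\mu$-a.e.\ for all rational $t\ge0$, and letting $t\to\infty$ forces $Vh\ge0$. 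The same reasoning applied to $A^{-1}=(-V^{-1}g,V^{-1})$ gives $V^{-1}C_0\subseteq C_0$, whence $VC_0=C_0$, and \eqref{eq:positive unitary} provides $S\in\mathrm{Aut}(X,\mathcal A,\mu)$ with $V=U_S$.

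It then remains to identify $g$. Substituting $V=U_S$ into the equivalence, dividing the inequality $\sqrt{S'}\,(f\circ S^{-1})+g\ge-1$ by $\sqrt{S'}>0$, and composing with $S$ (legitimate since $S$ is nonsingular) reduces everything to: $f\ge-1\Leftrightarrow f\ge\psi$ for all $f\in L^2_{\mathbb R}(\mu)$, where $\psi:=\big((-1-g)/\sqrt{S'}\big)\circ S$. I would then show $\psi=-1$ $\mu$-a.e.\ by two test-function arguments on sets of finite measure: if $\mu(\{\psi>-1\})>0$, then $-1_F$ with $F\subseteq\{\psi>-1\}$ of positive finite measure lies in $C_{-1}$ but is not $\ge\psi$, contradicting ``$\Rightarrow$''; if $\mu(\{\psi<-1\})>0$, then for $M$ large the truncated function $\psi 1_F$ with $F\subseteq\{-M\le\psi<-1\}$ of positive finite measure lies in $L^2_{\mathbb R}(\mu)$, satisfies $f\ge\psi$ (using $\psi\le-1<0$ off $F$, from the first step), yet is not $\ge-1$, contradicting ``$\Leftarrow$''. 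Composing $\psi=-1$ with $S^{-1}$ gives $g=\sqrt{S'}-1$, and since $g\in L^2(\mu)$ this forces $\sqrt{S'}-1\in L^2(\mu)$, i.e.\ $S\in\mathrm{Aut}_2(X,\mathcal A,\mu)$, so $A=(\sqrt{S'}-1,U_S)$ as required.

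I expect the extraction of the translation part $g$ to be the step requiring the most care: when $\mu$ is infinite the constant function $1$ is not in $L^2(\mu)$, so one cannot simply write $C_{-1}=C_0-1$ and read off $g$ from a ``vertex''; the identification has to go through test functions supported on sets of finite $\mu$-measure as above.
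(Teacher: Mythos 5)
Your proof is correct and follows essentially the same route as the paper: the inclusion $\supseteq$ is the same direct computation, and the converse proceeds, as in the paper, by first showing $VC_0=C_0$ and invoking \eqref{eq:positive unitary} to obtain $V=U_S$, and then identifying the translation part as $\sqrt{S'}-1$ by an extremality argument on $C_{-1}$. The only differences are in execution: you get $VC_0\subset C_0$ by scaling $t\to\infty$ where the paper uses the cone identity $C_0+C_{-1}=C_{-1}$, and you pin down the translation part via test functions supported on sets of finite measure where the paper takes the lattice infimum of $C_{-1}$ and $AC_{-1}$ in the ordered space $L(\mu)$ --- your version being, if anything, a more explicit justification of that infimum computation.
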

 
 \begin{proof}
 If $A:=(\sqrt{ S'}-1, U_S)$ for some transformation $S\in\text{{\rm Aut}}_2(Y,\mathcal B,\rho)$ then
 $Ah=(h\circ S^{-1}+1)\sqrt{S'}-1\ge -1$ for each $h\in C_{-1}$.
 Hence $AC_{-1}\subset C_{-1}$.
 The same is true if we take $S^{-1}$ in place of $S$. 
Therefore we obtain that $A^{-1}C_{-1}\subset C_{-1}$.
Hence $AC_{-1}=C_{-1}$, as desired.
 
  Conversely, let $A=(f, V)\in \text{{\rm Aff}}_\Bbb R(L^{2}(\mu))$ and $AC_{-1}=C_{-1}$.
 The following properties are verified straightforwardly:
 \begin{itemize}
 \item $C_0+C_{-1}=C_{-1}$.
 \item If $a+C_{-1}\subset C_{-1}$ for some $a\in L^2(\mu)$ then $a\in C_0$.
 \end{itemize}
 Then $A(C_0+C_{-1})=C_{-1}$.
 On the other hand, 
 $$
 A(C_0+C_{-1})= AC_0 + AC_{-1} -A0=VC_0+C_{-1}.
 $$
 Therefore $VC_0 +C_{-1}=C_{-1}$.
 Hence $VC_0\subset C_0$.
 Since $A^{-1}C_{-1}=C_{-1}$, a similar reasoning yields that  $V^{-1}C_0\subset C_0$.
 Therefore $VC_0=C_0$.
 In view of~(\ref{eq:positive unitary}), there is $S\in \text{Aut}(X,\mathcal A,\mu)$ such that
 $V=U_S$.
 Hence 
 \begin{equation}\label{eq: C}
 VC_{-1}=\{h\circ S\sqrt{S'}+f|\, h\in C_{-1}\}=C_{-1}.
 \end{equation}
 Let $L(\mu)$ stand for the space of all measurable real valued functions on $X$.
 Endowed with the natural order, $L(\mu)$ is an ordered vector space.
 Considering the semispace $C_{-1}$ as a subset of $L(\mu)$, we deduce from (\ref{eq: C}) that
 $$
L(\mu)\ni -1= \inf C_{-1}=\inf VC_{-1}=-\sqrt{S'}+f\in L(\mu).
 $$
 Thus, $\sqrt{S'}-1=f\in L^2_\Bbb R(\mu)$.
  \end{proof}
 
 We note that Theorem~\ref{th:characterization Aut_2} provides an alternative characterization of Aut$_2(X,\mathcal A,\mu)$.
 This characterization   is not  related straightforwardly to Poisson suspensions.
 Moreover, Theorem~\ref{th:characterization Aut_2} determines a one-to-one representation of Aut$_2(X,\mathcal A,\mu)$ in Aff$_\Bbb R(L^2(\mu))$.

 \begin{defn}\label{def:affine Koopman} We call the homomorphism
 $$
 A^{(2)}: \text{{\rm Aut}}_2(X,\mathcal A,\mu)\ni S\mapsto A^{(2)}_S:=(\sqrt{S'}-1,U_S)\in
  \text{Aff}_\Bbb R(L^2(\mu))
  $$
  {\it the affine Koopman representation} of Aut$_2(X,\mathcal A,\mu)$.
  We call the weakest topology on Aut$_2(X,\mathcal A,\mu)$ in which the affine Koopman representation is continuous {\it the $d_2$-topology}.
 \end{defn}

Thus, a sequence $(T_n)_{n=1}^\infty$ of transformations $T_n\in\text{{\rm Aut}}_2(X,\mathcal A,\mu)$ converges in $d_2$ to a transformation $T\in \text{{\rm Aut}}_2(X,\mathcal A,\mu)$ as $n\to \infty$
if and only if  $T_n\to T$ weakly and $\|\sqrt{T_n'}-\sqrt{T'}\|_2\to 0$ as $n\to\infty$.
It follows from Theorem~\ref{th:characterization Aut_2} that the image of  Aut$_2(X,\mathcal A,\mu)$
under $A^{(2)}$ is closed in $\text{Aff}(L^2(\mu))$.
Hence
 Aut$_2(X,\mathcal A,\mu)$ endowed with the $d_2$-topology is a Polish group.
 We state the next proposition without proof.
 It follows easily from~Remark~\ref{rem: measures}(\ref{rem:equivalence}).

\begin{prop} Let $\nu\in\mathcal M_{\mu,2}^{\circ,+}$.
Then ${\rm Aut}_2(X,\mathcal A,\nu)=\text{{\rm Aut}}_2(X,\mathcal A,\mu)$  as topological groups furnished with the corresponding $d_2$-topologies.
\end{prop}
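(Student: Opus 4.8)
The plan is to reduce the whole statement to a single pointwise change-of-density identity, so that one computation yields both the equality of the underlying sets and the equality of the topologies. Write $\psi:=\frac{d\nu}{d\mu}$. Then $\psi>0$ $\mu$-a.e., $\sqrt\psi-1\in L^2(\mu)$ by hypothesis, and $\mathrm{Aut}(X,\mathcal A,\mu)=\mathrm{Aut}(X,\mathcal A,\nu)$ as abstract groups because $\mu\sim\nu$. For any $S$ in this common group the chain rule for Radon--Nikodym derivatives gives $\frac{d\nu\circ S^{-1}}{d\nu}=(\psi\circ S^{-1})\cdot\frac{d\mu\circ S^{-1}}{d\mu}\cdot\psi^{-1}$ $\mu$-a.e., and a short manipulation with square roots, using $U_Sg=\sqrt{S'}\,(g\circ S^{-1})$ with $S':=\frac{d\mu\circ S^{-1}}{d\mu}$, turns this into the pointwise identity
\begin{equation}\label{eq:cv-identity}
\Bigl(\sqrt{\tfrac{d\nu\circ S^{-1}}{d\nu}}-1\Bigr)\sqrt\psi=\bigl(\sqrt{S'}-1\bigr)+U_S(\sqrt\psi-1)-(\sqrt\psi-1),
\end{equation}
in which $U_S(\sqrt\psi-1)$ is a bona fide element of $L^2(\mu)$ since $\sqrt\psi-1\in L^2(\mu)$ and $U_S$ is unitary on $L^2(\mu)$. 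No integrability of either side is needed to establish \eqref{eq:cv-identity}: it is just an equality of measurable functions.

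Granting \eqref{eq:cv-identity}, the set equality is immediate (it also follows from Remark~\ref{rem: measures}(\ref{rem:equivalence})). If $S\in\mathrm{Aut}_2(X,\mathcal A,\mu)$ then $\sqrt{S'}-1\in L^2(\mu)$, so the right-hand side of \eqref{eq:cv-identity} is a sum of three elements of $L^2(\mu)$, hence its left-hand side lies in $L^2(\mu)$; using the change of variables $\|h\|_{L^2(\nu)}=\|h\sqrt\psi\|_{L^2(\mu)}$ this says exactly that $\sqrt{\tfrac{d\nu\circ S^{-1}}{d\nu}}-1\in L^2(\nu)$, i.e.\ $S\in\mathrm{Aut}_2(X,\mathcal A,\nu)$. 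Solving \eqref{eq:cv-identity} for $\sqrt{S'}-1$ and arguing the same way gives the reverse inclusion. Hence the two groups coincide as sets.

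For the topologies, recall that $d_2$ (for $\mu$) is, by Definition~\ref{def:affine Koopman}, the weakest topology on $\mathrm{Aut}_2(X,\mathcal A,\mu)$ making both $S\mapsto\sqrt{S'}-1\in L^2(\mu)$ norm-continuous and $S\mapsto U_S$ strong-operator-continuous; in particular it is finer than the weak topology. Let $J\colon L^2(\mu)\to L^2(\nu)$ be the isometry $Jf:=f/\sqrt\psi$; a direct check shows it intertwines the Koopman representations, $U^{\nu}_S=J\,U^{\mu}_S\,J^{-1}$. Applying $J$ to \eqref{eq:cv-identity} writes $S\mapsto\sqrt{\tfrac{d\nu\circ S^{-1}}{d\nu}}-1\in L^2(\nu)$ as $J$ composed with $S\mapsto(\sqrt{S'}-1)+U^{\mu}_S(\sqrt\psi-1)-(\sqrt\psi-1)$, and the latter is $d_2^{\mu}$-continuous because $S\mapsto\sqrt{S'}-1$ is, because $S\mapsto U^{\mu}_S$ is strong-operator-continuous and $\sqrt\psi-1$ is a fixed vector, and because the last summand is constant. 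Likewise $S\mapsto U^{\nu}_S=J U^{\mu}_S J^{-1}$ is $d_2^{\mu}$-continuous. Thus both maps whose joint initial topology defines $d_2^{\nu}$ are $d_2^{\mu}$-continuous, so $d_2^{\nu}\subseteq d_2^{\mu}$; by the symmetry already used, $d_2^{\mu}\subseteq d_2^{\nu}$ as well. (Sequentially: for $T_n\to T$ in the weak topology one reads off from \eqref{eq:cv-identity} applied to $T_n$ and $T$ that $\|\sqrt{\tfrac{d\nu\circ T_n^{-1}}{d\nu}}-\sqrt{\tfrac{d\nu\circ T^{-1}}{d\nu}}\|_{L^2(\nu)}$ and $\|\sqrt{(T_n)'}-\sqrt{T'}\|_{L^2(\mu)}$ differ by at most $\|(U_{T_n}-U_T)(\sqrt\psi-1)\|_{L^2(\mu)}\to0$.)

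The only genuine computation is the passage from the chain rule to \eqref{eq:cv-identity}; it is elementary algebra with square roots of Radon--Nikodym derivatives, the sole subtlety being that in the infinite-measure case the constant function $1$ is not in $L^2(\mu)$, so \eqref{eq:cv-identity} must be read as an identity of measurable functions with $L^2$-membership asserted only for the three combinations on its right-hand side. Everything else is formal.
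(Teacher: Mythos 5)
Your proof is correct, and it supplies exactly the computation the paper omits (the paper states this proposition without proof, pointing only to Remark~3.1(2), which gives the set-level identity $\mathcal M_{\nu,2}^{\circ,+}=\mathcal M_{\mu,2}^{\circ,+}$). Your cocycle identity checks out --- with $\psi=\frac{d\nu}{d\mu}$ both sides reduce to $\sqrt{S'}\sqrt{\psi\circ S^{-1}}-\sqrt\psi$ --- and since $\|h\|_{L^2(\nu)}=\|h\sqrt\psi\|_{L^2(\mu)}$, the map $J$ is a unitary intertwiner and the weak operator topology coincides with the strong one on the unitary group, both the equality of the underlying groups and the equality of the two $d_2$-topologies follow as you argue; the symmetry step is licensed because $\|\sqrt{\frac{d\mu}{d\nu}}-1\|_{L^2(\nu)}=\|\sqrt\psi-1\|_{L^2(\mu)}$, so $\mu\in\mathcal M_{\nu,2}^{\circ,+}$.
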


\subsection{Polish group ${\rm Aut}_{1}(X,\mathcal{A},\mu)$, the associated affine Koopman representation and the structure of semidirect product}
Let $\mathcal{U}(L^{1}(\mu))$ stand for the group of isometries in $L^1(\mu)$ and
let  $\mathcal{U}_\Bbb R(L^{1}(\mu))$ stand for the subgroup of isometries that preserve invariant the $\Bbb R$-subspace  $L^1_\Bbb R(\mu)$ of real valued functions in $L^1(\mu)$.
We denote  by  Aff$_\Bbb R(L^1(\mu)):=L^1_\Bbb R(\mu)\rtimes\mathcal{U}_\Bbb R(L^{1}(\mu))$ the group of invertible affine operators
 in $L^{1}(\rho)$ that preserve invariant $L^1_\Bbb R(\mu)$.
The multiplication law in Aff$_\Bbb R(L^1(\mu))$ is given by the same formula as the multiplication law
in Aff$_\Bbb R(L^2(\mu))$.
We also note that Aff$_\Bbb R(L^1(\mu))$ is a Polish group when  endowed with the product of the  norm topology on $L^1_\Bbb R(\mu)$ and the strong (not the weak!)  operator topology on 
$\mathcal{U}_\Bbb R(L^{1}(\mu))$.
 We now let
$$
 C_{-1}^{(1)}:=\{f\in L^1(\mu):\, f\ge -1\}.
 $$
 Then $C_{-1}^{(1)}$ is a closed semispace in $L^1_\Bbb R(\mu)$.
Given $S\in \text{Aut}_1(X,\mathcal A,\mu)$, we define an  isometric invertible operator $U_S^{(1)}$
on $L^1(\mu)$
by setting
$U_S^{(1)}f:=f\circ S^{-1} \cdot S'$.
Then we call the one-to-one homomorphism 
$$
U^{(1)}: \text{Aut}_1(X,\mathcal A,\mu)\ni S\mapsto U_S^{(1)}\in \mathcal U_\Bbb R(L^1(\mu))
 $$
{\it the isometric Koopman representation} of  $\text{Aut}_1(X,\mathcal A,\mu)$.
 The following theorem is an  analogue of  Theorem~\ref{th:characterization Aut_2}.

 \begin{thm}\label{th:characterization Aut_1}
  $$
\{A\in\text{{\rm Aff}}_\Bbb R(L^{1}(\mu)): \, AC^{(1)}_{-1}=C_{-1}^{(1)}\} =\{( S'-1, U_S^{(1)}):\, S\in\text{{\rm Aut}}_1(X,\mathcal A,\mu)\}.
 $$
 \end{thm}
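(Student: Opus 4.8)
The plan is to mirror the proof of Theorem~\ref{th:characterization Aut_2}, replacing $L^2(\mu)$ by $L^1(\mu)$, the square root $\sqrt{S'}$ by $S'$ throughout, and the cone description~(\ref{eq:positive unitary}) by its $L^1$ analogue
\[
\{V\in\mathcal U(L^1(\mu)):\, VC_0^{(1)}=C_0^{(1)}\}=\{U^{(1)}_S:\, S\in\text{Aut}(X,\mathcal A,\mu)\},\qquad C_0^{(1)}:=\{f\in L^1(\mu):f\ge 0\},
\]
which is the classical description of the positive surjective isometries of $L^1$ over a $\sigma$-finite measure space. I would obtain this either by citing the Banach--Lamperti classification of surjective isometries of $L^p$ for $p\ne 2$, or by the short direct argument that a positive surjective isometry of $L^1$ is disjointness-preserving, hence a weighted composition operator $f\mapsto w\cdot(f\circ S^{-1})$, whose being a surjective isometry together with positivity forces $w=S'$ for a unique $S\in\text{Aut}(X,\mathcal A,\mu)$.

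For the inclusion ``$\supseteq$'', fix $S\in\text{Aut}_1(X,\mathcal A,\mu)$ and set $A:=(S'-1,U^{(1)}_S)$; this lies in $\text{Aff}_\Bbb R(L^1(\mu))$ precisely because $S'-1\in L^1(\mu)$. For $h\in C^{(1)}_{-1}$ one computes $Ah=U^{(1)}_Sh+S'-1=(h\circ S^{-1}+1)\,S'-1\ge -1$, since $h\circ S^{-1}+1\ge 0$ and $S'>0$; hence $AC^{(1)}_{-1}\subseteq C^{(1)}_{-1}$. Since $S\mapsto(S'-1,U^{(1)}_S)$ is a homomorphism (equivalently, using the chain-rule identity $(S^{-1})'=1/(S'\circ S)$), the operator $A^{-1}$ is the one attached to $S^{-1}\in\text{Aut}_1(X,\mathcal A,\mu)$, so the same estimate gives $A^{-1}C^{(1)}_{-1}\subseteq C^{(1)}_{-1}$, whence $AC^{(1)}_{-1}=C^{(1)}_{-1}$.

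For ``$\subseteq$'', let $A=(f,V)\in\text{Aff}_\Bbb R(L^1(\mu))$ with $AC^{(1)}_{-1}=C^{(1)}_{-1}$. As in Theorem~\ref{th:characterization Aut_2}, using $C_0^{(1)}+C^{(1)}_{-1}=C^{(1)}_{-1}$ and the implication ``$a+C^{(1)}_{-1}\subseteq C^{(1)}_{-1}\Rightarrow a\in C_0^{(1)}$'' one gets $A(C_0^{(1)}+C^{(1)}_{-1})=VC_0^{(1)}+C^{(1)}_{-1}=C^{(1)}_{-1}$, hence $VC_0^{(1)}\subseteq C_0^{(1)}$; applying the same to $A^{-1}$ yields $VC_0^{(1)}=C_0^{(1)}$. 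By the $L^1$-analogue of~(\ref{eq:positive unitary}) there is $S\in\text{Aut}(X,\mathcal A,\mu)$ with $V=U^{(1)}_S$. Finally, regarding $C^{(1)}_{-1}$ inside the ordered vector space $L(\mu)$ of all real measurable functions on $X$ and using that $U^{(1)}_S$ extends to an order automorphism of $L(\mu)$ commuting with (essential) infima, we obtain
\[
-1=\inf C^{(1)}_{-1}=\inf\bigl(AC^{(1)}_{-1}\bigr)=f+U^{(1)}_S\bigl(\inf C^{(1)}_{-1}\bigr)=f-S',
\]
so $f=S'-1$. Since $f\in L^1_\Bbb R(\mu)$, this says $S'-1\in L^1(\mu)$, i.e. $S\in\text{Aut}_1(X,\mathcal A,\mu)$, and $A=(S'-1,U^{(1)}_S)$, as required.

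The argument is essentially a transcription of the $L^2$ case, so there is no deep obstacle; the only genuinely new ingredient is the $L^1$ version of~(\ref{eq:positive unitary}), and pinning that down (via Banach--Lamperti or the disjointness-preservation argument) is the one step requiring care. A secondary subtlety, already implicit in the $L^2$ proof, is that when $\mu(X)=\infty$ the constant function $-1$ does not lie in $L^1(\mu)$: accordingly, $\inf C^{(1)}_{-1}$ must be computed in the ambient space $L(\mu)$ rather than in $L^1(\mu)$, and the implication ``$a+C^{(1)}_{-1}\subseteq C^{(1)}_{-1}\Rightarrow a\ge 0$'' is justified by testing against $h=-1_E$ for sets $E$ of finite positive measure rather than by plugging in the constant $-1$.
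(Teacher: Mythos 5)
Your proof is correct and is exactly the argument the paper intends: the authors omit the proof of Theorem~\ref{th:characterization Aut_1}, stating only that it is ``very similar'' to that of Theorem~\ref{th:characterization Aut_2}, and your write-up is precisely that transcription, with the one genuinely new ingredient --- the $L^1$ analogue of~(\ref{eq:positive unitary}) via Banach--Lamperti (or the disjointness-preserving argument) --- correctly identified and supplied. The two subtleties you flag (computing $\inf C^{(1)}_{-1}$ in $L(\mu)$ rather than $L^1(\mu)$, and testing positivity against $-1_E$) are handled properly.
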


We do not provide a proof of this theorem because it is very similar to the proof of 
Theorem~\ref{th:characterization Aut_2}.

\begin{defn}\label{def:affine 1-Koopman} We call the one-to-one homomorphism
 $$
 A^{(1)}: \text{{\rm Aut}}_1(X,\mathcal A,\mu)\ni S\mapsto A_S^{(1)}:=({S'}-1,U_S^{(1)})\in
  \text{Aff}_\Bbb R(L^1(\mu))
  $$
  {\it the affine Koopman representation} of Aut$_1(X,\mathcal A,\mu)$.
  We call the weakest topology on Aut$_1(X,\mathcal A,\mu)$ in which the affine Koopman representation is continuous {\it the $d_1$-topology}.
 \end{defn}
 
 Thus, a sequence $(T_n)_{n=1}^\infty$ of transformations $T_n\in\text{{\rm Aut}}_1(X,\mathcal A,\mu)$ converges in $d_1$ to a transformation $T\in \text{{\rm Aut}}_1(X,\mathcal A,\mu)$ as $n\to \infty$
if and only if  $T_n\to T$ weakly and $\|{T_n'}-{T'}\|_1\to 0$ as $n\to\infty$.
It follows from Theorem~\ref{th:characterization Aut_1} that the image of  Aut$_1(X,\mathcal A,\mu)$
under $A^{(1)}$ is closed in $\text{Aff}_\Bbb R(L^1(\mu))$.
Hence
 Aut$_1(X,\mathcal A,\mu)$ endowed with the $d_1$-topology is a Polish group.
  We state the next proposition without proof.
 It follows easily from~Remark~\ref{rem: measures}(\ref{rem:equivalence}).
 
\begin{prop} Let $\nu\in\mathcal M_{\mu,1}^{\circ,+}$.
Then ${\rm Aut}_1(X,\mathcal A,\nu)=\text{{\rm Aut}}_1(X,\mathcal A,\mu)$  as topological groups furnished with the corresponding $d_1$-topologies.
\end{prop}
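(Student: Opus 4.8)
The plan is to prove both assertions at once --- equality of the underlying groups and equality of the topologies --- by conjugating the affine Koopman representation of $\text{Aut}_1(X,\mathcal A,\mu)$ onto that of $\text{Aut}_1(X,\mathcal A,\nu)$ by a single affine homeomorphism between the two $L^1$-spaces. Write $\phi:=\frac{d\nu}{d\mu}$; since $\nu\in\mathcal M_{\mu,1}^{\circ,+}$ we have $\phi>0$ $\mu$-a.e.\ and $\phi-1\in L^1(\mu)$, and hence also $\frac1\phi-1\in L^1(\nu)$, because $\int_X|\frac1\phi-1|\,d\nu=\int_X|1-\phi|\,d\mu<\infty$ (equivalently $\mu\in\mathcal M_{\nu,1}^{\circ,+}$, which also follows from Remark~\ref{rem: measures}(\ref{rem:equivalence})). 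First I would introduce the map $\Phi\colon L^1(\mu)\to L^1(\nu)$, $\Phi f:=\frac{1+f}{\phi}-1$; its linear part is the multiplication operator $M_{1/\phi}$ and its translation part is the fixed vector $\frac1\phi-1\in L^1_{\mathbb R}(\nu)$. Since $\int_X|f/\phi|\,d\nu=\int_X|f|\,d\mu$, the operator $M_{1/\phi}$ is an isometric isomorphism of $L^1(\mu)$ onto $L^1(\nu)$ carrying $L^1_{\mathbb R}(\mu)$ onto $L^1_{\mathbb R}(\nu)$; being an isometric isomorphism followed by translation by a fixed vector, $\Phi$ is therefore an affine isometry of $L^1(\mu)$ onto $L^1(\nu)$, it preserves real-valuedness, and --- because $\phi>0$ --- it carries the semispace $C_{-1}^{(1)}$ onto its analogue $\{g\in L^1(\nu):g\ge-1\}$ for $\nu$ (indeed $f\ge-1\iff\frac{1+f}{\phi}\ge0\iff\Phi f\ge-1$), with inverse $\Phi^{-1}g=\phi(1+g)-1$.

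Next I would conjugate by $\Phi$. The assignment $\Theta\colon A\mapsto\Phi A\Phi^{-1}$ is a group isomorphism of $\text{Aff}_{\mathbb R}(L^1(\mu))$ onto $\text{Aff}_{\mathbb R}(L^1(\nu))$; writing an element of $\text{Aff}_{\mathbb R}(L^1(\mu))$ as $(w,V)$ and $\Phi=(v,M)$ with $v=\frac1\phi-1$ and $M=M_{1/\phi}$, one computes $\Theta(w,V)=\bigl(v+Mw-MVM^{-1}v,\ MVM^{-1}\bigr)$, and from this formula it is routine --- using only that $M$ is a fixed isometric isomorphism --- to check that $\Theta$ and $\Theta^{-1}$ are continuous for the Polish topologies on $\text{Aff}_{\mathbb R}(L^1(\mu))$ and $\text{Aff}_{\mathbb R}(L^1(\nu))$ (norm on the translation part, strong operator topology on the isometric part). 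Because $\Phi$ maps $C_{-1}^{(1)}$ for $\mu$ onto its analogue for $\nu$, the homeomorphism $\Theta$ restricts to a topological group isomorphism of $\{A\in\text{Aff}_{\mathbb R}(L^1(\mu))\colon AC_{-1}^{(1)}=C_{-1}^{(1)}\}$ onto its $\nu$-counterpart; by Theorem~\ref{th:characterization Aut_1} these two sets are exactly the images of the affine Koopman representations $A^{(1)}$ attached to $\mu$ and to $\nu$ (in the notation of Definition~\ref{def:affine 1-Koopman}).

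It then remains to identify $\Theta$ on those images. The Radon--Nikodym chain rule gives $\frac{d\nu\circ T^{-1}}{d\nu}=(\phi\circ T^{-1})\,\frac{d\mu\circ T^{-1}}{d\mu}\,\frac1\phi$ for every $\mu$-nonsingular $T$, and substituting $w=\frac{d\mu\circ T^{-1}}{d\mu}-1$ and $V=U_T^{(1)}$ (the $\mu$-isometric Koopman operator of $T$) into the formula for $\Theta$ shows, after cancellation, that $\Theta\bigl(A_T^{(1)}\bigr)$ --- computed from the $\mu$-representation --- is the affine operator $g\mapsto(g\circ T^{-1})\,\frac{d\nu\circ T^{-1}}{d\nu}+\bigl(\frac{d\nu\circ T^{-1}}{d\nu}-1\bigr)$ of $L^1(\nu)$, i.e.\ precisely the element of the $\nu$-affine Koopman representation attached to $T$. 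Since $\Theta\bigl(A_T^{(1)}\bigr)$ lies in $\text{Aff}_{\mathbb R}(L^1(\nu))$, its translation part lies in $L^1(\nu)$, so $\frac{d\nu\circ T^{-1}}{d\nu}-1\in L^1(\nu)$, i.e.\ $T\in\text{Aut}_1(X,\mathcal A,\nu)$; applying the same argument to $\Theta^{-1}$ gives the reverse inclusion, so $\text{Aut}_1(X,\mathcal A,\mu)=\text{Aut}_1(X,\mathcal A,\nu)$ as groups. Finally, on this common group $G$ the identity map $(G,d_1^\mu)\to(G,d_1^\nu)$ factors as (the $\nu$-affine Koopman representation)$^{-1}\circ\,\Theta\,\circ$ (the $\mu$-affine Koopman representation), a composition of homeomorphisms onto the respective closed images (each affine Koopman representation being, by construction, a homeomorphism onto its image); hence $d_1^\mu=d_1^\nu$, and the two topological groups coincide.

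The only genuine content is producing $\Phi$ with its three properties, and the single place where the hypothesis $\nu\in\mathcal M_{\mu,1}^{\circ,+}$ (rather than merely $\nu\sim\mu$) is used is that the translation vector $\frac1\phi-1$ of $\Phi$ must belong to $L^1(\nu)$ --- which is precisely the condition $\phi-1\in L^1(\mu)$; this is the $L^1$-avatar of the subtlety the paper flags for the $L^2$/weak setting. Everything else (the isometry property of $M_{1/\phi}$, the continuity of $\Theta$, and the chain-rule computation identifying $\Theta\bigl(A_T^{(1)}\bigr)$) is a mechanical unwinding of definitions; I do not foresee any serious obstacle beyond keeping the two Radon--Nikodym cocycles straight.
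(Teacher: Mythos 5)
Your argument is correct. Note that the paper does not actually supply a proof of this proposition: it is stated ``without proof'' with the indication that it follows easily from Remark~3.1(2), i.e.\ from $\mathcal M_{\nu,1}^{\circ,+}=\mathcal M_{\mu,1}^{\circ,+}$, which (via the chain rule and transitivity of the relation ``$\in\mathcal M_{\cdot,1}^{\circ,+}$'') gives the equality of the underlying groups quickly but leaves the identification of the two $d_1$-topologies to the reader. Your route is more structural: the single affine isometry $\Phi f=\frac{1+f}{\phi}-1$ of $L^1(\mu)$ onto $L^1(\nu)$, whose translation part $\frac1\phi-1$ lies in $L^1(\nu)$ exactly because $\phi-1\in L^1(\mu)$, conjugates $\mathrm{Aff}_{\mathbb R}(L^1(\mu))$ onto $\mathrm{Aff}_{\mathbb R}(L^1(\nu))$, carries $C_{-1}^{(1)}$ onto its $\nu$-analogue, and intertwines the two affine Koopman representations of the same transformation $T$ (your chain-rule computation $\Theta(A_T^{(1)})g=(1+g)\circ T^{-1}\cdot\frac{d\nu\circ T^{-1}}{d\nu}-1$ checks out). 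This yields the group equality and the homeomorphism of the $d_1$-topologies in one stroke, and as a bonus exhibits the two representations as conjugate inside the ambient affine groups. The continuity of $\Theta(w,V)=(v+Mw-MVM^{-1}v,\,MVM^{-1})$ and of its inverse in the norm-times-strong topology is indeed routine since $M=M_{1/\phi}$ is a fixed surjective isometry. I see no gap.
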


 The following important group homomorphism 
was introduced in \cite{Ner1996CatInfGroups}:

$$
\chi:\text{Aut}_{1}(X,\mathcal{A},\mu) \ni T\mapsto  \chi(T):=  \int_{X}(T'-1)\,d\mu
\in\Bbb R.
$$
Of course, 
$\chi$ depends on $\mu$.
However, 
we now show that $\chi$ does not depend on the choice of measure within the class $\mathcal{M}_{\mu,1}^{\circ,+}$.

\begin{prop} Let $\nu\in\mathcal{M}_{\mu,1}^{\circ,+}$.
Then $\chi(T)=\int_{X}(\frac{d\nu\circ T^{-1}}{d\nu}-1)\,d\nu$ for each $T\in\text{\rm Aut}_1(X,\mathcal A,\mu)$.
\end{prop}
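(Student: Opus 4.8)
The plan is to reduce the claim to the elementary change-of-variables identity
$\int_X (g\circ T^{-1})\,T'\,d\mu=\int_X g\,d\mu$ (valid for $g\ge 0$, hence for $g\in L^1(\mu)$), where $T':=\frac{d\mu\circ T^{-1}}{d\mu}$. Fix $T\in\text{Aut}_1(X,\mathcal A,\mu)$ and put $\psi:=\frac{d\nu}{d\mu}$; since $\nu\in\mathcal M_{\mu,1}^{\circ,+}$ we have $\psi>0$ $\mu$-a.e.\ and $\psi-1\in L^1(\mu)$, and since $\nu\sim\mu$ and $\mu\circ T^{-1}\sim\mu$ also $\nu\circ T^{-1}\sim\nu$. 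First I would identify the Radon--Nikodym derivative $\frac{d\nu\circ T^{-1}}{d\nu}$: applying the change-of-variables formula to $h=\psi\cdot(1_A\circ T)$ yields, for every $A\in\mathcal A$, $(\nu\circ T^{-1})(A)=\nu(T^{-1}A)=\int_A (\psi\circ T^{-1})\,T'\,d\mu$, whence $\frac{d\nu\circ T^{-1}}{d\nu}=\frac{(\psi\circ T^{-1})\,T'}{\psi}$ and therefore $\int_X\Big(\frac{d\nu\circ T^{-1}}{d\nu}-1\Big)d\nu=\int_X\big((\psi\circ T^{-1})T'-\psi\big)\,d\mu$.

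The key step is the algebraic regrouping
$$(\psi\circ T^{-1})T'-\psi=\big(T'(\psi\circ T^{-1}-1)-(\psi-1)\big)+(T'-1),$$
which exhibits the integrand as a sum of three $\mu$-integrable functions: $\psi-1\in L^1(\mu)$ and $T'-1\in L^1(\mu)$ by hypothesis, while $T'(\psi\circ T^{-1}-1)\in L^1(\mu)$ because $\int_X T'\,|\psi\circ T^{-1}-1|\,d\mu=\int_X |\psi-1|\,d\mu<\infty$ by the change-of-variables identity. In particular this shows $(\psi\circ T^{-1})T'-\psi\in L^1(\mu)$, i.e.\ $T\in\text{Aut}_1(X,\mathcal A,\nu)$ and the integral on the right-hand side of the proposition is absolutely convergent, so the statement is meaningful. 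Integrating the regrouping term by term and using $\int_X T'(\psi\circ T^{-1}-1)\,d\mu=\int_X(\psi-1)\,d\mu$ (the change-of-variables identity with $g=\psi-1$), the first two pieces cancel and what remains is $\int_X(T'-1)\,d\mu=\chi(T)$, as desired.

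I do not expect a genuine obstacle; the only point that needs care is to perform the regrouping into $L^1(\mu)$ pieces \emph{before} integrating any term separately, because the naive split $\int_X(\psi\circ T^{-1})T'\,d\mu-\int_X\psi\,d\mu$ involves the possibly infinite quantities $\nu(X)$ and $\mu(X)$ and is meaningless in general; once the regrouping is in place the computation is routine. (One could also deduce $\text{Aut}_1(X,\mathcal A,\nu)=\text{Aut}_1(X,\mathcal A,\mu)$ abstractly from Remark~\ref{rem: measures}(\ref{rem:equivalence}), but the direct estimate above is self-contained.)
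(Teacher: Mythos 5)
Your proof is correct, and it takes a different route from the paper's. The paper argues by exhaustion: choosing finite-measure sets $A_n\nearrow X$, it observes that $\int_X(\frac{d\nu}{d\mu}-1)\,d\mu$ equals both $\lim_n(\nu(A_n)-\mu(A_n))$ and $\lim_n(\nu(T^{-1}A_n)-\mu(T^{-1}A_n))$, and then telescopes the difference into $\lim_n(\mu(T^{-1}A_n)-\mu(A_n))-\lim_n(\nu(T^{-1}A_n)-\nu(A_n))=\chi(T)-\int_X(\frac{d\nu\circ T^{-1}}{d\nu}-1)\,d\nu$. You instead compute $\frac{d\nu\circ T^{-1}}{d\nu}=\frac{(\psi\circ T^{-1})T'}{\psi}$ via the chain rule and regroup the integrand pointwise as $\bigl(T'(\psi\circ T^{-1}-1)-(\psi-1)\bigr)+(T'-1)$, each piece lying in $L^1(\mu)$ by change of variables, so that the first two cancel upon integration. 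The regrouping you flag as the delicate point plays exactly the role that the choice of matching exhausting sequences $A_n$ and $T^{-1}A_n$ plays in the paper: both devices avoid splitting off the divergent quantities $\mu(X)$ and $\nu(X)$. What your argument buys is that it proves, rather than assumes, the $\nu$-integrability of $\frac{d\nu\circ T^{-1}}{d\nu}-1$ (equivalently $T\in\text{Aut}_1(X,\mathcal A,\nu)$), a fact the paper's proof needs in its last line and imports from Remark~3.1(2), which is stated there without proof; the paper's argument is in exchange shorter and never has to write down the chain rule for Radon--Nikodym derivatives.
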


\begin{proof}
By Remark~\ref{rem: measures}, $\mu$ and $\nu$ share the same family of subsets of finite measure.
Let  $\{ A_{n}\} _{n\in\mathbb{N}}$ be an
increasing sequence of sets of finite measure such that $\bigcup_{n=1}^\infty A_n=X$.
Then
\[
b:=\int_{X}\bigg(\frac{d\nu}{d\mu}-1\bigg)d\mu=\lim_{n\to+\infty}\int_{A_n}\bigg(\frac{d\nu}{d\mu}-1\bigg)d\mu=\lim_{n\to+\infty}(\nu(A_n)-\mu(A_n)).
\]
In a similar way, $b=\lim_{n\to+\infty}(\nu(T^{-1}A_n)-\mu(T^{-1}A_n))$.
Hence 
\begin{align*}
0&=\lim_{n\to+\infty}(\nu(A_n)-\mu(A_n)-\nu(T^{-1}A_n)+\mu(T^{-1}A_n))\\
&=\lim_{n\to+\infty}(\mu(T^{-1}A_n)-\mu(A_n))-\lim_{n\to+\infty}(\nu(T^{-1}A_n)-\nu(A_n))\\
&=\chi(T)-\int_{X}\left(\frac{d\nu\circ T^{-1}}{d\nu}-1\right)d\nu.
\end{align*}
\end{proof}

\begin{thm}\label{thm: Aut1 semidirect prod}
The homomorphism $\chi$ is $d_{1}$-continuous and the quotient group $\text{{\rm Aut}}_{1}(X,\mathcal{A},\mu)/\text{{\rm Ker}}\,\chi$ is isomorphic to $\mathbb{R}$.
In other words,
 the following short sequence
of Polish groups is exact
\[
\left\{ 1\right\} \to\text{{\rm Ker\,}}\chi\to\text{{\rm Aut}}_{1}(X,\mathcal{A},\mu)\to\mathbb{R}\to\left\{ 0\right\} 
\]
Moreover, this sequence splits, i.e. there is a continuous one-to-one homomorphism
$\sigma:\mathbb{R}\to\text{{\rm Aut}}_{1}(X,\mathcal{A},\mu)$
such that $\chi\circ\sigma=id_{\mathbb{R}}$.
\end{thm}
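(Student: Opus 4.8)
The plan is to prove the three assertions in order: continuity of $\chi$, exactness of the sequence, and splitting. First I would establish $d_1$-continuity of $\chi$. Recall that $d_1$-convergence $T_n\to T$ means $T_n\to T$ weakly and $\|T_n'-T'\|_1\to 0$. Since $\chi(T)=\int_X(T'-1)\,d\mu$ and $\chi(T_n)-\chi(T)=\int_X(T_n'-T')\,d\mu$, we get $|\chi(T_n)-\chi(T)|\le\|T_n'-T'\|_1\to 0$, so $\chi$ is $d_1$-continuous (in fact $1$-Lipschitz with respect to the $L^1$-part of the metric). That $\chi$ is a group homomorphism is already asserted in the excerpt (it was introduced in \cite{Ner1996CatInfGroups}); if needed one re-derives it from the chain rule $(T_1T_2)'=(T_1'\circ T_2)\cdot T_2'$ together with the change-of-variables identity $\int_X (T_1'\circ T_2)\,T_2'\,d\mu=\int_X T_1'\,d\mu$ (valid since $T_2$ is nonsingular and $T_1'-1\in L^1$).

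Next I would prove surjectivity of $\chi:\text{Aut}_1(X,\mathcal A,\mu)\to\mathbb{R}$, which is really the crux and which I would handle simultaneously with the splitting. The idea is to build an explicit one-parameter subgroup $\sigma:\mathbb{R}\to\text{Aut}_1(X,\mathcal A,\mu)$ with $\chi(\sigma(t))=t$. Since $(X,\mathcal A,\mu)$ is $\sigma$-finite nonatomic, it is $\bmod\,0$ isomorphic to $\mathbb{R}$ (or a bounded interval, when $\mu(X)<\infty$) with Lebesgue measure; in the infinite case I would fix a copy of $\mathbb{R}_{>0}$ with Lebesgue measure sitting inside $X$ (and leave the complement untouched), and for $t\in\mathbb{R}$ let $\sigma(t)$ act on this copy as a multiplicative dilation composed so that its Radon–Nikodym cocycle has $L^1$ mass exactly $t$. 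Concretely, on $\mathbb{R}_{>0}$ one can take a transformation whose derivative $\sigma(t)'$ equals $e^{c(t)}$ on a set of measure depending on $t$ and $1$ elsewhere, arranging $\int(\sigma(t)'-1)\,d\mu=t$ and the flow property $\sigma(s)\sigma(t)=\sigma(s+t)$; a clean choice is to transport the problem to an additive model where $\sigma(t)$ translates a copy of $\mathbb{R}$ carrying an exponential density, so that the cocycle is manifestly additive in $t$. One then checks $\sigma(t)'-1\in L^1(\mu)$ (so $\sigma(t)\in\text{Aut}_1$), that $t\mapsto\sigma(t)$ is a homomorphism, that $\chi(\sigma(t))=t$, and that $t\mapsto\sigma(t)$ is continuous for $d_1$ — continuity of the weak part is automatic from continuity of the flow, and continuity of the $L^1$-part follows from dominated convergence applied to $\sigma(t)'-1$ as $t$ varies over a bounded interval. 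Injectivity of $\sigma$ is immediate from $\chi\circ\sigma=\mathrm{id}_{\mathbb{R}}$.

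Finally I would assemble the exact sequence. Continuity of $\chi$ gives that $\text{Ker}\,\chi$ is a closed, hence Polish, subgroup of the Polish group $\text{Aut}_1(X,\mathcal A,\mu)$. Surjectivity of $\chi$ together with the existence of the continuous section $\sigma$ shows the algebraic quotient $\text{Aut}_1/\text{Ker}\,\chi$ is isomorphic to $\mathbb{R}$; moreover $\chi$ is then automatically open (a continuous surjective homomorphism between Polish groups is open by the open mapping theorem for Polish groups, or more cheaply here: $\chi=\chi\circ\sigma\circ\chi$ has the continuous right inverse $\sigma$, so $\chi$ maps a neighborhood of the identity onto a neighborhood of $0$ because $\sigma$ is continuous at $0$ and $\chi\circ\sigma=\mathrm{id}$). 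Hence the sequence
\[
\left\{1\right\}\to\text{Ker\,}\chi\to\text{Aut}_1(X,\mathcal A,\mu)\xrightarrow{\ \chi\ }\mathbb{R}\to\left\{0\right\}
\]
is an exact sequence of Polish groups, and $\sigma$ exhibits the splitting with $\chi\circ\sigma=\mathrm{id}_{\mathbb{R}}$, giving the semidirect product decomposition $\text{Aut}_1(X,\mathcal A,\mu)\cong\text{Ker}\,\chi\rtimes\mathbb{R}$ asserted in Theorem~C. I expect the main obstacle to be the explicit construction of the splitting flow $\sigma$ with all four properties (lands in $\text{Aut}_1$, is a homomorphism, satisfies $\chi\circ\sigma=\mathrm{id}$, and is $d_1$-continuous) simultaneously; once the right model for $\mu$ is chosen this becomes a direct verification, but getting a cocycle that is both additive in the parameter and has the prescribed $L^1$-mass requires the right coordinates.
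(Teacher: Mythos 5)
Your overall strategy is the paper's: the continuity of $\chi$ is the one-line estimate $|\chi(T_n)-\chi(T)|\le\|T_n'-T'\|_1$, and the whole content of the theorem is the construction of a continuous one-parameter subgroup $\sigma$ with $\chi\circ\sigma=\mathrm{id}_\mathbb{R}$, realized as translations of $\mathbb{R}$ equipped with a translation-non-invariant density. The paper does exactly this and nothing more (exactness and the semidirect product decomposition then follow formally, as you say).

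However, the one concrete model you single out as the ``clean choice'' does not work. If $\sigma(t)$ is translation by $-t$ on $\mathbb{R}$ carrying the exponential density $e^{x}\,dx$, then $\frac{d\mu\circ\sigma(t)^{-1}}{d\mu}\equiv e^{t}$ is a \emph{constant}, so $\sigma(t)'-1=e^{t}-1$ is a nonzero constant on an infinite measure space and is not in $L^1(\mu)$; these translations land in $\mathrm{Aut}_2$ but not in $\mathrm{Aut}_1$, and $\chi(\sigma(t))$ is undefined. The cocycle being ``manifestly additive'' is not the issue — the chain rule makes it additive for any choice — the issue is that $T'-1$ must be supported (up to an integrable tail) on a set of finite measure. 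The fix, which is what the paper does, is to take a density with a single jump: $\frac{dm}{dx}=1_{\mathbb{R}_-^*}+2\cdot 1_{\mathbb{R}_+}$. Then for the translation $T_t$ by $t>0$ one has $\frac{dm\circ T_t^{-1}}{dm}=\tfrac12$ on $[0,t)$ and $=1$ elsewhere, so $T_t'-1$ is supported on a set of finite $m$-measure, $T_t\in\mathrm{Aut}_1$, and $\chi(T_{-t})=t$; continuity of $t\mapsto T_{-t}$ in $d_1$ is then immediate. Your earlier, vaguer description (``derivative equals $e^{c(t)}$ on a set of measure depending on $t$ and $1$ elsewhere'') is the right picture, but as written the proposal's only fully specified construction fails, so the splitting — which you correctly identify as the crux — is not actually established.
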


\begin{proof}
Of course, $\chi$ is continuous.
Next, 
there is no loss in generality if we take $\left(X,\mathcal{A},\mu\right)=\left(\mathbb{R},\mathcal{B},m\right)$
where $m$ is defined by $\frac{dm}{dx}=1_{\mathbb{R}_{-}^{*}}+2\times1_{\mathbb{R}_{+}}$.
For $t\in\Bbb R$, denote by $T_t:\Bbb R\to\Bbb R$ the translation by $t$.
Then it is easy to verify that $T_t\in \text{Aut}_1(X,\mathcal A,\mu)$ and $\chi(T_{-t})=t$.
Of course, the homomorphism $\sigma:\Bbb R\ni t\mapsto T_{-t}\in  \text{Aut}_1(X,\mathcal A,\mu)$
is continuous. 
\end{proof}

It follows from the second claim of the theorem that there is a topological
isomorphism $\theta:\text{Aut}_{1}(X,\mathcal{A},\mu)\to\text{Ker}\,\chi\rtimes_{\sigma}\mathbb{R}$
such that the following diagram commutes:
$$
\begin{CD}
\{1\} @>>> \text{Ker}\,\chi @>i>>\text{Aut}_{1}(X,\mathcal{A},\mu)  @>\chi>> \Bbb R @>>>\{0\}\\
@.    @V\text{id}VV @V\theta VV   @V\text{id}VV @.\\
\{1\} @>>> \text{Ker}\,\chi @>i>>\text{Ker}\,\chi\rtimes_{\sigma}\mathbb{R} @>\chi>> \Bbb R @>>>\{0\}
\end{CD}
$$
Thus, we have showed that $\text{Aut}_{1}(X,\mathcal{A},\mu)$ has a natural structure of semidirect
product of Ker\,$\chi$ and $\Bbb R$.

In order to state one more property of $\chi$ we need to recall a definition of conservativeness.

\begin{defn} A transformation $T\in\text{Aut}(X,\mathcal A,\mu)$ is called {\it conservative} if for each subset $A\in\mathcal A$ of positive measure, there is $n>0$ such that $\mu(T^{-1}A\cap A)>0$.
\end{defn}

We recall that a nonsingular transformation $T$ is conservative if and only if  $\sum_{k=0}^\infty U^{(1)}_{T^k}f=+\infty$ 
a.e. for each measurable function $f>0$ \cite{Aar97InfErg}.

\begin{prop}
\label{prop:Conservativity Aut1} Let $T\in\text{{\rm Aut}}_{1}(X,\mathcal{A},\mu)$.
If\, $T$ is conservative 
then $\chi(T)=0$.
\end{prop}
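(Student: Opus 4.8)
The plan is to exploit the conservativity of $T$ through the characterization that $\sum_{k=0}^\infty U_{T^k}^{(1)}\mathbf 1 = +\infty$ $\mu$-a.e., together with a telescoping/averaging argument for the function $T'-1 \in L^1(\mu)$. The key observation is that $\chi$ is a homomorphism, so $\chi(T^n) = n\chi(T)$ for all $n\ge 1$; hence it suffices to show $\chi(T^n)$ stays bounded (in fact $o(n)$) as $n\to\infty$, which would force $\chi(T)=0$. Now $\chi(T^n) = \int_X ((T^n)' - 1)\,d\mu$, and since $(T^n)' = \prod_{j=0}^{n-1} T'\circ T^{-j}$ (chain rule for Radon–Nikodym derivatives), one can try to control the partial sums $S_N := \sum_{n=0}^{N-1}(T^n)'$ rather than individual terms.

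First I would set $f := T'-1 \in L^1_{\mathbb R}(\mu)$ and note $U^{(1)}_{T^n}\mathbf 1 = (T^n)'$, so $S_N = \sum_{n=0}^{N-1} U^{(1)}_{T^n}\mathbf 1$. Since $T$ is conservative, $S_N \to +\infty$ a.e.; moreover the sequence $((T^n)')_n$ is, by the cocycle identity, a product of successive "pull-backs" of $T'$, and a standard fact in infinite ergodic theory (Hopf-type argument, or the Chacon–Ornstein / Hurewicz machinery applied to the $L^1$-isometry $U^{(1)}_T$) gives that $\frac1N \int_A S_N\,d\mu$ cannot grow linearly with a positive coefficient unless $\mu(A)=\infty$ in a way incompatible with $T'-1\in L^1$. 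Concretely, I would write $\chi(T^n) - \chi(T^{n-1}) = \int_X ((T^n)' - (T^{n-1})')\,d\mu = \int_X (T^{n-1})'(T'\circ T^{-(n-1)} - 1)\,d\mu = \int_X (U^{(1)}_{T^{n-1}}(T'-1))\,d\mu$, but since $U^{(1)}_{T}$ preserves $\int_X \cdot\,d\mu$ on $L^1$ (it is the transfer operator, $\int U^{(1)}_S g\,d\mu = \int g\,d\mu$), each such increment equals $\int_X (T'-1)\,d\mu = \chi(T)$, recovering only $\chi(T^n)=n\chi(T)$ — so the naive approach is circular and the real input must be conservativity, not just the $L^1$ membership.

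The correct route: suppose for contradiction $\chi(T) = c \ne 0$; WLOG $c>0$ (replace $T$ by $T^{-1}$ if needed, noting $\chi(T^{-1}) = -\chi(T)$ after checking the sign — this uses $\int (T^{-1})'\,d\mu - \mu(X)$-type bookkeeping as in the proof of Proposition on $\chi$ not depending on the measure). Then $\int_X ((T^n)' - 1)\,d\mu = nc$. Pick a set $A$ of finite positive measure and apply the Hopf ratio ergodic theorem or the Maharam/conservativity dichotomy: conservativity of $T_*$ — or rather of $T$ — implies that for the invariant measure perspective one gets $\int_A (T^n)'\,d\mu = \mu(T^{-n}A) \le \mu(X)$... but $\mu$ need not be finite. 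Instead I would invoke: $T$ conservative $\Rightarrow$ there is no nonnegative $w\in L^1(\mu)$, $w\not\equiv 0$, with $U^{(1)}_T w \le w$ strictly decreasing in integral; combined with the fact that $(T^n)' = U^{(1)}_{T^n}\mathbf 1$ and that conservativity gives recurrence, one shows $\liminf_n \int_X |(T^n)' - 1 |\,d\mu$ controls things — more cleanly, use that for conservative $T$, $\mathbf 1$ is in the closure of the range issues and apply the mean ergodic decomposition: the Cesàro averages $\frac1N\sum_{n=0}^{N-1}(T^n)'$ would have to behave like $1 + \frac{(N-1)}2 c + o(N)$ in $L^1$-mean, which is impossible for a conservative system because $\frac1N\sum (T^n)'$ should be "tight" — its integral against any $\mathbf 1_A$, $\mu(A)<\infty$, equals $\frac1N\sum_{n=0}^{N-1}\mu(T^{-n}A)$, and I claim conservativity forces $\frac1N\sum_{n=0}^{N-1}\mu(T^{-n}A) \not\to\infty$ is false in general...

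Let me restate the plan more honestly: the cleanest argument is to show $\int_X (T^n)'\,d\mu - \mu(X)$ is the wrong quantity and instead use that $\chi(T) = \int_X (T'-1)\,d\mu = \lim_n (\mu(T^{-1}A_n) - \mu(A_n))$ along an exhausting sequence $A_n\uparrow X$ (as in the proof of the invariance of $\chi$). Then $n\chi(T) = \chi(T^n) = \lim_k (\mu(T^{-n}A_k) - \mu(A_k))$. The hard part — and the main obstacle — is extracting a contradiction from conservativity: I would use that $T$ conservative implies, by Hopf's decomposition, $X$ is the conservative part, so for $\mu(A)<\infty$ the orbit sums $\sum_{n\ge 0}\mathbf 1_A\circ T^n = \infty$ a.e. on $A$, hence $\sum_n \mu(T^{-n}A\cap A)=\infty$; one then shows via a maximal/Rokhlin-tower argument that $\sup_k |\mu(T^{-n}A_k) - \mu(A_k)|$ cannot grow linearly in $n$. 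Formally: if it did, the measure $\mu$ would "leak to infinity at linear rate" under $T$, contradicting Poincaré recurrence. So the final step is: invoke conservativity $\Rightarrow$ Poincaré recurrence $\Rightarrow$ for every finite-measure $A$, $\liminf_n |\mu(T^{-n}A) - \mu(A)|$ small infinitely often $\Rightarrow$ $\chi(T^n) = o(n)$ along a subsequence $\Rightarrow c = 0$. The main obstacle is making the "$\chi(T^n) = \lim_k(\mu(T^{-n}A_k) - \mu(A_k))$ stays sublinear" step rigorous; I expect one packages it through the transfer operator $U^{(1)}_T$, writing $\chi(T^n) = \int_X U^{(1)}_{T^{n-1}}(T'-1)\,d\mu$ and noting that since $T'-1 = (T'-1)^+ - (T'-1)^-$ with both parts in $L^1$, and conservativity means $U^{(1)}_T$ has no "wandering mass", the positive and negative parts must asymptotically cancel — precisely, one applies the Chacon–Ornstein theorem to conclude $\frac1N\sum_{n=0}^{N-1} U^{(1)}_{T^n}(T'-1) \to 0$ in some suitable sense on the conservative part, forcing $\chi(T) = 0$.
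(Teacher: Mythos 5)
Your proposal circles the right tools (the transfer operator $U^{(1)}_T$, conservativity via $\sum_k U^{(1)}_{T^k}g=+\infty$, a Chacon--Ornstein/Hurewicz ratio theorem), and you even write down the decisive identity in integrated form, namely $(T^n)'-(T^{n-1})'=U^{(1)}_{T^{n-1}}(T'-1)$. But you then only integrate it, which, as you correctly note, is circular. The missing idea is to \emph{sum the pointwise identity without integrating}: setting $\phi:=T'-1$, the telescoping gives
\[
\sum_{k=0}^{n}U^{(1)}_{T^{k}}\phi=(T^{n+1})'-1\ge -1\quad\text{pointwise,}
\]
a uniform lower bound on the partial sums of the numerator. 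This is the actual input to the ratio ergodic theorem. With $g\in L^1(\mu)$, $g>0$, $\int g\,d\mu=1$, the Hurewicz theorem gives $\sum_{k=0}^{n}U^{(1)}_{T^k}\phi\big/\sum_{k=0}^{n}U^{(1)}_{T^k}g\to\psi$ a.e.\ with $\psi\circ T=\psi$ and $\int_X\psi\,d\mu=\int_X\phi\,d\mu=\chi(T)$; since the numerator is $\ge-1$ and the denominator tends to $+\infty$ by conservativity, the limit satisfies $\psi\ge0$, whence $\chi(T)\ge0$. Note that this yields only a \emph{sign}, not vanishing: your closing claim that the Ces\`aro averages of $U^{(1)}_{T^n}(T'-1)$ tend to $0$ ``in some suitable sense'' is not what the ratio theorem delivers, and you give no mechanism for the asserted cancellation of positive and negative parts.

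The second missing step is how $\chi(T)=0$ is finally extracted: one applies the same one-sided argument to $T^{-1}$, which is also conservative, to get $\chi(T^{-1})=-\chi(T)\ge0$, and combines the two inequalities. Your ``WLOG $c>0$'' reduction points in this direction but is never completed, and the subsequent attempts (Poincar\'e recurrence forcing $\mu(T^{-n}A_k)-\mu(A_k)$ to be sublinear, ``no leaking of mass at linear rate'') are not substantiated and would be delicate to make rigorous in infinite measure. So the proposal identifies the correct machinery but has a genuine gap at its core: without the pointwise lower bound $\sum_{k=0}^{n}U^{(1)}_{T^k}\phi\ge-1$ the ratio-theorem strategy does not close.
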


\begin{proof}
Let $\phi:=T'-1$.
Then $\phi\in L^1(\mu)$ and $\int_X\phi\,d\mu=\chi(T)$.
Take $g\in L^{1}(\mu)$ such that $g>0$ and $\int_{X}gd\mu=1$.
By the Hurewicz ratio ergodic theorem, there exists $\psi\in L^{1}(\mu)$
such that $\psi\circ T=\psi$, $\int_{X}\psi \,d\mu=\int_{X}\phi\,d\mu=\chi\left(T\right)$
and
\[
\frac{\sum_{k=0}^{n}U_{T^{k}}^{(1)}\phi}{\sum_{k=0}^{n}U_{T^{k}}^{\left(1\right)}g}\to\psi\quad\text{almost everywhere as $n\to\infty$.}
\]
However 
$\sum_{k=0}^nU_{T^k}^{(1)}\phi=(T^{n+1})'-1  \ge-1$
while  $\sum_{k=0}^{n}U_{T^{k}}^{(1)}g\to+\infty$
as $n\to\infty$.
Consequently, $\psi\ge0$ a.e. and hence
 $\chi\left(T\right)\ge0$.
However the transformation $T^{-1}$ is also conservative and the above reasoning yields that
$\chi(T^{-1})\ge0$. 
As $\chi(T^{-1})=-\chi(T)$,
we obtain that $\chi\left(T\right)=0$.
\end{proof}

We conclude this subsection with a discussion about relationship among $d_1$, $d_2$ and the weak topology.
Since $\|\sqrt{T'}-1\|_2^2\le \|{T'}-1\|_1$, it follows that $d_1$ is stronger than $d_2$.

\begin{prop}  \ 
\begin{itemize} 
\item
$d_1$ is strictly stronger than $d_2$ restricted to $\text{{\rm Aut}}_{1}(X,\mathcal{A},\mu)$ and
$d_2$ is strictly stronger than  the weak topology restricted to $\text{{\rm Aut}}_{2}(X,\mathcal{A},\mu)$.
\item $\text{{\rm Aut}}_{1}(X,\mathcal{A},\mu)$ is   dense and meager in $\text{{\rm Aut}}_{2}(X,\mathcal{A},\mu)$ endowed with  $d_2$ and 
$\text{{\rm Aut}}_{2}(X,\mathcal{A},\mu)$ is   dense and meager in $\text{{\rm Aut}}(X,\mathcal{A},\mu)$ endowed with the weak topology.
\item $\chi$ is not $d_2$-continuous. Hence $\chi$ does not extend by continuity to
$\text{{\rm Aut}}_{2}(X,\mathcal{A},\mu)$.
\item If $\mathcal B_1$, $\mathcal B_2$ and $\mathcal B$ stand for the Borel $\sigma$-algebras generated by $\tau_1$, $\tau_2$ and the weak topology respectively then
$\mathcal B_2\restriction \text{{\rm Aut}}_{1}(X,\mathcal{A},\mu)=\mathcal B_1$ and
$\mathcal B\restriction \text{{\rm Aut}}_2(X,\mathcal{A},\mu)=\mathcal B_2$.
\end{itemize}
\end{prop}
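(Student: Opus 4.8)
\emph{Comparison of topologies and discontinuity of $\chi$ (first, third bullets).} Throughout we may and do assume $\mu(X)=\infty$, so that $(X,\mathcal A,\mu)=(\mathbb R,\mathrm{Leb})$ up to isomorphism. From $\|\sqrt{T'}-1\|_2^2\le\|T'-1\|_1$ and the descriptions of $d_2$- and $d_1$-convergence it is immediate that $d_1$ dominates $d_2$ on $\text{Aut}_1(X,\mathcal A,\mu)$ and $d_2$ dominates the weak topology on $\text{Aut}_2(X,\mathcal A,\mu)$; for strictness I would produce sequences of ``bumps escaping to infinity''. Given a nonnegative $\psi=c\,\mathbf 1_{I}$ with $I$ a bounded interval, let $S_\psi$ be a piecewise affine nonsingular transformation with $\sqrt{S_\psi'}-1=\psi$ that is the identity on the ray below $I$ and a translation on the ray above it; then $U_{S_\psi}f=f$ for every $f$ supported below $I$, while $\|\sqrt{S_\psi'}-1\|_2=\|\psi\|_2$ and $\|S_\psi'-1\|_1=\|\psi^2+2\psi\|_1$. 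Taking $\psi=\psi_n:=\mathbf 1_{[n,n+1]}$ gives $S_{\psi_n}\to\mathrm{id}$ weakly but, since $\|\psi_n\|_2=1$, not in $d_2$; taking $\psi_n:=\tfrac1n\mathbf 1_{[n,2n]}$ gives $S_{\psi_n}\to\mathrm{id}$ in $d_2$ but $\|S_{\psi_n}'-1\|_1=\tfrac1n+2\not\to0$, hence not in $d_1$. The latter sequence lies in $\text{Aut}_1(X,\mathcal A,\mu)$ and $\chi(S_{\psi_n})=\int_X(S_{\psi_n}'-1)\,d\mu=\tfrac1n+2\to2\neq0=\chi(\mathrm{id})$, so $\chi$ is not $d_2$-continuous; consequently no $d_2$-continuous function on $\text{Aut}_2(X,\mathcal A,\mu)$ can restrict to $\chi$ on $\text{Aut}_1(X,\mathcal A,\mu)$.

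\emph{Density (second bullet, density part).} Weak density of $\text{Aut}_2(X,\mathcal A,\mu)$ in $\text{Aut}(X,\mathcal A,\mu)$ follows from the classical weak density of the $\mu$-preserving transformations. For $d_2$-density of $\text{Aut}_1(X,\mathcal A,\mu)$ in $\text{Aut}_2(X,\mathcal A,\mu)$: given $T\in\text{Aut}_2(X,\mathcal A,\mu)$ put $\phi:=T'$, $\psi:=\sqrt\phi-1\in L^2$ and $\psi_n:=\psi\,\mathbf 1_{[-n,n]}$, so $\psi_n\in L^1\cap L^2$ and $\|\psi_n-\psi\|_2\to0$. By Lemma~\ref{lem:A_f} the set $T^{-1}([-n,n])$ has finite measure and $T^{-1}(\{|x|>n\})$ infinite measure; since any two nonatomic $\sigma$-finite infinite measure spaces are isomorphic, define $S_n$ to coincide with $T$ on $T^{-1}([-n,n])$ and to be an arbitrary measure-preserving bijection of $(T^{-1}(\{|x|>n\}),\mu)$ onto $(\{|x|>n\},\mu)$ elsewhere. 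Then $S_n'=\phi\,\mathbf 1_{[-n,n]}+\mathbf 1_{\{|x|>n\}}$, so $\sqrt{S_n'}-1=\psi_n\in L^1$ (hence $S_n\in\text{Aut}_1(X,\mathcal A,\mu)$) and $\|\sqrt{S_n'}-\sqrt{T'}\|_2\to0$. Moreover $U_{S_n}f=U_Tf$ on $[-n,n]$ for every $f$ (there $S_n=T$ and $S_n'=T'$), whereas for every $f\in L^2$ one computes $\|U_{S_n}f\,\mathbf 1_{\{|x|>n\}}\|_2=\|U_Tf\,\mathbf 1_{\{|x|>n\}}\|_2=\|f\,\mathbf 1_{T^{-1}(\{|x|>n\})}\|_2\to0$, since $T^{-1}(\{|x|>n\})\downarrow\emptyset$ a.e.; thus $U_{S_n}\to U_T$ strongly and $S_n\to T$ in $d_2$. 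This construction is the heart of the argument.

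\emph{Meagerness (second bullet, meagerness part).} Here I would argue abstractly. $(\text{Aut}_1(X,\mathcal A,\mu),d_1)$, $(\text{Aut}_2(X,\mathcal A,\mu),d_2)$ and $(\text{Aut}(X,\mathcal A,\mu),\text{weak})$ are Polish groups and the inclusions $(\text{Aut}_1,d_1)\hookrightarrow(\text{Aut}_2,d_2)\hookrightarrow(\text{Aut},\text{weak})$ are continuous and injective, so by the Lusin--Souslin theorem $\text{Aut}_1$ is a Borel subgroup of $(\text{Aut}_2,d_2)$ and $\text{Aut}_2$ is a Borel subgroup of $(\text{Aut},\text{weak})$. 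A Borel subgroup $H$ of a Polish group that is non-meager has the Baire property, so by Pettis' theorem $HH^{-1}=H$ is a neighbourhood of the identity; hence $H$ is open, therefore clopen, therefore Polish in the subspace topology. Then this subspace topology and the given Polish group topology on $H$ would be comparable Polish group topologies, hence equal by the open mapping theorem, contradicting the strictness established above. Therefore $\text{Aut}_1(X,\mathcal A,\mu)$ is meager in $(\text{Aut}_2(X,\mathcal A,\mu),d_2)$ and $\text{Aut}_2(X,\mathcal A,\mu)$ is meager in $(\text{Aut}(X,\mathcal A,\mu),\text{weak})$. (A concrete alternative for the first: $\text{Aut}_1=\bigcup_n\{T\in\text{Aut}_2:\|T'-1\|_1\le n\}$, each summand being $d_2$-closed by Fatou's lemma and of empty interior because, left-translating the sequences above, $\text{Aut}_2\setminus\text{Aut}_1$ is $d_2$-dense.)

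\emph{Borel structures (fourth bullet).} Continuity of the two inclusions gives $\mathcal B_2\restriction\text{Aut}_1\subseteq\mathcal B_1$ and $\mathcal B\restriction\text{Aut}_2\subseteq\mathcal B_2$; conversely, by the Lusin--Souslin theorem each inclusion carries Borel sets to Borel sets, so every set in $\mathcal B_1$ is a Borel subset of $(\text{Aut}_2,d_2)$ lying in $\text{Aut}_1$ (hence a member of $\mathcal B_2\restriction\text{Aut}_1$), and every set in $\mathcal B_2$ is a Borel subset of $(\text{Aut},\text{weak})$ lying in $\text{Aut}_2$, yielding the reverse inclusions and the claimed equalities. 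The single place I expect to require genuine care is the verification of the weak convergence in the density construction of the second step.
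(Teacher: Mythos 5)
The paper itself omits the proof of this proposition (it only notes that it can be deduced from descriptive set theory together with the density results of Section~5), so there is no official argument to compare against; judged on its own terms, your proposal is correct in all but one step. The escaping-bump examples for the strictness of $d_1\succ d_2\succ{}$weak and for the discontinuity of $\chi$, the cut-and-patch construction for $d_2$-density of $\mathrm{Aut}_1$ in $\mathrm{Aut}_2$, the Pettis/open-mapping argument for meagerness (where you correctly arrange that the sequences witnessing strictness lie in the smaller subgroup), and the Lusin--Souslin argument for the Borel structures are all sound.

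The genuine gap is the first sentence of your density paragraph: the $\mu$-preserving transformations are \emph{not} weakly dense in $\mathrm{Aut}(X,\mathcal{A},\mu)$ for the weak topology used here, namely the one pulled back from the weak operator topology via the Koopman representation $U$. Indeed, take $T$ with $T'=2$ on $[0,1]$ and $T^{-1}([0,1])=[0,2]$, and set $f=1_{[0,2]}$, $g=1_{[0,1]}$; then $U_Tf=\sqrt{2}\,1_{[0,1]}$, so $\langle U_Tf,g\rangle=\sqrt{2}$, whereas every measure-preserving $S$ gives $\langle U_Sf,g\rangle=\mu(S[0,2]\cap[0,1])\le 1$, so $T$ lies outside the weak closure of the measure-preserving group. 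The conclusion you want (weak density of $\mathrm{Aut}_2$, indeed of the local transformations, in $\mathrm{Aut}$) is nevertheless true, and the repair is your own cut-and-patch device in the form the paper uses in Proposition~\ref{prop: local are dense in Aut2}: for arbitrary $T\in\mathrm{Aut}(X,\mathcal{A},\mu)$ put $E_n:=A_n\cap T^{-1}A_n$ (of finite measure, increasing to $X$, with $TE_n\subset A_n$), let $T_n$ agree with $T$ on $E_n$, carry $A_n\setminus E_n$ onto $A_n\setminus TE_n$ by a map with constant Radon--Nikodym derivative, and act as the identity off $A_n$; then $T_n$ is local, hence in $\mathrm{Aut}_1\subset\mathrm{Aut}_2$, and $U_{T_n}h\to U_Th$ strongly because $U_{T_n}h=U_Th$ on $TE_n$ while $\int_{T_n^{-1}(X\setminus TE_n)}|h|^2\,d\mu=\int_{X\setminus E_n}|h|^2\,d\mu\to 0$, and likewise for $T$. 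Note that your $\mathrm{Aut}_1$-in-$\mathrm{Aut}_2$ construction cannot be transplanted verbatim to this setting, since for $T\notin\mathrm{Aut}_2$ the set $T^{-1}(\{|x|>n\})$ may fail to have infinite measure: Lemma~\ref{lem:A_f} is precisely what licenses the measure-preserving patch when $T\in\mathrm{Aut}_2$.
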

 
We do not provide a proof of this proposition because it will not be used below in the paper.
We only note that it can be deduced from the general theorems of the descriptive topology combined with several facts that are proved in the next section: Propositions~\ref{prop: local are dense in Aut2}, \ref{prop: Rokhlin property}, Theorem~\ref{thm: 2nd main category thm}.
The interested reader can also prove it independently of the next section by constructing appropriate concrete counterexamples.

\subsection{Unitary Koopman representations of $\text{{\rm Aut}}_{\mathcal{P}}(X^{*},\mathcal{A}^{*},\mu^{*})$}

Our objective in this subsection is to clarify relationship between  the  Koopman operators associated to $T$ and $T_{*}$ respectively.

Given an affine operator $A=(f,V)\in\text{Aff}_\Bbb R(L^2(\mu))$, we  define an  operator $W_A$ in
$L^2(\mu^*)$ by setting
\begin{equation}\label{eq:Weil}
W_A\Exp h:=e^{-\frac{\|f\|^2_2}2-\langle f,Vh\rangle_{L^2(\mu)}}\Exp{Ah}\end{equation}
for all $h\in L^2(\mu)$ and then extending $W_A$ by linearity and continuity to the entire 
$L^2(\mu^*)$.
It is shown 
in
\cite[\S\,2.2]{Gui72Sym}
that $W_A$ is well defined and $W_A\in \mathcal U(L^2(\mu^*))$.
It is called  a {\it Weyl operator}.
We observe that $W_A\in \mathcal U_{\Bbb R}(L^2(\mu^*))$ and $W_AW_B=W_{AB}$ for all
$A,B\in \text{Aff}_\Bbb R(L^2(\mu))$.
It is possible to define $W_A$ for arbitrary  $V\in \mathcal U(L^2(\mu))$ and $f\in L^2(\mu)$  by the same formula
(\ref{eq:Weil}).
Then $W_A \in \mathcal U_{\Bbb R}(L^2(\mu^*))$ if and only if  $A\in \text{Aff}_\Bbb R(L^2(\mu))$.
We leave the proof of this fact as an exercise for the reader.
We also need one more auxiliary result that follows easily from \cite[Theorem~2.1]{Gui72Sym}. 

\begin{theorem*} The map
$
W:\text{{\rm Aff}}_\Bbb R(L^2(\mu))\ni A\mapsto W_A\in \mathcal U_\Bbb R(L^2(\mu^*))
$
is a continuous  one-to-one group homomorphism.
Its image $\mathcal W:=\{W_A:\, A\in \text{{\rm Aff}}_\Bbb R(L^2(\mu))\}$ is a Polish subgroup in the induced topology.
\end{theorem*}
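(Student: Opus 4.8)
The plan is to dispatch the four parts of the statement — that $W$ is a group homomorphism, that it is one-to-one, that it is continuous, and that its image $\mathcal W$ is a Polish subgroup in the induced topology — in turn, with the real work concentrated in the two continuity claims (continuity of $W$ and of its inverse), both of which rest on a single observation about the chaos grading of $L^2(\mu^*)$. Recall that $\Exp k=\sum_{n\ge0}\frac1{n!}k^{\otimes n}$, so if $\Pi_0$ and $\Pi_1$ denote the orthogonal projections of $L^2(\mu^*)$ onto the chaos of order $0$ and the chaos of order $1$ (the latter being isometrically $L^2(\mu)$), then $\Pi_0\Exp k=1$ and $\Pi_1\Exp k=k$. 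Since \eqref{eq:Weil} exhibits $W_A\Exp h$ as an explicit scalar multiple of a coherent vector, applying $\Pi_0$ and $\Pi_1$ will let me read the data $(f,V)$ of $A=(f,V)$ off from $W_A$ in a manifestly continuous fashion.

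The homomorphism statement needs nothing new: $W_AW_B=W_{AB}$, $W_{(0,I)}=\mathrm{id}$, and $W_A\in\mathcal U_\Bbb R(L^2(\mu^*))$ have all been recorded above, so $\mathcal W$ is a subgroup and $W$ a homomorphism onto it. For injectivity I would show $\ker W=\{\mathrm{id}\}$: if $W_A=\mathrm{id}$ with $A=(f,V)$, then $W_A\Exp0=e^{-\|f\|_2^2/2}\Exp f$ equals $\Exp0=1$, and $\Pi_0$ gives $e^{-\|f\|_2^2/2}=1$, hence $f=0$; then $W_{(0,V)}\Exp h=\Exp{Vh}=\Exp h$ for all $h$, and $\Pi_1$ forces $Vh=h$, i.e. $V=I$. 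For continuity of $W$: since the $W_A$ are unitary, hence uniformly bounded, and the coherent vectors are total in $L^2(\mu^*)$, it suffices to check that $A\mapsto W_A\Exp h$ is norm-continuous for each fixed $h\in L^2(\mu)$; if $A_n=(f_n,V_n)\to A=(f,V)$, meaning $\|f_n-f\|_2\to0$ and $V_n\to V$ strongly, then $\|f_n\|_2^2\to\|f\|_2^2$, $\langle f_n,V_nh\rangle\to\langle f,Vh\rangle$, and $f_n+V_nh\to f+Vh$ in $L^2(\mu)$, so by \eqref{eq:Weil} and the already-noted continuity of $k\mapsto\Exp k$ we get $W_{A_n}\Exp h\to W_A\Exp h$.

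The step I expect to carry the actual content, and hence to be the main obstacle, is continuity of the inverse map $\mathcal W\to\text{Aff}_\Bbb R(L^2(\mu))$ with respect to the topology $\mathcal W$ inherits from $\mathcal U_\Bbb R(L^2(\mu^*))$: one must upgrade mere strong convergence of $W_{A_n}$ to norm convergence of the translation parts $f_n$ together with strong convergence of the unitary parts $V_n$. Here I would again use the two projections. From $W_{A_n}1=e^{-\|f_n\|_2^2/2}\Exp{f_n}\to W_A1$ in $L^2(\mu^*)$: applying $\Pi_0$ gives $e^{-\|f_n\|_2^2/2}\to e^{-\|f\|_2^2/2}>0$, and applying $\Pi_1$ gives $e^{-\|f_n\|_2^2/2}f_n\to e^{-\|f\|_2^2/2}f$ in $L^2(\mu)$; dividing out the convergent, strictly positive scalars yields $f_n\to f$ in norm. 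Then from $W_{A_n}\Exp h=e^{-\|f_n\|_2^2/2-\langle f_n,V_nh\rangle}\Exp{f_n+V_nh}\to W_A\Exp h$: the exponent $-\|f_n\|_2^2/2-\langle f_n,V_nh\rangle$ is real, because we stay inside the real subspace, so the prefactor is a strictly positive scalar; $\Pi_0$ shows these prefactors converge to the positive number $e^{-\|f\|_2^2/2-\langle f,Vh\rangle}$, $\Pi_1$ then shows $f_n+V_nh\to f+Vh$ in $L^2(\mu)$, and subtracting $f_n\to f$ gives $V_nh\to Vh$ for every $h$, i.e. $V_n\to V$ strongly. Thus $A_n\to A$ in $\text{Aff}_\Bbb R(L^2(\mu))$.

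Putting these together, $W$ is a continuous homomorphism onto $\mathcal W$ with continuous inverse, hence a topological group isomorphism of $\text{Aff}_\Bbb R(L^2(\mu))$ — which is Polish — onto $\mathcal W$ equipped with the induced topology; therefore $\mathcal W$ is a Polish subgroup of $\mathcal U_\Bbb R(L^2(\mu^*))$ (in particular closed, a $G_\delta$ subgroup being closed). One could instead simply invoke \cite[Theorem~2.1]{Gui72Sym} for the closedness of the image and get Polishness of the subspace for free; but the only genuinely analytic input beyond formal group theory is the recovery of norm convergence of the $f_n$ from strong convergence of the $W_{A_n}$, and that is exactly what the first-chaos projection delivers.
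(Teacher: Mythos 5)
Your proof is correct. Note that the paper does not actually prove this statement: it is labelled an ``auxiliary result'' and disposed of by citing \cite[Theorem~2.1]{Gui72Sym}, so your argument is a genuinely self-contained alternative. The mechanism you use --- reading off the pair $(f,V)$ from $W_A$ by applying the orthogonal projections $\Pi_0$, $\Pi_1$ onto the chaoses of order $0$ and $1$, using $\Pi_0\Exp{k}=1$ and $\Pi_1\Exp{k}=k$ --- is sound and does all the work at once: it gives injectivity ($\Pi_0$ of $W_A\Exp 0$ forces $f=0$, then $\Pi_1$ forces $V=\mathrm{id}$), and it gives continuity of $W^{-1}$ on $\mathcal W$, which is the only nontrivial analytic point. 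The small implicit steps are all fine: weak operator convergence of unitaries is strong convergence, so $W_{A_n}\Exp h\to W_A\Exp h$ in norm and the projections may be applied; the prefactors $e^{-\|f_n\|_2^2/2-\langle f_n,V_nh\rangle}$ are real and converge to a strictly positive limit for real $h$, and determining $V_n\to V$ on $L^2_{\mathbb{R}}(\mu)$ suffices by complex linearity; and the $\varepsilon/3$ argument with totality of coherent vectors and uniform boundedness gives continuity of $W$ itself. The conclusion that $\mathcal W$ with the induced topology is Polish, being the homeomorphic image of the Polish group $\mathrm{Aff}_{\mathbb{R}}(L^2(\mu))$ under a topological group isomorphism, is also correct, and matches how the paper then deduces closedness of $\mathcal W$ via \cite{BeKe96desc}. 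What your route buys over the citation is an explicit inverse formula $W_A\mapsto(f,V)$ in terms of the chaos grading; what it costs is nothing, since the well-definedness and unitarity of $W_A$ (the genuinely nontrivial input from \cite{Gui72Sym}) are already assumed in the surrounding text.
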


We will call $W$ {\it the Weyl homomorphism}.
It follows from the above auxiliary theorem and  \cite[Proposition 1.2.1]{BeKe96desc}  that $\mathcal W$ is closed
in $\mathcal U_\Bbb R(L^2(\mu^*))$ endowed with the weak operator topology.

It is well known that that if a transformation $T\in\text{Aut}_2(X,\mathcal A,\mu)$ preserves $\mu$
then the associated unitary Koopman operator $U_{T_*}$  can be written as $U_{T_*}=W_{(0,U_T)}=W_{A_T^{(2)}}$.
We now extend this result to arbitrary (nonsingular) elements of $\text{Aut}_2(X,\mathcal A,\mu)$.

\begin{thm}\label{main of 3}
Let $T\in\text{{\rm Aut}}_2(X,\mathcal A,\mu)$.
Under the natural identification of $F(L^2(\mu))$ with $L^2(\mu^*)$ described in \S\,{\rm \ref{Fock}},
we obtain  that 
$U_{T_{*}}=W_{A_T^{(2)}}$.
In other words,  the composition of the Poisson homomorphism of $\text{{\rm Aut}}_2(X,\mathcal A,\mu)$
with
 the unitary Koopman representation of {\rm Aut}$_\mathcal P(X^*,\mathcal A^*,\mu^*)$ 
equals the composition of 
the affine Koopman representation
of $\text{{\rm Aut}}_2(X,\mathcal A,\mu)$ with
the Weyl homomorphism.
\end{thm}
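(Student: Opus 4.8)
The plan is to verify the identity $U_{T_*}=W_{A_T^{(2)}}$ by checking that both unitary operators act in the same way on the total family $\{\Exp h: h\in\mathcal B_0(X)\}$ of coherent vectors, since two bounded operators agreeing on a total set are equal. First I would record what each side does to a coherent vector $\Exp h$. On the left, $U_{T_*}$ is the Koopman operator of the nonsingular transformation $T_*$, so $U_{T_*}F=\sqrt{(T_*)'}\,F\circ T_*^{-1}$, and by Corollary~\ref{cor:R-N} we have $(T_*)'=\Exp{T'-1}$, hence $U_{T_*}F=\sqrt{\Exp{T'-1}}\,F\circ T_*^{-1}$. On the right, by the definition~\eqref{eq:Weil} of the Weyl operator with $A_T^{(2)}=(\sqrt{T'}-1,\,U_T)$, putting $f:=\sqrt{T'}-1$ and $V:=U_T$,
\[
W_{A_T^{(2)}}\Exp h=e^{-\frac12\|\sqrt{T'}-1\|_2^2-\langle \sqrt{T'}-1,\,U_Th\rangle}\Exp{U_Th+\sqrt{T'}-1}.
\]
So the task reduces to proving that $\sqrt{\Exp{T'-1}}\cdot(\Exp h\circ T_*^{-1})$ equals the right-hand side for every $h\in\mathcal B_0(X)$.

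\textbf{Key computation.} The heart of the argument is a pointwise identity on $X^*$. For $h\in\mathcal B_0(X)$ and $\omega\in X^*$ I would use the explicit product formula~\eqref{eq:coherent_simple function-1} for $\Exp h(\omega)=e^{-\int h\,d\mu}\prod_{\omega(\{x\})=1}(1+h(x))$, the fact that $T_*^{-1}\omega=\{T^{-1}x:x\in\omega\}$, so that $\Exp h(T_*^{-1}\omega)=e^{-\int h\,d\mu}\prod_{\omega(\{x\})=1}(1+h(T^{-1}x))$, and the formula $\Exp{T'-1}(\omega)=e^{-\int(T'-1)\,d\mu}\prod_{\omega(\{x\})=1}T'(x)$ from Corollary~\ref{cor:R-N}(3) when $T'-1\in L^1$, or more generally its extended-coherent-vector interpretation from \S\ref{subsec:Product-formula}. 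Multiplying, the product over the points of $\omega$ becomes $\prod_{\omega(\{x\})=1}\sqrt{T'(x)}\,(1+h(T^{-1}x))=\prod_{\omega(\{x\})=1}\bigl(1+(U_Th)(x)+\sqrt{T'(x)}-1\bigr)$, where I have used $U_Th=\sqrt{T'}\cdot h\circ T^{-1}$ and the algebraic identity $\sqrt{T'}(1+h\circ T^{-1})-1=U_Th+(\sqrt{T'}-1)$; note this is exactly $1+(U_Th\bullet(\sqrt{T'}-1))$ in the $\bullet$-notation, which is why $\Exp{Ah}$ with $Ah=U_Th+\sqrt{T'}-1$ is the natural object. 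Matching the exponential prefactors then reduces to the scalar identity
\[
-\tfrac12\!\int(T'-1)\,d\mu-\!\int h\,d\mu=-\tfrac12\|\sqrt{T'}-1\|_2^2-\langle\sqrt{T'}-1,U_Th\rangle-\!\int\bigl(U_Th+\sqrt{T'}-1\bigr)d\mu,
\]
which is a direct consequence of $\|\sqrt{T'}-1\|_2^2=\int(T'-2\sqrt{T'}+1)\,d\mu$, $\langle\sqrt{T'}-1,U_Th\rangle=\int(\sqrt{T'}-1)\sqrt{T'}\,h\circ T^{-1}\,d\mu$ together with the change-of-variables identity $\int U_Th\,d\mu=\int\sqrt{T'}\,h\circ T^{-1}\,d\mu$ (and more precisely $\int\sqrt{T'}\cdot(\sqrt{T'}\,h\circ T^{-1})\,d\mu=\int h\,d\mu$ by the nonsingular change of variables $\int T'\cdot(h\circ T^{-1})\,d\mu=\int h\,d\mu$). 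Collecting terms gives the required equality of prefactors.

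\textbf{Main obstacle.} The routine algebra above is straightforward once one is careful; the genuine subtlety is \emph{integrability and convergence}. The product formula for $\Exp{T'-1}$ as an honest pointwise a.e.\ limit with an absolutely convergent infinite product is only guaranteed when $T'-1\in L^1(\mu)$, i.e.\ for $T\in\text{Aut}_1(X,\mathcal A,\mu)$; for general $T\in\text{Aut}_2(X,\mathcal A,\mu)$ one only has the extended coherent vector $\Exp{T'-1}\in L^1(\mu^*)$ and convergence in probability (Theorem~\ref{th:R-N-D}), so the pointwise manipulation must be justified by an approximation argument. I would handle this by first proving the identity for $T\in\text{Aut}_1(X,\mathcal A,\mu)$ by the direct pointwise computation, and then extending to all of $\text{Aut}_2(X,\mathcal A,\mu)$ by continuity: both $T\mapsto U_{T_*}$ and $T\mapsto W_{A_T^{(2)}}$ are continuous from $\text{Aut}_2(X,\mathcal A,\mu)$ with the $d_2$-topology into $\mathcal U_\mathbb R(L^2(\mu^*))$ with the weak operator topology (the first because $d_2$-convergence forces $\|\sqrt{T_n'}-\sqrt{T'}\|_2\to0$, hence $L^1$-convergence of the Radon--Nikodym cocycles and weak convergence of the Koopman operators; the second by the Weyl homomorphism theorem quoted above composed with the $d_2$-continuity of $A^{(2)}$), and $\text{Aut}_1(X,\mathcal A,\mu)$ is $d_2$-dense in $\text{Aut}_2(X,\mathcal A,\mu)$. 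Since the two continuous maps agree on a dense subgroup, they agree everywhere, which is precisely the asserted commutativity of the diagram.
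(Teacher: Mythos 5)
Your reduction to coherent vectors and the central computation are correct, and for $T\in\mathrm{Aut}_1(X,\mathcal A,\mu)$ they essentially reproduce the paper's calculation (modulo one slip: $A^{(2)}_Th$ equals $(h\circ T^{-1})\bullet(\sqrt{T'}-1)$, not $U_Th\bullet(\sqrt{T'}-1)$ --- the latter is $\sqrt{T'}+T'\cdot h\circ T^{-1}-1$). Where you genuinely diverge from the paper is in handling the integrability obstacle. The paper never needs the absolutely convergent infinite product for $\Exp{T'-1}$: it writes $T'-1=(\sqrt{T'}-1)\bullet(\sqrt{T'}-1)$ and applies the product formula \eqref{eq:product formula} for \emph{extended} coherent vectors, valid for arbitrary $f,g\in L^2(\mu)$, to get $\Exp{T'-1}=e^{-\|\sqrt{T'}-1\|_2^2}\Exp{\sqrt{T'}-1}^2$; combined with the positivity statement of Lemma~\ref{lem:non-negative-coherent} this gives $\sqrt{(T_*)'}=e^{-\frac12\|\sqrt{T'}-1\|_2^2}\Exp{\sqrt{T'}-1}$ outright, and one more application of \eqref{eq:product formula} to $(\sqrt{T'}-1)\bullet(f\circ T^{-1})=A^{(2)}_Tf$ finishes the proof for \emph{every} $T\in\mathrm{Aut}_2(X,\mathcal A,\mu)$ at once. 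Your two-step detour (prove on $\mathrm{Aut}_1$, extend by density) is therefore unnecessary, and it is also where the gaps sit.

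Concretely: (i) the continuity of $T\mapsto U_{T_*}$ from $(\mathrm{Aut}_2,d_2)$ to the weak operator topology is asserted but not established; to get it you need, among other things, $\|(T_n)_*'-(T_*)'\|_{L^1(\mu^*)}=\|\Exp{T_n'-1}-\Exp{T'-1}\|_{L^1(\mu^*)}\to0$, and the natural proof of that is precisely the identity $\Exp{T'-1}=e^{-\|\sqrt{T'}-1\|_2^2}\Exp{\sqrt{T'}-1}^2$ together with the $L^2$-continuity of $h\mapsto\Exp h$ --- i.e.\ the very ingredient that makes the direct argument work on all of $\mathrm{Aut}_2$. (ii) Even in the $\mathrm{Aut}_1$ step you tacitly use a pointwise product representation of $\Exp{A^{(2)}_Th}$, but $A^{(2)}_Th\in L^2(\mu)$ is generally not in $\mathcal B_0(X)$ (since $\sqrt{T'}-1$ need not have finite-measure support), so \eqref{eq:coherent_simple function-1} does not apply to it directly; here too the fix is to invoke \eqref{eq:product formula}. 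Both gaps are repairable, but once repaired your proof collapses into the paper's one-step algebraic argument.
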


\begin{proof}
It is sufficient to verify that
$U_{T_{*}}\Exp f=W_{A_T^{(2)}}\Exp f$
for every simple (i.e. finite valued) function $f$
  from $\mathcal{B}_{0}(X)$.
  We  note that $f\circ T^{-1}$ is also a simple function from $\mathcal{B}_{0}(X)$
  and
\begin{align*}
  \Exp {f\circ T^{-1}}(\omega)&=e^{-\int_Xf\circ T^{-1}d\mu}\prod_{\{x\in X:\,\omega(\{x\})=1\}}(1+f(T^{-1}x))\\
  &=e^{\int_X(f-f\circ T^{-1})d\mu}\,\Exp f(T_*^{-1}\omega)
\end{align*}
  at a.e. $\omega\in X^*$.
Using this and Corollary~\ref{cor:R-N} we obtain that
   \begin{align}\label{pu}
   \begin{split}
  U_{T_{*}}\Exp f&=\sqrt{(T_{*})'}\,\Exp f\circ T_{*}^{-1}\\
 &= \sqrt{\Exp{T^{\prime}-1}}e^{\int_X(f\circ T^{-1}-f)d\mu}\,\Exp {f\circ T^{-1}}.
 \end{split}
  \end{align}
 By
the product formula (\ref{eq:product formula}) (see also (\ref{eq:phi -1})), 
$$
  \Exp{T^{\prime}-1}=e^{-\| \sqrt{T^{\prime}}-1\| _{2}^{2}}\Exp{\sqrt{T^{\prime}}-1}^{2}.
$$
Due to Lemma~\ref{lem:non-negative-coherent}, 
$
\Exp{\sqrt{T^{\prime}}-1}\ge0$ and hence 
    \begin{equation}\label{was}
  \sqrt{\Exp{T^{\prime}-1}}=e^{-\frac12\|\sqrt{T'}-1\|_2^2}\Exp{\sqrt{T^{\prime}}-1}.
\end{equation}
By a straightforward computation, 
$
(\sqrt{T'}-1)\bullet( f\circ T^{-1})=A^{(2)}_Tf
$.
  Hence, in view of  (\ref{eq:product formula}), 
  \begin{equation}\label{uha}
  \Exp{\sqrt{T'}-1}\Exp{f\circ T^{-1}}=e^{\int_X(\sqrt{T'}-1) f\circ T^{-1}d\mu}\Exp{A^{(2)}_Tf}.
  \end{equation}
  Substituting first (\ref{was}) and then (\ref{uha}) into (\ref{pu}), we obtain that
\begin{align*}
U_{T_{*}}\Exp f
 & =e^{-\frac{1}{2}\| \sqrt{T^{\prime}}-1\| _{2}^{2}+\int_{X}(-f+U_Tf)d\mu}\Exp{A_T^{(2)}f}\\
 &=e^{-\frac{1}{2}\| \sqrt{T^{\prime}}-1\| _{2}^{2}-\langle \sqrt{T^{\prime}}-1,U_Tf\rangle_{L^2(\mu)}}\Exp{A_T^{(2)}f}\\
 &=W_{A_T^{(2)}}\Exp f.
\end{align*}
\end{proof}

Thus,  the group $\mathcal W$ contains the unitary Koopman operator generated by every transformation from {\rm Aut}$_\mathcal P(X^*,\mathcal A^*,\mu^*)$.
We now show that it does not contain any Koopman operator generated by transformations from the set theoretical difference 
{\rm Aut}$(X^*,\mathcal A^*,\mu^*)\setminus \text{{\rm Aut}}_\mathcal P(X^*,\mathcal A^*,\mu^*)$.

\begin{prop} 
$
\mathcal W\cap \{U_S:\,S\in\text{{\rm Aut}}(X^*,\mathcal A^*,\mu^*)\}
=\{W_{A^{(2)}_T}:\,T\in\text{{\rm Aut}}_2(X,\mathcal A,\mu)\}.
$
\end{prop}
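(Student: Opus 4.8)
The plan is to prove the non-trivial inclusion: if $S\in\text{Aut}(X^*,\mathcal A^*,\mu^*)$ and $U_S=W_A$ for some $A=(f,V)\in\text{Aff}_\Bbb R(L^2(\mu))$, then in fact $A=A^{(2)}_T$ for some $T\in\text{Aut}_2(X,\mathcal A,\mu)$ (and hence, by Theorem~\ref{main of 3}, $U_S=U_{T_*}$, forcing $S=T_*$ up to a point transformation). The reverse inclusion is immediate from Theorem~\ref{main of 3}. The key structural fact I would exploit is the cone characterization: by~(\ref{eq:positive unitary}), a unitary $U_S$ generated by a nonsingular transformation is exactly one preserving the positive cone $C_0=\{F\in L^2(\mu^*):F\ge 0\}$; and by Theorem~\ref{th:characterization Aut_2}, the affine operators $A^{(2)}_T$ of $\text{Aut}_2(X,\mathcal A,\mu)$ are exactly those in $\text{Aff}_\Bbb R(L^2(\mu))$ preserving the semispace $C_{-1}=\{h\in L^2(\mu):h\ge-1\}$. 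So the task reduces to: \emph{if $W_A$ preserves $C_0\subset L^2(\mu^*)$ then $A$ preserves $C_{-1}\subset L^2(\mu)$.}

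First I would record how $W_A$ acts on a coherent vector via~(\ref{eq:Weil}): $W_A\Exp h$ is a positive multiple of $\Exp{Ah}$. Now combine this with Lemma~\ref{lem:non-negative-coherent}, which says $\Exp\varphi\ge 0$ $\mu^*$-a.s.\ if and only if $\varphi\ge-1$ $\mu$-a.e. Taking $h$ with $h\ge-1$, i.e.\ $h\in C_{-1}$, the coherent vector $\Exp h$ is $\ge 0$, so (since $U_S=W_A$ preserves $C_0$) the image $W_A\Exp h$ is $\ge 0$, hence $\Exp{Ah}\ge 0$, hence $Ah\ge-1$, i.e.\ $Ah\in C_{-1}$. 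This gives $AC_{-1}\subseteq C_{-1}$. For the reverse containment one applies the same argument to $A^{-1}$: since $W$ is a homomorphism, $W_{A^{-1}}=W_A^{-1}=U_S^{-1}=U_{S^{-1}}$ also preserves $C_0$, so $A^{-1}C_{-1}\subseteq C_{-1}$, and therefore $AC_{-1}=C_{-1}$. By Theorem~\ref{th:characterization Aut_2}, $A=A^{(2)}_T$ for some $T\in\text{Aut}_2(X,\mathcal A,\mu)$, and then $W_A=W_{A^{(2)}_T}=U_{T_*}$ by Theorem~\ref{main of 3}. Since $U_S=U_{T_*}$ and the Koopman representation $U$ of $\text{Aut}(X^*,\mathcal A^*,\mu^*)$ is one-to-one, $S=T_*\in\text{Aut}_\mathcal P(X^*,\mathcal A^*,\mu^*)$, which finishes the proof.

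The main obstacle, and the place that needs care rather than routine bookkeeping, is the passage from "$W_A$ maps the coherent vector $\Exp h$ (for $h\ge-1$) to a nonnegative function" to "$Ah\ge-1$": one must be sure the scalar prefactor $e^{-\|f\|_2^2/2-\langle f,Vh\rangle}$ in~(\ref{eq:Weil}) is a genuine positive real (it is, since $f,Vh$ are real-valued, the inner products real), so that $W_A\Exp h\ge 0$ is equivalent to $\Exp{Ah}\ge 0$; and then one invokes the "in particular" clause of Lemma~\ref{lem:non-negative-coherent} to conclude $Ah\ge-1$. A secondary point worth a line is that the cone $C_0$ here is the positive cone in $L^2(\mu^*)$, and the identification $F(L^2(\mu))\cong L^2(\mu^*)$ of \S\ref{Fock} is an order-preserving one on the relevant coherent vectors precisely because $\Exp\varphi$ is defined as the genuine pointwise function $\exp(\varphi)$ on $X^*$; this is exactly what makes Lemma~\ref{lem:non-negative-coherent} applicable. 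No other step requires more than the cited results.
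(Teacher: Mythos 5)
Your argument is correct and is essentially identical to the paper's proof: both reduce the nontrivial inclusion, via \eqref{eq:positive unitary}, \eqref{eq:Weil} and Lemma~\ref{lem:non-negative-coherent}, to showing $AC_{-1}=C_{-1}$ (arguing for $A$ and $A^{-1}$ separately) and then invoke Theorem~\ref{th:characterization Aut_2}, with the converse inclusion coming from Theorem~\ref{main of 3}. The two points you flag as needing care (positivity of the scalar prefactor and the order-preserving identification of $\Exp\varphi$ with the pointwise function $\exp(\varphi)$) are exactly what makes the paper's argument go through.
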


\begin{proof} 
Suppose that for some operator $A\in\text{{\rm Aff}}_\Bbb R(L^2(\mu))$, the unitary $W_A$  is the Koopman operator generated by a nonsingular transformation of 
$(X^*,\mathcal A^*,\mu^*)$.
Then, according to (\ref{eq:positive unitary}), $W_A$ preserves invariant the cone
$L^2_+(\mu^*)$  of non-negative functions in $L^2(\mu^*)$.
It follows from Lemma~\ref{lem:non-negative-coherent} that 
$$
L^2_+(\mu^*)\cap\{\Exp h:\, h\in L^2_\Bbb R(X,\mu)\}=
\{\Exp h:\, h\in C_{-1}\}.
$$
Hence $W_A(\{\Exp h:\, h\in C_{-1}\})\subset L^2_+(\mu^*)$.
In view of (\ref{eq:Weil}) and Lemma~\ref{lem:non-negative-coherent} this is equivalent to $AC_{-1}\subset C_{-1}$.
Since $A^{-1}$ is also a Koopman operator, a similar reasoning yields that 
$A^{-1}C_{-1}\subset C_{-1}$.
Hence $AC_{-1}=C_{-1}$.
By Theorem~\ref{th:characterization Aut_2}, $A=A^{(2)}_T$ for some $T\in\text{{\rm Aut}}_2(X,\mathcal A,\mu)$.
Thus, we showed that
$
\mathcal W\cap \{U_S:\,S\in\text{{\rm Aut}}(X^*,\mathcal A^*,\mu^*)\}
\subset\{W_{A^{(2)}_T}:\,T\in\text{{\rm Aut}}_2(X,\mathcal A,\mu)\}.
$
The converse inclusion was  established in Theorem~\ref{main of 3}.
\end{proof} 

Since $\mathcal W$ is a closed subgroup of $\mathcal U(L^2(\mu^*))$, we obtain the following corollary from the above proposition.

\begin{cor} {\rm Aut}$_\mathcal P(X^*,\mathcal A^*,\mu^*)$ is  weakly closed  in 
{\rm Aut}$(X^*,\mathcal A^*,\mu^*)$.
\end{cor}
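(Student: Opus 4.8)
The plan is to transfer the already-established closedness of the Weyl group $\mathcal W$ in $\mathcal U_\Bbb R(L^2(\mu^*))$ back to the transformation side through the unitary Koopman representation $U$ of $\text{Aut}(X^{*},\mathcal A^{*},\mu^{*})$, using the preceding Proposition as the bridge. First I would unwind the relevant definitions: the weak topology on $\text{Aut}(X^{*},\mathcal A^{*},\mu^{*})$ is by construction the initial topology with respect to $U$, and this group is Polish, hence metrizable, so it suffices to show that whenever $S_n\to S$ weakly with each $S_n\in\text{Aut}_{\mathcal P}(X^{*},\mathcal A^{*},\mu^{*})$, the limit $S$ again lies in $\text{Aut}_{\mathcal P}(X^{*},\mathcal A^{*},\mu^{*})$. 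By the initial-topology description, $S_n\to S$ weakly forces $U_{S_n}\to U_S$ in the weak operator topology.

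Next I would feed in Theorem~\ref{main of 3}. Each $S_n$ is of the form $(T^{(n)})_*$ for some $T^{(n)}\in\text{Aut}_2(X,\mathcal A,\mu)$, and Theorem~\ref{main of 3} identifies $U_{S_n}=U_{(T^{(n)})_*}=W_{A^{(2)}_{T^{(n)}}}\in\mathcal W$. Since $\mathcal W$ is closed in $\mathcal U_\Bbb R(L^2(\mu^*))$ for the weak operator topology — a consequence of the auxiliary theorem on the Weyl homomorphism together with \cite[Proposition 1.2.1]{BeKe96desc}, as recorded above — the limit satisfies $U_S\in\mathcal W$. On the other hand $U_S$ is trivially a Koopman operator, so $U_S\in\mathcal W\cap\{U_R:R\in\text{Aut}(X^{*},\mathcal A^{*},\mu^{*})\}$, which by the Proposition immediately preceding this corollary equals $\{W_{A^{(2)}_T}:T\in\text{Aut}_2(X,\mathcal A,\mu)\}=\{U_{T_*}:T\in\text{Aut}_2(X,\mathcal A,\mu)\}$. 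Hence $U_S=U_{T_*}$ for some $T\in\text{Aut}_2(X,\mathcal A,\mu)$, and since $U$ is injective we get $S=T_*\in\text{Aut}_{\mathcal P}(X^{*},\mathcal A^{*},\mu^{*})$, as required.

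There is no real obstacle beyond this assembly, since the substantive work is already contained in the preceding Proposition and in the closedness of $\mathcal W$. The only points that demand a moment of care are the bookkeeping identity $U_{T_*}=W_{A^{(2)}_T}$ of Theorem~\ref{main of 3} and the observation that ``weak convergence'' of nonsingular transformations of $(X^{*},\mathcal A^{*},\mu^{*})$ is \emph{literally} weak-operator convergence of the associated Koopman operators, which is exactly what lets the closedness of $\mathcal W$ be pulled back. Equivalently, one can phrase the argument purely topologically: by~(\ref{eq:positive unitary}) and the open mapping theorem for Polish groups, $U$ is a topological isomorphism of $\text{Aut}(X^{*},\mathcal A^{*},\mu^{*})$ onto its (closed) image, and $U(\text{Aut}_{\mathcal P}(X^{*},\mathcal A^{*},\mu^{*}))=\mathcal W\cap U(\text{Aut}(X^{*},\mathcal A^{*},\mu^{*}))$ is relatively closed in that image, whence $\text{Aut}_{\mathcal P}(X^{*},\mathcal A^{*},\mu^{*})$ is weakly closed.
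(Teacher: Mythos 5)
Your proof is correct and is essentially the paper's own argument: the paper likewise derives the corollary by combining the weak-operator closedness of $\mathcal W$ with the preceding proposition identifying $\mathcal W\cap\{U_S\}$ as exactly the Koopman operators of Poisson suspensions, and pulling this back through the initial topology defining weak convergence. You have merely spelled out the details that the paper leaves implicit.
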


\section{Generic  properties in $\text{Aut}_{2}(X,\mathcal{A},\mu)$ and $\text{Aut}_{1}(X,\mathcal{A},\mu)$}\label{sec: Baire category}
As the groups $\text{Aut}_{2}(X,\mathcal{A},\mu)$ and $\text{Aut}_{1}(X,\mathcal{A},\mu)$ are Polish, it is natural to ask: which dynamical  properties (or, more rigorously,  the subsets of elements possessing these properties) are generic in these groups in the Baire category sense?
 Recall that a set in a Polish space is generic if it contains a subset which is a dense
  $G_\delta$  in this space.
 
 We first list the well known generic properties for  $\text{Aut}(X,\mathcal{A},\mu)$,
 the definitions of these (and other) properties will be given just below  the Theorem.
\begin{theorem*}[\cite{ChKa79}, \cite{ChHaPr87}]
The following  subsets of nonsingular transformations: 
\begin{itemize}
	\item $\mathrm{Cons}(X,\mathcal{A},\mu):=\{T\in \mathrm{Aut}(X,\mathcal{A},\mu): T\text{ is conservative}\}$,
	\item $\mathrm{Erg}(X,\mathcal{A},\mu):=\{T\in \mathrm{Aut}(X,\mathcal{A},\mu):\text{ $T$ is ergodic}\}$,
	\item $\mathrm{Erg}^{\mathrm{III}_1}(X,\mathcal{A},\mu):=\{T\in \mathrm{Aut}(X,\mathcal{A},\mu):\text{ $T$ is of type $\mathrm{III}_1$}\}$ 
\end{itemize}
are dense $G_\delta$ in {\rm Aut}$(X,\mathcal{A},\mu)$ endowed with  the weak topology.
\end{theorem*}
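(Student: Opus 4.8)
The plan is to prove the three genericity assertions by the standard two-step recipe: show that each of the sets $\mathrm{Cons}(X,\mathcal A,\mu)$, $\mathrm{Erg}(X,\mathcal A,\mu)$, $\mathrm{Erg}^{\mathrm{III}_1}(X,\mathcal A,\mu)$ is a $G_\delta$ in the weak topology, and show that $\mathrm{Erg}^{\mathrm{III}_1}(X,\mathcal A,\mu)$ is weakly dense; since $\mathrm{Erg}^{\mathrm{III}_1}\subseteq\mathrm{Erg}\subseteq\mathrm{Cons}$, density of the smallest set gives density of all three. Throughout I fix a finite measure $\mu_0\sim\mu$ and a countable family $\{A_k\}_{k\in\mathbb N}\subseteq\mathcal A$ dense in the measure algebra of $(X,\mathcal A,\mu_0)$, and I use the following standard consequences of the definition of the weak topology: if $T_n\to T$ weakly then, for every fixed $A\in\mathcal A$, $\mathbf 1_{T_nA}\to\mathbf 1_{TA}$ and $\mathbf 1_{T_n^{-1}A}\to\mathbf 1_{T^{-1}A}$ in $L^1(\mu_0)$, and $(T_n)'\to T'$ in $\mu$-measure on every set of finite $\mu$-measure (all of these follow from the strong convergence $U_{T_n}h\to U_Th$ in $L^2(\mu)$ applied to indicators of sets of finite $\mu$-measure).

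For conservativeness, recall that $T$ is conservative if and only if it has no wandering set of positive measure; approximating a putative wandering set by a nearby member of $\{A_k\}$ in the \emph{finite} measure $\mu_0$ (legitimate precisely because $\mu_0$ is finite, so the measure-algebra metric is available), one obtains
\[
\mathrm{Cons}(X,\mathcal A,\mu)=\bigcap_{k}\bigcap_{j}\Big\{T:\ \mu_0(A_k)\le\tfrac1j\ \text{ or }\ \exists\,n\ge1:\ \mu_0(T^{-n}A_k\cap A_k)>\tfrac1j\Big\}.
\]
Each set in the intersection is weakly open by continuity of $T\mapsto\mathbf 1_{T^{-n}A_k}$ into $L^1(\mu_0)$, so $\mathrm{Cons}$ is $G_\delta$. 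Ergodicity is handled identically, replacing the recurrence condition $\mu_0(T^{-n}A_k\cap A_k)>\tfrac1j$ by the transitivity condition $\mu_0(T^{-n}A_k\cap A_l)>\tfrac1j$ and intersecting over all pairs $(k,l)$; this again exhibits $\mathrm{Erg}$ as a countable intersection of weakly open sets.

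For type $\mathrm{III}_1$ I would use the ratio-set characterization: an ergodic $T$ is of type $\mathrm{III}_1$ if and only if for every $t\in\mathbb R$, every $\varepsilon>0$ and every $A$ with $\mu(A)>0$ there exist $B\subseteq A$ with $\mu(B)>0$ and $n\ge1$ such that $T^nB\subseteq A$ and $\big|\log(T^n)'-t\big|<\varepsilon$ a.e.\ on $B$. Letting $t$ range over a countable dense subset of $\mathbb R$, $\varepsilon$ over $\{1/j\}$ and $A$ over $\{A_k\}$, and taking the union over $n\ge1$ and over $B$ in $\{A_k\}$, one writes $\mathrm{Erg}^{\mathrm{III}_1}=\mathrm{Erg}\cap\bigcap(\cdots)$ as a countable intersection; the only point that needs checking is that a set of the form $\{T:\ \exists\,B\in\{A_k\}:\ B\subseteq A_k,\ \mu_0(B)>\tfrac1j,\ \mu_0(T^nB\setminus A_k)<\tfrac1{2j},\ |\log(T^n)'-t|<\tfrac1j\ \text{on }B\}$ is weakly open, which follows from continuity of $T\mapsto\mathbf 1_{T^nB}$ in $L^1(\mu_0)$ together with continuity of $T\mapsto(T^n)'$ in measure. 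Since this set is visibly $G_\delta$ and conjugation-invariant, it remains only to combine it with the density step.

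The hard part is the weak density of $\mathrm{Erg}^{\mathrm{III}_1}(X,\mathcal A,\mu)$. Unlike the $G_\delta$ assertions, this is not a soft measure-algebra manipulation: it relies on (a) the existence of at least one ergodic transformation of Krieger type $\mathrm{III}_1$ on $(X,\mathcal A,\mu)$ (Krieger's theorem, or an explicit nonsingular odometer / Bernoulli construction), and (b) the nonsingular conjugacy lemma — the weak closure of the conjugacy class $\{RT_0R^{-1}:R\in\mathrm{Aut}(X,\mathcal A,\mu)\}$ of any conservative ergodic $T_0$ is all of $\mathrm{Aut}(X,\mathcal A,\mu)$. Step (b) is the genuine obstacle, being more delicate than in the measure-preserving category because one must approximate simultaneously the set-theoretic action of $T$ and its Radon–Nikodym derivative; the standard route is a Rokhlin-tower / cutting-and-stacking argument carried out over the fixed finite equivalent measure $\mu_0$, approximating $T$ by a periodic transformation with prescribed tower data and then transporting the type $\mathrm{III}_1$ model $T_0$ along a measure-algebra isomorphism matching those data. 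Once (a) and (b) are in hand, since $\mathrm{Erg}^{\mathrm{III}_1}$ is conjugation-invariant and meets every dense conjugacy class, its density — and hence the genericity of all three sets — follows.
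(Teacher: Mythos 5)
First, a point of calibration: the paper does not prove this statement at all --- it is quoted as known background from \cite{ChKa79} and \cite{ChHaPr87} --- so there is no internal proof to compare against, and your proposal must stand on its own. Its architecture (exhibit each set as a countable intersection of weakly open sets, then get density from the density of the conjugacy class of a single type $\mathrm{III}_1$ transformation) is the right one, but the execution of the $G_\delta$ step contains a concrete error, and the density step is only a pointer.

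The set identity you write for $\mathrm{Cons}$ is false. Conservativity does not produce a single return time $n$ with $\mu_0(T^{-n}A_k\cap A_k)>1/j$ whenever $\mu_0(A_k)>1/j$: take $T$ a Bernoulli shift preserving the probability $\mu_0$ and $A_k$ (essentially) a one-coordinate cylinder of measure $0.51$; then $\mu_0(A_k\cap T^{-n}A_k)=\mu_0(A_k)^2\approx 0.26$ for every $n\ge 1$, so with $j=2$ this conservative $T$ lies outside your intersection. Recurrence only gives $\mu_0\bigl(A_k\setminus\bigcup_{n=1}^{N}T^{-n}A_k\bigr)\to 0$, so the open conditions must be phrased via finite unions of return times, not a single one; the same defect kills your formula for $\mathrm{Erg}$, since mixing drives $\mu_0(T^{-n}A_k\cap A_l)$ to $\mu_0(A_k)\mu_0(A_l)$, which can be below $1/j$ while both measures exceed $1/j$. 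Even after correcting this, the reduction from ``all measurable $A$'' to ``all $A_k$ in a dense family'' is not the soft step you suggest: in the nonsingular category $\mu_0(T^{-n}(A\triangle A_k))$ is not controlled by $\mu_0(A\triangle A_k)$ uniformly in $n$, and dealing with exactly this is the technical content of \cite{ChKa79}. Separately, in the type $\mathrm{III}_1$ step the requirement $|\log (T^n)'-t|<1/j$ \emph{a.e.\ on} $B$ is not a weakly open condition: weak convergence gives $(T_m^n)'\to (T^n)'$ only in measure, which does not preserve essential-supremum bounds on a fixed set, so you must relax to conditions of the form $\mu_0(\{x\in B:\ |\log (T^n)'(x)-t|\ge 1/j\})<\delta$ and re-derive the ratio-set characterization in that form (this is what \cite{ChHaPr87} does). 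Finally, the density half rests entirely on the conjugacy lemma, which you correctly identify as ``the genuine obstacle'' but do not prove; as it stands, the hardest part of the theorem is still outstanding.
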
 
Recall that a transformation $T\in \text{Aut}\left(X,\mathcal{A},\mu\right)$ is \textit{conservative} if every wandering set $W\in\mathcal{A}$ for $T$, that is a set such that $\left\{T^nW\right\}_{n\in\mathbb{Z}}$ are pairwise disjoint, is a null set. The transformation $T$ is \textit{ergodic} if every   $T$-invariant subset is either $\emptyset$ or $X$ modulo null sets.
The transformation $T$ is \textit{aperiodic} if there is a conull subset $X'\subset X$ such that for all $x\in X'$ and $n>0$, $T^nx\neq x$. There are several ways to define the type $\mathrm{III}_1$ property, in this paper we will use the definition involving the Maharam extension. 
The \textit{Maharam extension} of $T\in \text{Aut}(X,\mathcal{A},\mu)$, is the transformation $\tilde{T}$ on $\left(X\times \mathbb{R}, \mathcal{A}\otimes \mathcal{B}_{\mathbb{R}}\right)$ defined by 
\[
\tilde{T}(x,y):=\left(Tx,y-\log\frac{d\mu\circ T}{d\mu}(x)\right).
\]
For every $T\in \text{Aut}(X,\mathcal{A},\mu)$, its Maharam extension preserves the measure $\tilde{\mu}$ defined by the formula
$\tilde{\mu}(A\times I):=\mu(A)\int_I e^tdt$
for every $A\in\mathcal{A}$ and each interval $I\subset \mathbb{R}$.
  The transformation $T$ is \textit{of type} $\mathrm{III}_1$ if its Maharam extension is ergodic. 
  
The proof of genericity results usually consists of two steps: the first one is to show that the set under consideration is $G_\delta$ which may involve making use of ergodic theorems and countably many conditions defining the set. The second step is to prove that this set is dense. For the second step the following conjugacy lemma is often a key ingredient. 

\begin{lem*}
For every aperiodic $T\in \text{{\rm Aut}}(X,\mathcal{A},\mu)$, the conjugacy class 
$$
\left\{STS^{-1}:\ S\in \text{{\rm Aut}}\left(X,\mathcal{A},\mu\right)\right\}
$$
 of $S$ is weakly dense in $\text{{\rm Aut}}(X,\mathcal{A},\mu)$  \cite{ChKa79}. 
\end{lem*}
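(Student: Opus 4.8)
The plan is to follow the classical Rokhlin--Halmos conjugacy argument, adding the bookkeeping forced by the Radon--Nikodym cocycle.

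\emph{Reductions.} The closure of the conjugacy class $\mathcal C:=\{STS^{-1}:S\in\text{Aut}(X,\mathcal A,\mu)\}$ is closed and invariant under conjugation, so by the theorem quoted above (ergodic transformations are weakly dense) it suffices to show that every \emph{ergodic} $R$ lies in $\overline{\mathcal C}$. A basic weak neighbourhood of such an $R$ is determined by a finite partition $\mathcal P=\{A_1,\dots,A_k\}$ of $X$ into sets of finite measure together with an $\varepsilon>0$, and one must produce $Q=STS^{-1}\in\mathcal C$ with $\|U_Q\mathbf 1_{A_j}-U_R\mathbf 1_{A_j}\|_2<\varepsilon$ for all $j$; since $U_Q\mathbf 1_{A_j}=\sqrt{Q'}\,\mathbf 1_{QA_j}$, this means $QA_j$ is close to $RA_j$ and $\sqrt{Q'}$ is close to $\sqrt{R'}$ off a small set.

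\emph{Towers and the conjugating map.} Fix $n$ large and $\delta>0$ small. Using the nonsingular Rokhlin lemma for the ergodic $R$, choose $C\in\mathcal A$ with $C,RC,\dots,R^{n-1}C$ pairwise disjoint, with exceptional set $E_R:=X\setminus\bigsqcup_{i=0}^{n-1}R^iC$ of small measure, and (Alpern-type refinement) with each floor $R^iC$ equal, up to $\delta$-error, to a union of pieces each lying in a single atom of $\mathcal P$; likewise take a Rokhlin tower $D,TD,\dots,T^{n-1}D$ for the aperiodic $T$ with small exceptional set $E_T$. Define $S\in\text{Aut}(X,\mathcal A,\mu)$ by picking any nonsingular isomorphism $S|_D\colon D\to C$, setting $S|_{T^iD}:=R^{i}\circ(S|_D)\circ T^{-i}$ for $1\le i\le n-1$, and letting $S$ carry $E_T$ onto $E_R$ by any nonsingular isomorphism (which exists, both being standard non-atomic spaces). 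A one-line computation then shows that $Q:=STS^{-1}$ coincides with $R$ \emph{identically} on $\bigsqcup_{i=0}^{n-2}R^iC$; consequently $U_Q$ and $U_R$ agree as operators off the set $C\cup R^{n-1}C\cup E_R$ together with its images under $Q$ and $R$. Hence, for $n$ large (so $C$ and $R^{n-1}C$ are small) and $\delta$ small, $\|U_Q\mathbf 1_{A_j}-U_R\mathbf 1_{A_j}\|_2<\varepsilon$, while the $\mathcal P$-refinement of the $R$-tower yields $QA_j\approx RA_j$; thus $Q\in\mathcal C$ lies in the prescribed neighbourhood of $R$.

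\emph{The main obstacle.} The genuinely nonsingular difficulty is that, unlike in the measure preserving case, the floors of a Rokhlin tower for $R$ need not have comparable measures, and a transformation with a large local Radon--Nikodym derivative can inflate a set of small measure into one of large measure; so it is not automatic that the exceptional sets $C$, $R^{n-1}C$, $E_R$, $R^nC$, $RE_R$, $QE_R$ are all small in $\mu$. I would control this by running the Rokhlin construction (a Kakutani skyscraper over a small base) simultaneously with respect to the finitely many mutually equivalent measures $\mu,\mu\circ R^{\pm1},\dots,\mu\circ R^{\pm n}$, so that the complement is small for all of them at once, and by first discarding a set of small measure on which $(R^{\pm1})'$ is unbounded; when $\mu(X)=\infty$ one works throughout with the finite measures $\mathbf 1_{A_j}\,d\mu$, which is all that the weak topology detects. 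This estimate of the boundary pieces is the only step requiring real care; everything else is the classical conjugacy lemma, for which one may refer to \cite{ChKa79}.
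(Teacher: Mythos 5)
The paper does not prove this lemma at all: it is quoted verbatim from \cite{ChKa79}, so there is no internal argument to compare yours with. Judged on its own terms, your proposal has a genuine gap at the step ``for $n$ large (so $C$ and $R^{n-1}C$ are small)''. In the finite measure preserving case the floors of a height-$n$ Rokhlin tower all have measure $\approx \mu(X)/n$, which is what makes the top floor negligible; in the nonsingular category nothing of the sort holds. Since the floors $C,RC,\dots,R^{n-1}C$ are disjoint and cover all but $\epsilon$ of the space (in the relevant finite measure), some floor $R^{i_0}C$ has measure at least $(1-\epsilon)/n$, and then $\mu(R^{n-1}C)=\mu\bigl(R^{\,n-1-i_0}(R^{i_0}C)\bigr)$ is bounded below by a quantity depending only on $R$ and $n-1-i_0$ through the equivalence $\mu\circ R^k\sim\mu$ — a bound that need not tend to $0$ as $n\to\infty$ (for $n=2$ it is outright impossible to make both the residual and the top floor small). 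Your proposed repairs — running the Rokhlin lemma simultaneously for $\mu\circ R^{\pm i}$ and discarding a set where $(R^{\pm1})'$ is unbounded — control $\mu(RE_R)$ and relate consecutive floors up to a factor $M$, but over $n$ floors this still allows the top floor to carry a fixed positive fraction of the mass, so the gap remains. (Two smaller points: the reduction to ergodic targets risks circularity, since the density of ergodics is classically \emph{deduced} from this conjugacy lemma; and, on the other hand, your worry about images of the exceptional set is actually vacuous — once $Q=R$ on $G$ one has $U_Q\mathbf 1_{A\cap G}=U_R\mathbf 1_{A\cap G}$ and hence $\|U_Q\mathbf 1_A-U_R\mathbf 1_A\|_2\le 2\sqrt{\mu(A\setminus G)}$ because the Koopman operators are isometries, so only the size of the bad set itself matters, not where $Q$ or $R$ send it.)

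The classical argument of \cite{ChKa79} runs in the opposite direction and avoids ever needing a tower for the target. One first approximates the target by a \emph{periodic} transformation $P$ with $P^N=\mathrm{id}$, permuting cyclically the atoms $A_1\to\cdots\to A_N\to A_1$ of a fine partition and having bounded Radon--Nikodym derivative; the density of such $P$ is an independent, easier fact (cf.\ \cite{IonTul65} and Corollary 4.6(1) of this paper). Then only a Rokhlin tower $D,TD,\dots,T^{N-1}D$ for the aperiodic $T$ is used: one defines $S$ on $D$ as any nonsingular bijection onto a subset $A_1'\subset A_1$ with $\mu(A_1\setminus A_1')$ tiny, propagates by $S|_{T^{i+1}D}:=P\circ S|_{T^iD}\circ T^{-1}$, and sends $E_T$ onto $\bigsqcup_i P^i(A_1\setminus A_1')$. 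Then $STS^{-1}=P$ off $A_N'\cup\bigsqcup_i P^i(A_1\setminus A_1')$, a set of small measure because $A_N$ is an atom of a fine partition and $P$ has bounded derivative. The decisive feature of the nonsingular category — that a nonsingular isomorphism exists between \emph{any} two non-null sets, so the residual and the distinguished floors of the $T$-tower can be sent onto prescribed small sets — is exactly what your tower-for-$R$ scheme cannot exploit, because there the problematic set $R^{n-1}C$ is dictated by $R$ rather than chosen by you.
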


We also recall that a Polish group $G$ has the \textit{Rokhlin property} if it has a dense conjugacy class.
For instance, by the above lemma, $\text{Aut}(X,\mathcal{A},\mu)$ has the Rokhlin property.

We note that $\text{Aut}_1(X,\mathcal{A},\mu)$  appeared in \cite{Ner1996CatInfGroups}  as a generalization of earlier work on representation theory of groups of diffeomorphisms on non-compact manifolds which are the identity outside a compact set (see \cite{GelGraVer}, \cite{GolGroPowSha}).
In this connection, it seems natural to introduce the following definition:
 a transformation $T\in \text{Aut}(X,\mathcal{A},\mu)$  is \textit{local} if there is a set $A\in\mathcal{A}$ with $\mu(A)<\infty$ such that $Tx=x$ for all $x\notin A$. 
 Denote by $\text{Aut}_{\widehat{0}}(X,\mathcal{A},\mu)\subset \text{Aut}_1(X,\mathcal{A},\mu)$ the set of all local transformations.
 Of course, $\text{Aut}_{\widehat{0}}(X,\mathcal{A},\mu)$ is a group.
 Since each transformation in $\text{Aut}_2(X,\mathcal{A},\mu)$ preserves the class
 of subsets of finite measure, $\text{Aut}_{\widehat{0}}(X,\mathcal{A},\mu)$
 is a normal subgroup of 
 $\text{Aut}_{1}(X,\mathcal{A},\mu)$ and $\text{Aut}_{2}(X,\mathcal{A},\mu)$.
  (However it is not normal in  $\text{Aut}(X,\mathcal{A},\mu)$.)
  The following consequence of Theorem \ref{thm: Aut1 semidirect prod} (specifically the topological isomorphism to a semidirect product) shows that   $\text{Aut}_{\widehat{0}}\left(X,\mathcal{A},\mu\right)$ is ``too small'' in $\text{Aut}_1(X,\mathcal{A},\mu)$. 
  
\begin{cor}
The group $\text{{\rm Aut}}_1(X,\mathcal{A},\mu)$ does not have the Rokhlin property.
The subgroup  of local transformations, being a subset of $\ker \chi$, is  $d_1$-nowhere dense  in $\text{{\rm Aut}}_1(X,\mathcal{A},\mu)$.  
\end{cor}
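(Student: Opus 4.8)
The plan is to deduce both claims from Theorem~\ref{thm: Aut1 semidirect prod}, which supplies a $d_1$-continuous surjective homomorphism $\chi\colon\text{Aut}_1(X,\mathcal A,\mu)\to\mathbb R$ together with a \emph{continuous} splitting $\sigma\colon\mathbb R\to\text{Aut}_1(X,\mathcal A,\mu)$ satisfying $\chi\circ\sigma=\mathrm{id}_{\mathbb R}$. For the Rokhlin property, I would first note that $\chi$ is constant on conjugacy classes: since $\mathbb R$ is abelian, $\chi(STS^{-1})=\chi(T)$ for all $S,T\in\text{Aut}_1(X,\mathcal A,\mu)$. Hence the $d_1$-closure of any conjugacy class is contained in a fibre $\chi^{-1}(c)$, which is $d_1$-closed by continuity of $\chi$ and proper by surjectivity of $\chi$; so no conjugacy class is $d_1$-dense, and $\text{Aut}_1(X,\mathcal A,\mu)$ lacks the Rokhlin property.

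Next I would check that every local transformation lies in $\mathrm{Ker}\,\chi$. If $T$ fixes every point outside a set $A$ of finite measure, then $T$, being invertible and equal to the identity on $X\setminus A$, maps $X\setminus A$ onto itself, so $T^{-1}A=A$ and $T'=1$ a.e.\ off $A$; therefore
\[
\chi(T)=\int_A(T'-1)\,d\mu=(\mu\circ T^{-1})(A)-\mu(A)=\mu(A)-\mu(A)=0.
\]
Thus $\text{Aut}_{\widehat 0}(X,\mathcal A,\mu)\subset\mathrm{Ker}\,\chi$. I note that one cannot simply invoke Proposition~\ref{prop:Conservativity Aut1} here, because a local transformation need not be conservative, so this short direct computation is genuinely needed.

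It then remains to see that $\mathrm{Ker}\,\chi$ is $d_1$-nowhere dense, since any subset of a nowhere dense set is nowhere dense. As $\mathrm{Ker}\,\chi=\chi^{-1}(\{0\})$, it is $d_1$-closed, so I only need that it has empty $d_1$-interior. If it did not, then the subgroup $\mathrm{Ker}\,\chi$ would be $d_1$-open (a subgroup with nonempty interior contains an open neighbourhood of the identity, hence an open neighbourhood of each of its points), so it would contain a $d_1$-open neighbourhood $V\ni\mathrm{id}$; but then $\sigma^{-1}(V)$ would be an open neighbourhood of $0$ in $\mathbb R$ contained in $\sigma^{-1}(\mathrm{Ker}\,\chi)=\{t:\chi(\sigma(t))=0\}=\{0\}$, which is impossible. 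Hence $\mathrm{Ker}\,\chi$, and a fortiori $\text{Aut}_{\widehat 0}(X,\mathcal A,\mu)$, is $d_1$-nowhere dense.

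I do not expect a serious obstacle: every step is either a formal property of homomorphisms into $\mathbb R$ or a standard fact about topological groups. The one point that must not be glossed over is the use of the continuity of the splitting $\sigma$ (equivalently, that Theorem~\ref{thm: Aut1 semidirect prod} identifies $\text{Aut}_1(X,\mathcal A,\mu)/\mathrm{Ker}\,\chi$ with $\mathbb R$ \emph{topologically}, hence as a non-discrete group); without this input one could not exclude $\mathrm{Ker}\,\chi$ being clopen. All the real content is therefore already contained in Theorem~\ref{thm: Aut1 semidirect prod}.
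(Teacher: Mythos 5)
Your proof is correct and follows exactly the route the paper intends: the corollary is stated there without proof as a consequence of Theorem~\ref{thm: Aut1 semidirect prod}, and your argument is precisely the fleshing-out of that — $\chi$ is constant on conjugacy classes and its fibres are proper and closed, local transformations lie in $\operatorname{Ker}\chi$ by the direct computation $\chi(T)=\mu(T^{-1}A)-\mu(A)=0$, and $\operatorname{Ker}\chi$ is closed with empty interior because the continuous splitting $\sigma$ would otherwise pull an open neighbourhood of the identity back to an open subset of $\{0\}\subset\mathbb{R}$. All the details you supply (including the observation that one cannot shortcut via Proposition~\ref{prop:Conservativity Aut1}) are accurate.
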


The situation with $\text{Aut}_2(X,\mathcal{A},\mu)$ is different. 
We preface the statement of the corresponding result with the following notation that will be of wide use in this section. 
Given $A,B\subset X$ which are both of finite measure, let $\tau_{A,B}$ denote a $\mu$-nonsingular bijection from $A$ to $B$ such that for all $x\in A$, $\frac{d\mu\circ\tau_{A,B}}{d\mu}(x)=\frac{\mu(B)}{\mu(A)}$.

\begin{prop}\label{prop: local are dense in Aut2}
$\text{{\rm Aut}}_{\widehat{0}}(X,\mathcal{A},\mu)$ is $d_2$-dense in $\text{{\rm Aut}}_2(X,\mathcal{A},\mu)$. 
\end{prop}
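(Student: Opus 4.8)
The plan is to approximate an arbitrary $T\in\text{Aut}_2(X,\mathcal A,\mu)$ in the $d_2$-topology by local transformations. Recall that $d_2$-convergence $T_n\to T$ means $T_n\to T$ weakly together with $\|\sqrt{T_n'}-\sqrt{T'}\|_2\to 0$. First I would fix an exhausting increasing sequence $A_1\subset A_2\subset\cdots$ of finite-measure sets with $\bigcup_n A_n=X$, and also fix (by aperiodicity, or by passing to a coarser approximation) a finite partition / dyadic-type refinement of each $A_n$ whose atoms have small measure. The strategy is the standard two-step one: (i) reduce $T$ to a transformation supported on a large finite-measure set, controlling the $L^2$-norm of $\sqrt{\cdot\,'}-1$ lost off that set; (ii) on that finite-measure piece, approximate $T$ weakly by a \emph{measure-preserving} rearrangement built out of the maps $\tau_{A,B}$ introduced just above the statement, so that the Radon--Nikodym cocycle of the approximant is locally constant and close to $T'$ in the relevant sense.

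For step (i): since $\sqrt{T'}-1\in L^2(\mu)$ and, off a finite-measure set, $\{x:T'(x)=0\}$ has finite measure (Remark~\ref{rem: measures}(3)) and $T^{-1}$ also lies in $\text{Aut}_2$, one can choose $n$ so large that $B:=A_n$ satisfies $TB\subset A_{n+1}$ up to a set of small measure and $\int_{X\setminus B}(\sqrt{T'}-1)^2\,d\mu$ is small; the point is that $T$ agrees with a transformation fixing $X\setminus B$ except on a set whose contribution to the $L^2$-norm is small. For step (ii): on the finite-measure set I would use a Rokhlin-tower / partition argument — pick a fine partition $\{C_i\}$ of $B$ into sets of small measure with $\{TC_i\}$ also essentially a partition, and define a local transformation $T_0$ that maps $C_i$ to (a piece of) $TC_i$ via $\tau_{C_i,TC_i}$, so that $T_0'$ is constant equal to $\mu(TC_i)/\mu(C_i)$ on $C_i$. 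Weak convergence of the resulting sequence to $T$ is the classical fact that such rearrangements approximate any nonsingular transformation weakly; the extra requirement is that $\mu(TC_i)/\mu(C_i)\to T'$ in a way that forces $\|\sqrt{T_0'}-\sqrt{T'}\|_2\to 0$, which holds because the conditional-expectation operators onto these partitions converge strongly and $\sqrt{T'}\in L^2$ so $\sqrt{\mathbb E[T'\mid\{C_i\}]}$ (or the obvious comparison with $\mathbb E[\sqrt{T'}\mid\{C_i\}]$ via Jensen) converges to $\sqrt{T'}$ in $L^2$.

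I expect the main obstacle to be the simultaneous control of \emph{both} topologies by a single sequence: the classical conjugacy/rearrangement lemmas give weak density essentially for free, but ensuring $\|\sqrt{T_n'}-\sqrt{T'}\|_2\to 0$ requires that the discretization of the cocycle be done compatibly with the discretization of the point map — i.e. the partition $\{C_i\}$ must be chosen so that both $T$ is nearly constant-Jacobian on each $C_i$ \emph{and} $TC_i$ is essentially one atom of a matching partition. Handling the null set $\{T'=0\}$ (which is genuinely present in the infinite-measure case) and the ``tail'' $X\setminus B$ without blowing up the $L^2$-error is the delicate bookkeeping part; everything else is routine measure theory. A clean way to organize this is to first prove the statement for $T$ whose cocycle $T'$ takes finitely many values and which is supported (i.e. non-identity) on a finite-measure set, and then argue that such $T$ are $d_2$-dense in $\text{Aut}_2(X,\mathcal A,\mu)$ by the truncation in step (i) plus an $L^2$-approximation of $\sqrt{T'}$ by simple functions, reducing the whole proposition to the finitary case.
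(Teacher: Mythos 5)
There is a genuine gap, and it sits exactly at the point you flag as ``delicate bookkeeping'' without resolving it: the closing-up of the truncated map into a bijection that is the identity off a finite-measure set. If you keep $T$ (or a discretized version of it) on a finite-measure set $A$, the resulting partial map sends $A$ onto $TA\neq A$, so to obtain a local transformation you must also send some buffer $B\setminus A$ bijectively onto $B\setminus TA$ (equivalently $B\setminus T^{-1}A$ in the inverse direction). That patching map necessarily has Jacobian $\mu(B\setminus T^{-1}A)/\mu(B\setminus A)\neq 1$, and its contribution to $\|\sqrt{T_n'}-\sqrt{T'}\|_2^2$ is $\mu(B\setminus A)\bigl(\sqrt{c}-1\bigr)^2$ with $c$ that ratio; for a buffer only slightly larger than $A\cup T^{-1}A$ this quantity need not be small at all (e.g.\ $\mu(A)=10$, $\mu(T^{-1}A)=20$, $\mu(B)=21$ gives a contribution of order $5$). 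Your proposal never identifies the mechanism that kills this term. The paper's proof does: choose $B_n$ \emph{enormous} relative to $A_n\cup T^{-1}A_n$, so that
\[
\sqrt{\mu(B_n\setminus A_n)}\,\bigl|\sqrt{c_n}-1\bigr|
=\frac{\bigl|\mu(B_n\setminus A_n)-\mu(B_n\setminus T^{-1}A_n)\bigr|}{\sqrt{\mu(B_n\setminus A_n)}+\sqrt{\mu(B_n\setminus T^{-1}A_n)}}
\le\frac{\mu(A_n)+\mu(T^{-1}A_n)}{\sqrt{\mu(B_n\setminus A_n)}+\sqrt{\mu(B_n\setminus T^{-1}A_n)}}\le\frac1n,
\]
the numerator being fixed (for fixed $n$) while the denominator can be made as large as one likes. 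With this single device the proof is three lines of definition plus the displayed estimate; one simply keeps $T^{-1}$ itself on $A_n$ and uses one map $\tau_{B_n\setminus A_n,\,B_n\setminus T^{-1}A_n}$ on the buffer.

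Two further remarks. First, your step (ii) — discretizing the Jacobian on the finite-measure core via a fine partition $\{C_i\}$ and conditional expectations — is unnecessary: there is no need to approximate $T$ on $A_n$ at all, since keeping $T$ there contributes zero error, and the discretization does not help with (indeed reproduces) the closing-up problem, because $\{TC_i\}$ partitions $TB$ rather than $B$. Second, the worry about $\{T'=0\}$ is moot: elements of $\mathrm{Aut}_2(X,\mathcal A,\mu)$ are nonsingular with $\mu\circ T^{-1}\sim\mu$, so $T'>0$ a.e.; Remark~\ref{rem: measures}(3) concerns measures $\nu\ll\mu$ that need not be equivalent to $\mu$.
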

 
\begin{proof}
Let $T\in \text{Aut}_2(X,\mathcal{A},\mu)$. There exists an increasing sequence $A_n\in\mathcal{A}$ of finite measure subsets satisfying $\bigcup_{n=1}^\infty A_n=X$ and 
\[
\int_{X\setminus A_n}\left(1-\sqrt{\frac{d\mu\circ T^{-1}}{d\mu}}\,\right)^2d\mu=:\epsilon_n\xrightarrow[]{n\to\infty}0.
\] 
Select subsets $B_n\subset X$ of finite measure such that $A_n\cup TA_n\subset B_n$ and 
\[
\sqrt{\mu\left(B_n\setminus A_n\right)}+\sqrt{\mu\left(B_n\setminus T^{-1}A_n\right)}\geq n\left(\mu\left(A_n\right)+\mu\left(T^{-1}A_n\right)\right). 
\]
We define $T_n\in\text{Aut}_{\widehat 0}(X,\mathcal A,\mu)$ via
$$
T_n^{-1}x:=\begin{cases}
T^{-1}x, & x\in A_n\\
\tau_{B_n\setminus A_n,B_n\setminus T^{-1}A_n}x, & x\in B_n\setminus A_n\\
x, & x\notin B_n.
\end{cases}
$$
We will now show that $T_n$ converges in $d_2$ to $T$. Since $T_n^{-1}x=T^{-1}x$ for $x\in A_n$ we see that $T_n$ converges weakly to $T$ as $n\to\infty$. 
As 
\begin{equation}\label{RN for T_n}
\frac{d\mu\circ T_n^{-1}}{d\mu}(x):=
\begin{cases}
\frac{d\mu\circ T^{-1}}{d\mu}(x), & x\in A_n\\[0.5em]
\frac{\mu\left(B_n\setminus T^{-1}A_n\right)}{\mu\left(B_n\setminus A_n\right)}, & x\in B_n\setminus A_n\\
1, & x\notin B_n,
\end{cases}
\end{equation}
we deduce that
\begin{align*}
\left\Vert \sqrt{\frac{d\mu\circ T^{-1}}{d\mu}}-\sqrt{\frac{d\mu\circ T_n^{-1}}{d\mu}}\right\Vert _2 &
\le \left\Vert \left(\sqrt{\frac{d\mu\circ T^{-1}}{d\mu}}-1\right)1_{X\setminus A_n}\right\Vert _2
+\left\Vert \left(\sqrt{\frac{d\mu\circ T_n^{-1}}{d\mu}}-1\right)1_{X\setminus A_n}\right\Vert _2\\
&\leq \sqrt{\epsilon_n}+ \left(\int_{B_n\setminus A_n}\left(\sqrt{\frac{\mu\left(B_n\setminus T^{-1}A_n\right)}{\mu\left(B_n\setminus A_n\right)}}-1\right)^2d\mu\right)^{1/2}\\
&=\sqrt{\epsilon_n}+\left|\frac{\mu\left(B_n\setminus A_n\right)-\mu\left(B_n\setminus T^{-1}A_n\right)}{\sqrt{\mu\left(B_n\setminus A_n\right)}+\sqrt{\mu\left(B_n\setminus T^{-1}A_n\right)}}\right|\\
&\leq  \sqrt{\epsilon_n}+\frac{\mu\left(A_n\right)+\mu\left(T^{-1}A_n\right)}{\sqrt{\mu\left(B_n\setminus A_n\right)}+\sqrt{\mu\left(B_n\setminus T^{-1}A_n\right)}}\\
&\leq \sqrt{\epsilon_n}+\frac{1}{n}.
\end{align*}
It follows that $\lim_{n\to\infty}\left\Vert \sqrt{\frac{d\mu\circ T^{-1}}{d\mu}}-\sqrt{\frac{d\mu\circ T_n^{-1}}{d\mu}}\right\Vert _2=0$.
Thus we have shown that $T_n$ converges to $T$   in  $d_2$ as $n\to\infty$. 
\end{proof}

For $A\in\mathcal A$,  we can consider  the group $\text{Aut}_2(A,\mathcal{A}\cap A,\mu|_A)$ as the subset of $S\in \text{Aut}_2(X,\mathcal A,\mu)$ such that $S^{-1}A=A$ and $S|_{X\setminus A}=\text{id}|_{X\setminus A}$. 
We note  that if $\mu(A)<\infty$ then the map $S\mapsto S|_A$ is a topological isomorphism of  $\text{Aut}_2(A,\mathcal{A}\cap A,\mu|_A)$ with the $d_2$-topology onto $\text{Aut}(A,\mathcal{A}\cap A,\mu|_A)$ with the weak topology.

\begin{cor}\label{corol}\ 
\begin{enumerate}
\item
The subset of periodic transformations from $\text{{\rm Aut}}_{\widehat{0}}(X,\mathcal{A},\mu)$ is $d_2$-dense in $\text{{\rm Aut}}_{2}(X,\mathcal{A},\mu)$.
\item
The subset of conservative transformations is a dense $G_\delta$-subset  of $\text{{\rm Aut}}_{2}(X,\mathcal{A},\mu)$.
\end{enumerate}
\end{cor}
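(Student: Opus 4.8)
For (1) the plan is to combine Proposition~\ref{prop: local are dense in Aut2} with the nonsingular weak approximation theorem. Since $\text{Aut}_{\widehat{0}}(X,\mathcal{A},\mu)$ is already known to be $d_2$-dense in $\text{Aut}_2(X,\mathcal{A},\mu)$, it is enough to show that the periodic elements of $\text{Aut}_{\widehat{0}}(X,\mathcal{A},\mu)$ are $d_2$-dense in $\text{Aut}_{\widehat{0}}(X,\mathcal{A},\mu)$. Given $T\in\text{Aut}_{\widehat{0}}(X,\mathcal{A},\mu)$, fix $A\in\mathcal{A}$ with $0<\mu(A)<\infty$ and $Tx=x$ for $x\notin A$. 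By the topological isomorphism recalled just before the corollary, the $d_2$-topology on the subgroup $\{S\in\text{Aut}_2(X,\mathcal{A},\mu):S^{-1}A=A,\ S|_{X\setminus A}=\text{id}\}$ agrees with the weak topology on $\text{Aut}(A,\mathcal{A}\cap A,\mu|_A)$, and periodic transformations correspond to periodic transformations under $S\mapsto S|_A$; so the claim reduces to the assertion that periodic transformations are weakly dense in $\text{Aut}(A,\mathcal{A}\cap A,\mu|_A)$, a non-atomic \emph{finite} measure space.

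That assertion is the nonsingular form of Halmos's weak approximation theorem and is available in the literature (\cite{IonTul65}, \cite{ChKa79}); if a self-contained argument is preferred, I would prove it in the spirit of Proposition~\ref{prop: local are dense in Aut2}: split $A$ into the periodic and aperiodic parts of $T|_A$, apply the Rokhlin lemma on the aperiodic part to produce a tall tower filling all but a small fraction of it, then define the approximant to coincide with $T|_A$ off the top level, to send the top level back to the base by a rescaling bijection of type $\tau_{\,\cdot\,,\,\cdot\,}$, and to act periodically on the small leftover --- choosing the bijections so the result has finite order and keeping its Radon--Nikodym derivative $L^2$-close to $(T|_A)'$.

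For (2) I will treat density and the $G_\delta$-property separately. Density is immediate from (1): every periodic $S$ is conservative, since $S^N=\text{id}$ gives $\mu(S^{-N}B\cap B)=\mu(B)>0$ for each $B$ of positive measure (equivalently, $S$ has no wandering set of positive measure), so the $d_2$-dense set of periodic local transformations consists of conservative transformations. For the $G_\delta$-property, recall from the theorem of \cite{ChKa79}, \cite{ChHaPr87} quoted at the beginning of this section that $\text{Cons}(X,\mathcal{A},\mu)$ is $G_\delta$ in $\text{Aut}(X,\mathcal{A},\mu)$ for the weak topology; writing $\text{Cons}(X,\mathcal{A},\mu)=\bigcap_{n\ge 1}O_n$ with $O_n$ weakly open, the fact that the $d_2$-topology is finer than the topology induced by the weak topology of $\text{Aut}(X,\mathcal{A},\mu)$ makes each $O_n\cap\text{Aut}_2(X,\mathcal{A},\mu)$ a $d_2$-open set, so the conservative transformations in $\text{Aut}_2(X,\mathcal{A},\mu)$ form the $d_2$-$G_\delta$ set $\bigcap_{n\ge1}\bigl(O_n\cap\text{Aut}_2(X,\mathcal{A},\mu)\bigr)$. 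Together with density this gives (2).

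The one genuinely nontrivial point is the reduction in (1) to a weak-approximation statement on $(A,\mu|_A)$ --- which the topological isomorphism preceding the corollary delivers for free --- followed by invoking (or reproving) the nonsingular weak approximation theorem; part (2) then costs essentially nothing beyond the already-quoted genericity of conservativeness in the weak topology.
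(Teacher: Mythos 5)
Your proposal is correct and follows essentially the same route as the paper: reduce part (1) via the topological isomorphism $S\mapsto S|_A$ to the weak density of periodic transformations on a finite measure space (citing \cite{IonTul65}, \cite{ChKa79}) and combine with Proposition~\ref{prop: local are dense in Aut2}, then obtain part (2) from the conservativeness of periodic maps together with the fact that the weak-$G_\delta$ set of conservative transformations remains $G_\delta$ in the finer $d_2$-topology. The only additions beyond the paper's argument are the (unneeded but harmless) sketch of a self-contained Rokhlin-lemma proof of the weak approximation theorem and the explicit check that a local transformation fixes $A$ setwise.
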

\begin{proof} 
(1) By \cite{IonTul65} and the metric isomorphism mentioned above, for every subset $A\in\mathcal{A}$ of finite measure, the subset of periodic transformations in 
$\text{Aut}_2(A,\mathcal{A}\cap A,\mu|_A)$ is $d_2$-dense in $\text{Aut}_2(A,\mathcal{A}\cap A,\mu|_A)$. 
This implies that the subset of periodic transformations  is $d_2$-dense in $\text{{\rm Aut}}_{\widehat{0}}(X,\mathcal{A},\mu)$. 
The claim~(1) follows from this and Proposition \ref{prop: local are dense in Aut2}.

(2)
Since  $d_2$  is stronger than the weak topology and 
the subset of conservative transformations is  a  $G_\delta$  in $\text{Aut}(X,\mathcal{A},\mu)$, it follows that
 the subset of conservative transformations  is a $G_\delta$  in 
 $\text{{\rm Aut}}_{2}(X,\mathcal{A},\mu)$.
 As every periodic transformation  is conservative,
 we deduce from (1) that  subset of conservative transformations is $d_2$-dense
 in
 $\text{{\rm Aut}}_2(X,\mathcal{A},\mu)$.
\end{proof}

 We say that a transformation $T\in  \text{{\rm Aut}}_{\widehat{0}}(X,\mathcal{A},\mu)$
 is \textit{locally aperiodic} if there exists $A\in\mathcal{A}$ of positive finite measure such that $T\in\text{Aut}_2(A,\mathcal{A}\cap A,\mu|_A)$ and $T|_A$ is aperiodic.

\begin{prop}\label{prop: Rokhlin property}
Let $T\in   \text{{\rm Aut}}_{\widehat{0}}(X,\mathcal{A},\mu)$
 be locally aperiodic. 
 Then the conjugacy class of $T$ is $d_2$-dense in   \text{{\rm Aut}}$_2(X,\mathcal{A},\mu)$.
  In particular,   \text{{\rm Aut}}$_2(X,\mathcal{A},\mu)$ has the Rokhlin property. 	
\end{prop}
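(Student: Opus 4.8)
The plan is to use Proposition~\ref{prop: local are dense in Aut2} to reduce to target transformations with finite-measure support, to conjugate $T$ so that its ``active part'' sits exactly on that support, and then to finish by the classical conjugacy lemma applied on the finite measure space carrying the target.

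Since the $d_2$-closure of the conjugacy class $\{STS^{-1}:S\in\text{Aut}_2(X,\mathcal{A},\mu)\}$ is $d_2$-closed and, by Proposition~\ref{prop: local are dense in Aut2}, $\text{Aut}_{\widehat{0}}(X,\mathcal{A},\mu)$ is $d_2$-dense in $\text{Aut}_2(X,\mathcal{A},\mu)$, it is enough to place every local transformation $R$ in that closure. So fix a local $R$ and a set $A\in\mathcal{A}$ of finite measure with $Rx=x$ off $A$; then $R(A)=A$, and since $\mu(A)<\infty$ we may view $R$ as an element of $\text{Aut}_2(A,\mathcal{A}\cap A,\mu|_A)=\text{Aut}(A,\mathcal{A}\cap A,\mu|_A)$. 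By local aperiodicity there is $A_0\in\mathcal{A}$ with $0<\mu(A_0)<\infty$, $T\in\text{Aut}_2(A_0,\mathcal{A}\cap A_0,\mu|_{A_0})$, and $T|_{A_0}$ aperiodic. Next I would pick a finite-measure $B\supseteq A\cup A_0$ and a $\mu|_B$-nonsingular bijection of $B$ onto itself taking $A_0$ onto $A$ whose Radon--Nikodym derivative is bounded and bounded away from $0$ (built from maps of the type $\tau_{C,D}$, using that $\mu$ is non-atomic); extending it by the identity off $B$ gives $S_0\in\text{Aut}_{\widehat{0}}(X,\mathcal{A},\mu)\subseteq\text{Aut}_2(X,\mathcal{A},\mu)$ with $S_0(A_0)=A$, since $\sqrt{S_0'}-1$ is then bounded with finite-measure support, hence in $L^2(\mu)$.

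Set $T^{(0)}:=S_0TS_0^{-1}$. Then $T^{(0)}$ lies in the conjugacy class of $T$, it fixes every point off $A$, so $T^{(0)}\in\text{Aut}_2(A,\mathcal{A}\cap A,\mu|_A)$, and $T^{(0)}|_A=(S_0|_{A_0})(T|_{A_0})(S_0|_{A_0})^{-1}$ is aperiodic, being conjugate via the nonsingular isomorphism $S_0|_{A_0}\colon A_0\to A$ to the aperiodic $T|_{A_0}$. Now I would invoke the topological isomorphism $S\mapsto S|_A$ from $\text{Aut}_2(A,\mathcal{A}\cap A,\mu|_A)$ with $d_2$ onto $\text{Aut}(A,\mathcal{A}\cap A,\mu|_A)$ with the weak topology (recalled before Corollary~\ref{corol}), together with the conjugacy lemma quoted above applied on the non-atomic Lebesgue space $(A,\mathcal{A}\cap A,\mu|_A)$: the conjugacy class of the aperiodic $T^{(0)}|_A$ is weakly dense there. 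Transporting this back, for each $\epsilon>0$ there is $Q\in\text{Aut}_2(A,\mathcal{A}\cap A,\mu|_A)\subseteq\text{Aut}_2(X,\mathcal{A},\mu)$ with $d_2(QT^{(0)}Q^{-1},R)<\epsilon$; and $QT^{(0)}Q^{-1}=(QS_0)T(QS_0)^{-1}$, so $R$ lies in the $d_2$-closure of the conjugacy class of $T$, which is therefore all of $\text{Aut}_2(X,\mathcal{A},\mu)$. Finally, for the Rokhlin property it suffices to exhibit one locally aperiodic element of $\text{Aut}_{\widehat{0}}(X,\mathcal{A},\mu)$: take any finite-measure $A_0$, any aperiodic (e.g. ergodic measure-preserving) transformation of $(A_0,\mu|_{A_0})$, extended by the identity.

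I do not expect a serious obstacle. The only points needing care are verifying that the auxiliary maps $S_0$ and $Q$ genuinely lie in $\text{Aut}_2(X,\mathcal{A},\mu)$ (immediate, as they are local with controlled derivatives) and that aperiodicity survives the conjugation by $S_0$ (standard, being a conjugacy invariant); the essential step is simply the reduction that matches the support of the target $R$ with the set $A_0$ on which $T$ is already known to be aperiodic, after which one is back in the classical finite-measure situation.
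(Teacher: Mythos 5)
Your proof is correct and follows essentially the same route as the paper: reduce to local transformations via Proposition~\ref{prop: local are dense in Aut2}, use the identification of $\text{Aut}_2(A,\mathcal{A}\cap A,\mu|_A)$ with $\text{Aut}(A,\mathcal{A}\cap A,\mu|_A)$ to invoke the classical conjugacy lemma on the aperiodic piece, and transport between finite-measure supports with $\tau$-type maps. The only difference is the order of operations (you move the aperiodic part onto the support of the target before conjugating, whereas the paper conjugates on the aperiodic set first and then transports), which is immaterial.
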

\begin{proof}
In view of Proposition \ref{prop: local are dense in Aut2}, it is enough to show that each local transformation  is in the $d_2$-closure of the conjugacy class of $T$. 
Select a subset  $A\in\mathcal{B}$ of positive finite measure such that $T|_A$ is aperiodic. It follows from \cite[Theorem 2]{ChKa79} that $\left\{S^{-1}TS:\ S\in \text{Aut}_2(A,\mathcal{A}\cap A,\mu|_A)\right\}$ is $d_2$-dense in $\text{Aut}_2(A,\mathcal{A}\cap A,\mu|_A)$. Since for every subset $B\subset X$ of positive finite measure, the map 
\[
\text{Aut}_2(A,\mathcal{A}\cap A,\mu|_A)\ni T\mapsto \left(\tau_{B,A}\right)^{-1}T\tau_{B,A} 
\in \text{Aut}_2(B,\mathcal{A}\cap B,\mu|_B) 
\]
is a topological group isomorphism 
and $\tau_{B,A}\in \text{Aut}_2(X,\mathcal{A},\mu)$, every local transformation is in the $d_2$-closure of the conjugacy class of $T$. 
\end{proof}

We now state one of the main result of this section.

\begin{thm}\label{thm: main category thm}
The   subset 
	 $$
	 \mathrm{Erg}^{\mathrm{III}_1}_2(X,\mathcal A,\mu):=\{T\in\mathrm{Aut}_2(X,\mathcal A,\mu): T\text{ is ergodic of type } \mathrm{III}_1\}
	 $$
	  is a dense $G_\delta$ in $\text{{\rm Aut}}_2(X,\mathcal A,\mu)$.  
\end{thm}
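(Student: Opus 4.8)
\emph{Strategy and the $G_\delta$ part.} This is a Baire category statement, so the plan is to prove separately that $\mathrm{Erg}^{\mathrm{III}_1}_2(X,\mathcal A,\mu)$ is a $G_\delta$ subset of $(\text{Aut}_2(X,\mathcal A,\mu),d_2)$ and that it is $d_2$-dense. For the first point I would use that $d_2$ refines the weak topology, so the identity map $(\text{Aut}_2(X,\mathcal A,\mu),d_2)\hookrightarrow(\text{Aut}(X,\mathcal A,\mu),\text{weak})$ is continuous. By the theorems of Choksi--Kakutani and Choksi--Hawkins--Prasad recalled above, $\mathrm{Erg}^{\mathrm{III}_1}(X,\mathcal A,\mu)$ (like $\mathrm{Cons}(X,\mathcal A,\mu)$ and $\mathrm{Erg}(X,\mathcal A,\mu)$) is a $G_\delta$ in the weak topology, hence its trace $\mathrm{Erg}^{\mathrm{III}_1}_2(X,\mathcal A,\mu)=\mathrm{Erg}^{\mathrm{III}_1}(X,\mathcal A,\mu)\cap\text{Aut}_2(X,\mathcal A,\mu)$ is a $G_\delta$ for $d_2$. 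If one wants an intrinsic description: conservativity is cut out by countably many $d_2$-open conditions via the Hurewicz ratio ergodic theorem; on the generic conservative part, ergodicity of $T$ is cut out by countably many $d_2$-open conditions on the numbers $\mu(T^{-n}A\cap B)$, with $A,B$ ranging over a countable algebra; and type $\mathrm{III}_1$ is cut out by the analogous ergodicity conditions applied to the $\tilde\mu$-preserving Maharam extension $\tilde T$ on $X\times\mathbb R$.

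\emph{Density.} By Corollary~\ref{corol}(1) it suffices to show that each periodic $T_0\in\text{Aut}_{\widehat0}(X,\mathcal A,\mu)$, supported on a set $A_0$ of finite measure, lies in the $d_2$-closure of $\mathrm{Erg}^{\mathrm{III}_1}_2(X,\mathcal A,\mu)$. Fix $\epsilon>0$ and a basic $d_2$-neighbourhood of $T_0$, i.e.\ finitely many test sets and a tolerance $\delta$. I would produce the approximant $T$ by a cutting-and-stacking / Rokhlin-tower scheme over an exhaustion $A_0\subset A_1\subset\cdots$ of $X$ by finite-measure sets, arranged so that: $T$ reproduces, up to $\delta$, the action of the periodic transformation $T_0$ on $A_0$ (and behaves trivially on the bulk when tested against the prescribed sets), so that $T$ is weakly $\delta$-close to $T_0$; the tower heights tend to infinity, which forces $T$ to be conservative and ergodic; and the measures of the successive spacers together with the Radon--Nikodym values carried on them are chosen so that the Radon--Nikodym cocycle of $T$ has full Krieger ratio set $[0,+\infty)$, equivalently $\tilde T$ is ergodic and $T$ is of type $\mathrm{III}_1$, while the total $L^2(\mu)$-mass of the per-step distortions of $T'$ away from $1$ stays below $\epsilon^2$. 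This last requirement gives $\sqrt{T'}-1\in L^2(\mu)$ with $\|\sqrt{T'}-\sqrt{T_0'}\|_2<\epsilon$, so $T\in\text{Aut}_2(X,\mathcal A,\mu)$ and $d_2(T,T_0)$ is small; since $\mathrm{Erg}^{\mathrm{III}_1}_2(X,\mathcal A,\mu)$ is conjugation-invariant, the Rokhlin property from Proposition~\ref{prop: Rokhlin property} can be used as an alternative, conjugating a single such $T$, but one still needs the $L^2$-control of the derivative along the conjugation.

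\emph{Main obstacle.} The delicate point is the tension built into the construction: being of type $\mathrm{III}_1$ forces the Radon--Nikodym cocycle of $T$ to be ``spread out'' (full ratio set $[0,+\infty)$, equivalently $\tilde T$ ergodic), whereas $d_2$-closeness to $T_0$ forces $T'$ to be globally $L^2(\mu)$-close to $1$ off the finite-measure set $A_0$. The resolution is that per-step distortions which are individually arbitrarily close to $1$ still compound along the unboundedly growing towers to produce the whole ratio set; the quantitative heart of the proof is a careful $\ell^2$-budgeting of these distortions over the infinitely many stages, carried out simultaneously with the verification that $T$ is ergodic, that $\tilde T$ is ergodic, and with the explicit $d_2$-estimate. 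This difficulty is precisely what is absent in the classical weak-topology genericity argument, where no control of $\|\sqrt{T'}-1\|_2$ is required. Once $\mathrm{Erg}^{\mathrm{III}_1}_2(X,\mathcal A,\mu)$ is shown to be simultaneously $G_\delta$ and $d_2$-dense, the theorem follows.
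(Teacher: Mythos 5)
Your $G_\delta$ argument is exactly the paper's: since $d_2$ refines the weak topology, the trace of the weak-$G_\delta$ set $\mathrm{Erg}^{\mathrm{III}_1}(X,\mathcal A,\mu)$ on $\mathrm{Aut}_2(X,\mathcal A,\mu)$ is a $d_2$-$G_\delta$, and the reduction of density to approximating periodic or local transformations (Proposition~\ref{prop: local are dense in Aut2}, Corollary~\ref{corol}) is also fine. The gap is in the density step. You correctly identify the tension between forcing type $\mathrm{III}_1$ (a ``spread-out'' Radon--Nikodym cocycle) and keeping $\|\sqrt{T'}-\sqrt{T_0'}\|_2$ small, and you propose to resolve it by a cutting-and-stacking scheme with an ``$\ell^2$-budgeting of distortions over infinitely many stages'' --- but you never carry this out. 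The simultaneous verification that such a $T$ is ergodic, that its Maharam extension is ergodic, and that the $L^2$-estimate holds is precisely the hard content of the theorem; deferring it leaves the proof incomplete. Your fallback via Proposition~\ref{prop: Rokhlin property} also does not work as stated: the dense conjugacy classes produced there are those of \emph{locally aperiodic} transformations, which fix the complement of a finite-measure set pointwise and hence are never ergodic; since conjugation preserves ergodicity and Krieger type, that proposition gives no ergodic type $\mathrm{III}_1$ element with a dense conjugacy class.

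The paper dissolves the tension rather than budgeting it, by \emph{inducing}. Given a local $T$ supported on a finite-measure set $A$, one chooses sets $B_n\supset A$ with $\mu(B_n)\downarrow\mu(A)$ and ergodic type $\mathrm{III}_1$ transformations $S_n$ of $(B_n,\mu|_{B_n})$ converging weakly to $T|_A$ --- possible because on a finite-measure set the classical weak-topology genericity applies and $d_2$ there reduces to the weak topology. One then extends $S_n$ to a transformation $T_n$ of all of $X$ by measure-preserving towers over $B_n\setminus A$ of height greater than $n$, on which $T_n'\equiv1$, so that $S_n$ is the transformation induced by $T_n$ on the sweeping-out set $B_n$. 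Since inducing preserves ergodicity and type $\mathrm{III}_1$, each $T_n$ is ergodic of type $\mathrm{III}_1$; since $T_n'\ne1$ only on $B_n$ and $\mu(B_n\setminus A)\to0$, the estimate $\|\sqrt{T'}-\sqrt{T_n'}\|_2\to0$ is essentially free, while the growing tower heights give weak convergence. All the Radon--Nikodym distortion needed for type $\mathrm{III}_1$ is thus confined to a set of uniformly bounded measure, and no $\ell^2$-budget over infinitely many stages is required. To complete your own route you would in effect have to reprove, with quantitative $L^2$-control, the genericity of type $\mathrm{III}_1$ on an infinite measure space; the inducing argument is the device that avoids exactly that.
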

The proof of Theorem \ref{thm: main category thm} relies on the method of inducing which we now describe. 
Given a  transformation $T\in \text{Aut}(X,\mathcal{A},\mu)$, a subset $A\in\mathcal{A}$ is called {\it $T$-sweeping out} if 
$\mu\left(X\setminus\bigcup_{n=1}^\infty T^{-n}A\right)=0$.
If $T$ is ergodic then each subset $A\in\mathcal A$ of positive measure is 
$T$-sweeping out.
Given a $T$-sweeping out  subset $A$, we can define the \textit{induced map}, also known as the first return map, to $A$ as  a nonsingular transformation $T_A$ of the space $(A,\mathcal{A}\cap A,\mu|_A)$ defined on a full measure subset of $A$ by 
\[
T_Ax:=T^{\varphi_A(x)}x,
\]
where $\varphi_A(x):=\inf\left\{n\in\mathbb{N}:\ T^nx\in A\right\}$ is the \textit{first return time function} to $A$. 
We note  $T_A$ is well defined because $A$ is $T$-sweeping out.
The following facts will be used in the proof of Theorem \ref{thm: main category thm}:
\begin{itemize} 
\item If  $A$ is $T$-sweeping out then $T$ is ergodic   if and only if $T_A$ is ergodic \cite[Proposition 1.5.2]{Aar97InfErg}.
\item
If $T$ is ergodic and $A$ is of positive measure
 then $T$ is of type III$_1$   if and only if $T_A$ is of type III$_1$.\footnote{This can be 
 proved via inducing in the Maharam extension to the subset $A\times \mathbb{R}$ and noticing that $A$ is $T$-sweeping out if and only if $A\times \mathbb{R}$ is sweeping out for the Maharam extension of $T$.}
\end{itemize}

 \begin{proof}[Proof of Theorem {\rm \ref{thm: main category thm}}]
 Since Erg$^{\mathrm{III}_1}(X,\mathcal A,\mu)$ is a $G_\delta$ in Aut$(X,\mathcal A,\mu)$ and $d_2$ is stronger than the weak topology, it follows that 
 Erg$^{\mathrm{III}_1}_2(X,\mathcal A,\mu)$ is a $G_\delta$ in $\mathrm{Aut}_2(X,\mathcal A,\mu)$.
 It remains to show that  Erg$^{\mathrm{III}_1}_2(X,\mathcal A,\mu)$ is dense in $\mathrm{Aut}_2(X,\mathcal A,\mu)$.
 For that, take a local transformation $T\in \mathrm{Aut}_2(X,\mathcal A,\mu)$.
 Our purpose is to find a sequence of transformations from Erg$^{\mathrm{III}_1}_2(X,\mathcal A,\mu)$ that converges to $T$ in $d_2$ and apply Proposition~\ref{prop: local are dense in Aut2}.
 Let $A$ be a subset of finite measure such that $T|_{X\setminus A}=\text{id}_{X\setminus A}$.
 Take a sequence  $\{B_n\}_{n=1}^\infty$ of  subsets of finite  measure and 
 a
 sequence $\{\mathcal P_n\}_{n=1}^\infty$ of countable partitions of $X\setminus A$ into subsets of finite measure such that the following are satisfied:
 \begin{itemize}
 \item
 $A\subset B_n\subset B_{n-1}$,  $\mu(B_n\setminus A)>0$ for each $n$ and $\lim_{n\to\infty}\mu(B_n)=\mu(A)$,
 \item
 $\mathcal P_n=\{p^{(n)}_{l,j}: l\in\Bbb N, 1\le j\le J_l^{(n)}\}$ for some $J_l^{(n)}>n$ for all $n,l>0$,
  \item
 $\mu(p^{(n)}_{l,1})=\mu(p^{(n)}_{l,2})=\cdots=\mu(p^{(n)}_{l,J_l^{(n)}})$ for all $n,l>0$,
 \item
 $B_n\setminus A=\bigsqcup_{l=1}^\infty p^{(n)}_{l,1}$,
  \item
  if $\mathcal Q_n:=\Big\{\bigsqcup_{j=1}^{J^{(n)}_l}p^{(n)}_{l,j}: l\in\Bbb N\Big\}$ then 
  $\mathcal Q_1\prec \mathcal Q_2\prec\cdots$
 and   $\mathcal Q_n\to
  \mathcal A|_{X\setminus A}$ as $n\to\infty$.

 \end{itemize}
 Fix also a sequence $\{S_n\}_{n=1}^\infty$ of  transformations $S_n\in\mathrm{Erg}^{\mathrm{III}_1}(B_n,\mathcal A\cap B_n,\mu |B_n)$ that weakly converges to $T|_A$.
 By this we mean that  $\|U_{S_n}f-U_Tf\|_2\to 0$ as $n\to\infty$ for each $f\in L^2(A)$. 
 We now can construct, for each $n>0$, a nonsingular transformation $T_n$ of $X$ satisfying the following
 conditions:
 \begin{itemize}
 \item
$ T_nx=S_nx$ for all $x\in A$,
 \item
$
T_n p^{(n)}_{l,j}=
 \begin{cases}
 p^{(n)}_{l,j+1}, & \text{if }j\ne J_l^{(n)}\\
 S_np^{(n)}_{l,1}, &\text{if }j= J_l^{(n)}
\end{cases}
 $
 \item
 $T_n'(x)=1$ for each $x\not\in \bigsqcup_{l=1}^{\infty}S_np^{(n)}_{l,1}$.
 \end{itemize}
 It follows straightforwardly  from the definition of $T_n$ that 
 $B_n$ is $T_n$-sweeping out and
 $S_n$ is induced by $T_n$. Hence $T_n$ is ergodic of type $\mathrm{III}_1$ for each $n\in\Bbb N$.
 Of course, $T_n\in\text{Aut}_2(X,\mathcal A,\mu)$.
 We claim that $T_n\to T$ in $d_2$ as $n\to\infty$.
 Take an atom $q$ of $\mathcal Q_n$.
 Then $q=\bigsqcup_{j=1}^{J^{(n)}_l}p^{(n)}_{l,j}$ for some $l>0$ and
 $$
 \|U_T1_q-U_{T_n}1_q\|_2=\bigg\|1_{p_{l,1}^{(n)}}- U_{T_n}1_{p_{l,J_l^{(n)}}^{(n)}}\bigg\|_2
 \le 2\Big\|1_{p_{l,1}^{(n)}}\Big\|_2={\frac{2\|1_q\|_2}{\sqrt{J_l^{(n)}}}}.
 $$
Hence for each  function $f\in L^2(X\setminus A)$ which is $\mathcal Q_m$-measurable for some $m>0$, we have that
$$
\|U_Tf-U_{T_n}f\|_2\le \frac{2\|f\|_2}{\sqrt n}
$$
 whenever $n\ge m$.
 Hence $U_{T_n}f\to U_{T}f$ as $n\to\infty$.
 Since $\mathcal Q_n\to
  \mathcal A|_{X\setminus A}$ as $n\to\infty$, it follows that
  $U_{T_n}f\to U_Tf$ as $n\to\infty$ for each  function $f\in L^2(X\setminus A)$.
On the other hand,
$U_{T_n}g=U_{S_n}g$ for each $g\in L^2(A)$ and $U_{S_n}g\to U_Tg$ weakly as $n\to\infty$.
We  deduce that $T_n\to T$ weakly as $n\to\infty$.
  Since $T_n'(x)\ne 1$ only if $x\in S_nA\sqcup\bigsqcup_{l=1}^\infty S_np^{(n)}_{l, 1}=S_nB_n=B_n$, we obtain that
  $$
  \begin{aligned}
  \big\|\sqrt{T'}-\sqrt{T_n'}\big\|_2&= \big\|(\sqrt{T'}-\sqrt{T_n'})1_{B_n}\big\|_2\\
  &=
  \|U_T1_{B_n}-U_{T_n}1_{B_n}+\sqrt{T_n'}(1_{T_nB_n}-1_{B_n})\|_2\\
  &\le  \|(U_T-U_{T_n})1_{B_n}\|_2+\sqrt{\mu(B_n\triangle T_n^{-1}B_n)}
  \end{aligned}
  $$
  and $\mu(B_n\triangle T_n^{-1}B_n)=\mu\Big(\Big(\bigsqcup_{l=1}^\infty p^{(n)}_{l,1}\Big)\triangle \Big(\bigsqcup_{l=1}^\infty p^{(n)}_{l,J_l^{(n)}}\Big)\Big)=2\mu(B_n\setminus A)\to 0$
  as $n\to\infty$.
  It  follows that $ \big\|\sqrt{T'}-\sqrt{T_n'}\big\|_2\to 0$, as desired.
 \end{proof}

 \begin{rem} Let $B$ be a subset of positive finite measure in the standard $\sigma$-finite non-atomic measure space $(X,\mathcal A,\mu)$.
 Arguing as in the proof of the above theorem, we see that given an ergodic nonsingular transformation $S$ of $(B,\mathcal A
 \cap B,\mu|_B)$, we can construct an ergodic transformation $T\in\mathrm{Aut}_2 (X,\mathcal A,\mu)$ such that $S=T_B$.
 It is well known that  given an ergodic nonsingular flow $W$ (i.e. an $\Bbb R$-action $(W(t))_{t\in\Bbb R}$), there is 
 an ergodic nonsingular transformation whose associated flow is isomorphic to $W$.
On the other hand, 
 the associated flow of $T$ is isomorphic to the associated flow of each transformation induced by $T$.
 It  follows  from these facts that given an ergodic nonsingular flow $W$, there is 
 $T\in\mathrm{Aut}_2 (X,\mathcal A,\mu)$ such that the associated flow of $T$ is $W$.
 In particular, for each $\lambda\in[0,1]$, the group
 $\mathrm{Aut}_2 (X,\mathcal A,\mu)$ contains an ergodic transformation of Krieger's type III$_\lambda$.
 For the definition of the associated flow,  Krieger's type and other concepts of orbit theory we refer to
 \cite{DaSi}.
  \end{rem}

The following assertion is an analogue of Proposition~\ref{prop: local are dense in Aut2}
for Aut$_1(X,\mathcal A,\mu)$ furnished with  $d_1$.
It can not hold for the entire group Aut$_1(X,\mathcal A,\mu)$ 
because $\text{Aut}_{\widehat{0}}(X,\mathcal{A},\mu)$ is a subgroup of the proper closed subgroup Ker$\,\chi$ of  Aut$_1(X,\mathcal A,\mu)$.
However, it holds 
for
 $\mathrm{Ker}\,\chi$.

\begin{prop}\label{Aut_1 version}
$\mathrm{Aut}_{\widehat{0}}(X,\mathcal{A},\mu)$  is $d_1$-dense in $\mathrm{Ker}\,\chi$. 
\end{prop}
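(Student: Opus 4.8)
The plan is to adapt the proof of Proposition~\ref{prop: local are dense in Aut2}, replacing the $L^{2}$-estimate on the square roots of the Radon--Nikodym derivatives by an $L^{1}$-estimate on the derivatives themselves, and invoking the hypothesis $T\in\mathrm{Ker}\,\chi$ at the one place where the old argument does not carry over. We may assume $\mu(X)=\infty$, for otherwise $\mathrm{Ker}\,\chi=\mathrm{Aut}_1(X,\mathcal A,\mu)=\mathrm{Aut}_{\widehat{0}}(X,\mathcal A,\mu)$ and there is nothing to prove. Since $\mathrm{Aut}_{\widehat{0}}(X,\mathcal A,\mu)\subset\mathrm{Ker}\,\chi$ and $\mathrm{Ker}\,\chi$ is $d_1$-closed (by the $d_1$-continuity of $\chi$, Theorem~\ref{thm: Aut1 semidirect prod}), it is enough to approximate an arbitrary $T\in\mathrm{Ker}\,\chi$ in $d_1$ by elements of $\mathrm{Aut}_{\widehat{0}}(X,\mathcal A,\mu)$.

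First I would fix an increasing sequence $A_n\in\mathcal A$ of finite-measure sets with $\bigcup_{n}A_n=X$ and
\[
\epsilon_n:=\int_{X\setminus A_n}|T'-1|\,d\mu\xrightarrow[]{n\to\infty}0,
\]
which is possible because $T'-1\in L^1(\mu)$. The crucial point is that $T\in\mathrm{Ker}\,\chi$ gives
\[
|\mu(T^{-1}A_n)-\mu(A_n)|=\Big|\int_{A_n}(T'-1)\,d\mu\Big|=\Big|\int_X(T'-1)\,d\mu-\int_{X\setminus A_n}(T'-1)\,d\mu\Big|\le\epsilon_n,
\]
since $\int_X(T'-1)\,d\mu=\chi(T)=0$. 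Next choose finite-measure sets $B_n\supset A_n\cup TA_n\cup T^{-1}A_n$ with $\mu(B_n)>\max\bigl(\mu(A_n),\mu(T^{-1}A_n)\bigr)$, and define $T_n$ exactly as in Proposition~\ref{prop: local are dense in Aut2} by
\[
T_n^{-1}x:=\begin{cases}T^{-1}x,&x\in A_n,\\ \tau_{B_n\setminus A_n,\,B_n\setminus T^{-1}A_n}x,&x\in B_n\setminus A_n,\\ x,&x\notin B_n.\end{cases}
\]
One checks that $T_n^{-1}$ is a nonsingular bijection of $X$ supported on the finite-measure set $B_n$, so that $T_n\in\mathrm{Aut}_{\widehat{0}}(X,\mathcal A,\mu)$, and that $T_n'=T'$ on $A_n$, $T_n'\equiv c_n:=\mu(B_n\setminus T^{-1}A_n)/\mu(B_n\setminus A_n)$ on $B_n\setminus A_n$, and $T_n'\equiv1$ off $B_n$.

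It then remains to check that $T_n\to T$ in $d_1$. Weak convergence is obtained as in Proposition~\ref{prop: local are dense in Aut2}: one has $T_n^{-1}=T^{-1}$ on $A_n$ and $T_n=T$ on $T^{-1}A_n$, hence $U_{T_n}$ and $U_T$ agree on every $L^2(\mu)$-function supported in $T^{-1}A_m$ once $n\ge m$, and such functions are dense in $L^2(\mu)$ since $T^{-1}A_m\uparrow X$. For the $L^1$-convergence of the derivatives, note that $c_n-1$ has constant sign on $B_n\setminus A_n$, so
\[
\int_{B_n\setminus A_n}|c_n-1|\,d\mu=\bigl|\mu(B_n\setminus T^{-1}A_n)-\mu(B_n\setminus A_n)\bigr|=\bigl|\mu(A_n)-\mu(T^{-1}A_n)\bigr|\le\epsilon_n,
\]
whence
\[
\|T_n'-T'\|_1=\int_{X\setminus A_n}|T_n'-T'|\,d\mu\le\int_{X\setminus A_n}|T'-1|\,d\mu+\int_{B_n\setminus A_n}|c_n-1|\,d\mu\le2\epsilon_n\xrightarrow[]{n\to\infty}0.
\]
Thus $T_n\to T$ in $d_1$, and since $T\in\mathrm{Ker}\,\chi$ was arbitrary, this proves the proposition.

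The only genuine obstacle — and the essential difference from the $d_2$-situation — is the $L^1$-control of the correction term $\int_{B_n\setminus A_n}|c_n-1|\,d\mu$. In the $d_2$-proof of Proposition~\ref{prop: local are dense in Aut2} one is free to enlarge $B_n$ and thereby make the analogous $L^2$-quantity $\|\sqrt{c_n}-1\|_{L^2(B_n\setminus A_n)}$ as small as desired, because it carries a factor $\bigl(\sqrt{\mu(B_n\setminus A_n)}+\sqrt{\mu(B_n\setminus T^{-1}A_n)}\bigr)^{-1}$ in the denominator. Here, however, $\int_{B_n\setminus A_n}|c_n-1|\,d\mu=|\mu(A_n)-\mu(T^{-1}A_n)|$ is completely insensitive to the size of $B_n$, so enlarging $B_n$ is useless; it is precisely the hypothesis $\chi(T)=0$ that forces this quantity to $0$. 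This is also why the statement cannot hold for all of $\mathrm{Aut}_1(X,\mathcal A,\mu)$: there $\mathrm{Aut}_{\widehat{0}}(X,\mathcal A,\mu)\subset\mathrm{Ker}\,\chi\subsetneq\mathrm{Aut}_1(X,\mathcal A,\mu)$.
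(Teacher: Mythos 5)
Your proposal is correct and follows essentially the same route as the paper: approximate $T\in\mathrm{Ker}\,\chi$ by the local transformations $T_n$ built exactly as in Proposition~\ref{prop: local are dense in Aut2}, and use $\chi(T)=0$ to force $|\mu(T^{-1}A_n)-\mu(A_n)|\le\epsilon_n$, which is precisely what controls the $L^1$-size of the correction term $\int_{B_n\setminus A_n}|c_n-1|\,d\mu$. The paper simply asserts the existence of a sequence $(A_n)$ with this extra property, whereas you derive it explicitly from $\int_X(T'-1)\,d\mu=0$; your closing remark on why the estimate is insensitive to enlarging $B_n$ (and hence why the statement fails off $\mathrm{Ker}\,\chi$) matches the paper's surrounding discussion.
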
	
\begin{proof}
Let $T\in \text{Ker}\,\chi$. 
Then there exists an increasing sequence $A_n\in\mathcal{A}$ of finite measure subsets satisfying $\bigcup_{n=1}^\infty A_n=X$,  
\[
\int_{X\setminus A_n}\left|\frac{d\mu\circ T^{-1}}{d\mu}-1\right|d\mu:=\epsilon_n\xrightarrow[]{n\to\infty}0.
\] 
and 
\[
\left|\mu\left(T^{-1}A_n\right)-\mu\left(A_n\right)\right|<\epsilon_n.
\]
Select subsets $B_n\subset X$ of finite measure such that $A_n\cup T^{-1}A_n\subset B_n$.
As in the proof of Proposition \ref{prop: local are dense in Aut2}, we define a transformation $T_n\in\mathrm{Aut}_{\widehat{0}}(X,\mathcal{A},\mu)$ by setting
$$
T_n^{-1}x:=\begin{cases}
T^{-1}x, & x\in A_n\\
\tau_{B_n\setminus A_n,B_n\setminus T^{-1}A_n}x, & x\in B_n\setminus A_n\\
x, & x\notin B_n.
\end{cases}
$$
We will now show that $T_n$ converges in $d_1$ to $T$ as $n\to\infty$.
 Since $T_n^{-1}x=T^{-1}x$ for $x\in A_n$, it follows that $T_n$ converges weakly to $T$. 
 Next, as in the proof of Proposition \ref{prop: local are dense in Aut2}, we see that
 (\ref{RN for T_n}) holds and hence 
\begin{align*}
\left\Vert \frac{dm\circ T^{-1}}{d\mu}-\frac{d\mu\circ T_n^{-1}}{d\mu}\right\Vert_1 &\leq \int _{X\setminus A_n}\left|\frac{d\mu\circ T^{-1}}{d\mu}-1\right|d\mu\\
&+\int\left|\frac{\mu\left(B_n\setminus T^{-1}A_n\right)}{\mu\left(B_n\setminus A_n\right)}-1\right|d\mu\\
&\leq \epsilon_n+\left|\mu\left(T^{-1}A_n\right)-\mu\left(A_n\right)\right|\leq 2\epsilon_n.
\end{align*}
\end{proof}
Proceeding from here in a similar way as we have done  for Aut$_2(X,\mathcal A,\mu)$ but utilizing Proposition~\ref{Aut_1 version} instead of Proposition~\ref{prop: local are dense in Aut2},  we arrive at the 
following analogues of Corollary~\ref{corol},  Proposition~\ref{prop: Rokhlin property}  and Theorem~\ref{thm: main category thm} for $\text{Ker}\,\chi$.

\begin{thm}\label{thm: 2nd main category thm}\
\begin{enumerate}
\item
The subset of periodic transformations from $\text{{\rm Aut}}_{\widehat{0}}(X,\mathcal{A},\mu)$ is a $d_1$-dense subset of  $\text{{\rm Ker}}\,\chi$.
\item
For each locally aperiodic transformation $T$,
the conjugacy class of $T$  (in  $\text{{\rm Aut}}_{1}(X,\mathcal{A},\mu)$) is a $d_1$-dense subset of $\text{{\rm Ker}}\,\chi$.
In particular,  $\text{{\rm Ker}}\,\chi$ has the Rokhlin property. 
\item
The following three sets:
$$
\begin{aligned}
&\{T\in\mathrm{Aut}_1(X,\mathcal A,\mu): T\text{ is conservative}\}, \\
& \{T\in\mathrm{Aut}_1(X,\mathcal A,\mu): T\text{ is ergodic}\} \text{ and }\\
&\{T\in\mathrm{Aut}_1(X,\mathcal A,\mu): T\text{ is  of type } \mathrm{III}_1\} 
\end{aligned}
$$
are dense $G_\delta$-subsets  of  $\text{{\rm Ker}}\,\chi$ endowed with $d_1$.
\end{enumerate}
\end{thm}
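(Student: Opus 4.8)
The plan is to run, word for word, the arguments used for $\mathrm{Aut}_2(X,\mathcal A,\mu)$ in Corollary~\ref{corol}, Proposition~\ref{prop: Rokhlin property} and Theorem~\ref{thm: main category thm}, replacing the $L^2$-estimates on $\sqrt{T'}$ by $L^1$-estimates on $T'$ and Proposition~\ref{prop: local are dense in Aut2} by Proposition~\ref{Aut_1 version}. The one genuinely new structural fact that makes the transfer routine is that on a \emph{finite} measure space $(A,\mathcal A\cap A,\mu|_A)$ the $d_1$-topology and the weak topology coincide: if $S_n\to S$ weakly then, applying $U_{S_n}\to U_S$ to the constant function $1\in L^2(\mu|_A)$, we get $\sqrt{S_n'}\to\sqrt{S'}$ in $L^2(\mu|_A)$, and since $\|\sqrt{S_n'}\|_2^2=\mu(A)$ is bounded, the Cauchy--Schwarz inequality $\|S_n'-S'\|_1\le\|\sqrt{S_n'}-\sqrt{S'}\|_2\,\|\sqrt{S_n'}+\sqrt{S'}\|_2$ gives $\|S_n'-S'\|_1\to 0$; as $d_1$ is always stronger than the weak topology, this shows $S\mapsto S|_A$ is a topological isomorphism of $\mathrm{Aut}_1(A,\mathcal A\cap A,\mu|_A)$ with $d_1$ onto $\mathrm{Aut}(A,\mathcal A\cap A,\mu|_A)$ with the weak topology, just as in the $\mathrm{Aut}_2$-case. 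We also record that $\mathrm{Ker}\,\chi$ is closed, hence Polish, in $\mathrm{Aut}_1(X,\mathcal A,\mu)$ by Theorem~\ref{thm: Aut1 semidirect prod}; that $\mathrm{Aut}_{\widehat 0}(X,\mathcal A,\mu)\subset\mathrm{Ker}\,\chi$, so every local transformation lies in $\mathrm{Ker}\,\chi$; and that by Proposition~\ref{prop:Conservativity Aut1} every conservative transformation (in particular every periodic, ergodic or type $\mathrm{III}_1$ one) in $\mathrm{Aut}_1$ automatically lies in $\mathrm{Ker}\,\chi$, so that the three sets in (3) are unchanged if we intersect with $\mathrm{Ker}\,\chi$.

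For (1): by Proposition~\ref{Aut_1 version} it suffices to $d_1$-approximate an arbitrary local transformation $T$ by periodic local ones; if $T$ is the identity off a finite-measure set $A$, the Ionescu--Tulcea theorem \cite{IonTul65} (weak density of periodic transformations in $\mathrm{Aut}(A,\mathcal A\cap A,\mu|_A)$) together with the isomorphism above shows periodic transformations are $d_1$-dense in $\mathrm{Aut}_1(A,\mathcal A\cap A,\mu|_A)$; letting $A$ exhaust $X$ gives $d_1$-density of periodic local transformations in $\mathrm{Aut}_{\widehat 0}(X,\mathcal A,\mu)$, hence in $\mathrm{Ker}\,\chi$. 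For (2): $\mathrm{Ker}\,\chi$ is normal in $\mathrm{Aut}_1(X,\mathcal A,\mu)$ (kernel of the homomorphism $\chi$ into the abelian group $\mathbb R$), so the conjugacy class in $\mathrm{Aut}_1(X,\mathcal A,\mu)$ of a locally aperiodic $T$ stays in $\mathrm{Ker}\,\chi$; by Proposition~\ref{Aut_1 version} it is enough to approximate each local transformation by such conjugates. Choosing $A$ with $T\in\mathrm{Aut}_1(A,\ldots)$ and $T|_A$ aperiodic, the conjugacy lemma \cite[Theorem 2]{ChKa79} and the isomorphism above make $\{S^{-1}TS:S\in\mathrm{Aut}_1(A,\ldots)\}$ $d_1$-dense in $\mathrm{Aut}_1(A,\ldots)$, and conjugation by the local transformations $\tau_{B,A}\in\mathrm{Aut}_{\widehat 0}(X,\mathcal A,\mu)$ transports this to every $\mathrm{Aut}_1(B,\ldots)$ with $\mu(B)<\infty$, exactly as in Proposition~\ref{prop: Rokhlin property}. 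The Rokhlin property follows since locally aperiodic transformations clearly exist in $\mathrm{Aut}_{\widehat 0}(X,\mathcal A,\mu)$.

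For (3), the $G_\delta$ part: the three sets are $G_\delta$ in $\mathrm{Aut}(X,\mathcal A,\mu)$ with the weak topology by the quoted theorems of Choksi--Kakutani and Choksi--Hawkins--Prasad, and since $d_1$ is stronger than the weak topology their intersections with $\mathrm{Ker}\,\chi$ are $G_\delta$ in $(\mathrm{Ker}\,\chi,d_1)$. Density of the conservative transformations is immediate from (1) since periodic transformations are conservative. For the ergodic and type-$\mathrm{III}_1$ sets one repeats the inducing construction from the proof of Theorem~\ref{thm: main category thm}: given a local $T$ equal to the identity off a finite-measure set $A$, build towers over finite-measure sets $B_n\supset A$ with $\mu(B_n)\to\mu(A)$ and partitions $\mathcal P_n$ of $X\setminus A$ into many equal-measure atoms, with return maps $S_n\in\mathrm{Erg}^{\mathrm{III}_1}(B_n,\mathcal A\cap B_n,\mu|_{B_n})$ weakly converging to $T|_A$, so that $B_n$ is $T_n$-sweeping out and $S_n=(T_n)_{B_n}$; then $T_n$ is ergodic of type $\mathrm{III}_1$, it lies in $\mathrm{Ker}\,\chi$ (being conservative, with $T_n'-1$ supported on $B_n$ and hence in $L^1(\mu)$), and $T_n\to T$ weakly by the same Koopman computation as in Theorem~\ref{thm: main category thm}. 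The main obstacle, and the only place where the $L^1$-norm has to be handled by hand, is the remaining estimate $\|T_n'-T'\|_1\to 0$: since $T'=1$ off $A$ and $T_n'\ne 1$ only on $B_n$, one writes $\|T_n'-T'\|_1=\|(S_n'-T')1_A\|_1+\|(T_n'-1)1_{B_n\setminus A}\|_1$, the first term tending to $0$ because weak convergence on the finite-measure set $A$ forces $L^1$-convergence of Radon--Nikodym derivatives (the observation of the first paragraph), and the second term being bounded by $\mu(B_n\setminus A)+\mu(T_n^{-1}(B_n\setminus A))$, which is driven to $0$ by the choices $\mu(B_n\setminus A)\to 0$ and $S_n$ arranged to keep $\mu(S_n^{-1}(B_n\setminus A))$ small, precisely as the analogous quantity $\mu(B_n\triangle T_n^{-1}B_n)=2\mu(B_n\setminus A)$ was controlled in Theorem~\ref{thm: main category thm}. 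Combining with Proposition~\ref{Aut_1 version} yields $d_1$-density of the ergodic and of the type-$\mathrm{III}_1$ transformations in $\mathrm{Ker}\,\chi$, completing the proof.
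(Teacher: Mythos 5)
Your proposal is correct and follows exactly the route the paper intends: the paper offers no detailed proof here, stating only that one should repeat the $\mathrm{Aut}_2$ arguments (Corollary~\ref{corol}, Proposition~\ref{prop: Rokhlin property}, Theorem~\ref{thm: main category thm}) with Proposition~\ref{Aut_1 version} in place of Proposition~\ref{prop: local are dense in Aut2}, and your write-up is precisely that transfer. The one detail you supply that the paper leaves implicit --- that on a finite-measure piece the $d_1$-topology coincides with the weak topology, via $U_{S_n}1\to U_S1$ and Cauchy--Schwarz --- is exactly the fact needed to make the transfer routine, and your remaining observations (normality and closedness of $\mathrm{Ker}\,\chi$, conservative $\Rightarrow$ $\chi=0$ so the three sets of part (3) lie in $\mathrm{Ker}\,\chi$, and the $L^1$-estimate in the inducing construction) are all sound.
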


\section{Basic dynamical properties of Poisson suspensions for  locally compact group actions}\label{sec: Basic dynamics}

\subsection{Unitary and affine  Koopman representations}
Let $G$ be a locally compact non-compact  second countable group.
A nonsingular $G$-action on a standard $\sigma$-finite measure space $(X,\mathcal A,\mu)$ is a Borel map
$$
G\times X\ni (g,x)\mapsto T_gx\in X
$$
such that the mapping $T_g:X\ni x\mapsto T_gx\in X$ is a $\mu$-nonsingular bijection of $X$ for each $g\in G$.
Equivalently, a nonsingular $G$-action can be defined as a continuous group homomorphism $T:G\ni g\mapsto T_g\in\mathrm{Aut}(X,\mathcal A,\mu)$, where the later group is furnished with the weak topology.
The corresponding  unitary Koopman representation $G\ni g\mapsto U_{T_g}\in\mathcal U_\Bbb R(L^2(\mu))$
of $G$ is continuous in the weak (and strong) operator topology.
Since the real Hilbert space  $L^2_\Bbb R(\mu)$ is invariant under the unitary Koopman representation, we can consider $L^2_\Bbb R(\mu)$
  as a $G$-module.
 Denote by $Z^1(G, L^2_\Bbb R(\mu) )$ the vector space of all continuous 1-cocycles of $G$ in $L^2_\Bbb R(\mu)$, i.e. the  mappings $c:G\ni g\mapsto c(g)\in L^2_\Bbb R(\mu)$ such that $c(gh)=c(g)+ U_{T_g}c(h)$ for all $g,h\in G$.
By $B^1(G, L^2_\Bbb R(\mu) )$ we denote the subspace of 1-coboundaries, i.e. those 1-cocycles
$c\in Z^1(G, L^2_\Bbb R(\mu) )$ for which there is $f\in L^2_\Bbb R(\mu)$ such that $c(g)=U_{T_g}f-f$.
We also recall that a 1-cocycle $c\in Z^1(G, L^2_\Bbb R(\mu) )$ is called {\it proper} if $\|c(g)\|_2\to\infty$ as $g\to\infty$.

Suppose now that we are given a nonsingular $G$-action $T$ such that  $T_g\in\mathrm{Aut}_2(X,\mathcal A,\mu)$ for each $g\in G$.
Since the $d_2$-topology is stronger than the weak topology, it follows that the the restriction (to $\mathrm{Aut}_2(X,\mathcal A,\mu)$) of the Borel structure generated by the weak topology coincides with the Borel structure generated by $d_2$.
Hence the map $T$ considered as a homomorphism from $G$ to $\mathrm{Aut}_2(X,\mathcal A,\mu)$
is $d_2$-Borel.
Since each Borel homomorphism from a Polish group to another Polish group is continuous, we obtain that $T$ is continuous as a map from $G$
to $\mathrm{Aut}_2(X,\mathcal A,\mu)$ furnished with $d_2$.
We recall that $\mathrm{Aut}_2(X,\mathcal A,\mu)$  embeds  continuously into Aff$_\Bbb R(L^2(\mu))$ via the 
affine Koopman representation $A^{(2)}$ (see Definition~\ref{def:affine Koopman}).
Thus, we obtain the following proposition.

\begin{prop} If $T$ is a nonsingular $G$-action such that
$T_g\in\mathrm{Aut}_2(X,\mathcal A,\mu)$ for each $g\in G$ then the 1-cocycle
$$
c_T:G\ni g\mapsto c_T(g):=\sqrt{T_g'}-1\in L^2_\Bbb R(\mu)
$$
 of $G$  in $L^2_\Bbb R(\mu)$ is continuous. 
\end{prop}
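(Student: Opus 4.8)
The plan is to deduce the statement by assembling the facts recorded in the paragraph immediately preceding it. The first step is to note that $T$, regarded as a homomorphism from $G$ into $\mathrm{Aut}_2(X,\mathcal A,\mu)$, is continuous for the $d_2$-topology: it is weakly continuous by the very definition of a nonsingular $G$-action, hence weakly Borel; since on $\mathrm{Aut}_2(X,\mathcal A,\mu)$ the Borel structure generated by the weak topology coincides with the one generated by $d_2$, the map $T$ is $d_2$-Borel; and a Borel homomorphism between the Polish groups $G$ and $(\mathrm{Aut}_2(X,\mathcal A,\mu),d_2)$ is automatically continuous. Polishness of $(\mathrm{Aut}_2(X,\mathcal A,\mu),d_2)$ is the content of Theorem~\ref{th:characterization Aut_2} together with Definition~\ref{def:affine Koopman}.

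The second step is to push this continuity into the affine group. By Theorem~\ref{th:characterization Aut_2} and Definition~\ref{def:affine Koopman}, the affine Koopman representation $A^{(2)}:\mathrm{Aut}_2(X,\mathcal A,\mu)\to\mathrm{Aff}_\mathbb R(L^2(\mu))$ is a continuous one-to-one homomorphism, so $G\ni g\mapsto A^{(2)}_{T_g}=(c_T(g),U_{T_g})$ is a continuous homomorphism into $\mathrm{Aff}_\mathbb R(L^2(\mu))=L^2_\mathbb R(\mu)\rtimes\mathcal U_\mathbb R(L^2(\mu))$. Because the topology on $\mathrm{Aff}_\mathbb R(L^2(\mu))$ is the product of the $L^2$-norm topology on the first factor and the weak operator topology on the second, the projection $(f,V)\mapsto f$ onto the first factor is continuous; composing it with $g\mapsto A^{(2)}_{T_g}$ shows that $g\mapsto c_T(g)=\sqrt{T_g'}-1$ is continuous from $G$ to $L^2_\mathbb R(\mu)$ equipped with the norm topology, which is exactly the continuity asserted.

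It remains to check that $c_T$ is indeed a $1$-cocycle, i.e. that $c_T(gh)=c_T(g)+U_{T_g}c_T(h)$. The cleanest argument is to compare first coordinates in $A^{(2)}_{T_{gh}}=A^{(2)}_{T_g}A^{(2)}_{T_h}$: since the multiplication law in $\mathrm{Aff}_\mathbb R(L^2(\mu))$ is $(f,V)(f',V')=(f+Vf',VV')$, this yields the cocycle identity at once, so $c_T\in Z^1(G,L^2_\mathbb R(\mu))$. (If one prefers a direct verification, it follows from the chain rule $(T_{gh})'=(T_h'\circ T_g^{-1})\cdot T_g'$ and the formula $U_{T_g}f=\sqrt{T_g'}\,f\circ T_g^{-1}$, which together give $\sqrt{(T_{gh})'}=U_{T_g}\sqrt{T_h'}$.) There is no real obstacle in any of this; the one place where genuine work is hidden is the automatic continuity of Borel homomorphisms between Polish groups, and this is precisely why one needs both that $(\mathrm{Aut}_2(X,\mathcal A,\mu),d_2)$ is Polish and that the weak and $d_2$ Borel structures agree on it.
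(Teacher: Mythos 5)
Your proposal is correct and follows essentially the same route as the paper, which proves this proposition in the paragraph immediately preceding it: weak-Borel equals $d_2$-Borel on $\mathrm{Aut}_2(X,\mathcal A,\mu)$, automatic continuity of Borel homomorphisms between Polish groups, and then composition with the continuous embedding $A^{(2)}$ into $\mathrm{Aff}_{\mathbb R}(L^2(\mu))$ followed by projection onto the translation part. Your explicit verification of the cocycle identity from the multiplication law $(f,V)(f',V')=(f+Vf',VV')$ is a welcome addition that the paper leaves implicit.
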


Thus, $c_T\in Z^1(G,L^2_\Bbb R(\mu))$. 
It follows that under the condition of the above proposition, 
 a (weakly) continuous affine representation $A_T:G\ni g\mapsto A_{T_g}\in \mathrm{Aff}_\Bbb R(L^2(\mu))$ of $G$ in $L^2(\mu)$ is well defined by the restriction of $A^{(2)}$ to $G$, i.e.
$$
A_{T_g}h:=U_{T_g}h+ c_T(g).
$$

\begin{defn}\label{def:affine}
We call $A_T$ {\it the affine Koopman} representation of $G$ generated by $T$.
\end{defn}

In the next proposition we compare the property to have a fixed vector for the unitary and affine Koopman representations.

\begin{prop}\label{prop:fixed point}\ 
\begin{enumerate}
\item
The unitary Koopman representation has a non-trivial fixed vector if and only if
$T$ admits a non-trivial absolutely continuous invariant probability measure.
\item
The affine Koopman representation has a fixed vector if and only if $T$ admits an absolutely continuous invariant measure belonging to $\mathcal M^+_{\mu,2}$.
\end{enumerate}
\end{prop}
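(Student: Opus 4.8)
The plan is to prove both equivalences through the same dictionary between fixed vectors and absolutely continuous invariant measures, exploiting the explicit form $U_{T_g}h=\sqrt{T_g'}\,(h\circ T_g^{-1})$ of the Koopman operator together with the chain rule $\frac{d\rho\circ T_g^{-1}}{d\mu}=\big(\frac{d\rho}{d\mu}\circ T_g^{-1}\big)\,T_g'$, valid for every $\sigma$-finite $\rho\ll\mu$. All manipulations below are pointwise identities of measurable functions.

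For (1) I would argue as follows. If $\nu\ll\mu$ is a $T$-invariant probability, then $f:=\sqrt{\frac{d\nu}{d\mu}}$ lies in $L^2_\Bbb R(\mu)\setminus\{0\}$ because $\int_X\frac{d\nu}{d\mu}\,d\mu=\nu(X)=1$, and rewriting $\nu\circ T_g^{-1}=\nu$ via the chain rule and taking nonnegative square roots gives $U_{T_g}f=f$ for all $g$. Conversely, given a nonzero fixed vector $f$, one first notes that $|f|$ is again fixed, since $U_{T_g}$ is a positive operator ($|U_{T_g}h|=U_{T_g}|h|$), so one may take $f\ge 0$; then $\nu:=f^2\mu$ satisfies $\nu\ll\mu$, $\nu(X)=\|f\|_2^2\in(0,\infty)$, and $\frac{d\nu\circ T_g^{-1}}{d\mu}=(f^2\circ T_g^{-1})\,T_g'=f^2$, so $\nu$ is $T$-invariant and its normalisation is the required probability.

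For (2) I would run the same computation after the affine shift by the constant $1$, being careful that the resulting identities remain meaningful even though $1\notin L^2(\mu)$ when $\mu$ is infinite. If $\nu\in\mathcal{M}_{\mu,2}^{+}$ is $T$-invariant, set $h:=\sqrt{\frac{d\nu}{d\mu}}-1$; by definition of $\mathcal{M}_{\mu,2}^{+}$ one has $h\in L^2_\Bbb R(\mu)$, and $h+1=\sqrt{\frac{d\nu}{d\mu}}\ge 0$, so invariance gives $\sqrt{T_g'}\,(h+1)\circ T_g^{-1}=h+1$, i.e. $U_{T_g}h+(\sqrt{T_g'}-1)=h$, which is $A_{T_g}h=h$ since $c_T(g)=\sqrt{T_g'}-1$ and $U_{T_g}1=\sqrt{T_g'}$. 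Conversely, if $A_{T_g}h=h$ for all $g$ with $h\in L^2_\Bbb R(\mu)$, rearrange to $\sqrt{T_g'}\,(h+1)\circ T_g^{-1}=h+1$, square it, and put $\nu:=(h+1)^2\mu$: then $\nu\ll\mu$ is $T$-invariant exactly as in (1), and $\sqrt{\frac{d\nu}{d\mu}}-1=|h+1|-1$ satisfies $\big||h+1|-1\big|\le|h|\in L^2(\mu)$, so $\nu\in\mathcal{M}_{\mu,2}^{+}$.

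The underlying computations are routine Radon--Nikodym bookkeeping; the three points deserving attention are the passage to $|f|$ in (1) (positivity of the Koopman operators), the discipline of manipulating the non-$L^2$ function $h+1$ only through a.e. identities in (2), and the final verification $\nu\in\mathcal{M}_{\mu,2}^{+}$, which rests on the elementary inequality $\big||h+1|-|1|\big|\le|h|$. I expect this last check to be the only genuinely non-formal step, and it is precisely where the $L^2$-integrability built into $\mathcal{M}_{\mu,2}^{+}$ (equivalently, into the affine cocycle $c_T$) enters.
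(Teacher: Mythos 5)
Your proof is correct and follows essentially the same route as the paper: the paper declares part (1) standard and, for part (2), performs exactly your computation — rearranging $A_{T_g}h=h$ into $\sqrt{T_g'}\,\bigl((h+1)\circ T_g^{-1}\bigr)=h+1$, setting $\frac{d\nu}{d\mu}:=(h+1)^2$, and checking $\sqrt{\frac{d\nu}{d\mu}}-1=|h+1|-1\in L^2_{\mathbb R}(\mu)$. Your write-up is in fact slightly more careful than the paper's on the points you flag (positivity of the Koopman operators in (1) and the inequality $\bigl||h+1|-1\bigr|\le|h|$).
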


\begin{proof} (1) is standard. We leave its proof to the reader.

(2) If there is $h\in L^2(\mu)$ such that $A_gh=h$ for all $g\in G$ then
$$
\sqrt{ T_g'}\,(h\circ T_g^{-1}+1)=h+1.
$$
We now define  a measure $\nu$ which is absolutely continuous with respect to $\mu$ and
such that $\frac{d\nu}{d\mu}:=(h+1)^2$.
Then $\nu$ is invariant under $T$.
Since $\sqrt{\frac{d\nu}{d\mu}}-1=|h+1|-1\in L^2_\Bbb R(\mu)$, it follows that $\nu\in \mathcal M^+_{\mu,2}$, as desired.
The converse assertion is proved in a similar way by ``reversing'' the argument.
 \end{proof}

\subsection{Existence of an equivalent probability measure}

We now examine when the Poisson suspension $T_*:=\{(T_g)_*\}_{g\in G}$ of $T$ admits a  $\mu^*$-absolutely continuous  invariant probability measure.

\begin{prop}
\label{prop:coboundary}
Let  $T:=\left\{ T_{g}\right\} _{g\in G}$
be a nonsingular $G$-action such that $T_{g}\in\mathrm{Aut}_{2}(X,\mathcal{A},\mu)$
for every $g\in G$.
Then the following assertions are
equivalent:
\begin{enumerate}
\item The 1-cocycle $c_T$ is bounded, i.e. there is $d>0$ such that
$\|c_T(g)\|_2\le d$ for each $g\in G$.
\item $c_T\in B^1(G,L^2_\Bbb R(\mu))$.
\item There exists a  $T$-invariant measure $\nu\in\mathcal{M}_{\mu,2}^{+}$
(hence the probability measure $\nu^{*}$ is  $T_{*}$-invariant 
and $\nu^{*}\ll\mu^{*}$).
\item There exists a $T_{*}$-invariant probability measure $\rho\ll\mu^{*}$.
\end{enumerate}
\end{prop}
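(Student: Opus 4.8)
I would prove the cycle of implications $(1)\Rightarrow(2)\Rightarrow(3)\Rightarrow(4)\Rightarrow(1)$. Throughout write $A_{g}:=A^{(2)}_{T_{g}}=(c_{T}(g),U_{T_{g}})\in\mathrm{Aff}_{\Bbb R}(L^{2}(\mu))$ for the affine Koopman operators (Definitions~\ref{def:affine Koopman} and \ref{def:affine}); then $g\mapsto A_{g}$ is an affine isometric representation of $G$ on $L^{2}_{\Bbb R}(\mu)$ with $A_{g}\cdot 0=c_{T}(g)$ and $A_{g}(c_{T}(h))=c_{T}(gh)$. Three of the four implications are routine once Theorems~\ref{thm:absolute continuity} and \ref{main of 3} and Proposition~\ref{prop:fixed point} are in hand; the genuinely substantial step is $(4)\Rightarrow(1)$, which I expect to be the main obstacle and which I would settle with a Weyl-operator decay estimate.

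\textbf{$(1)\Rightarrow(2)$.} If $c_{T}$ is bounded, the orbit $S:=\{c_{T}(g):g\in G\}\subset L^{2}_{\Bbb R}(\mu)$ is a nonempty bounded set which, by the cocycle relation, satisfies $A_{g}(S)=S$ for every $g$. A nonempty bounded subset of a Hilbert space has a unique Chebyshev (circum)center $v$ — existence and uniqueness being the usual consequence of the parallelogram identity and completeness — and, since each $A_{g}$ is an affine isometry carrying $S$ onto itself, the function $w\mapsto\sup_{s\in S}\|w-s\|_{2}$ is $A_{g}$-invariant, so its unique minimizer is fixed: $U_{T_{g}}v+c_{T}(g)=v$. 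Hence $c_{T}(g)=-(U_{T_{g}}v-v)\in B^{1}(G,L^{2}_{\Bbb R}(\mu))$. (The converse $(2)\Rightarrow(1)$, not needed for the cycle, is immediate from $\|U_{T_{g}}f-f\|_{2}\le 2\|f\|_{2}$.)

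\textbf{$(2)\Rightarrow(3)\Rightarrow(4)$.} If $c_{T}(g)=U_{T_{g}}f-f$ then $A_{g}(-f)=U_{T_{g}}(-f)+c_{T}(g)=-f$ for all $g$, so the affine Koopman representation has a fixed vector; by Proposition~\ref{prop:fixed point}(2) this means $T$ admits an absolutely continuous invariant measure $\nu\in\mathcal M_{\mu,2}^{+}$, which is $(3)$. For $(3)\Rightarrow(4)$ put $\rho:=\nu^{*}$: it is a probability measure, $\nu^{*}\ll\mu^{*}$ by Theorem~\ref{thm:absolute continuity}, and, by the functoriality $\mu^{*}\circ\varphi_{*}^{-1}=(\mu\circ\varphi^{-1})^{*}$ applied to $\varphi=T_{g}$, $\nu^{*}\circ(T_{g})_{*}^{-1}=(\nu\circ T_{g}^{-1})^{*}=\nu^{*}$, so $\nu^{*}$ is $T_{*}$-invariant.

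\textbf{$(4)\Rightarrow(1)$.} I would argue by contraposition. Given a $T_{*}$-invariant probability $\rho\ll\mu^{*}$, set $\psi:=d\rho/d\mu^{*}\ge 0$ and $\xi:=\sqrt{\psi}\in L^{2}(\mu^{*})$, a unit vector. The $(T_{g})_{*}$-invariance of $\rho$ reads $(\psi\circ(T_{g})_{*}^{-1})\cdot(T_{g})_{*}'=\psi$, whence $U_{(T_{g})_{*}}\xi=\xi$; by Theorem~\ref{main of 3}, $U_{(T_{g})_{*}}=W_{A_{g}}$, so $W_{A_{g}}\xi=\xi$ for every $g$. Now suppose $c_{T}$ were unbounded and pick $(g_{n})$ with $\|c_{T}(g_{n})\|_{2}\to\infty$. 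Using the defining formula~(\ref{eq:Weil}) for $W_{A_{g}}$ together with the exponential relation~(\ref{eq:exponentialrelation}), one computes for any fixed $h_{1},h_{2}\in L^{2}(\mu)$
\[
\big|\langle\Exp{h_{1}},W_{A_{g}}\Exp{h_{2}}\rangle\big|\le\exp\Big(-\tfrac12\|c_{T}(g)\|_{2}^{2}+(\|h_{1}\|_{2}+\|h_{2}\|_{2})\,\|c_{T}(g)\|_{2}+\|h_{1}\|_{2}\|h_{2}\|_{2}\Big),
\]
so $\langle\Exp{h_{1}},W_{A_{g_{n}}}\Exp{h_{2}}\rangle\to 0$. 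Since the coherent vectors are total in $L^{2}(\mu^{*})$ and $\|W_{A_{g}}\|=1$, approximating $\xi$ by finite linear combinations of coherent vectors yields $\langle\xi,W_{A_{g_{n}}}\xi\rangle\to 0$; but $W_{A_{g_{n}}}\xi=\xi$ forces $\langle\xi,W_{A_{g_{n}}}\xi\rangle=\|\xi\|_{2}^{2}=1$, a contradiction. Hence $c_{T}$ is bounded. The delicate point here — the part I would write most carefully — is precisely this last estimate: it exploits the gaussian-type decay in $\|c_{T}(g)\|_{2}$ of the matrix coefficients of the Weyl operators against coherent vectors, together with the totality of the latter, to force any common fixed vector of the $W_{A_{g}}$ to vanish as soon as the cocycle is unbounded.
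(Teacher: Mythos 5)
Your proof is correct and follows the same cycle $(1)\Rightarrow(2)\Rightarrow(3)\Rightarrow(4)\Rightarrow(1)$ as the paper, with the middle two implications handled identically: a coboundary gives a fixed vector of the affine Koopman representation (you correctly note it is $-f$, not $f$ as the paper's phrasing suggests), Proposition~\ref{prop:fixed point}(2) produces $\nu\in\mathcal M_{\mu,2}^{+}$, and $\rho:=\nu^{*}$ settles $(3)\Rightarrow(4)$. Two points differ in execution. For $(1)\Rightarrow(2)$ the paper simply cites the classical fact \cite[Proposition 2.2.9]{BHV08Kazh}, whereas you reprove it via the circumcenter of the bounded $A_g$-invariant orbit $\{c_T(g)\}$ — this is exactly the proof behind the citation, so nothing is lost or gained. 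For $(4)\Rightarrow(1)$ the underlying mechanism is the same in both arguments, namely that the Gaussian decay $e^{-\frac12\|c_T(g)\|_2^2}$ is incompatible with the existence of the fixed unit vector $\sqrt{d\rho/d\mu^{*}}$; but you reach the weak convergence $U_{(T_{g_n})_*}\to 0$ by bounding \emph{all} coherent-vector matrix coefficients of the Weyl operators $W_{A_g}$ (your estimate is correct: the exponent is $-\tfrac12\|c_T(g)\|_2^2$ plus terms linear in $\|c_T(g)\|_2$) and then invoking totality of coherent vectors, whereas the paper computes only the single coefficient $\langle U_{(T_g)_*}1,1\rangle=e^{-\frac12\|c_T(g)\|_2^2}$ from (\ref{was}) and Corollary~\ref{cor:R-N} and propagates the decay to all indicators using positivity of the Koopman operator and the finiteness of $\mu^{*}$. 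Your route leans on the identification $U_{(T_g)_*}=W_{A^{(2)}_{T_g}}$ of Theorem~\ref{main of 3} and is slightly heavier, but it has the small advantage of not needing the positivity trick; both are sound.
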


\begin{proof}
(1) $\Longleftrightarrow$ (2) is classical, see \cite[Proposition 2.2.9]{BHV08Kazh},
 for a proof. 
 
 (2) $\implies$ (3) because if $c_T(g)=U_{T_g}h-h$ for some $h\in L^2_\Bbb R(\mu)$
 and all $g\in G$ then $h$ is a fixed vector for the affine Koopman representation of $G$ generated by $T$.
 It remains to apply Proposition~\ref{prop:fixed point}(2).
 
  (3) $\implies$(4) is obvious if we set $\rho:=\nu^*$.
 
 (4) $\implies$ (1)
 It follows from ({\ref{was}}) that  for each $g\in G,$
 $$
\sqrt{   \Exp{T^{\prime}_g-1}}=e^{-\frac12\|c_T(g)\| _{2}^{2}}\,\Exp{\sqrt{T^{\prime}_g}-1}.
 $$
 Integrating this equality and using Corollary~\ref{cor:R-N} we obtain that
 \begin{align}\label{eq:c_T}
\langle U_{(T_g)_*}1,1\rangle_{L^2(\mu^*)}
&=e^{-\frac 12\|c_T(g)\|_2^2}.
\end{align}
Therefore if $c_T$ were unbounded then there would exist  a sequence
$\{g_n\}_{n=1}^\infty$ of elements of  $G$ such that
$\langle U_{(T_g)_*}1,1\rangle_{L^2(\mu^*)}
\to 0$ as $n\to\infty$.
Hence for all subsets $A,B$ of finite measure in $(X^*,\mu^*)$, we have that
$$
\left\langle U_{(T_{g_n})_*}1_A,1_B\right\rangle_{L^2(\mu^*)}\le
\langle U_{(T_g)_*}1,1\rangle_{L^2(\mu^*)}
\to 0 \quad  \text{(as $n\to\infty$)}.
$$
This implies, in turn, that $U_{(T_{g_n})_*}h\to 0$  for each $h\in L^2(X^*,\mu^*)$ in the weak topology. 
However, taking $h:=\sqrt{\frac{d\rho}{d\mu^*}}$, we obtain that
$U_{(T_{g_n})_*}h=h$, a contradiction.
\end{proof}

\begin{cor}
Assume that $\mu$ is infinite and  $T$ is ergodic. 
There exists
a $T_{*}$-invariant probability measure $\rho\sim\mu^{*}$ if and only
if there exists a $T$-invariant measure $\nu\in\mathcal{M}_{\mu,2}^{\circ,+}$.
In this case $\rho=\nu^{*}$ and $T_{*}$ is ergodic (and weakly mixing).
\end{cor}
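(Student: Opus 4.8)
The plan is to establish the two implications separately and then derive the remaining two assertions.

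For the ``if'' direction, suppose a $T$-invariant measure $\nu\in\mathcal{M}_{\mu,2}^{\circ,+}$ is given. Since $\nu\sim\mu$, Remark~\ref{rem: measures}(\ref{rem:equivalence}) gives $\mu\in\mathcal{M}_{\nu,2}^{\circ,+}$ as well, so applying Theorem~\ref{thm:absolute continuity} first with base measure $\mu$ and then with base measure $\nu$ yields $\nu^{*}\ll\mu^{*}$ and $\mu^{*}\ll\nu^{*}$, i.e. $\nu^{*}\sim\mu^{*}$. As $\nu\circ T_{g}^{-1}=\nu$ for every $g\in G$, we have $\nu^{*}\circ (T_{g})_{*}^{-1}=(\nu\circ T_{g}^{-1})^{*}=\nu^{*}$, so $\rho:=\nu^{*}$ is a $T_{*}$-invariant probability measure equivalent to $\mu^{*}$.

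For the converse, assume there is a $T_{*}$-invariant probability $\rho\sim\mu^{*}$. In particular $\rho\ll\mu^{*}$, which is condition~(4) of Proposition~\ref{prop:coboundary}, so condition~(3) of that proposition holds: there exists a $T$-invariant measure $\nu\in\mathcal{M}_{\mu,2}^{+}$. It remains to promote $\nu\ll\mu$ to $\nu\sim\mu$. Put $\phi:=\frac{d\nu}{d\mu}$ and $N:=\{x\in X:\phi(x)=0\}$. From $T_{g}$-invariance of $\nu$ one computes $\phi=(\phi\circ T_{g}^{-1})\,T_{g}'$ $\mu$-a.e., and since $T_{g}'>0$ $\mu$-a.e., this shows $N$ is $T_{g}$-invariant for every $g\in G$, hence $T$-invariant. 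By the third assertion of Remark~\ref{rem: measures} (valid because $\mu$ is infinite and $\nu\in\mathcal{M}_{\mu,2}^{+}$), $\mu(N)<\infty$, so $\mu(X\setminus N)=\infty>0$. Ergodicity of $T$ now forces $\mu(N)=0$, i.e. $\phi>0$ $\mu$-a.e., so $\nu\sim\mu$ and $\nu\in\mathcal{M}_{\mu,2}^{\circ,+}$.

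Finally, assume the equivalent conditions hold and fix a $T$-invariant $\nu\in\mathcal{M}_{\mu,2}^{\circ,+}$. Since ergodicity of a nonsingular action depends only on the measure class and $\nu\sim\mu$, the measure-preserving system $(X,\nu,T)$ is ergodic with $\nu$ infinite, hence $T$ has no invariant set of finite positive measure; by the classical theory of measure-preserving Poisson suspensions \cite{Roy07Infinite}, the probability-preserving system $(X^{*},\nu^{*},T_{*})$ is then weakly mixing, in particular ergodic. Consequently, given any $T_{*}$-invariant probability $\rho\sim\mu^{*}\sim\nu^{*}$, the density $\frac{d\rho}{d\nu^{*}}$ is a $T_{*}$-invariant function, hence $\nu^{*}$-a.e.\ constant, hence equal to $1$; therefore $\rho=\nu^{*}$. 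I expect the main obstacle to be precisely the converse implication --- upgrading the absolutely continuous invariant measure supplied by Proposition~\ref{prop:coboundary} to an equivalent one --- since this is exactly the point at which both hypotheses ``$\mu$ infinite'' and ``$T$ ergodic'' are used in an essential way (the corollary is false without them); the rest is routine bookkeeping with Takahashi's dichotomy and the quoted ergodicity criterion.
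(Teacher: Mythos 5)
Your proof is correct and follows essentially the same route as the paper: the equivalence comes from Proposition~\ref{prop:coboundary} together with Takahashi's dichotomy, and the ergodicity/weak mixing of $T_*$ and the identification $\rho=\nu^*$ come from the classical criterion for measure-preserving Poisson suspensions. The only difference is that you spell out the step the paper leaves implicit --- upgrading the absolutely continuous invariant $\nu\in\mathcal{M}_{\mu,2}^{+}$ to an equivalent one via the $T$-invariance of $\{\frac{d\nu}{d\mu}=0\}$, Remark~\ref{rem: measures}(3) and ergodicity --- and that step is carried out correctly.
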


\begin{proof}
The ergodicity of $T_{*}$ when $\nu^{*}$ is invariant follows from
the lack of $T$-invariant set of non-zero and finite $\nu$-measure
which is the classical ergodicity criteria in the measure preserving case.
\end{proof}

\subsection{Conservativeness and zero type for Poisson $G$-actions}
We first recall some standard definitions for locally compact group actions.

\begin{defn} Let $T=\{T_g\}_{g\in G}$  be a  nonsingular $G$-action on $(X,\mathcal A,\mu)$.
\begin{itemize}
\item
$T$ is called {\it conservative} if for each subset $A$ of positive measure and a compact subgroup $K\subset G$, there is $g\in G\setminus  K$ such that
$\mu(A\cap T_g A)>0$.
\item
$T$ is called {\it dissipative} if it is not conservative.
\item 
$T$ is called {\it totally dissipative} if the restriction of $T$ to every $T$-invariant subset of $X$ is dissipative.
\item
$T$ is called {\it of zero type} if $U_{T_g}\to 0$ as $g\to\infty$ in the weak operator topology.
\end{itemize}
\end{defn}
In case $G=\Bbb Z$, these definitions for conservativeness  and dissipativeness are equivalent to those given above.
We will need the following lemma from \cite[Proposition A.34]{ArIsMa}.

\begin{lem}\label{lem:aux} Let $\lambda$ be a left Haar measure on $G$.
If there is $s\in\Bbb R$ such that
$\int_G(T_g')^s\,d\lambda(g)<\infty$ then $T$ is totally dissipative.
\end{lem}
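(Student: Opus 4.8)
The plan is to extract from the hypothesis a finite $T$-invariant density together with a strictly positive function that is integrable along the $G$-orbits, and then to conclude via the Hopf decomposition. Since $G$ is non-compact, $\lambda(G)=\infty$, so $s=0$ is impossible and I may assume $s\ne 0$. Put
\[
\rho(x):=\Big(\int_G T_g'(x)^{s}\,d\lambda(g)\Big)^{1/s}.
\]
By joint measurability of $(g,x)\mapsto T_g'(x)$ this is measurable; it is strictly positive everywhere (the integrand is positive and $\lambda(G)>0$) and, by hypothesis, finite for $\mu$-a.e.\ $x$.

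The key identity rests on the Radon--Nikodym cocycle relation $T_{gh}'=(T_h'\circ T_g^{-1})\cdot T_g'$ together with the left-invariance of $\lambda$. Fix $h\in G$; from $T_b'(T_h^{-1}x)=T_{hb}'(x)/T_h'(x)$ (valid for a.e.\ $x$ and a.e.\ $b$, by Fubini) one obtains, for $\mu$-a.e.\ $x$,
\[
\rho(T_h^{-1}x)^{s}=\int_G T_b'(T_h^{-1}x)^{s}\,d\lambda(b)=T_h'(x)^{-s}\int_G T_{hb}'(x)^{s}\,d\lambda(b)=T_h'(x)^{-s}\rho(x)^{s},
\]
the last step being the change of variable $c=hb$ via left-invariance of $\lambda$. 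Hence $\rho(T_h^{-1}x)\cdot T_h'(x)=\rho(x)$ for every $h$, i.e.\ $\nu:=\rho\cdot\mu$ is a $\sigma$-finite measure equivalent to $\mu$ which is invariant under $T$. Taking reciprocals in the displayed identity and integrating over $g$ gives moreover $\int_G\rho(T_g^{-1}x)^{-s}\,d\lambda(g)=\rho(x)^{-s}\int_G T_g'(x)^{s}\,d\lambda(g)=1$ for $\mu$-a.e.\ $x$.

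Set $\psi:=\rho^{-s}$, a measurable function, strictly positive $\nu$-a.e., with $\int_G\psi(T_g^{-1}x)\,d\lambda(g)=1<\infty$ for $\nu$-a.e.\ $x$. Since $T$ preserves $\nu$, this forces the measure-preserving action $(X,\nu,T)$ to be totally dissipative: otherwise $T$ would be conservative on some invariant set $C$ of positive $\nu$-measure, and then --- writing $C$ (up to a null set) as the increasing union of the sets $\{\psi>1/n\}\cap C$ and using that the return-time set $\{g:T_g^{-1}x\in B\}$ has infinite Haar measure for $\nu$-a.e.\ $x\in B$ whenever $\nu(B)>0$ (Poincar\'e recurrence for conservative measure-preserving actions of a locally compact second countable group) --- one would get $\int_G\psi(T_g^{-1}x)\,d\lambda(g)=\infty$ on a set of positive measure, contradicting the identity above. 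As conservativity and total dissipativity depend only on the measure class, $(X,\mathcal A,\mu,T)$ is totally dissipative as well.

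I expect the main (and only really delicate) point to be the bookkeeping in the second paragraph: one must use the cocycle relation $T_{gh}'=(T_h'\circ T_g^{-1})\cdot T_g'$ as an a.e.\ statement in both variables, justify the interchange of integrals by Fubini, and carry out the change of variable $c=hb$ via the \emph{left}-invariance of $\lambda$ --- which is precisely why the statement is phrased with a left Haar measure and why no modular function enters. The remaining ingredients (the invariant-density argument and the recurrence theorem) are standard; indeed this is \cite[Proposition~A.34]{ArIsMa}.
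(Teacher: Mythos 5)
The paper does not actually prove this lemma --- it is imported verbatim as \cite[Proposition A.34]{ArIsMa} --- so there is no in-paper argument to compare with; what you have written is essentially a reconstruction of the proof from that reference. The bulk of your argument is correct: since $G$ is assumed non-compact, $\lambda(G)=\infty$ forces $s\neq0$; the cocycle identity $T_{gh}'=(T_h'\circ T_g^{-1})\cdot T_g'$ combined with left-invariance of $\lambda$ gives $\rho(T_h^{-1}x)\,T_h'(x)=\rho(x)$, so $\nu=\rho\,\mu$ is a $\sigma$-finite $T$-invariant measure equivalent to $\mu$, and the normalized identity $\int_G\rho(T_g^{-1}x)^{-s}\,d\lambda(g)=1$ follows. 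The Fubini bookkeeping and the handling of the sign of $s$ are fine.

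The genuine soft spot is the final step, where you pass from ``$T|_C$ conservative'' to ``$\lambda(\{g:T_g^{-1}x\in B\})=\infty$ for a.e.\ $x\in B$''. With the paper's definition of conservativity (for every $A$ of positive measure and every compact $K$ there is $g\notin K$ with $\mu(A\cap T_gA)>0$), the elementary Poincar\'e--Halmos argument yields only that the return set is not relatively compact: setting $B_K:=\{x\in B:\ T_g^{-1}x\notin B\ \text{for all }g\notin K\}$, one checks $B_K\cap T_gB_K=\emptyset$ for $g\notin K$, so conservativity forces $\nu(B_K)=0$ for every compact $K$. For $G=\mathbb{Z}$ ``unbounded'' and ``of infinite Haar measure'' coincide, but for a continuous group an unbounded measurable subset of $G$ can be Haar-null, so the bound $\int_G\psi(T_g^{-1}x)\,d\lambda(g)\ge\frac1n\,\lambda(\{g:T_g^{-1}x\in B\})$ does not yet give a contradiction. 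Upgrading non-relative-compactness of return sets to infinite Haar measure is exactly the content of the Hopf decomposition for measure-preserving actions of locally compact second countable groups (equivalently, the characterization of dissipativity by the existence of a strictly positive function integrable along orbits), which is what the appendix of \cite{ArIsMa} establishes before deducing its Proposition~A.34. You should either invoke that characterization explicitly (a $\nu$-preserving action admitting $\psi>0$ with $\int_G\psi\circ T_g^{-1}\,d\lambda<\infty$ a.e.\ is isomorphic to a translation action on $G\times Y$, which is totally dissipative in the sense used here) or supply the Hopf-decomposition argument; as written, the appeal to ``Poincar\'e recurrence'' does not close the proof for non-discrete $G$.
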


The next corollary follows from Lemma~\ref{lem:aux} and (\ref{eq:c_T}).

\begin{cor}\label{co:total dissipat}
 If $T=\{T_g\}_{g\in G}$ is a nonsingular $G$-action such that
$T_g\in\mathrm{Aut}_2(X,\mu)$ for each $g\in G$ and 
$$
\int_G e^{-\frac12\|c_T(g)\|_2^2} \,d\lambda(g)<\infty
$$
 then $T_*$ is totally dissipative.
\end{cor}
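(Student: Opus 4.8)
The plan is to apply Lemma~\ref{lem:aux} directly to the Poisson suspension $T_*=\{(T_g)_*\}_{g\in G}$ itself, regarded as a nonsingular $G$-action on the standard probability space $(X^*,\mathcal A^*,\mu^*)$. First I would record that $T_*$ really is such an action: each $(T_g)_*$ is $\mu^*$-nonsingular by Corollary~\ref{cor:R-N} because $T_g\in\mathrm{Aut}_2(X,\mathcal A,\mu)$, the map $g\mapsto(T_g)_*$ is a homomorphism, and it is continuous since by Theorem~\ref{main of 3} we have $U_{(T_g)_*}=W_{A^{(2)}_{T_g}}$, which depends continuously on $g$ once one uses the continuity of the affine Koopman representation $A^{(2)}$ and of the Weyl homomorphism $W$. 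In particular $(g,\omega)\mapsto(T_g)_*'(\omega)$ is jointly measurable.

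The key computation is the choice $s=\tfrac12$ in Lemma~\ref{lem:aux}. Since $U_S1=\sqrt{S'}$ for any nonsingular automorphism $S$, we get $\int_{X^*}\sqrt{(T_g)_*'}\,d\mu^*=\langle U_{(T_g)_*}1,1\rangle_{L^2(\mu^*)}$, which by~(\ref{eq:c_T}) equals $e^{-\frac12\|c_T(g)\|_2^2}$. As the integrand is nonnegative, Tonelli's theorem gives
\begin{align*}
\int_{X^*}\Big(\int_G\sqrt{(T_g)_*'}\,d\lambda(g)\Big)\,d\mu^*
&=\int_G\langle U_{(T_g)_*}1,1\rangle_{L^2(\mu^*)}\,d\lambda(g)\\
&=\int_G e^{-\frac12\|c_T(g)\|_2^2}\,d\lambda(g)<\infty
\end{align*}
by the hypothesis of the corollary. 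Hence $\int_G\big((T_g)_*'\big)^{1/2}\,d\lambda(g)<\infty$ for $\mu^*$-almost every $\omega\in X^*$, which is exactly the hypothesis of Lemma~\ref{lem:aux} applied to $T_*$ with $s=\tfrac12$. Therefore $T_*$ is totally dissipative.

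There is essentially no hard analytic obstacle here; the proof is a one-line application of Lemma~\ref{lem:aux} once the right exponent is identified. The only points requiring a little care are interpreting the finiteness condition in Lemma~\ref{lem:aux} as the $\mu^*$-a.e.\ finiteness of $\omega\mapsto\int_G\big((T_g)_*'(\omega)\big)^s\,d\lambda(g)$ (which is what the Tonelli estimate delivers) and justifying the joint measurability needed for Tonelli, both of which are immediate from the continuity of the $G$-action $T_*$ noted above. The conceptual heart is simply the observation that with $s=\tfrac12$ the spatial integral $\int_{X^*}\big((T_g)_*'\big)^{1/2}\,d\mu^*$ reduces to the matrix coefficient $\langle U_{(T_g)_*}1,1\rangle$ already evaluated in~(\ref{eq:c_T}).
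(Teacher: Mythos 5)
Your proof is correct and is exactly the argument the paper intends: the authors state that the corollary "follows from Lemma~\ref{lem:aux} and (\ref{eq:c_T})," and the intended reading is precisely your application of Lemma~\ref{lem:aux} to $T_*$ with $s=\tfrac12$, using $\int_{X^*}\sqrt{(T_g)_*'}\,d\mu^*=\langle U_{(T_g)_*}1,1\rangle=e^{-\frac12\|c_T(g)\|_2^2}$ and Tonelli. Your write-up just makes explicit the measurability and a.e.-finiteness points that the paper leaves implicit.
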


Let $T_g\in\mathrm{Aut}_2(X,\mu)$ for all $g\in G$ and let $A_T=\{A_{T_g}\}_{g\in G}$ stand for the affine Koopman representation of $G$ generated by $T$.

\begin{prop}\label{prop:mixing_proper} 
$T_*$ is of zero type if and only if $c_T$ 
is proper.
\end{prop}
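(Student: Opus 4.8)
The plan is to reduce both implications to explicit matrix coefficients of $U_{(T_g)_*}$ on the total family of coherent vectors. By Theorem~\ref{main of 3} we have $U_{(T_g)_*}=W_{A_{T_g}}$ with $A_{T_g}=(c_T(g),U_{T_g})\in\mathrm{Aff}_\Bbb R(L^2(\mu))$, so the Weyl formula~(\ref{eq:Weil}) gives, for any $f,h\in L^2(\mu)$,
\[
U_{(T_g)_*}\Exp f=e^{-\frac12\|c_T(g)\|_2^2-\langle c_T(g),U_{T_g}f\rangle}\,\Exp{c_T(g)+U_{T_g}f}.
\]
Pairing with $\Exp h$ and using the exponential relation~(\ref{eq:exponentialrelation}),
\[
\langle U_{(T_g)_*}\Exp f,\Exp h\rangle=e^{-\frac12\|c_T(g)\|_2^2}\,e^{-\langle c_T(g),U_{T_g}f\rangle+\langle c_T(g)+U_{T_g}f,h\rangle}.
\]

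From here I would take moduli. Since $\langle c_T(g),h\rangle-\langle c_T(g),U_{T_g}f\rangle=\langle c_T(g),h-U_{T_g}f\rangle$ and $U_{T_g}$ is unitary, Cauchy--Schwarz bounds the real part of the linear-in-$c_T(g)$ terms by $\|c_T(g)\|_2(\|f\|_2+\|h\|_2)$, while $\mathrm{Re}\,\langle U_{T_g}f,h\rangle\le\|f\|_2\|h\|_2$; hence
\[
\big|\langle U_{(T_g)_*}\Exp f,\Exp h\rangle\big|\le\exp\!\Big(-\tfrac12\|c_T(g)\|_2^2+(\|f\|_2+\|h\|_2)\|c_T(g)\|_2+\|f\|_2\|h\|_2\Big).
\]
If $c_T$ is proper, then $\|c_T(g)\|_2\to\infty$ as $g\to\infty$, and the right-hand side, a quadratic in $\|c_T(g)\|_2$ with negative leading term, tends to $0$; thus every matrix coefficient of $U_{(T_g)_*}$ between two coherent vectors tends to $0$. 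Since the coherent vectors are total in $L^2(\mu^*)$ (see \S\,\ref{Fock}) and the operators $U_{(T_g)_*}$ are unitary, hence uniformly bounded, a routine density argument upgrades this to $\langle U_{(T_g)_*}\xi,\eta\rangle\to0$ for all $\xi,\eta\in L^2(\mu^*)$, i.e.\ $U_{(T_g)_*}\to0$ in the weak operator topology, which is exactly the statement that $T_*$ is of zero type.

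For the converse, suppose $T_*$ is of zero type. Specializing to $\xi=\eta=1$ gives $\langle U_{(T_g)_*}1,1\rangle\to0$ as $g\to\infty$; but by~(\ref{eq:c_T}) this coefficient equals $e^{-\frac12\|c_T(g)\|_2^2}$, so $\|c_T(g)\|_2\to\infty$, i.e.\ $c_T$ is proper. The only point needing a bit of care is the passage from coherent vectors to arbitrary vectors of $L^2(\mu^*)$, which uses totality together with the uniform boundedness of the Koopman operators; this presents no real obstacle, and the converse implication is immediate from~(\ref{eq:c_T}).
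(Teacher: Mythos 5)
Your proof is correct, and for the substantive direction (properness implies zero type) it takes a genuinely different route from the paper. The paper's proof is essentially a two-line reduction: it invokes the equivalence ``$T_*$ is of zero type iff $\langle U_{(T_g)_*}1,1\rangle\to 0$'', which was established inside the proof of Proposition~\ref{prop:coboundary} via the domination argument $\langle U_{(T_{g})_*}1_A,1_B\rangle\le\langle U_{(T_{g})_*}1,1\rangle$ (valid because nonsingular Koopman operators preserve the cone of non-negative functions and $\mu^*$ is a probability), and then applies the single identity~(\ref{eq:c_T}). You instead control \emph{all} matrix coefficients on the total family of coherent vectors by an explicit Weyl-operator computation: $U_{(T_g)_*}=W_{A^{(2)}_{T_g}}$ together with~(\ref{eq:Weil}) and~(\ref{eq:exponentialrelation}) gives a closed form for $\langle U_{(T_g)_*}\Exp{f},\Exp{h}\rangle$, which you bound by $\exp\bigl(-\tfrac12\|c_T(g)\|_2^2+(\|f\|_2+\|h\|_2)\|c_T(g)\|_2+\|f\|_2\|h\|_2\bigr)$; the negative quadratic term dominates, and totality plus uniform boundedness of unitaries finishes the job. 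What the paper's route buys is brevity and independence from the Fock-space machinery beyond the single coefficient $\langle U_{(T_g)_*}1,1\rangle=e^{-\frac12\|c_T(g)\|_2^2}$; what your route buys is a self-contained, quantitative decay estimate for every coherent-vector matrix coefficient, which does not rely on positivity of the Koopman operators and would generalize to settings where the operators are Weyl operators but not necessarily Koopman operators of point transformations. Your converse direction (specializing to $\xi=\eta=1$ and reading off~(\ref{eq:c_T})) coincides with the paper's.
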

\begin{proof}
We first note that $T_*$ is of zero type if and only if $\langle U_{(T_g)_*}1,1\rangle\to 0$ as $g\to\infty$.
This fact was shown (for arbitrary nonsingular $G$-actions)
 in the proof of Proposition~\ref{prop:coboundary}.
 It remains to apply (\ref{eq:c_T}).
\end{proof}
A class of groups {\it having property} (BP$_0$) was introduced in  \cite{CoTeVa2}.
This class includes  groups with property (T), solvable groups (in particular, Abelian groups),  connected Lie groups, 
 linear algebraic groups over a local field of characteristic zero, etc.
We need only the following fact: if $G$ has property (BP$_0$) and $T$ is of zero type then $c_T$ is either proper or bounded \cite{CoTeVa2}.
This fact and Propositions~\ref{prop:mixing_proper} and \ref{prop:coboundary} imply the next corollary.

\begin{cor}\label{cor:0type}
 Let $G$ have property {\rm (BP$_0$)}. 
Let  $T=\left\{ T_{g}\right\} _{g\in G}$
be a nonsingular $G$-action such that $T_{g}\in\mathrm{Aut}_{2}(X,\mathcal{A},\mu)$
for every $g\in G$.
Suppose that there is no any $T$-invariant measure in $\mathcal M_{\mu,2}^+$.
If $T$ is of zero type   then $T_*$ is of zero type.
\end{cor}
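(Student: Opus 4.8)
The plan is to deduce \ref{cor:0type} from the cocycle dichotomy for groups with property (BP$_0$), combined with Propositions~\ref{prop:coboundary} and \ref{prop:mixing_proper}. The key point to isolate first is that ``$T$ is of zero type'' means exactly that the unitary Koopman representation $U_T=\{U_{T_g}\}_{g\in G}$ of $G$ in $L^2_\mathbb{R}(\mu)$ converges to $0$ in the weak operator topology as $g\to\infty$, i.e.\ $U_T$ is a mixing (``$C_0$'') representation. Since $c_T\in Z^1(G,L^2_\mathbb{R}(\mu))$ is a continuous $1$-cocycle for this mixing representation and $G$ has property (BP$_0$), the dichotomy of \cite{CoTeVa2} quoted just above the corollary applies: either $c_T$ is proper, or $c_T$ is bounded.

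Next I would rule out the ``bounded'' alternative using the standing hypothesis. If $c_T$ were bounded, then the implication (1)$\Rightarrow$(3) of Proposition~\ref{prop:coboundary} would produce a $T$-invariant measure $\nu\in\mathcal{M}_{\mu,2}^{+}$, contradicting the assumption that no $T$-invariant measure in $\mathcal{M}_{\mu,2}^{+}$ exists. Hence $c_T$ is proper. Finally, Proposition~\ref{prop:mixing_proper} says that $T_*$ is of zero type if and only if $c_T$ is proper; combining this with the previous step gives that $T_*$ is of zero type, which is the assertion.

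I do not expect any genuine obstacle here: the entire mathematical content is imported, on the one hand from the external dichotomy of \cite{CoTeVa2} and on the other from the two propositions of this section. The only points that require care are purely bookkeeping: verifying that zero type of $T$ is precisely the mixing hypothesis under which the (BP$_0$) dichotomy may be invoked, and verifying that the exclusion of the bounded case corresponds exactly — via the chain of equivalences in Proposition~\ref{prop:coboundary} — to the non-existence of a $T$-invariant measure in $\mathcal{M}_{\mu,2}^{+}$. Since all these pieces are already in place, the proof is a short three-line deduction.
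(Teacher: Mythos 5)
Your argument is correct and coincides with the paper's intended proof: the (BP$_0$) dichotomy applied to the cocycle $c_T$ of the (zero type, hence $C_0$) Koopman representation, exclusion of the bounded case via Proposition~\ref{prop:coboundary}(1)$\Rightarrow$(3) and the no-invariant-measure hypothesis, and conclusion via Proposition~\ref{prop:mixing_proper}. Nothing to add.
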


\section{Furstenberg entropy and Stationarity}\label{sec: F-entropy}

\subsection{Furstenberg entropy}
Let $G$ be a locally compact group second countable group and let $\kappa$ be a generating
probability measure on $G$ (i.e. the support of $\kappa$ generates a dense subgroup
of $G$). The Furstenberg $\kappa$-entropy of a non-singular action
$S=\left(S_{g}\right)_{g\in G}$ on a probability space $\left(Y,\mathcal{B},\nu\right)$
is the quantity
\[
h_{\kappa}(S,\nu):=-\int_{G}\left(\int_{Y}\log S_{g}^{\prime}\,d\nu\right)d\kappa(g)\in\left[0,+\infty\right].
\]

We note that $h_{\kappa}(S,\nu)=0$ if and only if $S$ preserves $\nu$.
We will need the following corollary from Theorems~\ref{prop:LogT'} and \ref{thm: LogT'}.

\begin{cor}
\label{prop:Log}Let $T$ be in $\text{{\rm Aut}}_{2}(X,\mathcal{A},\mu)$.
We have that
\[
\mathbb{E}_{\mu^{*}}[\log\left(T_{*}\right)^{\prime}]=-\int_{X}\left(T^{\prime}-1-\log T^{\prime}\right)d\mu,
\]
it is finite if and only if 
$\int_{\{x\in X:\,|\log T^{\prime}(x)|>1\}}|\log T^{\prime}|d\mu<\infty$.
In particular, if $T\in\text{{\rm Aut}}_{1}(X,\mathcal{A},\mu)$,
then $\int_{X}\log T^{\prime}d\mu$ is well defined and takes its
value in the extended interval $\left[-\infty,\chi\left(T\right)\right]$. 
We get in this
case:
\[
\mathbb{E}_{\mu^{*}}[\log(T_{*})^{\prime}]=-\chi(T)+\int_{X}\log T^{\prime}d\mu.
\]
\end{cor}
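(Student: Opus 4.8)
The plan is to obtain the corollary as a direct specialization of Theorems~\ref{prop:LogT'} and~\ref{thm: LogT'} to the measure $\nu:=\mu\circ T^{-1}$. First I would verify that $\nu\in\mathcal M_{\mu,2}^{\circ,+}$: the measure $\nu$ is $\sigma$-finite (since $\mu$ is and $T$ is a bijection), $\nu\sim\mu$ because $T$ is $\mu$-nonsingular, and $\sqrt{d\nu/d\mu}-1=\sqrt{T'}-1\in L^{2}(\mu)$ by the very definition of $\mathrm{Aut}_{2}(X,\mathcal A,\mu)$. Writing $\phi:=d\nu/d\mu=T'$, Theorem~\ref{thm:absolute continuity} (equivalently Corollary~\ref{cor:R-N}) gives $\frac{d\nu^{*}}{d\mu^{*}}=\Exp{T'-1}=(T_{*})'$, so $\log\frac{d\nu^{*}}{d\mu^{*}}=\log(T_{*})'$ $\mu^{*}$-a.e.; in particular the two expectations $\mathbb{E}_{\mu^{*}}[\log(T_{*})']$ and $\mathbb{E}_{\mu^{*}}\big[\log\frac{d\nu^{*}}{d\mu^{*}}\big]$ coincide.

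With this identification the first assertion is exactly Theorem~\ref{prop:LogT'}(3): it yields $\mathbb{E}_{\mu^{*}}[\log(T_{*})']=-\int_{X}(T'-1-\log T')\,d\mu\in[-\infty,0]$, where the integral is well defined because $T'-1-\log T'\ge 0$, and finiteness of the expectation is equivalent to $\int_{\{x\in X:\,|\log T'(x)|>1\}}|\log T'|\,d\mu<\infty$.

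For the second part, assume $T\in\mathrm{Aut}_{1}(X,\mathcal A,\mu)$, i.e. $T'-1\in L^{1}(\mu)$, so that $\nu\in\mathcal M_{\mu,1}^{\circ,+}$ and Theorem~\ref{thm: LogT'} applies. Its part~(2) states that $\int_{X}\log\phi\,d\mu=\int_{X}\log T'\,d\mu$ is well defined with value in $\big[-\infty,\int_{X}(\phi-1)\,d\mu\big]$, and $\int_{X}(\phi-1)\,d\mu=\int_{X}(T'-1)\,d\mu=\chi(T)$ by the definition of $\chi$; moreover $\mathbb{E}_{\mu^{*}}\big[\log\frac{d\nu^{*}}{d\mu^{*}}\big]=-\int_{X}(\phi-1)\,d\mu+\int_{X}\log\phi\,d\mu=-\chi(T)+\int_{X}\log T'\,d\mu$, which is the claimed formula.

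Since every step is a substitution into previously established results, I do not anticipate a genuine obstacle; the only point needing a moment's attention is the bookkeeping in the first paragraph --- checking that $\mu\circ T^{-1}$ lies in the right class $\mathcal M_{\mu,2}^{\circ,+}$ (resp. $\mathcal M_{\mu,1}^{\circ,+}$) and that its Poisson lift has Radon--Nikodym derivative $(T_{*})'$ --- after which the statement is a direct rephrasing of Theorems~\ref{prop:LogT'}(3) and~\ref{thm: LogT'}(2).
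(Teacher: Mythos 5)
Your proposal is correct and is exactly the derivation the paper intends: the corollary is stated without proof as an immediate consequence of Theorems~\ref{prop:LogT'}(3) and~\ref{thm: LogT'}(2), obtained by substituting $\nu=\mu\circ T^{-1}$, $\phi=T'$, and identifying $(T_*)'$ with $\frac{d\nu^*}{d\mu^*}$ via Theorem~\ref{thm:absolute continuity}. Your bookkeeping of the membership checks $\nu\in\mathcal M_{\mu,2}^{\circ,+}$ (resp.\ $\mathcal M_{\mu,1}^{\circ,+}$) and of $\int_X(\phi-1)\,d\mu=\chi(T)$ is accurate, so nothing is missing.
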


Given $c>0$, we denote by $\mu_c$ the $c$-scaling of $\mu$, i.e. $\mu_c(B)=c\mu(B)$ for each $B\in\mathcal A$.

\begin{cor}
\label{cor:entr-c}
Let $T$ be in $\text{{\rm Aut}}_{2}(X,\mathcal{A},\mu)$. 
Then for each $c>0$, the following
formula holds:
\[
\mathbb{E}_{\mu_c^{*}}\left[
\log\frac{d\mu_c^{*}\circ T_{*}^{-1}}{d\mu_c^{*}}\right]=c\,\mathbb{E}_{\mu^{*}}\left[\log\frac{d\mu^{*}\circ T_{*}^{-1}}{d\mu^{*}}\right].
\]
\end{cor}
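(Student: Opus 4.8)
The plan is to reduce the statement to two applications of Corollary~\ref{prop:Log}: one with the base measure $\mu$ and one with the scaled measure $\mu_c$.

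First I would observe that scaling the measure does not change the Radon--Nikodym cocycle. Since $\mu_c=c\mu$, for every $\mu$-nonsingular $T$ we have
$$
\frac{d\mu_c\circ T^{-1}}{d\mu_c}=\frac{c\,d\mu\circ T^{-1}}{c\,d\mu}=T'.
$$
Consequently $L^2(\mu_c)=L^2(\mu)$ as vector spaces, with proportional norms, so $\sqrt{T'}-1\in L^2(\mu_c)$ if and only if $\sqrt{T'}-1\in L^2(\mu)$; hence $T\in\text{Aut}_2(X,\mathcal A,\mu_c)$, the triple $(X,\mathcal A,\mu_c)$ is again a non-atomic standard $\sigma$-finite Lebesgue space, and by Corollary~\ref{cor:R-N} applied with $\mu_c$ in place of $\mu$ the Poisson suspension $T_*$ is $\mu_c^*$-nonsingular with $\frac{d\mu_c^*\circ T_*^{-1}}{d\mu_c^*}=\Exp{T'-1}$ (the coherent vector now being formed over $L^2(\mu_c)$).

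Next I would apply Corollary~\ref{prop:Log} twice. With the base measure $\mu_c$ it gives
$$
\mathbb{E}_{\mu_c^*}\Big[\log\frac{d\mu_c^*\circ T_*^{-1}}{d\mu_c^*}\Big]=-\int_X(T'-1-\log T')\,d\mu_c,
$$
while with the base measure $\mu$ it gives
$$
\mathbb{E}_{\mu^*}\Big[\log\frac{d\mu^*\circ T_*^{-1}}{d\mu^*}\Big]=-\int_X(T'-1-\log T')\,d\mu.
$$
Because $t-1-\log t\ge 0$ for all $t>0$, the integrand $T'-1-\log T'$ is non-negative, so the identity $\int_X(T'-1-\log T')\,d\mu_c=c\int_X(T'-1-\log T')\,d\mu$ holds in $[0,+\infty]$ without any cancellation issue and remains valid when both sides equal $+\infty$. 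Combining the three displayed equalities yields the claimed formula, including the degenerate case where both expectations equal $-\infty$.

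There is essentially no serious obstacle here: the only points that require attention are checking that the scaled space still satisfies the hypotheses of Corollary~\ref{prop:Log} and that $T'$ is unaffected by scaling, and handling the possibly infinite values, which is harmless since the relevant integrand is non-negative.
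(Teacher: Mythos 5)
Your proposal is correct and follows essentially the same route as the paper: both arguments rest on the observation that $\frac{d\mu_c\circ T^{-1}}{d\mu_c}=T'$ and then apply Corollary~\ref{prop:Log} to both base measures, using $\int_X(\cdot)\,d\mu_c=c\int_X(\cdot)\,d\mu$. Your extra remark that the integrand $T'-1-\log T'$ is non-negative (so the identity holds in $[0,+\infty]$ and the $-\infty$ case is harmless) is a careful touch the paper leaves implicit.
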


\begin{proof}
Since $\frac{d\mu_c\circ T^{-1}}{d\mu_c}=\frac{d\mu\circ T^{-1}}{d\mu}$,
we deduce from Corollary~\ref{prop:Log} that
\begin{align*}
\mathbb{E}_{\mu_c^{*}}\left[\log\frac{d\mu_c^{*}\circ T_{*}^{-1}}{d\mu_c^{*}}\right]   & 
=-c\int_{X}\left(\frac{d\mu\circ T^{-1}}{d\mu}-1-\log\frac{d\mu\circ T^{-1}}{d\mu}\right)d\mu\\
 & =c\mathbb{E}_{\mu^{*}}\left[\log\frac{d\mu^{*}\circ T_{*}^{-1}}{d\mu^{*}}\right].
\end{align*}
\end{proof}
In view of the above results, we obtain the following.

\begin{prop}\label{prop:entr}
Let $T=(T_{g})_{g\in G}$
be a non-singular $G$-action on $(X,\mathcal{A},\mu)$ such that $T_g \in\text{{\rm Aut}}_{2}(X,\mathcal{A},\mu)$ for each $g\in G$. 
Then:
\[
h_{\kappa}(T_{*},\mu^{*})=-\int_{G}\left(\int_{X}\left(\log T_{g}^{\prime}-T_{g}^{\prime}+1\right)d\mu\right) d\kappa(g).
\]
Moreover $h_{\kappa}\left(T_{*},\mu_c^{*}\right)=ch_{\kappa}\left(T_{*},\mu^{*}\right)$,
for any $c>0$.
If $T_{g}\in\text{{\rm Aut}}_{1}(X,\mathcal{A},\mu)$ for each $g\in G$
then
\[
h_{\kappa}\left(T_*,\mu^{*}\right)=\int_{G}\left(\chi\left(T_{g}\right)-\int_{X}\log T_{g}^{\prime}\,d\mu\right) d\kappa(g).
\]
\end{prop}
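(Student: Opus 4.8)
The plan is to reduce everything to the pointwise formula for $\log(T_g)'$ provided by Corollary~\ref{prop:Log}, then integrate against $\kappa$ and interchange the order of integration. First I would recall that by definition
$$
h_\kappa(T_*,\mu^*)=-\int_G\Big(\int_{X^*}\log (T_g)_*'\,d\mu^*\Big)\,d\kappa(g)=-\int_G\mathbb E_{\mu^*}\big[\log(T_g)_*'\big]\,d\kappa(g).
$$
For each fixed $g\in G$ the transformation $T_g$ lies in $\mathrm{Aut}_2(X,\mathcal A,\mu)$, so Corollary~\ref{prop:Log} applies and gives
$$
\mathbb E_{\mu^*}\big[\log(T_g)_*'\big]=-\int_X\big(T_g'-1-\log T_g'\big)\,d\mu\in[-\infty,0].
$$
Substituting this into the displayed expression for $h_\kappa(T_*,\mu^*)$ yields the first formula of the proposition. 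Here one should note that the integrand $T_g'-1-\log T_g'$ is everywhere $\ge 0$ (the inequality $\log t\le t-1$), so the inner integral is a well-defined element of $[0,+\infty]$ and the outer integral against the probability $\kappa$ is also well-defined with values in $[0,+\infty]$; no integrability hypothesis is needed to make the statement meaningful, exactly as in the definition of Furstenberg entropy.

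Next I would prove the scaling identity $h_\kappa(T_*,\mu_c^*)=c\,h_\kappa(T_*,\mu^*)$. For this I note that $T_g$ is $\mu_c$-nonsingular with $\frac{d\mu_c\circ T_g^{-1}}{d\mu_c}=\frac{d\mu\circ T_g^{-1}}{d\mu}=T_g'$, so in particular $T_g\in\mathrm{Aut}_2(X,\mathcal A,\mu_c)$ for every $g$ (the class $\mathcal M_{\mu,2}^+$ is scaling-invariant). Then Corollary~\ref{cor:entr-c} gives, for each $g$,
$$
\mathbb E_{\mu_c^*}\big[\log(T_g)_*'\big]=c\,\mathbb E_{\mu^*}\big[\log(T_g)_*'\big],
$$
and integrating this against $-\,d\kappa(g)$ and pulling the constant $c$ out of the integral produces the claimed identity. (One can equally well just multiply the first formula of the proposition by $c$, since $\int_X(\log T_g'-T_g'+1)\,d\mu_c=c\int_X(\log T_g'-T_g'+1)\,d\mu$.)

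Finally, under the stronger hypothesis $T_g\in\mathrm{Aut}_1(X,\mathcal A,\mu)$ for every $g$, Corollary~\ref{prop:Log} tells us that $\int_X\log T_g'\,d\mu$ is well-defined in $[-\infty,\chi(T_g)]$ and
$$
\mathbb E_{\mu^*}\big[\log(T_g)_*'\big]=-\chi(T_g)+\int_X\log T_g'\,d\mu.
$$
Substituting into $h_\kappa(T_*,\mu^*)=-\int_G\mathbb E_{\mu^*}[\log(T_g)_*']\,d\kappa(g)$ gives the third formula. The only point requiring a little care — and the main (mild) obstacle — is the measurability in $g$ of the maps $g\mapsto\int_X(\log T_g'-T_g'+1)\,d\mu$ and $g\mapsto\chi(T_g)$, so that the outer integrals against $\kappa$ make sense; this follows from the joint measurability of the action $g\mapsto T_g'$ (equivalently, from the fact established earlier that $g\mapsto T_g$ is continuous into $\mathrm{Aut}_2(X,\mathcal A,\mu)$ with the $d_2$-topology, hence $g\mapsto\sqrt{T_g'}$ is norm-continuous into $L^2(\mu)$) together with Fubini–Tonelli applied to the nonnegative integrand $T_g'-1-\log T_g'$ on $G\times X$. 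Everything else is a direct substitution of the corollaries already proved.
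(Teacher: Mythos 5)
Your proof is correct and follows exactly the route the paper intends: the paper states this proposition without proof, introducing it with ``In view of the above results,'' and the intended argument is precisely the substitution of Corollary~\ref{prop:Log} (and Corollary~\ref{cor:entr-c} for the scaling) into the definition of $h_\kappa$, which is what you carry out. Your added remarks on the sign of $T_g'-1-\log T_g'$ (so all integrals are well-defined in $[0,+\infty]$) and on measurability in $g$ are correct and fill in the only details the paper leaves implicit.
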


\begin{cor}\label{cor:entr-cons}
Let $T=(T_{g})_{g\in G}$
be a non-singular $G$-action on $(X,\mathcal{A},\mu)$ such that $T_g \in\text{{\rm Aut}}_{1}(X,\mathcal{A},\mu)$ for each $g\in G$. 
If one of the following conditions  is satisfied:
\begin{enumerate}
\item  $T_{g}$ is conservative for each $g\in G$,
\item $\kappa$ is symmetric,
\end{enumerate}
then we have
\[
h_{\kappa}\left(T_{*},\mu^{*}\right)=-\int_{G}\left(\int_{X}\log T_{g}^{\prime}\,d\mu\right)
d\kappa(g).
\]
\end{cor}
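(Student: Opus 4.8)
The plan is to reduce everything to the vanishing of $\int_G\chi(T_g)\,d\kappa(g)$. By the last formula of Proposition~\ref{prop:entr} (valid since $T_g\in\mathrm{Aut}_1(X,\mathcal A,\mu)$ for all $g$),
\[
h_\kappa(T_*,\mu^*)=\int_G\Big(\chi(T_g)-\int_X\log T_g'\,d\mu\Big)\,d\kappa(g),
\]
and the integrand equals $\int_X(T_g'-1-\log T_g')\,d\mu\ge 0$. So if I can show that $\int_G\chi(T_g)\,d\kappa(g)=0$ (in a way that also legitimizes splitting the integrand), the corollary follows immediately.

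Under hypothesis (1), each $T_g$ is conservative, so Proposition~\ref{prop:Conservativity Aut1} gives $\chi(T_g)=0$ for every $g\in G$. Then the integrand above is literally $-\int_X\log T_g'\,d\mu$ pointwise in $g$ — and it is automatically nonnegative, since $\int_X\log T_g'\,d\mu\le\chi(T_g)=0$ by Corollary~\ref{prop:Log} — so no splitting is needed and the asserted identity holds at once.

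Under hypothesis (2), I would use that $g\mapsto T_g$ is a homomorphism into $\mathrm{Aut}_1(X,\mathcal A,\mu)$ while $\chi$ is a homomorphism onto $\mathbb R$ (Theorem~\ref{thm: Aut1 semidirect prod}), so $\psi:=\chi\circ T\colon G\to\mathbb R$ is a homomorphism; in particular $\psi(g^{-1})=-\psi(g)$. Since $\kappa$ is symmetric, the substitution $g\mapsto g^{-1}$ yields $\int_G\psi\,d\kappa=\int_G\psi(g^{-1})\,d\kappa(g)=-\int_G\psi\,d\kappa$, hence $\int_G\chi(T_g)\,d\kappa(g)=0$. Splitting the integrand in the Proposition~\ref{prop:entr} formula as $\int_G\chi(T_g)\,d\kappa(g)-\int_G\big(\int_X\log T_g'\,d\mu\big)\,d\kappa(g)$ is then legitimate: from $\int_X\log T_g'\,d\mu\le\chi(T_g)$ one gets $\max\{0,\int_X\log T_g'\,d\mu\}\le\psi(g)^{+}$, and the same symmetry argument applied to $\psi^{+}$ gives $\int_G\psi^{+}\,d\kappa=\int_G\psi^{-}\,d\kappa$, so both $\psi$ and $-\int_X\log T_g'\,d\mu$ have $\kappa$-integrable negative part and the decomposition of the integral is valid. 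The one point genuinely requiring care is precisely this integrability bookkeeping in the symmetric case; the rest is a direct application of Proposition~\ref{prop:entr}, Corollary~\ref{prop:Log}, and Proposition~\ref{prop:Conservativity Aut1}.
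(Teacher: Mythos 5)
Your treatment of case (1) is correct and is exactly the paper's argument: Proposition~\ref{prop:Conservativity Aut1} gives $\chi(T_g)=0$ for every $g$, the integrand in Proposition~\ref{prop:entr} collapses pointwise to $-\int_X\log T_g'\,d\mu$, and this is nonnegative because $\int_X\log T_g'\,d\mu\le\chi(T_g)=0$ by Corollary~\ref{prop:Log}, so no splitting of the integral is needed.

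In case (2) your core idea --- that $\psi:=\chi\circ T$ is odd under $g\mapsto g^{-1}$, so the symmetry of $\kappa$ kills the $\chi$ term --- is also the paper's. But your integrability bookkeeping contains a genuine gap. From $\psi^{+}(g^{-1})=\psi^{-}(g)$ and the symmetry of $\kappa$ you correctly deduce $\int_G\psi^{+}\,d\kappa=\int_G\psi^{-}\,d\kappa$, but equality of these two integrals does not imply that either is finite: both may equal $+\infty$, in which case $\int_G\psi\,d\kappa$ is undefined rather than $0$, the bound $\bigl(\int_X\log T_g'\,d\mu\bigr)^{+}\le\psi^{+}(g)$ gives no integrability, and the decomposition of $\int_G\bigl(\chi(T_g)-\int_X\log T_g'\,d\mu\bigr)\,d\kappa(g)$ into two separate integrals is not legitimate. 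The sentence ``so both $\psi$ and $-\int_X\log T_g'\,d\mu$ have $\kappa$-integrable negative part'' is therefore a non sequitur. The paper circumvents this by exhausting $G$ with an increasing sequence of \emph{compact} symmetric sets $A_n$: on each $A_n$ the continuous homomorphism $\psi$ is bounded, hence $\kappa$-integrable there, so the splitting is valid on $A_n$ and $\int_{A_n}\psi\,d\kappa=0$ by symmetry of $\kappa$ and $A_n$; since the full integrand $\chi(T_g)-\int_X\log T_g'\,d\mu=\int_X(T_g'-1-\log T_g')\,d\mu$ is nonnegative, monotone convergence then yields the identity in the limit. You should replace your global splitting argument by this local-then-limit argument, or else supply an argument that $\psi\in L^1(\kappa)$.
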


\begin{proof}
The first point follows from Proposition \ref{prop:Conservativity Aut1}.
For the second, take an increasing sequence of compact symmetric sets $\left\{ A_{n}\right\} _{n\in\mathbb{N}}$
such that $G=\bigcup_{n\in\mathbb{N}}A_{n}$.
Then

\[
\int_{G}\left(\chi\left(T_{g}\right)-\int_{X}\log T_{g}^{\prime}\,d\mu\right)d\kappa(g)=\lim_{n\to\infty}\int_{A_{n}}\left(\chi\left(T_{g}\right)-\int_{X}\log T_{g}^{\prime}d\mu\right)d\kappa(g
).
\]
Since $G\ni g\mapsto\chi\left(T_{g}\right)\in\Bbb R$ is a continuous group homomorphism, it is integrable on $A_{n}$.
Moreover, $\chi(T_g^{-1})=-\chi(T_g)$ and hence, by the
symmetry of $\kappa$ and $A_n$, we obtain that $\int_{A_{n}}\chi(T_{g})d\kappa(g)=0$.
The result follows.
\end{proof}

\subsection{Stationarity for Poisson suspensions}

Let $\kappa$ be a generating probability measure on $G$.
 We recall that a non-singular
action $\left(S_{g}\right)_{g\in G}$ on a probability space $\left(Y,\mathcal{B},\nu\right)$
is called {\it $\kappa$-stationary} if $\int_{G}\nu\circ S_{g}\,d\kappa(g)=\nu$.

\begin{prop}\label{prop:statio}
Let $T=(T_{g})_{g\in G}$
be a non-singular $G$-action on $(X,\mathcal{A},\mu)$ such that $T_g \in\text{{\rm Aut}}_2(X,\mathcal{A},\mu)$ for each $g\in G$. 
 The Poisson suspension  $T_*=((T_{g})_{*})_{g\in G}$ of\, $T$ acting on
  the space $\left(X^{*},\mathcal{A}^{*},\mu^{*}\right)$
 is stationary if
and only if $T_*$ preserves $\mu^*$.
\end{prop}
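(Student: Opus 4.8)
The plan is to use the characterization of zero Furstenberg entropy together with a convexity/fixed-point argument coming from the Weyl-operator formula~\eqref{eq:c_T}. The ``if'' direction is trivial: if $T_*$ preserves $\mu^*$ then $\mu^*\circ (T_g)_*=\mu^*$ for every $g$, so $\int_G\mu^*\circ(T_g)_*\,d\kappa(g)=\mu^*$, i.e. $T_*$ is $\kappa$-stationary. So the content is the ``only if'' direction.

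First I would recall that for a $\kappa$-stationary action the Furstenberg $\kappa$-entropy is always defined and nonnegative, and that $h_\kappa(T_*,\mu^*)=0$ forces $T_*$ to preserve $\mu^*$ (this is the remark just before Corollary~\ref{prop:Log}: $h_\kappa(S,\nu)=0$ iff $S$ preserves $\nu$). Thus it suffices to show that stationarity of $T_*$ implies $h_\kappa(T_*,\mu^*)=0$. By Proposition~\ref{prop:entr},
\[
h_\kappa(T_*,\mu^*)=-\int_G\Big(\int_X\big(\log T_g'-T_g'+1\big)\,d\mu\Big)\,d\kappa(g),
\]
and since $\log t-t+1\le 0$ for all $t>0$, the inner integral is $\le 0$ for every $g$; hence $h_\kappa(T_*,\mu^*)\ge 0$ automatically, and it vanishes precisely when $\int_X(\log T_g'-T_g'+1)\,d\mu=0$ for $\kappa$-a.e.\ $g$, i.e.\ when $T_g'=1$ $\mu$-a.e., i.e.\ when $T_g$ preserves $\mu$ for $\kappa$-a.e.\ $g$; since $\kappa$ is generating this propagates to all $g\in G$, giving the second conclusion $T$ preserves $\mu$ as well.

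So the crux is to deduce, from stationarity of $T_*$, that $h_\kappa(T_*,\mu^*)=0$. The key identity is~\eqref{eq:c_T}:
\[
\langle U_{(T_g)_*}1,1\rangle_{L^2(\mu^*)}=\mathbb E_{\mu^*}\Big[\sqrt{(T_g)_*'}\Big]=e^{-\frac12\|c_T(g)\|_2^2}.
\]
Here is the mechanism I would exploit. Stationarity says $\int_G (T_g)_*'{}^{-1}$-pushforward averages back to $\mu^*$; equivalently, writing $P=\int_G U_{(T_g)_*}\,d\kappa(g)$ for the Markov operator on $L^2(\mu^*)$ (note each $U_{(T_g)_*}$ is the unitary Koopman operator, but the relevant stationarity operator is the substochastic one $f\mapsto\int_G f\circ (T_g)_*^{-1}\cdot (T_g)_*'\,d\kappa$ — I should be a little careful here and use the correct ``stationarity operator''), the constant function $1$ is a fixed point of the adjoint-type operator, and stationarity of $\mu^*$ means $\int_G \mathbb E_{\mu^*}[f\circ (T_g)_*^{-1}\, (T_g)_*']\,d\kappa=\mathbb E_{\mu^*}[f]$. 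Testing against a suitable function and using Jensen will force $\int_G \|c_T(g)\|_2^2\,d\kappa(g)$ to be controlled; more precisely I expect to derive $\int_G e^{-\frac12\|c_T(g)\|_2^2}\,d\kappa(g)=1$ from stationarity applied to $f\equiv 1$ together with Cauchy–Schwarz, and since each term is $\le 1$ this forces $c_T(g)=0$ for $\kappa$-a.e.\ $g$, hence $T_g$ preserves $\mu$ for $\kappa$-a.e.\ $g$, hence (as $\kappa$ generates) for all $g$, and therefore $T_*$ preserves $\mu^*$.

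The main obstacle I anticipate is pinning down exactly which operator encodes $\kappa$-stationarity of $T_*$ and extracting the scalar identity $\int_G e^{-\frac12\|c_T(g)\|_2^2}\,d\kappa(g)=1$ cleanly — one must integrate the stationarity relation $\int_G\mu^*\circ (T_g)_*\,d\kappa(g)=\mu^*$ against the right test function so that~\eqref{eq:c_T} appears, and justify the interchange of integrals (finiteness is fine since all integrands are bounded by $1$). Once that scalar identity is in hand, the rest (equality in a sum of terms each $\le 1$ forces each to be $1$; $\|c_T(g)\|_2=0$; generating $\Rightarrow$ all $g$; then $T_g$ preserves $\mu$ so $(T_g)_*$ preserves $\mu^*$) is routine. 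An alternative, perhaps cleaner, route is: stationarity implies $h_\kappa(T_*,\mu^*)<\infty$ and that $\mu^*$ is a $\kappa$-stationary measure in the classical sense, to which one applies the general fact that a $\kappa$-stationary probability with zero entropy is invariant — but proving $h_\kappa=0$ still reduces to the same positivity-of-a-sum argument via Proposition~\ref{prop:entr}, so I would present the direct computation.
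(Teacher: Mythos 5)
Your reduction is sound at both ends: the ``if'' direction is immediate, and \emph{if} one knew that $\int_G e^{-\frac12\|c_T(g)\|_2^2}\,d\kappa(g)=1$, then since each term is $\le 1$ this would indeed force $c_T(g)=0$ for $\kappa$-a.e.\ $g$, hence $\mu\circ T_g=\mu$ on a dense subgroup and therefore everywhere. The gap is precisely where you anticipated it: the scalar identity does not follow from stationarity by the Jensen/Cauchy--Schwarz mechanism you sketch. Stationarity says $\int_G \phi_g\,d\kappa(g)=1$ $\mu^*$-a.e., where $\phi_g:=\frac{d(\mu\circ T_g)^*}{d\mu^*}$, and $\int_{X^*}\sqrt{\phi_g}\,d\mu^*=e^{-\frac12\|c_T(g)\|_2^2}$ is the Hellinger affinity. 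But concavity of $t\mapsto\sqrt{t}$ gives $\int_G\sqrt{\phi_g}\,d\kappa\le\sqrt{\int_G\phi_g\,d\kappa}=1$, so after integrating over $X^*$ you only recover $\int_G e^{-\frac12\|c_T(g)\|_2^2}\,d\kappa(g)\le 1$, which holds for \emph{every} action and carries no information; there is no equality case to exploit. Likewise the detour through Furstenberg entropy is circular: $h_\kappa(T_*,\mu^*)=0$ is equivalent to $T_*$ preserving $\mu^*$, and stationary actions with strictly positive Furstenberg entropy are abundant (boundary actions), so ``stationary $\Rightarrow h_\kappa=0$'' cannot be a soft fact and must use the Poisson structure, which your argument never does beyond the Hellinger formula.

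The missing ingredient is the \emph{intensity identity}. The paper tests the stationarity relation $\int_G(\mu\circ T_g)^*\,d\kappa(g)=\mu^*$ against two families of observables: first $N_A$, whose expectation under $\nu^*$ is $\nu(A)$ and is \emph{linear} in the intensity, yielding $\mu(A)=\int_G\mu(T_gA)\,d\kappa(g)$; and second the void indicator $1_{\{N_A=0\}}$, yielding $e^{-\mu(A)}=\int_G e^{-\mu(T_gA)}\,d\kappa(g)$. It is the first (linear) identity that turns the Jensen inequality $-\log\int_G e^{-\mu(T_gA)}\,d\kappa\le\int_G\mu(T_gA)\,d\kappa$ into an equality, and strict convexity of $x\mapsto e^{-x}$ then forces $\mu(T_gA)=\mu(A)$ for $\kappa$-a.e.\ $g$ and every $A$ of finite measure. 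Without an analogue of the linear constraint, a single convexity inequality (yours or the paper's) cannot be upgraded to the equality case, so as written the proposal does not prove the ``only if'' direction.
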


\begin{proof}
Assume  that $T_*$ is $\kappa$-stationary. 
Then
 $$
 \mu^{*}=\int_{G}\mu^{*}\circ\left(T_{g}\right)_{*}d\kappa(g)
 =\int_{G}\left(\mu\circ T_{g}\right)^{*}d\kappa(g).
 $$
Computing the intensity of the two sides of this equation we obtain that
\begin{equation}\label{eq:intensities}
\mu=\int_{G}\mu\circ T_{g}\,d\kappa(g).
\end{equation}
Take a set $A\in\mathcal{A}$ with $0<\mu\left(A\right)<+\infty$
and compute the void probabilities of $A$ using $\mu^{*}$ and $\int_{G}\left(\mu\circ T_{g}\right)^{*}d\kappa(g)$.
We obtain that
\[
e^{-\mu\left(A\right)}=\int_{G}e^{-\mu\circ T_{g}\left(A\right)}d\kappa(g).
\]
From this and the Jensen inequality we deduce that
$$
\mu\left(A\right)=-\log\int_{G}e^{-\mu\circ T_{g}\left(A\right)}d\kappa(g)\le \int_{G}
\mu\circ T_{g}(A)d\kappa(g)
$$
In view of \eqref{eq:intensities} we see that  the equality case takes place in the Jensen inequality.
This happens if and only if 
 $\mu\left(A\right)=\mu(T_{g}A)$
for $\kappa$-a.e. $g\in G$. 
Thus $\mu=\mu\circ T_{g}$ for $\kappa$-a.e.
$g\in G$. 
Since $\kappa$ is generating, $\mu$ is invariant under $T_g$ for $g$ belonging to a dense
subgroup in $G$.
 Therefore $\mu=\mu\circ T_{g}$ for every $g\in G$.
\end{proof}

\section{Property (T), Poisson Suspensions and Furstenberg entropy}\label{sec: Property T}
Let $G$ be a locally compact second countable group. The group has Kazhdan property (T) if every unitary representation of $G$ which has almost invariant vectors admits a non zero invariant vector. There are numerous equivalent characterizations of property (T), among them is that for every unitary representation of $G$ every  cocycle  is a coboundary \cite{BHV08Kazh}. Theorem \ref{th:T-prop} is a new characterization of property (T) in terms of nonsingular Poisson suspensions.

\begin{defn}
A nonsingular  action $R=\{R_g\}_{g\in G}$ of $G$ is called {\it Poisson} if there is a $\sigma$-finite standard measure space $(X,\mathcal A,\mu)$ and a nonsingular action $T=\{T_g\}_{g\in G}$ on $X$ such that $T_g\in\text{Aut}_2(X,\mathcal A,\mu)$ and the Poisson suspension $T_*=\{(T_g)_*\}_{g\in G}$ of $T$ is isomorphic to $R$. 
\end{defn}

We establish the following characterization  of  property (T) in terms of nonsingular Poisson suspensions.

\begin{thm}\label{th:T-prop} $G$ has property (T) if and only if each nonsingular Poisson $G$-action admits an absolutely continuous invariant probability measure.
\end{thm}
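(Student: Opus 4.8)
The plan is to establish both implications by translating the statement into the language of affine isometric representations, using Proposition~\ref{prop:coboundary} as the bridge. First I would prove the easy direction: suppose $G$ has property (T). Let $R=\{R_g\}_{g\in G}$ be any nonsingular Poisson $G$-action, realized (by definition) as $T_*$ for some nonsingular $G$-action $T=\{T_g\}_{g\in G}$ with $T_g\in\mathrm{Aut}_2(X,\mathcal A,\mu)$. The associated $L^2_\mathbb{R}(\mu)$-cocycle $c_T\in Z^1(G,L^2_\mathbb{R}(\mu))$ is continuous (by the proposition preceding Definition~\ref{def:affine}). Since $G$ has property (T), every continuous cocycle into an orthogonal representation is a coboundary, so $c_T\in B^1(G,L^2_\mathbb{R}(\mu))$. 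By the implication (2)$\implies$(3)$\implies$(4) of Proposition~\ref{prop:coboundary}, there is a $T_*$-invariant probability measure $\rho\ll\mu^*$; that is, $R=T_*$ admits an absolutely continuous invariant probability measure, as required.

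For the converse, I would argue by contraposition: assume $G$ does \emph{not} have property (T), and construct a nonsingular Poisson $G$-action with no absolutely continuous invariant probability. By the Delorme--Guichardet characterization, the failure of property (T) gives a continuous orthogonal representation $\pi$ of $G$ on a real Hilbert space $\mathcal H$ together with a continuous cocycle $b\in Z^1(G,\mathcal H)\setminus B^1(G,\mathcal H)$; moreover one may arrange $b$ to be unbounded (an unbounded cocycle exists whenever a non-coboundary cocycle exists, since the sum of a bounded cocycle and an unbounded coboundary is unbounded — more carefully, if every cocycle were bounded then, by a standard averaging/center-of-a-bounded-orbit argument, every cocycle would be a coboundary, contradicting the failure of (T)). The task is then to \emph{realize} the pair $(\pi,b)$ as the affine Koopman representation $A_T$ of a Poisson $G$-action: I need a $\sigma$-finite nonatomic space $(X,\mathcal A,\mu)$, a nonsingular $G$-action $T$ with $T_g\in\mathrm{Aut}_2(X,\mathcal A,\mu)$, and an isometric $G$-equivariant embedding $j:\mathcal H\hookrightarrow L^2_\mathbb{R}(\mu)$ intertwining $\pi$ with the Koopman representation $U_{T_g}$ and carrying $b$ to $c_T=\sqrt{T_g'}-1$, in such a way that $j(b)$ actually has the form $\sqrt{T_g'}-1$ for a genuine nonsingular transformation — this is exactly where Theorem~A (the manual Theorem~A, characterizing $\mathrm{Aut}_2$ via the semispace $C_{-1}$) is needed. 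Concretely, one realizes $(\pi,\mathcal H)$ inside $L^2$ of a suitable infinite measure space (e.g. via a Gaussian-type or direct multiplicity construction as in the nonsingular Gaussian literature \cite{ArIsMa}), arranges the cocycle values to lie in $C_{-1}$ by a rescaling/truncation trick so that Theorem~A produces transformations $T_g$, and checks the cocycle identity forces $\{T_g\}$ to be a $G$-action. Once this is done, $c_T=j\circ b$ is unbounded, so by Proposition~\ref{prop:coboundary} (the equivalence (1)$\Leftrightarrow$(4)) the suspension $T_*$ has no absolutely continuous invariant probability, completing the contrapositive.

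The main obstacle is the realization step: given an abstract unbounded cocycle $b$ into an orthogonal representation, one must produce an honest nonsingular $G$-action $T$ on an infinite measure space whose affine Koopman representation is (equivalent to) $(\pi,b)$. The subtlety is that $c_T(g)=\sqrt{T_g'}-1$ is constrained to take values in $C_{-1}=\{f\ge -1\}$ and to be compatible with the multiplicative cocycle identity for $T_g'$; not every Hilbert-space cocycle lands there a priori. I expect the resolution to combine Theorem~A (which says the transformations in $\mathrm{Aut}_2$ are \emph{exactly} the affine maps preserving $C_{-1}$) with a normalization: replace $b$ by $t\,b$ for small $t>0$ and/or by a cocycle with values in a cone, using that scaling does not affect unboundedness, so that the orbit of the affine action $g\mapsto \pi(g)\cdot + b(g)$ (suitably placed) preserves $C_{-1}$ and hence, by Theorem~A, arises from a Poisson $G$-action. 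This is essentially the content of the constructions in Section~\ref{sec: Property T} and parallels the nonsingular Gaussian realization of \cite{ArIsMa}; the cocycle-identity bookkeeping and the verification that the resulting $\{T_g\}$ is a measurable $G$-action are the routine-but-nontrivial parts of the argument.
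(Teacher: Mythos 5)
Your forward direction is correct and coincides with the paper's argument: Delorme--Guichardet makes $c_T$ a coboundary, and Proposition~\ref{prop:coboundary} converts that into an absolutely continuous invariant probability for $T_*$. (The paper writes the invariant measure explicitly as $d\nu/d\mu=(1-f)^2$, but this is exactly your (2)$\implies$(3)$\implies$(4) chain.)

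The converse, however, has a genuine gap at the step you yourself flag as the ``main obstacle.'' Your plan is to take an abstract orthogonal representation $\pi$ with an unbounded cocycle $b$ and \emph{realize} the pair $(\pi,b)$ as $(U_T,c_T)$ for a nonsingular $G$-action $T$, invoking Theorem~\ref{th:characterization Aut_2} plus a ``rescaling/truncation trick.'' This cannot work as described. Theorem~\ref{th:characterization Aut_2} only \emph{characterizes} which affine operators arise from $\mathrm{Aut}_2(X,\mathcal A,\mu)$; it gives no mechanism for converting an arbitrary affine isometric action into one of that form. The constraint is not merely that the cocycle values land in $C_{-1}$: the linear part of $A^{(2)}_{T_g}$ must be a Koopman operator, i.e.\ must preserve the positive cone $C_0$ of $L^2(\mu)$, and an abstract orthogonal representation coming from the failure of property (T) has no reason to admit an equivariant isometric embedding into such a representation that simultaneously carries $b$ into the cocycle $\sqrt{T_g'}-1$. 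Rescaling $b\mapsto tb$ only shifts the translation part and does nothing to fix the linear part. A functor producing nonsingular actions from arbitrary affine isometric representations is exactly what the nonsingular \emph{Gaussian} construction of \cite{ArIsMa} provides; no Poisson analogue of that functor is established in this paper, and your proof would need one.

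The paper avoids the realization problem entirely by using a dynamical, rather than representation-theoretic, witness of non-(T): a weakly mixing probability-preserving action $S$ that is not strongly ergodic (Connes--Weiss). From almost-invariant sets $A_n$ it builds the diagonal action $T$ on $\Omega=X^{\mathbb N}$ and a density $F^2$ with $F\notin L^2(\mathbb P)$ but $F^2-F^2\circ T_g\in L^1(\mathbb P)$, giving an infinite measure $\mu$ with $T_g\in\mathrm{Aut}_1(\Omega,\mu)$. The punchline is then Proposition~\ref{prop:coboundary} run in the direction you did not use: an absolutely continuous invariant probability for $T_*$ would force an \emph{infinite} $T$-invariant measure in $\mathcal M^+_{\mu,2}$, which is impossible because $T$ is ergodic and already preserves a probability equivalent to $\mu$. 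If you want to salvage your outline, you would either have to prove a Poisson realization theorem for affine isometric actions (a substantial new result) or switch to a construction of this concrete type.
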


\begin{proof}
$(\Longrightarrow)$
Let $G$ has property (T) and let $R=\{R_g\}_{g\in G}$ be a nonsingular Poisson $G$-action.
Then there is a  $\sigma$-finite standard measure space $(X,\mathcal A,\mu)$ with $\mu(X)=\infty$ and a $G$-action
$T:G\ni g\mapsto T_g\in\text{Aut}_2(X,\mu)$ such that the Poisson suspension $T_*=\{(T_g)_*\}_{g\in G}$ of $T$
is isomorphic to $R$.
 By the Delorme-Guichardet theorem (see \cite{BHV08Kazh}), $c_T$ is a coboundary.
 Hence there is $f\in L^2(X,\mu)$ such that
\begin{equation}
  c_T(g)=f\circ T_g^{-1}\cdot\sqrt{T_g'}-f.
 \label{bbb}
 \end{equation}
 We define a measure $\nu$ on $X$ be setting $\frac{d\nu}{d\mu}=(1-f)^2$.
 Of course, $\nu(X)=\infty$ and $\nu\in \mathcal{M}_{\mu,2}^+$.
 The equation (\ref{bbb}) yields that $\nu$ is invariant under $T_g$.
$\nu\in\mathcal{M}_{\mu,2}^+$. 
Hence $\nu^*$ is a $T_*$-invariant probability absolutely continuous with respect to $\mu^*$.

$(\Longleftarrow)$ Suppose now that $G$ does not have property (T).
Then there is  a weakly mixing measure preserving action
$S=\{S_g\}_{g\in G}$ on a standard probability space $(X, \mathcal{B},m)$ which is not strongly ergodic.
The latter implies that there is a sequence $\{A_n\}_{n=1}^\infty$ of measurable subsets in $X$ such that $m(A_n)=\frac12$ for every $n\in\Bbb N$ and for each compact subset $K\subset G$,
$$
\lim_{n\to\infty}\sup_{g\in K}m(A_n\triangle S_gA_n)=0.
$$
Fix an increasing sequence $\{K_{n}\}_{n=1}^\infty$  of symmetric compact
subsets of $G$ with $1\in K_1$ and  $G=\bigcup_{n=1}^\infty K_{n}$. 
By
passing to a subsequence in $\{A_n\}_{n=1}^\infty$,  we can assume  without loss of generality that for all $n\in\mathbb{N}$
and $g\in K_{n}$, 
\begin{equation}
\label{eq: almost-invariant A_n}
m\left(A_{n}\triangle S_gA_{n}\right)\leq\frac{2^{-n^2}}{n}.
\end{equation}
Let $\left(\Omega,\mathcal{C},\mathbb{P}\right):=\left(X^{\mathbb{N}},\mathcal{B}^{\mathbb{N}},m^{\mathbb{N}}\right)$.
We will consider the diagonal (measure preserving) action 
$$
T:G\ni g \mapsto T_g:=S_g\times S_g\times\cdots\in\text{Aut}(\Omega,\mathcal C,\mathbb{P})
$$
of $G$ on $\Omega$.
Since $S$ is weakly mixing, so is $T$.
For each $n\in\Bbb N$, we let
\[
B_{n}:=\left\{ x=(x_j)_{j=1}^\infty\in\Omega:\ \forall j\in\left[\frac{n(n-1)}{2},\frac{n(n+1)}{2}\right),\ x_{j}\in A_{n}\right\}.
\]
Of course,  $B_n\in\mathcal{C}$ and $\mathbb{P}\left(B_{n}\right)=2^{-n}$.
Now we define a function 
 $F:\Omega\to[1,+\infty)$ by setting
\[
F(x)=\sqrt{1+\sum_{n=1}^{\infty}2^{n}1_{B_{n}}(x)}.
\]
Since $\sum_{n=1}^\infty\mathbb{P}(B_n)<\infty$, it follows from the 
Borel-Cantelli Lemma that $F(x)<\infty$ at a.e. $x\in\Omega$. 
We note that $F\notin L^{2}(\mathbb{P})$. For $D\in \mathcal{B}$ and $k\in \mathbb{Z}$ we write $[D]_k:=\{x\in \Omega: x_k\in D\}$.

{\sl Claim.}   
We claim that $\left\Vert F^2-F^2\circ T_g\right\Vert _{1}<\infty$ for each $g\in G$.
To prove this inequality,  we choose $N\in\mathbb{N}$ such that 
$g^{-1}\in K_{n}$ for all $n\geq N$.
Writing $I_n=\left[\frac{n(n-1)}{2},\frac{n(n+1)}{2}\right)$, we have 
\[
B_n\triangle T_g^{-1}B_n\subset \bigcup_{k\in I_n} \left[A_n\triangle S_g^{-1}A_n\right]_k.
\]
This together with \eqref{eq: almost-invariant A_n} implies that
\begin{align}\label{eq: bound on B translate}
\left\Vert 1_{B_{n}}-1_{B_{n}}\circ T_g\right\Vert _{1} & =\mathbb{P}\left(B_n\triangle T_g^{-1}B_n\right)\\
&\leq \sum_{k\in I_n} \mathbb{P}\left(\left[A_n\triangle S_g^{-1}A_n\right]_k\right)\nonumber\\
& =n\cdot m\left(A_{n}\triangle S_gA_{n}\right)\leq2^{-n^2}
  \nonumber
\end{align}
for all $n\geq N$.
Hence
\begin{align*}
\left\Vert F^2-F^2\circ T_g\right\Vert _{1} & \leq \sum_{n=1}^{\infty}2^{n}\left\Vert 1_{B_{n}}-1_{B_{n}}\circ T_g\right\Vert _{1}\\
 & \leq\sum_{n=1}^{N-1}2^{n}+\sum_{n=N}^{\infty}2^{n}2^{-n^2}<\infty,
\end{align*}
as claimed.

Now we  define a new measure $\mu$ on $(\Omega,\mathcal{C})$ by setting:
$\mu\sim \mathbb{P}$ and $\frac{d\mu}{d\mathbb{P}}:=F^{2}$.
Then $\mu$ is $\sigma$-finite and $\mu(X)=\infty$.
For each $g\in G$,
\begin{equation} \label{eq: It is in Aut1}
\int_{X}\left|\frac{d\mu\circ T_g}{d\mu}-1\right|d\mu
=\int_{X}\left|F^2\circ T_{g}-F^2\right|d\mathbb{P}
<\infty
\end{equation}
in view of Claim. 
Thus,  $T_{g}\in{\rm Aut}_{1}(\Omega,\mathcal C,\mu)$.
Hence the nonsingular Poisson $G$-action $T_*=\{(T_g)_*\}_{g\in G}$ is well defined
on $(\Omega^*,\mathcal C^*,\mu^*)$.
If $T_*$ had an invariant $\mu^*$-absolutely continuous probability measure then by
Proposition~\ref{prop:coboundary} there would exist an infinite $T$-invariant measure 
$\nu\in\mathcal{M}_{\mu,2}^+$ action.
This contradicts to the fact that $T$ is ergodic and has an equivalent invariant  probability measure.

\end{proof}

We are interested now in the computation of the Furstenberg entropy  of Poisson $G$-actions.
For that, we first  prove the following proposition.

\begin{prop}\label{prop:estimlog}
Let $(\Omega,\mu,T)$ be as in the proof of Theorem~\ref{th:T-prop}.
Then
 $\log T_g'\in L^1(\Omega,\mathbb{P})$ for each $g\in G$.
\end{prop}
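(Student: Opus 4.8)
The plan is to first reduce the claim to the statement that $\log F\in L^1(\mathbb{P})$, and then to verify that by a crude product bound. Since the diagonal action $T$ preserves $\mathbb{P}$ by construction and $\frac{d\mu}{d\mathbb{P}}=F^2$, the Radon--Nikodym chain rule (the same computation as in \eqref{eq: It is in Aut1}) gives the closed form
$$
T_g'=\frac{d\mu\circ T_g^{-1}}{d\mu}=\frac{F^2\circ T_g^{-1}}{F^2},
$$
whence $\log T_g'=2\log F\circ T_g^{-1}-2\log F$. As $\mathbb{P}$ is $T_g$-invariant, $\|\log F\circ T_g^{-1}\|_{L^1(\mathbb{P})}=\|\log F\|_{L^1(\mathbb{P})}$, so $\log T_g'\in L^1(\mathbb{P})$ as soon as $\log F\in L^1(\mathbb{P})$.

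To establish $\log F\in L^1(\mathbb{P})$, I would note that $F\ge 1$, so $\log F\ge 0$ and only an upper bound on $\mathbb{E}_{\mathbb{P}}[\log F^2]$ is needed. By the Borel--Cantelli lemma (already invoked in the construction to see $F<\infty$ a.e.), for a.e.\ $x$ only finitely many of the events $B_n$ occur, so
$$
F^2=1+\sum_{n=1}^\infty 2^n 1_{B_n}\le\prod_{n=1}^\infty\bigl(1+2^n 1_{B_n}\bigr),
$$
the right-hand side being a finite product for a.e.\ $x$. Taking logarithms and then integrating term by term (legitimate since every summand is nonnegative, so Tonelli applies), and using $\mathbb{P}(B_n)=2^{-n}$ together with $\log(1+2^n)\le(n+1)\log 2$, one gets
$$
\mathbb{E}_{\mathbb{P}}[\log F^2]\le\sum_{n=1}^\infty\mathbb{P}(B_n)\log(1+2^n)\le(\log 2)\sum_{n=1}^\infty(n+1)2^{-n}=3\log 2<\infty.
$$
Hence $\log F=\tfrac12\log F^2\in L^1(\mathbb{P})$ and the proof is complete.

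This argument is essentially routine; there is no real obstacle, the lemma being meant only to license the Furstenberg-entropy computation for the action of Theorem~\ref{th:T-prop}. The two points demanding (mild) care are respecting the paper's orientation $T_g'=\frac{d\mu\circ T_g^{-1}}{d\mu}$ of the Radon--Nikodym derivative, and justifying the exchange of sum and integral --- but nonnegativity of $\log F^2$ and of each term $1_{B_n}\log(1+2^n)$ makes the latter immediate.
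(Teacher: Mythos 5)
Your argument does correctly prove the statement as literally printed: since $d\mu/d\mathbb{P}=F^2$ and $\mathbb{P}$ is $T_g$-invariant, $\log T_g'=2\bigl(\log F\circ T_g^{-1}-\log F\bigr)$, and your product bound $F^2\le\prod_n(1+2^n1_{B_n})$ gives $\mathbb{E}_{\mathbb{P}}[\log F^2]\le\sum_n2^{-n}\log(1+2^n)<\infty$, so both terms lie in $L^1(\mathbb{P})$. However, the ``$\mathbb{P}$'' in the statement is almost certainly a slip for ``$\mu$'': the paper's own proof establishes the strictly stronger fact $\int_\Omega|\log T_g'|\,d\mu=\int_\Omega F^2\bigl|\log(F^2\circ T_g/F^2)\bigr|\,d\mathbb{P}<\infty$, and it is this $\mu$-weighted integral, together with the explicit constants read off from that proof, that Theorem~\ref{th:Fur-entr} uses via Corollary~\ref{cor:entr-cons} to bound $h_\kappa(T_*,\mu^*)=-\int_G\bigl(\int_\Omega\log T_g'\,d\mu\bigr)d\kappa(g)$. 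Since $F^2\ge1$ we have $L^1(\mu)\subset L^1(\mathbb{P})$, so your conclusion is genuinely weaker than what the proposition is meant to deliver.

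More importantly, your route cannot be upgraded to the $\mu$-statement, because the termwise splitting fails there: $\log F\notin L^1(\mu)$. Indeed, the events $B_n$ depend on disjoint blocks of coordinates and are therefore independent, so with $E_n=\{2^n<F^2\le2^{n+1}\}$ one has $\mathbb{P}(E_n)=2^{-n}\prod_{k>n}(1-2^{-k})\ge c\,2^{-n}$ for some $c>0$, while $F^2\log F^2>n\,2^n\log 2$ on $E_n$; hence $\int_\Omega F^2\log F^2\,d\mathbb{P}\ge c\log 2\sum_n n=\infty$. The finiteness of $\int_\Omega F^2|\log(F^2\circ T_g/F^2)|\,d\mathbb{P}$ therefore rests on cancellation between the two logarithms, i.e.\ on the quantitative near-invariance $\mathbb{P}(B_n\triangle T_g^{-1}B_n)\le2^{-n^2}$ from \eqref{eq: bound on B translate}. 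The paper's proof implements this by splitting each $E_n$ into the part carried by $T_g$ into $\bigsqcup_{k\ge n}E_k$, where $F^2\circ T_g/F^2>1/2$ and the elementary bound $|\log y|\le2|y-1|$ reduces everything to $\|F^2-F^2\circ T_g\|_1<\infty$, and the exceptional part, which is contained in $B_n\triangle T_g^{-1}B_n$ and hence summable against the weight $(n+1)2^{n+1}$. You would need to supply an argument of this kind for the proposition to serve its purpose in Theorem~\ref{th:Fur-entr}.
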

\begin{proof}
We first let $B_0:=\Omega$  and define a sequence $\{E_n\}_{n=0}^\infty$ of subsets of $\Omega$ by setting 
$
E_n:=B_n\setminus \left(\bigcup_{k=n+1}^\infty B_k\right).
$
It follows from the definition of $F^2$ that
\[
E_n=\left\{x\in\Omega:\ 2^n<F(x)^2\leq 2^{n+1}\right\}.
\]
Hence $E_0,E_1,\dots$ form 
a countable partition of $\Omega$.
Fix $g\in G$ and note that 
\[
\int_\Omega \left|\log\frac{d\mu\circ T_g}{d\mu}\right|d\mu
= \sum_{n=0}^\infty\int_{E_n} F^2\left|\log\left(\frac{F^2\circ T_g}{F^2}\right)\right|d\mathbb{P}.
\]
We will show that the right hand side of this equality is finite.
If for some $n\in\mathbb{N}$ and $x\in\Omega$, we have that $x\in D_{n,g}:=E_n\cap T_g^{-1}\left(\bigsqcup_{k=n}^\infty E_k\right)$ then $F( T_gx)^2 >2^{n}\geq F(x)^2/2$. 
Since  $|\log y|\leq 2|y-1|$ for each $y>1/2$,
 we see that 
\begin{align*}
 \sum_{n=0}^\infty \int_{D_{n,g}}F^2\left|\log\left(\frac{F^2\circ T_g}{F^2}\right)\right|d\mathbb{P} &\leq   \sum_{n=0}^\infty 2\int_{D_{n,g}}F^2\left|\frac{F^2\circ T_g}{F^2}-1\right|d\mathbb{P}\\
&\leq2\sum_{n=0}^\infty \int_{E_n}\left|F^2\circ T_g-F^2\right|d\mathbb{P}\\
&= 2\left\Vert F^2-F^2\circ T_g\right\Vert _{1}<\infty
\end{align*}
in view of \eqref{eq: It is in Aut1}.
Now if $x\in E_n \setminus T_g^{-1}\left(\bigsqcup_{k=n}^\infty E_k\right)$ then
\[
F^2(x)\left|\log\left(\frac{F(T_gx)^2}{F(x)^2}\right)\right|\leq 2^{n+1}\left|\log\left(2^{-(n+1)}\right)\right|. 
\]
As  $E_n\subset B_n$ and $\bigsqcup_{k=n}^\infty E_k\supset B_n$,
we obtain that
$E_n \setminus T_g^{-1}\left(\bigsqcup_{k=n}^\infty E_k\right)
\subset B_n\triangle T_g^{-1}B_n$.
Let  $N$ be  the smallest natural number such that $g^{-1}\in K_n$ for all $n\geq N$ then,
\begin{align*}
 \sum_{n=0}^\infty \int_{E_n\setminus D_{n,g}}F^2\left|\log\left(\frac{F^2\circ T_g}{F^2}\right)\right|d\mathbb{P} 
 &\leq  \sum_{n=0}^\infty (n+1)2^{n+1}\int_{B_n\triangle T_g^{-1}B_n}d\mathbb{P} \\
&\leq \sum_{n=1}^{N-1}(n+1)2^{n+1}+\sum_{n=N}^{\infty}(n+1)2^{n+1}2^{-n^2},
\end{align*}
where the latter bound follows from \eqref{eq: bound on B translate}. We conclude that 
\[
\int_\Omega \left|\log\frac{d\mu\circ T_g}{d\mu}\right|d\mu<\infty,
\]
as desired.
\end{proof}

We can now prove the following result.

\begin{thm}\label{th:Fur-entr}
 Let $G$ do not have property (T) and let $\kappa$ be a probability measure on $G$.
Then there is a nonsingular $G$-action $T$ on an infinite measure space $(\Omega,\mu)$
such that the Poisson suspension $T_*$ of $T$ is $\mu^*$-nonsingular and 
$\{h_\kappa(T_*,\mu_t^*): t\in(0,+\infty)\}=(0,+\infty)$.
\end{thm}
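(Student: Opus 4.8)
The plan is to use the same action $T=\{T_g\}_{g\in G}$ on $(\Omega,\mathcal C,\mu)$ constructed in the proof of Theorem~\ref{th:T-prop}, and to show that varying the mass scale $t$ sweeps out all of $(0,+\infty)$ for the Furstenberg $\kappa$-entropy of $T_*$. First I would record that, by Proposition~\ref{prop:estimlog}, $\log T_g'\in L^1(\Omega,\mathbb P)$ for each $g$, and since $T_g\in\mathrm{Aut}_1(\Omega,\mathcal C,\mu)$ with $T$ measure preserving for $\mathbb P$ (so $\chi(T_g)=0$ by Proposition~\ref{prop:Conservativity Aut1}, as $T$ is conservative being an ergodic probability-preserving action — or alternatively invoke that $\kappa$ can be replaced by a symmetric one, but conservativeness is cleaner here), Proposition~\ref{prop:entr} and Corollary~\ref{cor:entr-cons} give
\[
h_\kappa(T_*,\mu^*)=-\int_G\Big(\int_\Omega\log T_g'\,d\mu\Big)\,d\kappa(g),
\]
and the scaling relation $h_\kappa(T_*,\mu_t^*)=t\,h_\kappa(T_*,\mu^*)$ for all $t>0$. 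Consequently, once I know $h_\kappa(T_*,\mu^*)=:h_0$ is finite and strictly positive, the set $\{h_\kappa(T_*,\mu_t^*):t>0\}=\{t h_0:t>0\}=(0,+\infty)$, which is exactly the claim.

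So the two things to verify are $0<h_0<\infty$. Finiteness: I would show $\int_\Omega|\log T_g'|\,d\mu<\infty$ for every $g$ — this is precisely the content of Proposition~\ref{prop:estimlog} (the integral $\int_\Omega|\log\frac{d\mu\circ T_g}{d\mu}|\,d\mu$ there is the same quantity up to the obvious bookkeeping between $T_g$ and $T_g^{-1}$) — and then bound $\int_G(\int_\Omega|\log T_g'|\,d\mu)\,d\kappa(g)$ by exploiting uniformity in $g$ over the compact sets $K_n$: the estimates in the proof of Proposition~\ref{prop:estimlog} show $\int_\Omega|\log T_g'|\,d\mu\le C+ N(g)\cdot(\text{constant})$ where $N(g)$ is the least $N$ with $g^{-1}\in K_n$ for all $n\ge N$; since $\kappa$ is a probability measure and $N(\cdot)$ is finite $\kappa$-a.e., a dominated/monotone convergence argument gives a finite integral. (If one wants full uniformity one may as well first replace $K_n$ by a faster exhaustion so that $N(g)$ is $\kappa$-integrable; this is a harmless modification of the construction.)

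Positivity is the main obstacle, and where the heart of the argument lies. I would argue that $h_0=0$ would force $\log T_g'=0$ $\mu$-a.e., hence $T_g$ would preserve $\mu$, for $\kappa$-a.e.\ $g$; since $\kappa$ is generating this would make $\mu$ invariant under a dense subgroup and hence (by continuity of $g\mapsto T_g$ in $\mathrm{Aut}_1$) under all of $G$. But $\mu\sim\mathbb P$ with $\frac{d\mu}{d\mathbb P}=F^2\notin L^1(\mathbb P)$, i.e.\ $\mu$ is an infinite measure equivalent to the $T$-invariant probability $\mathbb P$; since $T$ is ergodic with respect to $\mathbb P$ (being weakly mixing and measure preserving), its only equivalent invariant $\sigma$-finite measures are scalar multiples of $\mathbb P$, so $\mu$ cannot be $T$-invariant. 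This contradiction gives $h_0>0$. I would be slightly careful here to invoke ergodicity of $T$ for $\mathbb P$ (inherited from weak mixing of $S$ via the diagonal construction) and the uniqueness-up-to-scalar of the invariant measure in the conservative ergodic case. With $0<h_0<\infty$ in hand, the scaling identity closes the proof.
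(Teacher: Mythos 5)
Your overall architecture matches the paper's: the same action $(\Omega,\mu,T)$ built in the proof of Theorem~\ref{th:T-prop}, conservativeness of each $T_g$ (it preserves a probability equivalent to $\mu$, so $\chi(T_g)=0$) to reduce the entropy via Proposition~\ref{prop:entr} and Corollary~\ref{cor:entr-cons} to $h_\kappa(T_*,\mu^*)=-\int_G\bigl(\int_\Omega\log T_g'\,d\mu\bigr)d\kappa(g)$, and the scaling identity $h_\kappa(T_*,\mu_t^*)=t\,h_\kappa(T_*,\mu^*)$ to sweep out $(0,+\infty)$. Your positivity argument is correct, and in fact more explicit than the paper's, which leaves positivity implicit in the remark that $h_\kappa(S,\nu)=0$ iff $S$ preserves $\nu$ together with the non-existence of an absolutely continuous invariant probability for $T_*$ established in the proof of Theorem~\ref{th:T-prop}.

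The gap is in the finiteness step. First, the bound extracted from the proof of Proposition~\ref{prop:estimlog} is not of the form $C+N(g)\cdot(\mathrm{const})$: it is of order $C_{N(g)}$ with $C_N=\sum_{n=1}^{N-1}(n+3)2^{n+1}+\sum_{n\ge N}(n+2)2^{n+1}2^{-n^2}$, i.e.\ it grows like $N(g)\,2^{N(g)}$. Second, and more seriously, ``$N(\cdot)$ is finite $\kappa$-a.e., hence a dominated/monotone convergence argument gives a finite integral'' is a non sequitur: a $\kappa$-a.e.\ finite function need not be $\kappa$-integrable, and no integrable dominating function is available. Your parenthetical fallback --- re-choose the exhaustion so that $N(g)$ is $\kappa$-integrable --- is the right idea but still insufficient, precisely because the bound is exponential in $N(g)$. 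What the paper actually does is fix the compacts \emph{before} building $T$ so that $\kappa(K_{n-1}^{-1})>1-\frac{1}{2^nC_n}$; then $\sum_N C_N\,\kappa(K_N^{-1}\setminus K_{N-1}^{-1})\le\sum_N 2^{-N}<\infty$, i.e.\ one arranges that $C_{N(g)}$ (not merely $N(g)$) is $\kappa$-integrable. Since the $K_n$ enter the construction of $T$ through the almost-invariance condition on the sets $A_n$, this tuning must indeed precede the construction, as you note. With that quantitative correction your proof closes and coincides with the paper's.
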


\begin{proof}
For each $N>0$, we let
$$
C_N:=\sum_{n=1}^{N-1}(n+3)2^{n+1}+\sum_{n=N}^{\infty}(n+2)2^{n+1}2^{-n^2}.
$$
Then we choose an increasing sequence $(K_n)_{n=1}^\infty$ of compacts in $G$ such that
$\bigcup_{n=1}^\infty K_n=G$ and 
$\kappa(K_{n-1}^{-1})>1-\frac 1{2^nC_n}$ for each $n$.
By $K_n^{-1}$ we mean the subset $\{k^{-1}: k\in K_n\}$.
Using this sequence, we construct the dynamical system $(\Omega,\mu, T)$
exactly as in the proof of Theorem~\ref{th:T-prop}.
It follows from the proof of Proposition~\ref{prop:estimlog} that
$$
\sup_{g^{-1}\in K_N}\left|\int_\Omega \log\frac{d\mu\circ T_g}{d\mu}d\mu\right|< C_N
$$
for each $N>0$.
Since $T_g$ preserves a probability measure equivalent to $\mu$, 
it follows  that $T_g$ is conservative  for each $g\in G$.
Therefore, by Proposition~\ref{prop:entr},
$$
\begin{aligned}
h_\kappa(T_*,\mu^*)
&=\sum_{N=1}^\infty\int_{K_N^{-1}\setminus K_{N-1}^{-1}}\left(-\int_\Omega\log\frac{d\mu\circ T_g}{d\mu}d\mu\right)\,d\kappa(g)\\
&\le
\sum_{N=1}^\infty C_N\kappa(K_N^{-1}\setminus K_{N-1}^{-1})\\
&\le
\sum_{N=1}^\infty \frac 1{2^N}=1.
\end{aligned}
$$
By Corollary~\ref{cor:entr-cons}, 
$$
\{h_\kappa(T_*,\mu_t^*): t\in(0,+\infty)\}=\{th_\kappa(T_*,\mu^*):0<t<+\infty\}=(0,+\infty).
$$
\end{proof}

\subsection{A remark on ergodicity of  $T_*$}
It is well known that if a transformation $T$ preserves $\mu$ then  $T_*$ is ergodic if and only if there are no $T$-invariant sets of strictly positive finite measure. 
This is  no longer true for general $T\in\text{Aut}_2(X,\mathcal{A},\mu)$.
Moreover, using the techniques developed in this section, we show in the following example that even when $T$ is ergodic, $T_*$ can be non-ergodic.

\begin{prop}
There exists 
an ergodic transformation $T\in \text{{\rm Aut}}_2(X,\mathcal{A},\mu)$ with $\mu(X)=\infty$ such that $T_*$ is totally dissipative. 
\end{prop}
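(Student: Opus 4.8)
The plan is to build $T$ concretely by gluing together a measure-preserving ergodic piece with a "thick" weight that makes the Poisson suspension dissipative, exactly in the spirit of the construction in the proof of Theorem~\ref{th:T-prop} but now for a single transformation $T=T_1$ instead of a group action. First I would take a mixing probability-preserving transformation $S$ on a standard probability space $(X,\mathcal B,m)$ that is not strongly ergodic as a $\Bbb Z$-action; a standard example is a suitable Bernoulli shift, or more safely $S$ can be chosen as the diagonal product $S\times S\times\cdots$ on $(X^{\Bbb N},\mathcal B^{\Bbb N},m^{\Bbb N})$ as in the proof above, so that there is a sequence of sets $A_n$ with $m(A_n)=\tfrac12$ and $m(A_n\triangle S^{-1}A_n)$ decaying fast, and after passing to a subsequence we may assume $m(A_n\triangle S A_n)\le 2^{-n^2}/n$. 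Then, using the block sets $B_n$ supported on coordinates in the dyadic intervals $I_n=[\tfrac{n(n-1)}2,\tfrac{n(n+1)}2)$ with $\Bbb P(B_n)=2^{-n}$, I would define $F^2:=1+\sum_{n}2^n 1_{B_n}$ and set $\mu\sim\Bbb P$ with $\tfrac{d\mu}{d\Bbb P}=F^2$. The same Borel--Cantelli and telescoping estimates proved there give $F<\infty$ a.e., $F\notin L^2(\Bbb P)$, and $\|F^2-F^2\circ T\|_1<\infty$, where $T$ is the copy of $S$ acting on $(X^{\Bbb N},\cdot)$; hence $T\in\mathrm{Aut}_1(X^{\Bbb N},\mu)\subset\mathrm{Aut}_2$, $T$ is ergodic (being mixing for $\Bbb P$ and $\mu\sim\Bbb P$), $\mu$ is infinite, and $T$ preserves the probability $\Bbb P\sim\mu$.

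The next step is to verify total dissipativeness of $T_*$ via Corollary~\ref{co:total dissipat} (equivalently Lemma~\ref{lem:aux} applied to the $\Bbb Z$-action), i.e. to show $\sum_{k\in\Bbb Z}e^{-\frac12\|c_T(k)\|_2^2}<\infty$, where $c_T(k)=\sqrt{(T^k)'}-1$. By \eqref{eq:c_T} we have $e^{-\frac12\|c_T(k)\|_2^2}=\langle U_{(T^k)_*}1,1\rangle=\Bbb E_{\mu^*}[\Exp{(T^k)'-1}^{1/2}]$, which by Lemma~\ref{lem:non-negative-coherent} (cf.~\eqref{was}) equals $e^{-\frac12\|\sqrt{(T^k)'}-1\|_2^2}$; so I must show $\|\sqrt{(T^k)'}-1\|_2^2\to\infty$ fast enough in $|k|$, in fact summably for $e^{-\frac12\|\cdot\|_2^2}$. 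Now $(T^k)'=\frac{d\mu\circ T^k}{d\mu}=\frac{F^2\circ T^{-k}}{F^2}$ (up to the usual inversion conventions), and $\|\sqrt{(T^k)'}-1\|_2^2=\int(\sqrt{F^2\circ T^{-k}}-F)^2\,d\Bbb P/\ (\text{normalisation})$... more precisely, writing $\|\sqrt{(T^k)'}-1\|_2^2=\int(\,|F\circ T^{-k}|-F\,)^2\,d\Bbb P$ after changing variables, the point is that on the set where $F$ and $F\circ T^{-k}$ land in very different dyadic levels $E_n=\{2^n<F^2\le 2^{n+1}\}$ the integrand is of order $2^n$. The key quantitative input is: for fixed $k$, the sets $B_n$ and $T^{-k}B_n$ are nearly disjoint for small $n$ (since $A_n$ and $S^{-k}A_n$ are genuinely different for $n$ small relative to $k$, being independent-ish coordinates in those block ranges) yet nearly equal for $n$ large — and it is precisely the "small $n$, large measure mismatch" range $1\le n\lesssim \log|k|$ that forces $\|\sqrt{(T^k)'}-1\|_2^2\gtrsim c\log|k|$ or better. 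If one only gets a logarithmic lower bound the sum $\sum_k |k|^{-c/2}$ may diverge, so the construction must be tuned (choosing the depth of the block-coordinate ranges $I_n$ growing fast enough, or reindexing so that level $n$ is "activated" only past $K_n$) to make $\|\sqrt{(T^k)'}-1\|_2^2$ grow at least like a power of $\log|k|$ times a large constant, or like $|k|^{\epsilon}$; since we have complete freedom in choosing the subsequence $\{A_n\}$ and the coordinate blocks, this can be arranged, say so that $\|\sqrt{(T^k)'}-1\|_2^2\ge c\,\sqrt{|k|}$.

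The main obstacle, and the step I would spend the most care on, is exactly this lower bound on $\|\sqrt{(T^k)'}-1\|_2^2$: one must show that iterating $T$ genuinely moves mass across many dyadic levels of $F^2$, not merely the bounded amount guaranteed by $\|F^2-F^2\circ T\|_1<\infty$ (which controls the $L^1$ difference, not the $L^2$ difference of square roots, and is finite precisely because the construction is "almost invariant"). The resolution is that almost-invariance in $L^1$ of $F^2$ is compatible with $\|\sqrt{F^2\circ T^{-k}}-F\|_2\to\infty$: $L^1$-smallness of $F^2-F^2\circ T$ is dominated by the single top level present, whereas the $L^2$ norm of the difference of square roots accumulates $\sum_{n\le n_0(k)}$-many level-$2^{n/2}$ contributions from all the lower blocks that genuinely differ. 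I would make this precise by computing, for each $n$, $\int_{E_n}(\sqrt{(T^k)'}-1)^2 d\Bbb P \ge c\, 2^{n}\,\Bbb P(E_n\setminus T^{-k}B_n\cap\cdots)$ and lower-bounding the relevant measure using independence of the coordinate blocks $I_n$ under $m^{\Bbb N}$ for $n$ in the range where $S^k$ has not yet "synchronised" $A_n$ with itself, then summing over $n$. Once total dissipativeness is in hand, the proposition is complete: $T$ is ergodic, $T\in\mathrm{Aut}_2(X,\mathcal A,\mu)$ with $\mu$ infinite, and $T_*$ is totally dissipative — in particular non-ergodic.
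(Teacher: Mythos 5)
Your overall strategy (find $F\ge 1$ with $F\notin L^2(\mathbb{P})$, $F-F\circ T^n\in L^2(\mathbb{P})$ for all $n$, and $\sum_{n\in\mathbb{Z}}e^{-\frac12\|F-F\circ T^n\|_2^2}<\infty$, then set $d\mu:=F^2\,d\mathbb{P}$ and invoke Corollary~\ref{co:total dissipat}) is exactly the paper's, and your identification of the crux --- a summably fast lower bound on $\|\sqrt{(T^n)'}-1\|_{L^2(\mu)}^2=\|F-F\circ T^n\|_{L^2(\mathbb{P})}^2$ --- is correct. But you do not actually establish that lower bound, and the specific construction you propose (recycling the almost-invariant sets $A_n$, the block sets $B_n$ and the weight $F^2=1+\sum_n 2^n1_{B_n}$ from the proof of Theorem~\ref{th:T-prop}) cannot deliver it. Those $A_n$ come from non-strong ergodicity, so by design one controls $m(A_n\triangle S^kA_n)$ only from \emph{above}. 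To make $\|F-F\circ T^k\|_2^2$ grow you need, for each large $k$, a guaranteed amount of decorrelation $\mathbb{P}(B_n\triangle T^{-k}B_n)\gtrsim \mathbb{P}(B_n)$ for all $n$ up to some $n_0(k)\to\infty$ at a controlled rate, i.e. a quantitative \emph{lower} bound on $m(A_n\triangle S^kA_n)$ uniformly in $n\le n_0(k)$. Mere (weak) mixing of $S$ gives this for each fixed $n$ eventually, hence $\|c_T(k)\|_2\to\infty$ and zero type, but with no rate; if $S$ exhibits rigid behaviour along a subsequence, $\sum_k e^{-\frac12\|c_T(k)\|_2^2}$ can diverge. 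Writing ``the construction must be tuned \dots\ this can be arranged'' is precisely the missing step, and the tuning you suggest (deeper blocks, stronger almost-invariance) pushes in the wrong direction: it shrinks $\|F^2-F^2\circ T\|_1$ but does nothing for the lower bound.

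The paper resolves this by abandoning the almost-invariant-set weight altogether and building $F$ from genuinely independent ingredients: on $\Omega=(\{0,1\}^{\mathbb{Z}})^{\mathbb{N}}$ with $T$ a product of Bernoulli shifts, it takes $F=1+\sum_k\sum_{j=0}^{2^k-1}2^kf_k\circ T^j$, where $f_k$ is the indicator of a cylinder of probability $4^{-k}k^{-2}$ in the $k$-th factor. Since $(f_k\circ T^j)_{j\in\mathbb{Z}}$ is i.i.d., the quantity $\|Y_k-Y_k\circ T^n\|_2^2$ (with $Y_k=\sum_{j=0}^{2^k-1}2^kf_k\circ T^j$) is computed \emph{exactly} as a variance of a sum of independent centred terms, yielding $\|F-F\circ T^n\|_2^2\ge C|n|/(\log_2|n|)^2$, far more than summability of $e^{-\frac12\|\cdot\|_2^2}$ requires. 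To salvage your version you would have to replace the abstract $A_n$ by explicit cylinder sets of a Bernoulli shift and carry out this variance computation --- at which point you have reproduced the paper's construction. As written, the proof has a genuine gap at its decisive step.
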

\begin{proof}
Let $X=\{0,1\}^\mathbb{Z}$ with the product metric and $\mathcal{B}$ be the corresponding Borel $\sigma$-algebra and $S$ be the shift on $X$. 
For $k\in\mathbb{N}$, we 
denote by  $\lambda_k$ the distribution on $\{0,1\}$ such that  $\lambda_k(0)=\frac{1}{4^kk^2}$.
We now define a Borel probability $\mu_k$ on $X$ by setting 
$\mu_k:=\lambda_k^{\otimes\mathbb{Z}}$.
 Finally, let $\Omega:=X^\mathbb{N}$, $\mathbb{P}=\bigotimes_{k=1}^\infty \mu_k$ and $T:=S\times S\times\cdots$. 
 Since $T$ is a direct product of mixing (Bernoulli) transformations,
  $T$ is an ergodic $\Bbb P$-preserving transformation of $\Omega$.
Suppose that we have  a function  $F:\Omega\to (0,+\infty)$ such that
  \begin{itemize} 
    \item[$(\ast)$] $F\not\in L^2(\Bbb P)$,
  \item[$(\ast\ast)$]
 $F-F\circ T^n\in L^2(\Bbb P)$ for all $n\in\mathbb{Z}$ and
 \item[$(\ast\ast\ast)$]
$
\sum_{n\in\mathbb{Z}} e^{-\frac{1}{2}\left\|F-F\circ T^n\right\|_2^2}<\infty.
$
\end{itemize}
 Then, as in the proof of 
Theorem~\ref{th:T-prop}, we define a new measure $\mu$ on $(\Omega,\mathcal C)$ by setting
$\mu\sim \Bbb P$ and  $\frac{d\mu}{d\Bbb P}:=F^2$.
We deduce from $(\ast)$ that $\mu(\Omega)=\infty$.
Since
$$
\int_\Omega|\sqrt{T'}-1|^2d\mu=\int_\Omega|F-F\circ T^{-1}|^2d\Bbb P,
$$
it follows that
 $T\in \text{Aut}_2(X^\mathbb{N},\mathcal{B}^\mathbb{N},\mu)$.
Moreover, 
\[
\sum_{n\in\mathbb{Z}}e^{-\frac12\int |\sqrt{ (T^{-n})'}-1|^2\,d\mu}
=\sum_{n\in\mathbb{Z}} e^{-\frac{1}{2}\int|F-F\circ T^n|^2\,d\Bbb P}<\infty.
\] 
Hence $T_*$ is dissipative by Corollary~\ref{co:total dissipat} and the proposition is proved.
Thus, it remains to build such an $F$.
For that purpose, we let  $f_k:=1_{[1]_k}$ for each $k\in\Bbb N$.
As it will be used repeatedly, we note that for each $k\in\mathbb{N}$, $\left(f_k\circ T^j\right)_{j\in\mathbb{Z}}$ is distributed as an iid sequence of random variables.
 The double sequence $\left(f_k\circ T^j\right)_{k\in\mathbb{N}, j\in\mathbb{Z}}$ is a sequence of independent random variables. 

We now define a function $F:\Omega\to [1,\infty)$ by setting
\[
F:=1+\sum_{k=1}^\infty \sum_{j=0}^{2^k-1}2^kf_k\circ T^j.
\]	
Since 
\[
\sum_{k=1}^\infty \sum_{j=0}^{ 2^k-1}\mathbb{P}\left(\{\omega\in\Omega\mid 2^kf_k\circ T^j\neq 0\}\right)=\sum_{k=1}^\infty \frac{1}{2^kk^2}<\infty,
\]	
it follows from the Borel-Cantelli Lemma that $F<\infty$  almost everywhere. 
For each  $n\in\mathbb{N}$, 
\[
F-F\circ T^n=\sum_{k=1}^\infty \left(Y_k-Y_k\circ T^n\right),
\] 
where $Y_k:=\sum_{j=0}^{2^k-1}2^kf_k\circ T^j$. 
From now on, we will use the notation  $\|r\|_2$ for an arbitrary
measurable map $r:\Omega\to\Bbb R$.
Thus, $\|r\|_2:=\int_\Omega|r|^2\,d\Bbb P$ can be infinite.
We now calculate the sequence $\left(\left\|Y_k-Y_k\circ T^n\right\|_2 \right)_{k=1}^\infty$.
We first note that
\begin{equation}\label{eq: IID1.5}
Y_k-Y_k\circ T^n = \sum_{j=0}^{2^k-1}2^k\left(f_k\circ T^j-f_k\circ T^{j+n}\right).
\end{equation}
  Denote by $m$ the unique non-negative integer  with $2^m\leq n<2^{m+1}$. 
  If $k\leq m$, then 
the right hand side of  (\ref{eq: IID1.5}) is a sum of $2^k$ iid random variables with zero mean and variance $\frac{2}{k^2}(1-4^{-k}k^{-2})$.
Indeed, 
\begin{align*}
\|2^k\left(f_k\circ T^j-f_k\circ T^{j+n}\right)\|_2^2&=4^k\left(2\|f_k\|^2-2\langle f_k,f_k\circ T^n\rangle\right)\\
&=2\cdot4^{k}(\Bbb P([1]_k)-2
\Bbb P([1]_k)^2)
\end{align*}
and $\Bbb P([1]_k)=\mu_k(1)=\frac 1{4^kk^2}$.
 We conclude that for all $k\leq m$, 
\begin{equation}\label{eq: IID2}
\left\|Y_k-Y_k\circ T^n\right\|_2^2=\frac{2^{k+1}}{k^2}\bigg(1-\frac1{4^kk^2}\bigg).
\end{equation}
Next, if $k>m$ then 
\[
Y_k-Y_k\circ T^n = \sum_{j=0}^{n-1}2^kf_k-\sum_{j=2^k}^{2^k+n-1}2^kf_k\circ T^j.
\] 
Since the right hand side can be written as a sum of $n$ iid random variables with zero mean and variance $\frac{2}{k^2}(1-4^{-k}k^{-2})$ 
we see that for all $k> m$, 
\begin{equation}\label{eq: IID3}
\left\|Y_k-Y_k\circ T^n\right\|_2^2=\frac{2n}{k^2}\bigg(1-\frac1{4^kk^2}\bigg).
\end{equation}
Combining \eqref{eq: IID2} and \eqref{eq: IID3} we see that 
\[
\sum_{k=1}^\infty \left\|Y_k-Y_k\circ T^n\right\|_2^2=\sum_{k=1}^{m}\frac{2^{k+1}}{k^2}\bigg(1-\frac1{4^kk^2}\bigg)+n\sum_{k=m+1}^\infty\frac{2}{k^2}\bigg(1-\frac1{4^kk^2}\bigg)<\infty,
\]
where we recall that $m$ in the integer part of $\log_2n$. 
Finally, since $\left(Y_k-Y_k\circ T^n\right)_{k=1}^\infty$ is a sequence of square integrable, zero mean, independent random variables for every $n\in\mathbb{N}$,  it follows that  $F-F\circ T^n\in L^2(\Bbb P)$
and
\[
\left\|F-F\circ T^n\right\|_2^2=\sum_{k=1}^\infty \left\|Y_k-Y_k\circ T^n\right\|_2^2.
\]
As $T$ preserves $\mathbb{P}$, it follows that  $F-F\circ T^n\in L^2(\Bbb P)$ for each $n\in\mathbb{Z}$ and
\[
\left\|F-F\circ T^n\right\|_2^2=\left\|F-F\circ T^{|n|}\right\|_2^2.
\]
Thus, $(\ast\ast)$ is proved. 
 The same equality yields that for all $n\in\mathbb{Z}\setminus\{0\}$, 
\[
\left\|F-F\circ T^n\right\|_2^2\ge \frac{3}{4}\sum_{k=1}^{m}\frac{2^{k+1}}{k^2}\ge  C\frac{|n|}{(\log_2(|n|))^2}
\]
for some $C>0$.
 This implies $(\ast\ast\ast)$.
 Since $\left\|F-F\circ T^n\right\|_2^2\to\infty$, it follows that $F\not\in L^2(\Bbb P)$, i.e. $(\ast)$ holds.

\end{proof}

\section{Appendix}

\subsection{Infinitely divisible random variables}

We recall that a real valued random variable $X$ defined on some
probability space $\left(\Omega,\mathcal{F},\mathbb{P}\right)$ is
\emph{infinitely divisible} if the distribution $p$ of $X$ satisfies that,
for each $k\ge1$, there exists a probability distribution $p_{k}$ on $\Bbb R$
such that $p$ is the $k$-th convolution power of $p_{k}$. 
Equivalently,
there exists $\delta,\kappa\ge0$ and a $\sigma$-finite Borel measure $\sigma$ on $\Bbb R$
satisfying the following conditions: $\sigma(\{ 0\})=0$, $\int_{\mathbb{R}}x^{2}\wedge1d\sigma(x)<\infty$
and such that
\[
\mathbb{E}[e^{iaX}]=\exp\left(-\frac{a^{2}\kappa^{2}}{2}+ia\delta+\int_{\mathbb{R}}(e^{iax}-1-iax1_{\{y\in\Bbb R:\,|y|\le 1\}}(x))\,d\sigma(x)\right)
\]
The  measure $\sigma$ is called the \emph{L\'evy measure} of $X$. 
Conversely, each $\sigma$-finite measure
 $\sigma$ such that $\sigma(\{ 0\})=0$ and 
$\int_{\mathbb{R}}x^{2}\wedge1d\sigma<\infty(x)$ is the L\'evy measure
of some infinitely divisible random variable.
For the proof of the following proposition we refer to \cite[page 39, below formula (8.8)]{Sato99LevPro}.

\begin{prop}
\label{prop:Integrability ID RV} 
Let $X$ be infinitely divisible.
Then
$X\in L^1(\mathbb P)$  if and only if $\int_{\mathbb{R}}|x|1_{\{ y\in\Bbb R:\,|y|>1\}}(x)\,d\sigma(x)<\infty$.
In this case the characteristic function of $X$ can be written as 
\[
\mathbb{E}[e^{iaX}]=\exp\left(-\frac{a^{2}\kappa^{2}}{2}+ia\gamma+\int_{\mathbb{R}}(e^{iax}-1-iax)\,d\sigma(x)\right),\quad a\in\Bbb R,
\]
and $\mathbb{E}[X]=\gamma$.
\end{prop}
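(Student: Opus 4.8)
The plan is to reduce everything to the Lévy--Itô structure of infinitely divisible laws, following \cite{Sato99LevPro}. Write $\sigma=\sigma_{0}+\sigma_{1}$ with $\sigma_{0}:=\sigma|_{\{|x|\le1\}}$ and $\sigma_{1}:=\sigma|_{\{|x|>1\}}$. Then $X$ can be realized, in distribution, as a sum $X=G+J_{0}+J_{1}$ of three independent random variables (plus the deterministic drift $\delta$, which is irrelevant for integrability): a Gaussian variable $G$ of variance $\kappa^{2}$; the centred ``small jumps'' part $J_{0}$, whose characteristic exponent is $\int(e^{iax}-1-iax)\,d\sigma_{0}(x)$; and a compound Poisson variable $J_{1}=\sum_{i=1}^{N}\xi_{i}$ with $N$ Poisson of mean $\lambda:=\sigma_{1}(\mathbb{R})$ and $\xi_{i}$ i.i.d.\ of law $\sigma_{1}/\lambda$ (when $\lambda=0$ one sets $J_{1}=0$). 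I would quote this decomposition from \cite{Sato99LevPro} rather than reprove it.

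Next I would check integrability of the three pieces separately. The Gaussian part $G$ is trivially in $L^{1}$. The small-jumps part $J_{0}$ is even in $L^{2}$: since $\int_{|x|\le1}x^{2}\,d\sigma<\infty$, the defining (compensated) integral converges in $L^{2}$ with $\mathbb{E}[J_{0}]=0$ and $\mathrm{Var}(J_{0})=\int_{|x|\le1}x^{2}\,d\sigma$. For the compound Poisson part, conditioning on $N$ gives $\mathbb{E}|J_{1}|\le\mathbb{E}[N]\,\mathbb{E}|\xi_{1}|=\int_{|x|>1}|x|\,d\sigma$, and conversely, restricting to the positive-probability event $\{N=1\}$ shows that $J_{1}\in L^{1}$ forces $\xi_{1}\in L^{1}$, i.e.\ $\int_{|x|>1}|x|\,d\sigma<\infty$. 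Finally, for independent summands $Y_{1},Y_{2}$ one has $Y_{1}+Y_{2}\in L^{1}$ if and only if both $Y_{1}\in L^{1}$ and $Y_{2}\in L^{1}$: the ``if'' direction is the triangle inequality, while for the converse Tonelli gives $\int\!\int|y_{1}+y_{2}|\,dP_{Y_{1}}(y_{1})\,dP_{Y_{2}}(y_{2})<\infty$, hence $\int|y_{1}+y_{2}^{0}|\,dP_{Y_{1}}(y_{1})<\infty$ for some fixed $y_{2}^{0}$, so $Y_{1}\in L^{1}$ (and symmetrically). Applying this to $X=(G+J_{0})+J_{1}$ and using $G,J_{0}\in L^{1}$, we conclude $X\in L^{1}\iff J_{1}\in L^{1}\iff\int_{|x|>1}|x|\,d\sigma<\infty$.

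It remains to rewrite the characteristic function and identify the mean under the hypothesis $\int_{|x|>1}|x|\,d\sigma<\infty$. Splitting $\int(e^{iax}-1-iax1_{\{|y|\le1\}})\,d\sigma=\int(e^{iax}-1-iax)\,d\sigma+ia\int_{|x|>1}x\,d\sigma$ (the first integral on the right now absolutely convergent for every $a$, by $|e^{iax}-1-iax|\le\tfrac12a^{2}x^{2}$ on $\{|x|\le1\}$ and $|e^{iax}-1-iax|\le2+|a||x|$ on $\{|x|>1\}$, noting $\sigma(\{|x|>1\})<\infty$), one absorbs $\int_{|x|>1}x\,d\sigma$ into the drift and obtains the stated formula with $\gamma:=\delta+\int_{|x|>1}x\,d\sigma$. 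To get $\mathbb{E}[X]=\gamma$ I would write $\mathbb{E}[e^{iaX}]=e^{\psi(a)}$ with $\psi(a)=-\tfrac12a^{2}\kappa^{2}+ia\gamma+\int(e^{iax}-1-iax)\,d\sigma(x)$ and differentiate at $a=0$: differentiation under the integral sign is justified for $|a|\le1$ by the $\sigma$-integrable dominating function $x\mapsto x^{2}1_{\{|x|\le1\}}+2|x|1_{\{|x|>1\}}$ for the integrand's derivative $ix(e^{iax}-1)$, which vanishes at $a=0$; hence $\psi'(0)=i\gamma$. Since $X\in L^{1}$, the map $a\mapsto\mathbb{E}[e^{iaX}]$ is $C^{1}$ near $0$ with derivative $i\mathbb{E}[X]$ there, so $i\mathbb{E}[X]=\psi'(0)=i\gamma$.

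The main obstacle is the second paragraph: turning the qualitative statement ``$X$ integrable $\iff$ heavy tails of $\sigma$ integrable'' into a rigorous argument. Everything hinges on the Lévy--Itô decomposition together with the elementary but slightly delicate fact that independent summands are individually integrable as soon as their sum is; the compound Poisson computation and the recentring in the last paragraph are routine.
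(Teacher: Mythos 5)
Your proof is correct. The paper itself offers no argument for this proposition --- it simply refers the reader to Sato's book --- so any complete proof is ``a different route'' in the trivial sense; what you have written is a sound, self-contained reconstruction of the standard argument (and essentially the one found in Sato, where the moment criterion is also proved via the convolution factorization into a Gaussian part, a compensated small-jump part, and a compound Poisson large-jump part). All the delicate points are handled properly: the large-jump part is genuinely compound Poisson because $\int x^{2}\wedge 1\,d\sigma<\infty$ forces $\sigma(\{|x|>1\})<\infty$; the converse direction of the compound Poisson step (restricting to $\{N=1\}$, which has positive probability when $\lambda>0$, and noting the $\lambda=0$ case is vacuous) is exactly the right device; the Tonelli argument showing that independent summands of an integrable sum are individually integrable is correct; and the recentring $\gamma=\delta+\int_{\{|x|>1\}}x\,d\sigma$ together with the dominated differentiation of $\psi$ at $a=0$ (using $|ix(e^{iax}-1)|\le x^{2}$ on $\{|x|\le1\}$ for $|a|\le1$ and $\le 2|x|$ on $\{|x|>1\}$) legitimately yields $\mathbb{E}[X]=\psi'(0)/i=\gamma$, since integrability of $X$ makes the characteristic function differentiable at $0$ with derivative $i\,\mathbb{E}[X]$. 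The only thing your write-up buys beyond the paper is self-containedness at the cost of quoting the L\'evy--It\^o (or convolution) decomposition, which is a heavier input than the single citation the authors chose; both are legitimate.
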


\subsection{Stochastic integrals against a Poisson measure}\label{stoc}

Let $\left(X^{*},\mathcal{A}^{*},\mu^{*}\right)$ be the  Poisson space over  a base $(X,\mathcal A,\mu)$.
If $f:X\to\mathbb{R}$ is a measurable function  satisfying  $\int_{X}f^{2}\wedge1d\mu<\infty$
then a so-called {\it stochastic integral} $I_{\mu}(f):X^*\to\Bbb R$ is well defined
(see \cite{Maruyama70IDproc}, up to a slight change) by the following formula
\[
I_{\mu}(f)(\omega)=\lim_{\epsilon\to0}\left(\int_{\{x\in X:\,|f(x)|>\epsilon\}}fd\omega-\int_{\{x\in X:\,|f(x)|>\epsilon\}}f1_{\{x\in X:\,|f(x)|\le 1\}}d\mu\right),
\]
where the limit means the convergence in probability. 
It appears that the random variable $I_{\mu}(f)$
is infinitely divisible.
The L\'evy measure of $I_{\mu}(f)$ is 
$(\mu\circ f^{-1})\restriction{\mathbb{R}\setminus\{0\}}$.
The
 characteristic function of $I_{\mu}(f)$ is
\begin{equation}\label{eq:charac}
\mathbb{E}_{\mu^{*}}[e^{iaI_\mu(f)}]=\exp\left(\int_{X}(e^{iaf}-1-iaf1_{\{x\in X:\,|f(x)|\le1\}})d\mu\right).
\end{equation}
for each $a\in\Bbb R$.

\begin{prop}\label{prop: appendix ID}
Let a function $f:X\to \mathbb{R}$ satisfy $\int_{X}f^{2}\wedge1d\mu<\infty$. Then $I_\mu(f)\in L^1(\mu^*)$ if and only if $f1_{\{x\in X:\,|f(x)|>1\}}\in L^1(\mu)$. 
In this case $\mathbb{E}_{\mu^*}(I_\mu(f))=\int_{\{x\in X:\,|f(x)|>1\}}fd\mu$. 
\end{prop}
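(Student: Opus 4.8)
The plan is to deduce the statement from the structure theory of infinitely divisible laws recalled in Proposition~\ref{prop:Integrability ID RV}, using the facts about $I_{\mu}(f)$ collected in~\S\ref{stoc}: namely that $I_{\mu}(f)$ is infinitely divisible with Gaussian part $\kappa=0$ and L\'evy measure $\sigma:=(\mu\circ f^{-1})\restriction\mathbb{R}\setminus\{0\}$, and that its characteristic function is given by \eqref{eq:charac}. The basic computational tool is the change-of-variables formula for image measures: for every Borel $\psi\ge0$ on $\mathbb{R}$ with $\psi(0)=0$ one has $\int_{\mathbb{R}}\psi\,d\sigma=\int_{X}\psi\circ f\,d\mu$. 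Applying this with $\psi(x)=|x|1_{\{|x|>1\}}$ gives
\[
\int_{\mathbb{R}}|x|1_{\{|x|>1\}}\,d\sigma=\int_{X}|f|1_{\{|f|>1\}}\,d\mu ,
\]
so Proposition~\ref{prop:Integrability ID RV} immediately yields the equivalence $I_{\mu}(f)\in L^{1}(\mu^{*})\iff f1_{\{|f|>1\}}\in L^{1}(\mu)$.

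It then remains to compute $\mathbb{E}_{\mu^{*}}[I_{\mu}(f)]$ when these equivalent conditions hold. First I would note that, under the standing hypothesis $\int_{X}f^{2}\wedge1\,d\mu<\infty$ (which forces $\mu(\{|f|>1\})<\infty$) together with $\int_{\{|f|>1\}}|f|\,d\mu<\infty$, the integral $\int_{X}(e^{iaf}-1-iaf)\,d\mu$ is absolutely convergent for each $a\in\mathbb{R}$: on $\{|f|\le1\}$ the integrand is $O(f^{2})$, hence $\mu$-integrable, while on $\{|f|>1\}$ it is bounded by $2+|a|\,|f|$. I would then split the integrand of \eqref{eq:charac} as
\[
e^{iaf}-1-iaf1_{\{|f|\le1\}}=\bigl(e^{iaf}-1-iaf\bigr)+iaf1_{\{|f|>1\}},
\]
integrate termwise (legitimate because both resulting integrals converge absolutely), and apply the change of variables once more to the complex integrand $e^{iax}-1-iax$ to obtain
\[
\mathbb{E}_{\mu^{*}}[e^{iaI_{\mu}(f)}]=\exp\!\left(ia\gamma+\int_{\mathbb{R}}(e^{iax}-1-iax)\,d\sigma(x)\right),\qquad \gamma:=\int_{\{|f|>1\}}f\,d\mu .
\]
This is exactly the canonical form of Proposition~\ref{prop:Integrability ID RV} with $\kappa=0$, whence $\mathbb{E}_{\mu^{*}}[I_{\mu}(f)]=\gamma=\int_{\{x\in X:\,|f(x)|>1\}}f\,d\mu$, as desired.

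I do not expect a genuine obstacle here, since the argument is essentially bookkeeping once the cited facts about $I_{\mu}(f)$ and infinitely divisible variables are invoked. The one point deserving care is the rearrangement of \eqref{eq:charac}: one cannot naively write $\int_{X}(e^{iaf}-1-iaf1_{\{|f|\le1\}})\,d\mu$ as $\int_{X}(e^{iaf}-1)\,d\mu-\int_{\{|f|\le1\}}iaf\,d\mu$, because neither of these pieces need converge absolutely near $\{f=0\}$ (there $e^{iaf}-1=O(f)$, not $O(f^{2})$). The correct split is the one above, keeping the $O(f^{2})$ combination $e^{iaf}-1-iaf$ intact and peeling off only the $\{|f|>1\}$ compensator; with that precaution the identification with the $L^{1}$-centred L\'evy representation is routine.
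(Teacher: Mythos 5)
Your proposal is correct and follows essentially the same route as the paper: deduce the integrability criterion from Proposition~\ref{prop:Integrability ID RV} via the identification of the L\'evy measure as $(\mu\circ f^{-1})\restriction\mathbb{R}\setminus\{0\}$, then rewrite \eqref{eq:charac} in the $L^1$-centred canonical form by peeling off the compensator $iaf1_{\{|f|>1\}}$ and read off $\gamma=\int_{\{|f|>1\}}f\,d\mu$. Your extra care about which split of the integrand converges absolutely is a sound refinement of a step the paper leaves implicit.
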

\begin{proof}
The integrability criteria for $I_\mu(f)$ follows from Proposition \ref{prop:Integrability ID RV} since $(\mu\circ f^{-1})\restriction{\mathbb{R}\setminus\{0\}}$ is the L\'evy measure of $I_\mu(f)$. If the latter happens, we can rewrite (\ref{eq:charac}) as 
\[
\mathbb{E}_{\mu^{*}}[e^{iaI(f)}]=\exp\left(ia\int_{\{x\in X:\,|f(x)|>1\}}fd\mu+\int_{X}(e^{iaf}-1-iaf)\,d\mu\right).
\]	
By Proposition \ref{prop:Integrability ID RV}, $\mathbb{E}_{\mu^*}(I_\mu(f))=\int_{\{x\in X:\,|f(x)|>1\}}f\,d\mu$. 
\end{proof}

\bibliographystyle{plain}
\bibliography{biblioNS}

\end{document}